\newcommand{\CCC}{\mathbb{C}}
		\newcommand{\NN}{\mathbb{N}}	
		\newcommand{\RR}{\mathbb{R}}
\newcommand{\CC}{\mathcal{C}}			
\newcommand{\CE}{\mathcal{E}}			
\newcommand{\CG}{\mathcal{G}}		\newcommand{\CH}{\mathcal{H}}
\newcommand{\CO}{\mathcal{O}}		\newcommand{\CP}{\mathcal{P}}	
\newcommand{\CQ}{\mathcal{Q}}
\newcommand{\CW}{\mathcal{W}}
\newcommand{\ubar}[1]{\underaccent{\bar}{#1}} 		
\DeclarePairedDelimiter\abs{\lvert}{\rvert}		
\DeclarePairedDelimiter\norm{\lVert}{\rVert}		
\DeclarePairedDelimiter\angles{\langle}{\rangle}	
\DeclarePairedDelimiter\paren{(}{)}			
\DeclarePairedDelimiter\braces{\{}{\}}			
	\newcommand{\bigparen}[1]{\paren[\big]{#1}}
	\newcommand{\Bigparen}[1]{\paren[\Big]{#1}}
	\newcommand{\bigbraces}[1]{\braces[\big]{#1}}
	\newcommand{\Bigbraces}[1]{\braces[\Big]{#1}}
\newcommand{\bigmid}{\mathrel{\big|}}			
\theoremstyle{plain}
\newtheorem{thm}{Theorem}[section]				
\newtheorem{prop}[thm]{Proposition}		
\newtheorem{lem}[thm]{Lemma}						
\newtheorem{cor}[thm]{Corollary}
\newtheorem*{thm*}{Theorem}			\newtheorem*{theorem*}{Theorem}		
\newtheorem*{prop*}{Proposition}		\newtheorem*{proposition*}{Proposition}
\newtheorem*{lem*}{Lemma}			\newtheorem*{lemma*}{Lemma}			
\newtheorem*{cor*}{Corollary}			\newtheorem*{corollary*}{Corollary}
\newtheorem*{qu*}{Question}			\newtheorem*{question*}{Question}
\newtheorem*{conj*}{Conjecture}			\newtheorem*{conjecture*}{Question}
\newtheorem*{fact*}{Fact}
\newtheorem*{claim*}{Claim}
\newtheorem{alphthm}{Theorem}			
\newtheorem{alphprop}[alphthm]{Proposition}
\newtheorem{alphcor}[alphthm]{Corollary}
\theoremstyle{definition}
\newtheorem{de}[thm]{Definition}
\newtheorem*{de*}{Definition}			\newtheorem{definition*}{Definition}	
\newtheorem*{notation*}{Notation}	
\newtheorem*{conv*}{Convention}			\newtheorem*{convention*}{Convention}
\theoremstyle{remark}
\newtheorem{rmk}[thm]{Remark}
\numberwithin{equation}{section}
 \theoremstyle{definition}
  \newtheorem{defn}[thm]{Definition}
 \theoremstyle{remark}
 \newtheorem{rem}[thm]{Remark}
  \newtheorem{ex}[thm]{Example}
\def\B{\mathfrak B}
\def\K{\mathfrak K}
\def\H{\mathcal H}
\def\supp{\mathrm{supp}}
\def\diam{\mathrm{diam}}
\def\Id{\mathrm{Id}}
\def\N{\mathbb N}
\def\C{\mathbb C}
\newcommand{\acbdry}{\partial^{\Gamma}}
\newcommand*{\mkbdry}[1]{{\abs{\partial_{\Pi}(#1)}_m}}
\newcommand*{\locmkbdry}[1]{{\abs{\partial_{\Pi_{Y,S}}(#1)}_{\snu}}}
\newcommand{\fnc}{\ubar c}
\newcommand{\fnb}{\ubar b}
\newcommand{\fnk}{\ubar k}
\newcommand{\fnh}{\ubar h}
\newcommand{\fkG}{\mathfrak{G}}
\newcommand{\fkP}{\mathfrak{P}}
\newcommand{\snu}{{\tilde\nu_{Y,S}}}
\newcommand*{\wc}[1]{{\CW\CC\paren{#1}}}
\newcommand{\symdiff}{{\, \resizebox{0.8 em}{!}{$\triangle$}\, }}
\newcommand{\mhyphen}{\operatorname{-}}
\newcommand{\variable}{\,\mhyphen\,}
\def\O{\mathcal O}
\def\d{\mathrm{d}}
\def\act{\curvearrowright}
\begin{document}

\title[A Markovian and Roe-algebraic approach to asymptotic expansion]{A Markovian and Roe-algebraic approach to asymptotic expansion in measure}

\author{Kang Li}
\address{Department of Mathematics, Friedrich-Alexander-Universität Erlangen-Nürnberg, Cauerstraße 11, 91058 Erlangen, Germany}
\email{kang.li@fau.de}

\author{Federico Vigolo}
\address{Faculty of Mathematics and Informatics, Westfälische Wilhelms-Universität Münster, Einsteinstrasse 62, 48149, Münster, Germany}
\email{fvigolo@uni-muenster.de}

\author{Jiawen Zhang}
\address{School of Mathematical Sciences, Fudan University, 220 Handan Road, Shanghai, 200433, China}
\email{jiawenzhang@fudan.edu.cn}

\thanks{KL has received funding from the European Research Council (ERC) under the European Union's Horizon 2020 research and innovation programme (grant agreement no. 677120-INDEX)}
\thanks{FV was supported by the ISF Moked 713510 grant number 2919/19 for his stay at the Weizmann Institute of Science, Israel.}
\thanks{JZ was supported by the Sino-British Trust Fellowship by Royal Society and NSFC11871342.}

\begin{abstract}
In this paper, we conduct further studies on geometric and analytic properties of asymptotic expansion in measure. More precisely, we develop a machinery of Markov expansion and obtain an associated structure theorem for asymptotically expanding actions. Based on this, we establish an analytic characterisation for asymptotic expansion in terms of the Dru\c{t}u-Nowak projection and the Roe algebra of the associated warped cones. As an application, we provide new counterexamples to the coarse Baum-Connes conjecture.
\end{abstract}

\date{\today} 

\maketitle

\noindent\textit{Mathematics Subject Classification (2020): 37A30, 37A15. Secondary: 46H35, 19K56.}\\
\textit{Keywords: Asymptotic expansion in measure; Coarse Baum-Connes conjecture; Markov expansion; Spectral gap; Strong ergodicity; Warped cones.}\\

\section{Introduction}

This paper is the second part of a broader study of the notion of asymptotic expansion in measure for measurable actions of countable groups on probability spaces.
We introduced this notion in \cite{dynamics1}, as a dynamical analogue of a previously defined notion of asymptotic expansion for metric spaces \cite{intro}.

Asymptotic expansion in measure is a weakening of expansion in measure as defined in \cite{Vig19} and it turns out that---for measure\=/class\=/preserving actions---it is also equivalent to the classical notion of strong ergodicity introduced by Schmidt \cite{Sch81} and Connes-Weiss \cite{connes_property_1980} (see \cite{dynamics1} for more details). 

More precisely, a measurable action $\rho\colon\Gamma\curvearrowright(X,\nu)$ of a countable group on a probability space is \emph{asymptotically expanding in measure} if for each $\alpha \in (0,\frac{1}{2}]$ there exist $c_\alpha>0$ and a finite symmetric set $S_\alpha \subseteq \Gamma$ such that for every measurable subset $A\subseteq X$ with $\alpha\leq \nu(A) \leq \frac{1}{2}$ we have
\begin{equation}\label{eq:intro:as.expansion in measure}
  \nu\Bigparen{\bigcup_{s\in S_\alpha} s\cdot A} > (1+c_\alpha)\nu(A).
\end{equation}
The action $\rho$ is called \emph{expanding in measure} if we let $c_\alpha \equiv c$ and $S_\alpha\equiv S$ for some $c>0$ and a finite subset $S$ in $\Gamma$.

In \cite{dynamics1} we studied the general structure theory of asymptotically expanding actions. Most notably, we showed that an action is asymptotically expanding in measure \emph{if and only if} it admits an exhaustion by domains of expansion (see Section~\ref{ssec:asymptotic.expansion in measure} for a more detailed account). This fact allowed us to reprove a few recent--and--old results for strongly ergodic actions and it is also a key technical tool for the present paper. In addition, we also made explicit the connection between the notion of asymptotic expansion for measurable actions and that of asymptotic expansion for metric spaces. This allowed us to provide a rich source of concrete examples of asymptotic expander graphs (see \cite{dynamics1} for details).

In this paper, we will further study the notion of asymptotic expansion in measure in the context of measure\=/class\=/preserving actions (in particular, all the results here described hold for strongly ergodic actions). Adapting the techniques developed in \cite{structure} to the dynamical setting, we are able to prove some rather striking analytic and geometric properties of asymptotic expansion in measure. More precisely, we obtain an analytic characterisation of asymptotic expansion in measure in terms of quasi\=/locality of averaging projections and Roe algebras of the associated warped cones. As a consequence, we will provide a new source of counterexamples to the coarse Baum--Connes conjecture, which is a central problem in higher index theory (see, \emph{e.g.}, \cite{Roe96, WY20}).

To obtain the above results, we develop a spectral characterisation of (asymptotic) expansion in measure which we find of independent interest. This is obtained by associating measure\=/class\=/preserving actions with some reversible Markov kernels and by studying the resulting Laplacians and averaging operators. The spectral characterisation is obtained by extending some classical results for Markov processes on finite state\=/spaces to general Markov kernels. We find that this theory provides a solid framework to study spectral properties of actions that are not necessarily measure\=/preserving.

\subsection{Spectral gaps and Markov expansion}
A probability measure-preserving action $\rho\colon\Gamma\curvearrowright(X,\nu)$ always induces a unitary representation $\pi\colon\Gamma\curvearrowright L^2(X,\nu)$. If $\Gamma$ is generated by a finite symmetric subset $S$, the action $\rho$ has a \emph{spectral gap} if there exists some positive constant $\kappa>0$ such that every $f\in L^2(X,\nu)$ with $\int_Xf\d\nu=0$ satisfies
\begin{equation}\label{eq:intro:spectral gap}
 \norm{f}_2\leq \kappa\sum_{s\in S}\norm{\pi(s)f-f}_2.
\end{equation}
This can be seen as an extremely strong version of ergodicity, and it is not very hard to show that $\rho$ has a spectral gap \emph{if and only if} it is expanding in measure (this was shown more or less independently in \cite{BIG17,grabowski_measurable_2016,houdayer2017strongly,Vig18}, and was already implicit in earlier works of K. Schmidt and Connes--Feldmann--Weiss).

With the action $\rho$ is associated a \emph{Markov operator} $\fkP \in \B(L^2(X,\nu))$ defined by $\fkP \coloneqq \frac{1}{\abs S}\sum_{s\in S}\pi(s)$ and a \emph{Laplacian} $\Delta\coloneqq 1-\fkP \in \B(L^2(X,\nu))$. These operators are self\=/adjoint, and $\rho$ has a spectral gap \emph{if and only if} $0$ is a \emph{simple} (\emph{i.e.}, with multiplicity one) isolated point in the spectrum of $\Delta$ (equivalently, $1$ is a simple isolated point in the spectrum of $\fkP$).
This characterisation in terms of self-adjoint operators is crucial to provide explicit examples of actions with spectral gap, as it opens a door to algebraic and representation theoretical tools. 
In fact, this point of view leads to very deep connections between dynamical systems, analysis and number theory. These connections make the study of the spectral gap property for measure-preserving actions into a very active and important field of research  (\cite{benoist_spectral_2014,bourgain_spectral_2007,BIG17,GJS99,lubotzky1986hecke,margulis1980some}).

As an intermediate step toward an analytic study of asymptotic expansion in measure, we set a framework to extend the above connections to the setting of measure\=/class\=/preserving actions. It follows from the work of Houdayer--Marrakchi--Verraedt \cite[Theorem 3.2]{houdayer2017strongly} that expansion in measure is equivalent to \eqref{eq:intro:spectral gap}, whenever $\rho(s)$ has bounded Radon--Nikodym derivative for every $s\in S$. In turn, \eqref{eq:intro:spectral gap} holds \emph{if and only if} $0$ is a simple isolated point in the spectrum of the self\=/adjoint operator $T\coloneqq \sum_{s\in S}\abs{1-\pi(s)}$. However, the spectrum of the operator $T$ remains difficult to control. It is therefore desirable to produce some spectral condition which can more adequately describe the notion of expansion.

In this paper, we provide a rather satisfactory answer to the above need by using Markov kernels and Markov expansion.
Our approach is based on a shift in paradigm, and can be justified by some analogies between finite graphs and dynamical systems.
A systematic study of these analogies by means of the approximation procedure can be found in \cite{Vig18} and further developed in \cite{dynamics1}.
According to this procedure, expansion in measure corresponds to \emph{vertex\=/expansion} for finite graphs \cite{Vig18} and, if the action is measure-preserving, the spectral gap condition \eqref{eq:intro:spectral gap} can be seen as an analogue of \emph{spectral expansion} for graphs (see also \cite{sawicki_super-expanders_2017}). 
It is a classical result that spectral\=/expansion is equivalent to \emph{edge\=/expansion} (\cite{alon1986eigenvalues,alon1985lambda1,dodziuk1984difference}), and it is easy to verify that the latter is equivalent\footnote{Assuming that the graphs have uniformly bounded degree.} to vertex\=/expansion. This can be seen as the graph\=/theoretic analogue of the equivalence between \eqref{eq:intro:spectral gap} and expansion in measure for measure-preserving actions.

To be more precise, \emph{spectral expansion} for a regular finite graph $\CG$ is defined in terms of the spectral gap of the discrete Laplacian $\Delta\in\B(L^2(\CG,\nu))$. Here $\nu$ is the counting measure on the set of vertices of $\CG$ and $\Delta$ is defined as $1-\fkP$, where $\fkP$ is the averaging operator (\emph{a.k.a.} Markov operator) defined by $\fkP f(v)\coloneqq \sum_{v\sim w}f(w)/ \text{degree(v)}$ for $f\in L^2(\CG,\nu)$. Importantly, if the graph $\CG$ is \emph{not} regular then the discrete Laplacian is \emph{no longer} self\=/adjoint in $\B(L^2(\CG,\nu))$. Instead, it is self\=/adjoint in $\B(L^2(\CG,\tilde\nu))$, where $\tilde\nu$ is a different measure which takes into account the degree of each vertex. Spectral expansion is then defined in terms of the spectrum of $\Delta$ seen as an operator on $L^2(\CG,\tilde \nu)$. A more sophisticated way of rephrasing this is that the (lazy) simple random walk on a finite connected graph $\CG$ has a unique \emph{stationary probability measure} $\tilde \nu$. The probability distribution of the $n$\=/th step of such a random walk converges exponentially fast to $\tilde\nu$ (in the $L^2$\=/norm), and the spectral expansion measures the rate of exponential convergence.

The above discussion can be used as heuristics in the dynamical setting. We remark that graphs corresponding to a measure-preserving action are ``regular on a large scale''.\footnote{
To give a somewhat precise meaning to the notion of ``regular on a large scale'' it is necessary to use the terminology of \cite{dynamics1,Vig18}: given any measurable subset $A\subseteq X$ and a sufficiently fine approximation $[A]_\CP$, the ratio $\abs{\partial[A]_\CP}/\abs{[A]_\CP}$ will be roughly equal to $\nu(S\cdot A)/\nu(A)$. If $\rho$ is measure-preserving and $A$ is disjoint from $s\cdot A$ for every $s\in S$, then the latter ratio is equal to $\abs{S}$. That is, the approximating graphs are ``$\abs{S}$\=/regular on a large scale''.
} 
It is therefore natural to expect a correspondence between spectral expansion and expansion in measure. On the other hand, actions that are not measure\=/preserving correspond to irregular graphs (the ``large scale degrees'' are governed by the Radon--Nikodym derivatives). This suggests us to search for a spectral characterisation of expansion in measure in terms of some operator in $\B(L^2(X,\tilde \nu))$---where $\tilde \nu$ is some stationary measure depending on the Radon--Nikodym derivatives. This is precisely the approach that we take in this paper.

Let $\Gamma$ be a finitely generated group and $S\subseteq \Gamma$ a finite symmetric generating set containing the identity element, and let $\rho\colon\Gamma\curvearrowright (X,\nu)$ be a measure\=/class\=/preserving action with the Radon--Nikodym derivatives $r(\gamma,x)\coloneqq\frac{d \gamma^{-1}_*\nu}{d\nu}(x)$. It turns out that the measure $\tilde \nu$ defined by 
\[
 \d\tilde\nu(x)\coloneqq\sum_{s\in S}r(s,x)^{\frac 12}\d\nu(x) 
\]
is a stationary measure for the reversible Markov kernel 
\[
 \Pi(x,\mhyphen)\coloneqq\frac{1}{\sum_{s\in S}r(s,x)^{\frac 12}}\sum_{s\in S}r(s,x)^{\frac 12}\delta_{s\cdot x}.
\]

Naturally associated to $\Pi$, there are a Markov operator $\fkP$ and a Laplacian $\Delta=1-\fkP$. Both of these are self\=/adjoint operators in $\B(L^2(X,\tilde \nu))$---we defer to Section~\ref{sec:Markov.expansion} for preliminaries and definitions regarding Markov kernels.
Every measurable subset $A\subseteq X$ has a natural notion of ``measure of the boundary'' $\abs{\partial_{\Pi}(A)}\in\RR_{\geq 0}$ (Definition~\ref{defn:markov boundary} or \cite{Kai92}), and we say that $\rho$ is \emph{Markov expanding} if there is a $c>0$ such that
\[
 \abs{\partial_{\Pi}(A)}> c\tilde\nu(A)
\]
for every $A\subseteq X$ with $0< \tilde\nu(A)\leq\frac{1}{2}\tilde\nu(X)$. This should be thought of as a dynamical analogue of edge\=/expansion for graphs.
Importantly, the equivalence between edge\=/expansion and spectral expansion can be extended from the context of random walks on graphs to that of general reversible Markov kernels:


\begin{alphthm}[{\cite[Theorem 2.1]{lawler1988bounds}, see also the appendix to this paper}]\label{thm:intro: expansion Markov kernel}
 Let $\Pi$ be a reversible Markov kernel on $X$ with finite reversing measure $m$. Let $\lambda_2$ be the infimum of the spectrum of the restriction of $\Delta$ to the space of functions with zero average, and let $\kappa\coloneqq\inf \abs{\partial_{\Pi}(A)}/m(A)$ for $A\subseteq X$ with $0<m(A)\leq\frac 12 m(X)$. Then
 \[
  \frac{\kappa^2}{2} \leq 1-\lambda_2\leq 2\kappa.
 \]
\end{alphthm}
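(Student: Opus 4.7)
This is a Cheeger-type inequality relating the Cheeger constant $\kappa$ of the reversible Markov kernel to the spectral gap $1-\lambda_2$, generalising the classical discrete Cheeger inequality for finite graphs. I would prove the two bounds separately: $1-\lambda_2 \leq 2\kappa$ by a direct test-function calculation, and $\kappa^2/2 \leq 1-\lambda_2$ by the Cheeger rearrangement argument proper.

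For the upper bound, fix $\varepsilon > 0$ and pick $A \subseteq X$ with $0 < m(A) \leq \tfrac{1}{2} m(X)$ and $\abs{\partial_\Pi(A)}/m(A) \leq \kappa + \varepsilon$. Plugging the zero-average test function $f_A \coloneqq \infun_A - m(A)/m(X)$ into the variational characterisation
\[
  1-\lambda_2 \;=\; \inf\bigbrace{\scal{\Delta f}{f}/\norm{f}_2^2 \colon f \neq 0,\ \textstyle\int f\,\d m = 0},
\]
a direct computation shows $\scal{\Delta f_A}{f_A} = \abs{\partial_\Pi(A)}$ (the Dirichlet form of an indicator is invariant under additive constants) and $\norm{f_A}_2^2 = m(A)(1 - m(A)/m(X)) \geq m(A)/2$. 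This gives $1-\lambda_2 \leq 2(\kappa + \varepsilon)$, and $\varepsilon \to 0$ finishes this direction.

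For the harder lower bound, start with an approximate minimiser $f$ of the Rayleigh quotient, i.e.\ a zero-average function with $\scal{\Delta f}{f}/\norm{f}_2^2 \leq (1-\lambda_2) + \varepsilon$; since the spectrum need not be discrete we cannot in general take $f$ to be an eigenfunction. Apply a \emph{median trick}: pick $c \in \RR$ so that the shifted function $g \coloneqq f - c$ satisfies $m(\{g>0\}) \leq m(X)/2$ and $m(\{g<0\}) \leq m(X)/2$. Shift-invariance yields $\scal{\Delta g}{g} = \scal{\Delta f}{f}$, while $\norm{g}_2^2 = \norm{f}_2^2 + c^2 m(X) \geq \norm{f}_2^2$ (using $\int f\,\d m = 0$), so the Rayleigh quotient of $g$ is at most that of $f$. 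Decompose $g = g_+ - g_-$. The pointwise inequality $(g(x)-g(y))^2 \geq (g_+(x)-g_+(y))^2 + (g_-(x)-g_-(y))^2$ (a four-case analysis using $g_+ g_- \equiv 0$) together with $\norm{g}_2^2 = \norm{g_+}_2^2 + \norm{g_-}_2^2$ forces the Rayleigh quotient of at least one of $g_+, g_-$ to be $\leq$ that of $g$. Replacing $g$ by this part (WLOG $g_+$, whose support has measure $\leq m(X)/2$), the task reduces to establishing $\scal{\Delta h}{h}/\norm{h}_2^2 \geq \kappa^2/2$ for any non-negative $h$ with $m(\supp h) \leq m(X)/2$.

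The final step couples a co-area identity with Cauchy-Schwarz. Using $h(x)^2 - h(y)^2 = \int_0^\infty 2t\bigpar{\infun_{\{h>t\}}(x) - \infun_{\{h>t\}}(y)}\,\d t$ and Fubini gives
\[
  \int_X\!\int_X \abs{h(x)^2 - h(y)^2}\,\Pi(x,\d y)\,\d m(x) \;=\; 4\int_0^\infty t\cdot\abs{\partial_\Pi(\{h>t\})}\,\d t \;\geq\; 2\kappa \norm{h}_2^2,
\]
where the inequality uses $\{h > t\} \subseteq \supp(h)$ (hence of measure $\leq m(X)/2$) together with the layer-cake identity $\int_0^\infty t\cdot m(\{h>t\})\,\d t = \norm{h}_2^2/2$. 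On the other hand, factoring $\abs{h(x)^2 - h(y)^2} = \abs{h(x)-h(y)}(h(x)+h(y))$, applying Cauchy-Schwarz in $L^2(X \times X, \Pi(x,\d y)\,\d m(x))$, and using stationarity of $m$ to bound $\int\!\int (h(x)+h(y))^2\,\Pi(x,\d y)\,\d m(x) \leq 4\norm{h}_2^2$ yields the reverse bound $2\sqrt{2}\sqrt{\scal{\Delta h}{h}}\,\norm{h}_2$ for the same quantity. Combining and squaring gives $\scal{\Delta h}{h}/\norm{h}_2^2 \geq \kappa^2/2$, and letting $\varepsilon \to 0$ in the earlier step completes the argument.

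The main obstacle is the potentially non-discrete spectrum of $\Delta$: the classical proof uses an honest eigenfunction for $\lambda_2$, which is not available here. The median trick combined with approximate minimisers circumvents this, but requires care tracking the $\varepsilon$ through both the shift and the positive/negative-part decomposition. A secondary technical point is justifying the layer-cake/co-area computation for general non-negative $L^2$ functions and the associated measurability of the level sets, which is standard but should be spelled out.
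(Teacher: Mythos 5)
Your proof is correct, and while the upper bound and the core estimate for nonnegative functions coincide with the paper's (the test function $f_A=\chi_A-m(A)/m(X)$ for one direction; the co\-/area identity applied to $h^2$ followed by Cauchy--Schwarz for the other, which is exactly the content of the paper's Lemmas~\ref{lem:sobolev.inequality} and~\ref{lem:dirichlet.estimate}), you handle the concluding argument of the lower bound by a genuinely different route. The paper takes approximate \emph{eigenvectors} $f_n$ with $\norm{\fkP f_n-\lambda_2 f_n}_{m,2}\to 0$ from the spectral theorem, passes to positive parts, and must then work to ensure $\norm{f_n^+}_{m,2}$ stays bounded away from zero (replacing $f_n$ by $-(f_n+c_n)$ for suitable constants $c_n$ when it does not), before controlling the error via $\angles{h_n^+,\fkP(h_n^+)}_m\geq\angles{h_n^+,(\fkP h_n)^+}_m$ and Cauchy--Schwarz. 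You instead take approximate \emph{minimisers of the Rayleigh quotient} (which only needs the variational characterisation \eqref{eq: spectral gap}, not the spectral theorem), recentre at a median, and use the pointwise inequality $(g(x)-g(y))^2\geq(g_+(x)-g_+(y))^2+(g_-(x)-g_-(y))^2$ together with the mediant inequality to pass to whichever of $g_\pm$ has the smaller quotient; since you are free to choose either part, the degenerate case $\norm{g_+}_{m,2}\to 0$ that the paper must explicitly repair simply never arises, and both level sets automatically have measure at most $\tfrac12 m(X)$ by the median choice. This is cleaner and sidesteps precisely the step the authors flag as ``considerably more involved''; the paper's approach stays closer to the classical finite\-/state proof and additionally exhibits the minimising sequence as near\-/eigenvectors, but gains nothing for the stated inequality. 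Your two flagged technical points (existence of a median for a finite measure, and measurability in the layer\-/cake computation) are indeed routine.
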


As a consequence, we obtain a characterisation for Markov expansion in terms of the spectrum of $\Delta\in\B(L^2(X,\tilde\nu))$. 
Furthermore, it is relatively easy to show that, when the Radon--Nikodym derivatives are bounded, Markov expansion is equivalent to the original notion of expansion in measure (this is analogous to the equivalence between edge\=/expansion and vertex\=/expansion for graphs of uniformly bounded degrees). This leads us to the following:

\begin{alphprop}[Corollary \ref{cor:local.spec.gap.iff.domain.of.expansion}, Remark \ref{rmk:bdd ratio}]\label{prop:intro spectral charact. expansion}
Let $\Theta\geq 1$ be a constant. A measure\=/class\=/preserving action $\rho\colon\Gamma\curvearrowright (X,\nu)$ with $1/\Theta\leq r(s,x)\leq\Theta$ for every $s\in S$ and $x\in X$ is expanding in measure \emph{if and only if} $0$ is a simple isolated point in the spectrum of $\Delta\in\B(L^2(X,\tilde\nu))$.
\end{alphprop}

\begin{rmk}
 It is not hard to show that Proposition~\ref{prop:intro spectral charact. expansion} and \cite[Theorem 3.2]{houdayer2017strongly} are in fact equivalent. However, we find that our approach has various advantages:
 \begin{enumerate}
  \item We find that the Laplacian operator $\Delta$ is more natural than $T$. It should be easier to handle (\emph{e.g.}, to control spectral gap), and it allows us to borrow several calculations and results from the classical setting of random walks on graphs.
  \item The spectral gap condition can be rephrased by saying that the restriction of the Markov operator $\fkP$ to the space functions with zero\=/average has operator norm strictly less than $1$. It can be useful to know that $\fkP^n$ converges in the operator norm to the projection onto constant functions (see also Section~\ref{ssec:domains.of.exp.and.projections.as.limits}). 
  \item It allows for a finer control of the expansion constants.
  \item The spectral characterisation of Markov expansion holds true also for actions with unbounded Radon--Nikodym derivatives (this should be of independent interest).
 \end{enumerate}
\end{rmk}

We restricted the previous discussion to the case of actions of finitely generated groups for the sake of simplicity. However, the machinery of Markov kernels is very flexible, and all the results mentioned above will actually be proved for actions of arbitrary discrete countable groups. Furthermore, we will also study restrictions of actions to subsets of $X$ which are not necessarily invariant.\footnote{It would be also possible to extend this theory to include general countable measurable equivalence relations.} 
As a sample application, we note that Proposition~\ref{prop:intro spectral charact. expansion} implies the following (see Section~\ref{sec:prelims} for the relevant definitions and Corollary~\ref{cor:local.spec.gap.iff.domain.of.expansion}):

\begin{alphcor}[{\cite{grabowski_measurable_2016}}]\label{cor:intro:Matkov}
 A measure\=/preserving action $\Gamma\curvearrowright(X,\nu)$ has local spectral gap with respect to $Y\subseteq X$ \emph{if and only if} $Y$ is a domain of expansion. 
\end{alphcor}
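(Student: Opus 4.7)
The plan is to deduce this corollary directly from Corollary~\ref{thm:intro spectral charact. expansion} by restricting the Markov machinery to $Y$. Since the action $\Gamma\curvearrowright(X,\nu)$ is measure-preserving, every Radon--Nikodym derivative $r(s,x)$ equals $1$, so the boundedness hypothesis of Corollary~\ref{thm:intro spectral charact. expansion} holds trivially (with $\Theta=1$), and the stationary measure collapses to a constant multiple of $\nu$. In particular, the associated Markov kernel becomes the uniform average $\Pi(x,\mhyphen)=\frac{1}{|S|}\sum_{s\in S}\delta_{s\cdot x}$, and $\Delta\in\B(L^2(X,\nu))$ is (up to normalisation) the combinatorial Laplacian associated with the generating set $S$.

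To capture locality, I would invoke the restricted Markov kernel $\Pi_Y$ built on the (not necessarily invariant) subset $Y$, whose construction is announced in the introduction. The key point is that for $A\subseteq Y$ the Markov boundary $\abs{\partial_{\Pi_Y}(A)}$ reduces, up to a uniform multiplicative constant, to $\nu(S\cdot A\setminus A)$, so Markov expansion of $\Pi_Y$ in the sense of Theorem~\ref{thm:intro: expansion Markov kernel} corresponds---via the elementary argument identifying edge- and vertex-expansion under bounded degree---to the condition that $\nu(S\cdot A)\geq(1+c)\nu(A)$ for every $A\subseteq Y$ with $\nu(A)\leq\tfrac{1}{2}\nu(Y)$. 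This is exactly the definition of $Y$ being a domain of expansion.

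On the analytic side, local spectral gap with respect to $Y$ in the sense of \cite{grabowski_measurable_2016} is the statement that the Poincar\'e-type inequality \eqref{eq:intro:spectral gap} holds for every $f\in L^2(X,\nu)$ supported in $Y$ with $\int_X f\,\d\nu=0$; this is equivalent to spectral gap of the restriction of $\Delta$ to the mean-zero subspace of $L^2(Y,\nu)$, and this is precisely the spectral datum that Theorem~\ref{thm:intro: expansion Markov kernel} controls for the reversible kernel $\Pi_Y$. Chaining the two translations through Theorem~\ref{thm:intro: expansion Markov kernel} yields both implications of the corollary.

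The main obstacle is purely bookkeeping: one has to set up $\Pi_Y$ so that it is reversible with a finite reversing measure comparable to $\nu|_Y$, and verify that its Markov boundary and its restricted spectral gap align respectively with $\nu(S\cdot A\setminus A)$ and with the local spectral gap in the sense of \cite{grabowski_measurable_2016}. This is where the flexibility announced in the introduction---extending the theory to arbitrary countable groups and to non-invariant subsets---does real work. Once the restricted kernel is correctly in place, both directions of the equivalence are immediate consequences of Theorem~\ref{thm:intro: expansion Markov kernel} together with the trivial $\Theta=1$ bound on Radon--Nikodym derivatives.
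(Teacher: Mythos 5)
Your proposal follows essentially the same route as the paper: restrict to the normalised local Markov kernel $\Pi_{Y,S}$ on the (possibly non-invariant) domain $Y$, use the bounded--Radon--Nikodym comparison with $\Theta=1$ (Lemma~\ref{lem:domain.of.exp iff Markov.exp}) to identify domains of expansion with domains of Markov expansion, and then invoke Theorem~\ref{thm:spectral.characterisation.markov.exp} to convert the Cheeger bound into a spectral gap for $\Delta_{Y,S}$ on $L^2_0(Y,\snu)$. This is exactly the paper's proof of Corollary~\ref{cor:local.spec.gap.iff.domain.of.expansion}.

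Two small corrections to your bookkeeping, one of which hides a real (if short) step. First, even for a measure-preserving action the reversing measure on a non-invariant $Y$ is $\d\snu=|S_{Y,x}|\,\d\nu(x)$, which is comparable to $\nu|_Y$ but not a constant multiple of it, and the local kernel averages only over $S_{Y,x}$, not over all of $S$; likewise the expansion condition involves $\nu\bigparen{(S\cdot A)\cap Y}$ rather than $\nu(S\cdot A)$. Second, Definition~\ref{defn:local spectral gap} requires the inequality for \emph{all} $f\in L^2(X,\nu)$ with $\int_Y f\,\d\nu=0$, not only for $f$ supported in $Y$, and these two conditions are not obviously equivalent (restricting $f$ to $Y$ can increase the right-hand side differences $\norm{\gamma\cdot f-f}_{\nu|_Y,2}$ is false in general). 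The paper bridges this by observing that the Dirichlet form $\CE_2$ of $\Pi_{Y,S}$ only integrates over the interior pairs $Y\cap s^{-1}(Y)$, where the centering constants cancel, so $2\CE_2(g)\leq\sum_{s\in S}\norm{s\cdot f-f}_{\nu|_Y,2}^2$ for $g=f|_Y-\int_Y f\,\d\snu$, together with the comparison $\norm{g}_{\snu,2}\geq\norm{f}_{\nu|_Y,2}$ coming from $\int_Y f\,\d\nu=0$. Once you replace your reformulation of local spectral gap with this computation, the argument closes exactly as in the paper.
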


Being able to work with subsets of $X$ is a necessary requirement to use the structure theorems established in \cite{dynamics1}, which characterise asymptotic expansion in terms of exhaustions (see Section~\ref{ssec:asymptotic.expansion in measure}). Combining those results with the Markov machinery developed above, we are able to prove an additional structure result which will play a key role in the rest of the paper:

\begin{alphthm}[Theorem~\ref{thm:structure theorem Markov}]\label{thm:intro:structure.Matkov}
 A measure\=/class\=/preserving action $\Gamma\curvearrowright (X,\nu)$ on a probability space is asymptotically expanding in measure \emph{if and only if} every subset $Y\subseteq X$ admits an exhaustion by domains of Markov expansion.
\end{alphthm}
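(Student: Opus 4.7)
The plan is to reduce the statement to the structure theorem of \cite{dynamics1}, which already gives that asymptotic expansion in measure is equivalent to the existence, for every $Y\subseteq X$, of an exhaustion by (classical) domains of expansion. The remaining task is therefore to show that one can pass back and forth between exhaustions by domains of expansion and exhaustions by domains of Markov expansion. The bridge is the observation noted in the excerpt and formalised in Corollary~\ref{cor:local.spec.gap.iff.domain.of.expansion} / Remark~\ref{rmk:bdd ratio}: on subsets where all the Radon--Nikodym cocycles $r(s,\,\cdot\,)$ are uniformly bounded from above and from below, Markov expansion and expansion in measure coincide.

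For the forward implication I would start with an exhaustion $Y_1\subseteq Y_2\subseteq\cdots\subseteq Y$ by domains of expansion, coming from the structure theorem. I would then fix an exhausting sequence $S_1\subseteq S_2\subseteq\cdots$ of finite symmetric subsets of $\Gamma$ containing the identity, and define the ``bounded-cocycle'' sets
\[
 B_n\coloneqq\Bigbraces{x\in X \bigmid \tfrac{1}{n}\leq r(s,x)\leq n \text{ for every }s\in S_n}.
\]
Since $r(s,\,\cdot\,)$ is finite and positive $\nu$-a.e., the sets $B_n$ exhaust $X$ up to a null set. Passing to a subsequence if necessary, the subsets $Z_n\coloneqq Y_n\cap B_n$ still form an exhaustion of $Y$. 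On each $Z_n$ the measures $\nu$ and $\tilde\nu$ are comparable, so by the equivalence between expansion and Markov expansion on subsets with uniformly controlled Radon--Nikodym derivatives, $Z_n$ is a domain of Markov expansion (with respect to an appropriate generating set $S_n$). The converse direction is symmetric: starting from an exhaustion by domains of Markov expansion and intersecting with the same sets $B_n$, the same equivalence upgrades each $Z_n$ into a domain of expansion, whence the structure theorem of \cite{dynamics1} delivers asymptotic expansion in measure.

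The conceptual content of the argument is contained in Theorem~\ref{thm:intro: expansion Markov kernel} and its Corollary~\ref{thm:intro spectral charact. expansion}, which give the equivalence between the two kinds of expansion on sets with bounded Radon--Nikodym derivatives, and in the structure theorem of \cite{dynamics1}. The main technical obstacle is precisely the bookkeeping needed to align three parameters simultaneously along the exhaustion: the growing generating set $S_n$ (required both for the definition of a domain of expansion and for the Markov kernel $\Pi$), the growing bound on the Radon--Nikodym cocycles used to define $B_n$, and the expansion constants, all while maintaining that the intersections $Y_n\cap B_n$ still form a genuine exhaustion of $Y$. One has to verify that the spectral constants and Markov kernel produced on each $Z_n$ depend only on the restriction of the action to $Z_n$, so that the truncation to a non-invariant subset does not create pathologies; this is where the flexibility of working with Markov kernels on arbitrary measurable subsets (as stressed in the excerpt before Corollary~\ref{cor:intro:Matkov}) is essential.
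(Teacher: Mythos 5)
Your high\-/level strategy is the same as the paper's: reduce everything to the structure theorem of \cite{dynamics1} and use the equivalence between expansion in measure and Markov expansion on sets with uniformly bounded Radon--Nikodym derivatives (Lemma~\ref{lem:domain.of.exp iff Markov.exp}). However, there is a genuine gap at the central step of the forward direction. You take an exhaustion $Y_n\nearrow Y$ by domains of expansion and intersect with the bounded\-/cocycle sets $B_n$, asserting that $Z_n=Y_n\cap B_n$ is then a domain of (Markov) expansion. Lemma~\ref{lem:domain.of.exp iff Markov.exp} only converts a domain of $S$\-/expansion with bounded cocycles into a domain of Markov $S$\-/expansion; it does not tell you that $Y_n\cap B_n$ is a domain of expansion in the first place, and in general a domain of expansion does \emph{not} remain one after removing a set of small measure. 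Quantitatively: for $A\subseteq Z_n$ with $\nu(A)$ tiny, the guaranteed gain $\nu\bigl((S\cdot A)\cap Y_n\bigr)-\nu(A)>c\,\nu(A)$ can be entirely absorbed by the removed piece $Y_n\smallsetminus B_n$, whose measure, although small, may vastly exceed $c\,\nu(A)$. This is exactly the difficulty the paper's Proposition~\ref{prop:Markov local version} is designed to overcome: rather than intersecting, it invokes Proposition~\ref{prop:exhausting domains by domains} to \emph{construct} domains of expansion that avoid prescribed shrinking bad sets $\widetilde Z_n$ from the outset, and then needs Lemma~\ref{lem:union of domains} plus a diagonal argument to reassemble these into a genuinely nested exhaustion. (A secondary bookkeeping issue: since $S_n$ grows with $n$, your sets $B_n$ need not exhaust $X$ unless the cocycle bound is chosen to grow fast enough relative to $|S_n|$; the paper's choice of the integers $m^{(n)}_k$ handles this.)

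The converse direction in your proposal has the same flaw (a subset of a domain of Markov expansion need not be one), and the paper sidesteps it differently: it first shows the action is ergodic, so that by Theorem~\ref{thm:structure theorem general} ``(3)$\Rightarrow$(1)'' it suffices to exhibit a \emph{single} domain of expansion. One then chooses in advance one domain $Y$ on which all cocycles $r(\gamma,\cdot)$ are bounded, applies the hypothesis to obtain an exhaustion of that $Y$ by domains of Markov expansion, and converts any one of them via Lemma~\ref{lem:domain.of.exp iff Markov.exp} --- no intersection with bounded\-/cocycle sets is ever needed. I would encourage you to rework the forward direction around an ``avoidance'' statement of the type of Proposition~\ref{prop:exhausting domains by domains}, which is the real technical content here.
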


\begin{rmk}
 The above theorem remains true when replacing ``probability'' by ``$\sigma$\=/finite'' and ``asymptotically expanding in measure'' by ``strongly ergodic''. 
\end{rmk}

\subsection{Warped cones and finite propagation approximations}

Our next aim is to study asymptotically expanding actions via analytic properties of certain projection operators. This is done by using the warped cone construction as a bridge between the metric and dynamical setting, and then utilizing Markov expansion. The end result is a dynamical analogue of the theory developed in \cite{structure} to characterise asymptotic expanders using averaging projections.

The notion of warped cone was firstly introduced by Roe in \cite{roe_warped_2005} to explore more examples with/without Yu's property A and coarse embeddings into Hilbert spaces. The geometry of warped cones was subsequently studied by a number of people, \emph{e.g.}, \cite{DN17,fisher_rigidity_2019, nowak_warped_2017, sawicki_super-expanders_2017, sawicki_warped_2017, SawickiThesis, sawicki_straightening_2017, Vig18,vigolo2019discrete, wang2017warped}. Roughly speaking, given a continuous action $\Gamma\curvearrowright (X,d)$ on a compact metric space with diameter at most $2$, the associated \emph{unified warped cone} is the metric space $(\CO_\Gamma X,d_\Gamma)$, where $\CO_\Gamma X= X\times[1,\infty)$ as a set and $d_\Gamma$ is a metric on $\CO_\Gamma X$ defined in terms of the group action (see Section \ref{ssec:pre on warped cones} for details).

Given a probability measure $\nu$ on $(X,d)$, we consider the \emph{averaging projection $P_X$} on $L^2(X,\nu)$, which is the rank-one orthogonal projection onto the space of constant functions on $X$. Denoting by $\lambda$ the Lebesgue measure on $[1,\infty)$, the \emph{Dru\c{t}u--Nowak projection} is defined as $\fkG= P_X\otimes \Id_{L^2([1,\infty))}\in \B(L^2(\CO_\Gamma X, \nu\times\lambda))$, which is the orthogonal projection onto $\CCC\otimes L^2([1,\infty),\lambda)$. 

The Dru\c{t}u--Nowak projection $\fkG$ was first introduced in \cite[Section~6.c.]{DN17} in their study on the coarse Baum--Connes conjecture (more details will be provided later). They showed that if an action is measure-preserving and has a spectral gap, then the projection $\fkG$ is a norm limit of \emph{finite propagation} operators in $\B(L^2(\CO_\Gamma X, \nu\times\lambda))$. Recall that an operator $T\in \B(L^2(\CO_\Gamma X, \nu\times\lambda))$ has finite propagation if there exists $R>0$ such that for any $f,g\in C_0(\CO_\Gamma X)$ with $d_\Gamma (\supp (f), \supp (g))>R$ we have $fTg=0$, where $f$ and $g$ are regarded as diagonal operators on $L^2(\CO_\Gamma X, \nu\times\lambda)$ via the multiplication representation.

In this paper, we study the converse of Dru\c{t}u--Nowak's result and prove the following analytic characterisation for asymptotically expanding actions:

\begin{alphthm}[Theorem~\ref{thm: characterise quasi-locality} and Theorem ~\ref{thm: characterise finite ppg}]\label{thm:intro:analytic char}
Let $(X,d)$ be a metric space with diameter at most $2$ equipped with a Radon probability measure $\nu$, and $\rho\colon \Gamma\curvearrowright X$ be a continuous measure\=/class\=/preserving action. The following are equivalent:
\begin{enumerate}
  \item $\rho$ is asymptotically expanding;
  \item the Druţu--Nowak projection $\fkG$ is quasi-local;
  \item the Druţu--Nowak projection $\fkG$ is a norm limit of operators with finite propagation.
\end{enumerate}
\end{alphthm}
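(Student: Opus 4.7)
My plan is to establish the equivalence via the cycle $(3)\Rightarrow(2)\Rightarrow(1)\Rightarrow(3)$. The implication $(3)\Rightarrow(2)$ is formal: every finite-propagation operator is trivially quasi-local, and the collection of quasi-local operators is closed in operator norm inside $\B(L^2(\CO_\Gamma X,\nu\times\lambda))$, so norm limits stay quasi-local.

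For $(2)\Rightarrow(1)$ I would argue the contrapositive. If $\rho$ fails to be asymptotically expanding, there exists $\alpha\in(0,1/2]$ such that, for each finite symmetric $S\subseteq\Gamma$ containing $e$ and each $c>0$, one can find $A\subseteq X$ with $\alpha\le\nu(A)\le 1/2$ and $\nu(S\cdot A)<(1+c)\nu(A)$. Given any $R>0$, take $S=S_R$ equal to the ball of radius $R+1$ around $e$ in a fixed finite generating set $S_0$ of $\Gamma$, apply the assumption with $c=1/2$ to produce a witness $A$, and choose an interval $E\subseteq[1,\infty)$ at a large enough level $t\ge T(R)$ to ensure that cone distances within $X\times\{t\}$ exceed $R$. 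The sets $A'\coloneqq A\times E$ and $B\coloneqq(X\setminus S_R\cdot A)\times E$ are then at warped distance strictly greater than $R$. A direct computation using $\fkG\chi_{A'}(x,t)=\nu(A)\chi_E(t)$, tested on the unit vector $\chi_{A'}/\|\chi_{A'}\|$, yields
\[
\|\chi_B\fkG\chi_{A'}\|_{\mathrm{op}}\;\ge\;\sqrt{\nu(A)\,\nu(X\setminus S_R\cdot A)}\;\ge\;\sqrt{\alpha/4}.
\]
Since $R$ is arbitrary, this forbids quasi-locality of $\fkG$.

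For the main implication $(1)\Rightarrow(3)$ I would deploy the Markov machinery from the first half of the paper. Theorem~\ref{thm:intro:structure.Matkov} provides an increasing exhaustion $X=\bigcup_n Y_n$ by domains of Markov expansion with constants $\kappa_n>0$ and finite symmetric generating sets $S_n\subseteq\Gamma$. By Theorem~\ref{thm:intro: expansion Markov kernel}, the restricted Markov operator $\fkP_n\in\B(L^2(Y_n,\tilde\nu_n))$ has $1$ as a simple isolated spectral value, so $\fkP_n^{k}$ converges in operator norm, at a rate controlled by $\kappa_n$, to the projection $P_{Y_n}$ onto constants. Given $\epsilon>0$, I pick $n$ with $\nu(X\setminus Y_n)<\epsilon^2$ and $k_n$ with $\|\fkP_n^{k_n}-P_{Y_n}\|<\epsilon$, and define the candidate
\[
T_\epsilon\coloneqq\bigl(\chi_{Y_n}\fkP_n^{k_n}\chi_{Y_n}\bigr)\otimes\Id_{L^2([1,\infty))}\in\B\bigl(L^2(\CO_\Gamma X,\nu\times\lambda)\bigr).
\]
Since each iterate of $\fkP_n$ is a weighted sum of translations by elements of $S_n$, the propagation of $T_\epsilon$ is bounded by $k_n\cdot\max_{s\in S_n}|s|_{S_0}<\infty$. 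The convergence $T_\epsilon\to\fkG$ then splits into a tail piece bounded by $O(\sqrt{\nu(X\setminus Y_n)})$ (via Cauchy--Schwarz, using that $\fkG$ averages only in the $X$-direction) and a spectral-gap piece on $Y_n$.

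The main obstacle lies in the latter piece: $P_{Y_n}$ projects onto the constants of $L^2(Y_n,\tilde\nu_n)$, whereas $\fkG$ compressed to the $Y_n$-block projects onto the constants of $L^2(Y_n,\nu|_{Y_n})$, and these two rank-one projections genuinely differ unless the Radon--Nikodym derivative $\phi_n=d\tilde\nu_n/d\nu$ is essentially constant on $Y_n$. To bridge the gap I would further refine the exhaustion by intersecting each $Y_n$ with dyadic sublevel sets $\{x:\phi_n(x)\in[\Theta,2\Theta]\}$; on each refined piece the Radon--Nikodym derivatives are uniformly bounded and the two projections become norm-close thanks to Corollary~\ref{thm:intro spectral charact. expansion}. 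The delicate balancing act is to simultaneously control the growth of $k_n$, the number of refinement strata, and their $\phi_n$-bounds, so that the propagation of $T_\epsilon$ stays finite while the $L^2(\nu)$-error tends to zero.
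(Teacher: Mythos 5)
Your architecture coincides with the paper's: $(3)\Rightarrow(2)$ is formal, $(2)\Rightarrow(1)$ rests on the computation $\norm{\chi_A P_X\chi_C}=\sqrt{\nu(A)\nu(C)}$ (Lemma~\ref{lem: averaging proj calculation} and Proposition~\ref{prop: characterise quasi-locality}), and $(1)\Rightarrow(3)$ approximates the projection by powers of Markov operators over an exhaustion by domains of Markov expansion. However, there are two genuine gaps. In $(2)\Rightarrow(1)$ you cannot simply ``choose $t\geq T(R)$ so that cone distances within $X\times\{t\}$ exceed $R$'': for a general measurable witness $A$ (say, a dense one) the neighbourhood $N_R(A;d_\Gamma^t)$ is all of $X$ for \emph{every} $t$, so no level separates $A\times E$ from $(X\smallsetminus S_R\cdot A)\times E$. (Also, $\Gamma$ is not assumed finitely generated; balls must be taken for a proper length function.) The paper avoids this by phrasing the obstruction dynamically, via the condition $\nu((B_k\cdot A)\cap C)=0$, and only converting between dynamical and metric separation in Proposition~\ref{prop: quasi-locality equiv}, where the conversion for general measurable sets uses inner regularity of the Radon measure to reduce to compact sets and then Lemma~\ref{lem:intersection.of.neighbourhoods}. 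Your step is repairable by the same compact-exhaustion device, but as written it fails.

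The more serious gap is that your proposed fix for the ``main obstacle'' in $(1)\Rightarrow(3)$ does not close it. Constraining the density $\phi_n$ to a dyadic band $[\Theta,2\Theta]$ makes the projection onto constants in $L^2(Y_n,\tilde\nu_{Y_n,S^{(n)}})$ and the one in $L^2(Y_n,\nu)$ norm-\emph{comparable}, not norm-close: their distance is governed by the oscillation of $\phi_n$, which remains of order one on such a band. Forcing closeness would require bands of ratio $1+\delta$ with $\delta\to 0$, hence infinitely many strata, and intersecting a domain of Markov expansion with an arbitrary sublevel set need not preserve expansion. The paper resolves the discrepancy with \emph{zero} error by two moves you are missing. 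First, Proposition~\ref{prop:Markov local version} builds the bounded-derivative condition into the exhaustion from the start: the bad sets where $r(s,\cdot)$ is unbounded are excised \emph{before} the expanding domains are extracted (using the freedom in Proposition~\ref{prop:exhausting domains by domains} to avoid prescribed small sets). Second, Lemma~\ref{lem:projections limits.of FP operators} conjugates $(\fkP_{Y_n,S^{(n)}})^k$ not by the isometry $f\mapsto\sqrt{\sigma}\,f$ but by the bounded, non-isometric extension-by-zero map $I_{Y_n,S^{(n)}}$, whose adjoint divides by $\sigma$; the weight cancels exactly and $\mathrm{Ad}(\tilde P_{Y_n,S^{(n)}})=\tfrac{\nu(Y_n)}{\tilde\nu_{Y_n,S^{(n)}}(Y_n)}P_{Y_n}$ on the nose. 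Since $P_{Y_n}\to P_X$ in norm as $\nu(Y_n)\to 1$, a diagonal argument finishes; no balancing of strata against propagation is needed. This choice of conjugation is the missing idea.
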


\begin{rmk}
 The notion of quasi-locality was introduced by Roe in \cite{Roe88}. It is weaker\footnote{It is conjectured that quasi\=/locality should be strictly weaker than admitting such approximations.} than the property of admitting an approximation by finite propagation operators, and it is relatively easy to verify. For more details on quasi-locality, we refer readers to \cite{Eng15, intro, LWZ19, ST19, SZ18}.
\end{rmk}

Theorem \ref{thm:intro:analytic char} is a dynamical analogue of \cite[Theorem 6.1]{structure}, and there are two main ingredients in its proof. Firstly, we introduce a dynamical notion of finite propagation approximation and quasi-locality (see Section \ref{ssec:Warped cones and Druţu--Nowak projections} and \ref{ssec:finite propagation wcone}) as an intermediate bridge to connect asymptotic expansion and analytic properties of the Druţu--Nowak projection. Secondly, we apply the tool of Markov expansion to approximate dynamical quasi-local operators with finite dynamical propagation ones. Due to some correspondence results (Proposition \ref{prop: quasi-locality equiv} and \ref{prop: finite ppg equiv}), we can then pass from the dynamical notions to their analytic analogues for unified warped cones and obtain Theorem \ref{thm:intro:analytic char}.

As a byproduct, we construct numerous projections which can be approximated by operators with finite propagation (see Corollary \ref{cor:projection finite ppg}). These projections will be important in the next section, where we deal with the coarse Baum--Connes conjecture.

\subsection{Roe algebras and the coarse Baum--Connes conjecture}

Roe algebras are $C^*$-algebras associated with metric spaces. These $C^*$-algebras encode coarse geometric information of the metric spaces and play key roles in higher index theory (see, \emph{e.g.}, \cite{Roe88, Roe96, WY20} for more details). We conclude this paper by studying Roe algebras of warped cones associated to asymptotically expanding actions and provide an application to the so-called coarse Baum--Connes conjecture.

Given a continuous action $\Gamma\curvearrowright (X,d)$ on a compact metric space with diameter at most $2$ and a non-atomic probability measure $\nu$ on $(X,d)$ with full support, we consider the multiplication representation $C_0(\CO_\Gamma X) \to \B(L^2(\CO_\Gamma X, \nu\times\lambda))$. The \emph{Roe algebra} of the unified warped cone, denoted by $C^*(\CO_\Gamma X)$, is the norm closure of all finite propagation locally compact operators in $\B(L^2(\CO_\Gamma X, \nu\times\lambda))$ (see Section \ref{ssec:roe algebras} for more details). 

Although the Druţu--Nowak projection $\fkG$ can be approximated by finite propagation operators, it is \emph{not} locally compact because its restriction on $L^2([1,\infty),\lambda)$ is the identity operator. In order to obtain non-trivial projections in the Roe algebra, Sawicki \cite{sawicki_warped_2017} suggested to consider the \emph{integral} warped cone and the associated \emph{integral} Druţu--Nowak projection (see Section \ref{ssec:roe algebras}). Since the integral warped cone is coarsely equivalent to the original warped cone, they have $*$-isomorphic Roe algebras. Based on \cite{DN17}, Sawicki \cite[Proposition 1.3]{sawicki_warped_2017} showed that for a measure-preserving action with spectral gap, the integral Druţu--Nowak projection belongs to the associated Roe algebra. 

Theorem \ref{thm:intro:analytic char} allows us to both extend and provide a converse to Sawicki's result:

\begin{alphthm}[Theorem~\ref{thm:char for asymp. expansion via Roe}, Corollary \ref{cor:ghost.projection.in.Roe}]\label{thm:intro:analytic char Roe}
Let $(X,d)$ be a compact metric space with diameter at most $2$, $\nu$ a non-atomic Radon probability measure on $X$ of full support, and $\rho\colon \Gamma\curvearrowright (X,d,\nu)$ a continuous measure-class-preserving action. Then $\rho$ is asymptotically expanding \emph{if and only if} the integral Druţu--Nowak projection belongs to the Roe algebra $C^*(\mathcal{O}_\Gamma X)$. Moreover, the integral Druţu--Nowak projection is non-compact and ghost.
\end{alphthm}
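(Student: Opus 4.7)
The plan is to deduce this theorem from Theorem~\ref{thm:intro:analytic char} by transferring the finite\=/propagation characterisation between the unified and integral warped cones, and then to verify the concluding assertions by direct computation.

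First I would observe that, in contrast to its continuous counterpart, the integral Dru\c{t}u--Nowak projection $\fkG^{\mathrm{int}}$ is locally compact. Under the natural identification $L^2(\CO_\Gamma^{\mathrm{int}} X)\cong L^2(X,\nu)\otimes\ell^2(\NN)$ it has the form $P_X\otimes\Id_{\ell^2(\NN)}$, which is block\=/diagonal in the level decomposition with each block equal to the rank\=/one projection $P_X$. Since any bounded subset of the integral warped cone meets only finitely many levels, truncation by a $C_0$ function produces a finite sum of rank\=/one operators, hence a compact operator.

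For the forward direction, suppose $\rho$ is asymptotically expanding. Theorem~\ref{thm:intro:analytic char} then provides a norm approximation of the continuous Dru\c{t}u--Nowak projection $\fkG$ by finite\=/propagation operators on $L^2(\CO_\Gamma X,\nu\times\lambda)$. Since the integral warped cone embeds as a coarsely dense subspace of the unified one, the induced $*$\=/isomorphism between their Roe algebras transports such approximations onto corresponding approximations of $\fkG^{\mathrm{int}}$. Combined with local compactness, this places $\fkG^{\mathrm{int}}$ in $C^*(\CO_\Gamma X)$. Conversely, if $\fkG^{\mathrm{int}}$ lies in the Roe algebra, then by definition it is a norm limit of finite\=/propagation operators, and reversing the transfer yields finite\=/propagation approximations of $\fkG$; Theorem~\ref{thm:intro:analytic char} then returns asymptotic expansion.

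The ``moreover'' clause splits into two routine checks. Non\=/compactness is immediate, since $\fkG^{\mathrm{int}}$ has range $\CCC\otimes\ell^2(\NN)$, which is infinite\=/dimensional. For the ghost property, I would exploit that $\nu$ is non\=/atomic and of full support on the compact space $X$: the warped\=/cone metric at level $t$ refines the $X$\=/metric on scale $\sim 1/t$ up to bounded orbit moves, so unit balls at level $t$ project to subsets of $X$ whose $\nu$\=/measure tends to $0$ uniformly as $t\to\infty$. For unit vectors $\xi,\eta$ supported in such balls one has $|\langle \fkG^{\mathrm{int}}\xi,\eta\rangle|\leq \sqrt{\nu(\supp\xi)\,\nu(\supp\eta)}$, which vanishes uniformly as $t\to\infty$. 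I expect the main technical hurdle to be the transfer step between the unified and integral warped cones at the operator level: one must carefully verify that the coarse equivalence induces a $*$\=/isomorphism of Roe algebras compatible with the two Dru\c{t}u--Nowak projections (up to compact perturbations) and preserving finite\=/propagation approximations in both directions.
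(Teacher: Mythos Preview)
Your outline has a genuine gap in the forward direction, and the proposed transfer mechanism does not work as stated.

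The problem with the transfer step is that the coarse equivalence between the integral and unified warped cones induces a $*$\=/isomorphism \emph{between their Roe algebras}, but neither the continuous Dru\c{t}u--Nowak projection $\fkG$ nor its finite\=/propagation approximants from Theorem~\ref{thm:intro:analytic char} lie in $C^*(\CO_\Gamma X)$: they are operators of the form $T\otimes\Id_{L^2([1,\infty))}$ and hence fail to be locally compact (this is precisely why Sawicki passed to the integral cone in the first place). So there is nothing in the Roe algebra to transport. One can certainly observe that the \emph{integral} analogue of Theorem~\ref{thm:intro:analytic char} holds by the same proof---the paper does exactly this---but that only produces approximants $S_n\otimes\Id_{\ell^2(\NN)}$ with $S_n$ a (typically non\=/compact) bounded operator on $L^2(X,\nu)$, which are again not locally compact.

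This leads to the second, more serious gap: the sentence ``combined with local compactness, this places $\fkG^{\mathrm{int}}$ in $C^*(\CO_\Gamma X)$'' assumes that a locally compact operator which is a norm limit of finite\=/propagation operators automatically lies in the Roe algebra. That implication is not available in general, and you give no argument for it here. What is needed is a sequence of \emph{locally compact} finite\=/propagation approximants. The paper obtains these by a concrete device: it constructs a propagation\=/$1$ projection $R$ (onto a subspace spanned by pieces $\chi_{U_i}\otimes\chi_{\{n(i)\}}$ coming from a bounded\=/diameter Borel partition) with the property that $R\phi$ has finite rank for every compactly supported $\phi$, and such that $\fkG^{\mathrm{int}}=R\fkG^{\mathrm{int}}R$. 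Sandwiching the finite\=/propagation approximants as $RT_nR$ then yields operators that are simultaneously finite propagation and locally compact. Without some such trick, the argument does not close.

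For the converse, routing back through the continuous cone is unnecessary and runs into the same obstruction. The paper argues directly: membership in the Roe algebra implies quasi\=/locality, and the integral version of Theorem~\ref{thm: characterise quasi-locality} (equivalently, of Theorem~\ref{thm:intro:analytic char}) then gives asymptotic expansion. Your ``moreover'' clauses (non\=/compactness and the ghost property) are fine and match the paper's treatment.
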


The study of projections in Roe algebras is motivated by the computation of their K-theories. The coarse Baum--Connes conjecture asserts that K\=/theories of Roe algebras can be computed in terms of homology information of underlying metric spaces. When true, this establishes a connection between geometry, topology and analysis. One ground-breaking result on the subject is due to Yu \cite{Yu00}, as he showed that the coarse Baum--Connes conjecture holds for all metric spaces with bounded geometry that are coarsely embeddable into Hilbert spaces. On the other hand, counterexamples to the conjecture were subsequently discovered by Higson \cite{higson1999counterexamples} (see also \cite{HLS02}) using expander graphs. In a recent joint work with Khukhro, we found more counterexamples using asymptotic expanders \cite{structure}. 

Understanding which spaces satisfy the coarse Baum--Connes conjecture is still one of the major questions in higher index theory, as it has significant applications to other areas of mathematics, such as topology and geometry (see \cite{higson1995coarse, schick2014:ICM,  skandalis2002coarse, yu1997zero} for more details).

It is an open question whether warped cones arising from actions with spectral gap are counterexamples to the coarse Baum--Connes conjecture. This question was the motivation behind the introduction of the Druţu--Nowak projection in \cite{DN17}. Recently, Sawicki \cite[Theorem 3.5]{sawicki_warped_2017} proved that \emph{sparse warped cones} (see Section \ref{ssec:ceg to cBc}) do provide counterexamples to the coarse Baum--Connes conjecture. His proof follows a similar outline of Higson's original proof for expander graphs. Using our work on asymptotically expanding actions, we can generalise Sawicki's result as follows:

\begin{alphthm}[Corollary \ref{cor:ceg to CBC}]\label{thm:intro:ceg to cBC}
Let $(X,d)$ be a compact metric space of diameter at most $2$ equipped with a non-atomic probability measure $\nu$ of full support, and $\rho\colon\Gamma \act (X,d,\nu)$ be a free Lipschitz measure\=/class\=/preserving asymptotically expanding action. Under \textbf{either} of the following conditions:
 \begin{itemize}
 \item[(1)] if $\Gamma$ has property $A$ and $X$ is a manifold;
 \item[(2)] if the asymptotic dimension of $\Gamma$ is finite and $X$ is an ultrametric space;
 \end{itemize}  
 the coarse Baum--Connes conjecture for the sparse warped cone fails.
\end{alphthm}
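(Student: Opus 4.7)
My plan is to adapt Sawicki's proof of \cite[Theorem 3.5]{sawicki_warped_2017}---which treats the spectral gap case---by replacing the hypothesis there with asymptotic expansion and invoking Theorem~\ref{thm:intro:analytic char Roe} in place of \cite[Proposition 1.3]{sawicki_warped_2017}. The argument has three components: transport the ghost projection to the sparse cone, verify coarse embeddability of the sparse cone under hypothesis (1) or (2), and then run a Higson-type obstruction to the surjectivity of the coarse assembly map.

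First, I would transport the non-compact ghost projection from the unified warped cone to the sparse warped cone. The sparse warped cone is, coarsely, a disjoint union of level sets $X\times\{t_n\}$ for a sparsely chosen sequence $t_n\to\infty$, and the integral Dru\c{t}u--Nowak projection on it is the block-diagonal sum of the averaging projections $P_X$ on each level. By Theorem~\ref{thm:intro:analytic char Roe}, the integral Dru\c{t}u--Nowak projection is a non-compact ghost element of the Roe algebra of the unified warped cone; restricting to the chosen sparse levels and exploiting the fact that Roe algebras of sparse disjoint unions respect the level-wise decomposition, one obtains a non-compact ghost projection $\fkG^{\rm sp}$ in the Roe algebra of the sparse warped cone.

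Second, under condition (1) or (2), I would verify that the sparse warped cone is coarsely embeddable into a Hilbert space. In case (1), property $A$ of $\Gamma$ together with the smooth manifold structure of $X$ allows one to build uniform coarse embeddings of each level (following the approach of \cite{sawicki_warped_2017,DN17,nowak_warped_2017}, using partitions of unity on $X$ compatible with the warped metric). In case (2), finite asymptotic dimension of $\Gamma$ combined with the ultrametric structure of $X$ gives uniformly finite asymptotic dimension for each level, yielding coarse embeddability (in fact something stronger). In either case, the uniformity across levels combined with sparseness lifts to coarse embeddability of the whole sparse warped cone.

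Third, I would invoke the Higson/Higson--Lafforgue--Skandalis obstruction (as adapted in \cite{sawicki_warped_2017} and in our \cite{structure}): the class of a non-compact ghost projection in $K_0$ of the Roe algebra of a sparse disjoint union of bounded spaces cannot lie in the image of the coarse assembly map. Indeed, on each bounded level the ghost ideal equals the compacts, so the assembly map image factors through a quotient that is blind to globally non-compact ghost projections; combined with the coarse embeddability from the previous step (which would force the conjecture to hold), the existence of $\fkG^{\rm sp}$ gives the contradiction. The main obstacle here is the K-theoretic bookkeeping: one must carefully set up the short exact sequence involving the ghost ideal and the uniform Roe-type quotient for the sparse warped cone, and verify that $[\fkG^{\rm sp}]$ maps non-trivially to the cokernel of the assembly map. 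A secondary technical point is ensuring that the level-wise embeddings assemble uniformly in cases (1) and (2)---the restrictions on $X$ (manifold, respectively ultrametric) are precisely what enables this.
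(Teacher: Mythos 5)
Your first step (pushing the non\-/compact ghost projection down to the sparse cone via the block\-/diagonal decomposition) is fine and matches what the paper does. The serious problem is your second step, and it propagates into the third. Hypotheses (1) and (2) do \emph{not} give coarse embeddability of the sparse warped cone into Hilbert space --- in fact Proposition~\ref{prop:non CE} shows that the warped cone of \emph{any} asymptotically expanding action admits no coarse embedding into Hilbert space, so the property you propose to verify is provably false under the standing hypotheses. (Even if it held, Yu's theorem needs bounded geometry, which these cones generally lack; see Remark~\ref{rmk: unbounded geometry profinite}.) The actual role of (1) and (2), via Remark~\ref{rmk:ONL of product cone}, is to guarantee the operator norm localisation property for the \emph{unwrapped} product $(\Gamma\times\CQ X, d_{\Gamma\times\CQ})$ --- the covering space of the sparse cone, not the cone itself. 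Freeness of the action enters separately, to make the quotient map $\pi\colon\Gamma\times\CQ X\to\CQ_\Gamma X$ asymptotically faithful.

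With that misallocation of hypotheses, your third step has no engine. The assertion that a non\-/compact ghost class cannot lie in the image of the assembly map is exactly the point that needs proof, and "the assembly map image factors through a quotient blind to ghosts" is not something one gets for free from the levelwise picture. The paper's mechanism is a two\-/trace argument: a levelwise trace $\tau_{\mathrm d}$, which is nonzero precisely on classes of non\-/compact projections (Lemma~\ref{lem:trace d}); and a uniform trace $\tau^{\mathrm u}$, defined by lifting finite propagation operators along the asymptotically faithful covering to $\Gamma$\-/equivariant operators on $\Gamma\times\CQ X$ --- this is where ONL is indispensable, to bound the lifting homomorphism $\Psi$ and extend it to the whole Roe algebra --- and which vanishes exactly on the ghost ideal (Corollary~\ref{cor:ghost iff kernel}, Proposition~\ref{prop:trace=0}). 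The two traces agree on the image of the coarse assembly map by Atiyah's $\Gamma$\-/index theorem, so the sparse Dru\c{t}u--Nowak class, on which they disagree, lies outside that image. Your proposal is missing this entire second trace, the covering\-/space lift, and the Atiyah comparison, and the ingredient you substitute for them (coarse embeddability) is unavailable.
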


\begin{rmk}
We can produce examples whose violation of the coarse Baum--Connes conjecture can be deduced from Theorem \ref{thm:intro:ceg to cBC}, but not from any previously known results (see Example \ref{eg:counterexample BCC}).
\end{rmk}

Under some extra conditions (ONL and bounded geometry), it follows by combining Theorem~\ref{thm:intro:ceg to cBC} with Yu's result \cite{Yu00} that warped cones arising from asymptotically expanding actions cannot coarsely embed into Hilbert spaces. Our last result shows that these extra conditions are in fact unnecessary (this partially generalises \cite[Theorem 3.1]{nowak_warped_2017}):

\begin{alphprop}[Proposition \ref{prop:non CE}]\label{prop:intro:non CE}
Let $(X,d)$ be a compact metric space of diameter at most $2$ equipped with a non-atomic probability measure $\nu$, and $\rho\colon \Gamma\curvearrowright (X,d,\nu)$ be a continuous measure-class-preserving and asymptotically expanding action. Then the warped cone $\mathcal{O}_\Gamma X$ does not admit a coarse embedding into any Hilbert space. 
\end{alphprop}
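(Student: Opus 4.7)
The plan is to exhibit a family of asymptotic expanders that coarsely embeds into $\mathcal{O}_\Gamma X$, and then invoke the fact, established in \cite{intro}, that asymptotic expanders do not coarsely embed into any Hilbert space. The first step is to apply Theorem \ref{thm:intro:structure.Matkov} with $Y=X$ to obtain an exhaustion $Y_1 \subseteq Y_2 \subseteq \cdots$ of $X$ by domains of Markov expansion. By the spectral characterisation (Corollary \ref{thm:intro spectral charact. expansion} combined with Theorem \ref{thm:intro: expansion Markov kernel}), each $Y_n$ carries a strictly positive Markov--Cheeger constant, although these constants may degenerate as $n \to \infty$.

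Next, for each $n$ and each large scale $t \geq 1$, I would discretise the slice $Y_n \times \{t\} \subseteq \mathcal{O}_\Gamma X$ using the approximation procedure of \cite{Vig18, dynamics1}: the vertex set is a measurable partition of $Y_n$ of mesh comparable to $1/t$, the edges are inherited from a finite symmetric generating set of $\Gamma$, and the resulting combinatorial metric is bi-Lipschitz to the restriction of the warped cone metric once $t$ is sufficiently large. The Markov expansion of $Y_n$ translates, via Theorem \ref{thm:intro: expansion Markov kernel} and standard Cheeger-type estimates, into a lower bound on the edge-expansion of the approximating finite graphs $\mathcal{G}_{n,t}$. By diagonalising with a sequence $t_n \to \infty$ chosen fast enough, the disjoint union $\bigsqcup_n \mathcal{G}_{n,t_n}$ becomes an asymptotic expander in the sense of \cite{intro}, since the (possibly decaying) Markov--Cheeger constants of $Y_n$ can be counterbalanced at each stage by letting $t_n$ be large enough.

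Finally, the natural inclusion $\bigsqcup_n \mathcal{G}_{n,t_n} \hookrightarrow \mathcal{O}_\Gamma X$ obtained by placing the $n$-th graph at scale $t_n$ is a coarse embedding by construction. Hence any coarse embedding of $\mathcal{O}_\Gamma X$ into a Hilbert space would restrict to a coarse embedding of the asymptotic expander, contradicting the non-embeddability result from \cite{intro}. The main technical obstacle lies in the discretisation step, where one must quantify the compatibility between the approximating graphs and the warped cone metric and translate Markov expansion of the (not necessarily $\Gamma$-invariant) subsets $Y_n$ into honest edge-expansion of the graphs $\mathcal{G}_{n,t}$. This is precisely the setting the machinery developed in the present paper was designed to handle, and the execution should closely follow the analogous arguments in \cite{structure, dynamics1}.
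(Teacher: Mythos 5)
Your strategy (discretise, produce an asymptotic expander, quote non-embeddability of asymptotic expanders) is genuinely different from the paper's, but as written it has two concrete gaps. First, the "counterbalancing" step does not work: increasing the scale $t_n$ only makes the approximating graph $\mathcal{G}_{n,t}$ a finer model of $Y_n$; its Cheeger constant converges to (and cannot exceed) a quantity controlled by the Markov--Cheeger constant of $Y_n$ itself, which you acknowledge may degenerate as $n\to\infty$. So $\bigsqcup_n \mathcal{G}_{n,t_n}$ is a priori a sequence of graphs with vanishing expansion, not an asymptotic expander; to get asymptotic expansion of the approximating graphs one must approximate the whole space $X$ and use the asymptotic expansion of the action directly (this is the content of \cite{dynamics1}), not the exhaustion by domains $Y_n$. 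Second, and more seriously, the non-embeddability of asymptotic expanders established in \cite{intro,structure} is a statement about graphs of \emph{uniformly bounded degree}, and nothing in the hypotheses here (a continuous, merely measure-class-preserving action on a compact space with a non-atomic measure) guarantees that the approximating graphs have uniformly bounded degree; the degrees are governed by the Radon--Nikodym derivatives and the local distortion of the action, which are only controlled on each $Y_n$ by a constant $\Theta_n$ that may blow up. The paper explicitly points out (Remark~\ref{rmk: unbounded geometry profinite} and the example following Proposition~\ref{prop:non CE}) that its motivating examples yield warped cones of \emph{unbounded} geometry, and the whole purpose of this proposition is to remove bounded-geometry-type hypotheses; your route would reintroduce them.

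For contrast, the paper's proof avoids discretisation entirely: it uses Proposition~\ref{prop:Markov local version} to find a single domain $Y$ of Markov $S$\=/expansion on which the Radon--Nikodym derivatives are bounded, so that Theorem~\ref{thm:spectral.characterisation.markov.exp} yields a Poincar\'e inequality $\norm{g}_{\nu,2}^2\leq \kappa\sqrt{\Theta}\sum_{s\in S}\int_{Y\cap s^{-1}(Y)}\abs{g(x)-g(s\cdot x)}^2\d\nu(x)$ for $g\in L^2_0(Y,\nu)$. Applying this to the coordinate functions of a putative coarse embedding restricted to the level sets $Y\times\{t\}$, and using that $d_\Gamma^t(x,s\cdot x)\leq\ell(s)$ is bounded uniformly in $t$, one gets a uniform bound on $\iint_{Y\times Y}\norm{F_t(x)-F_t(y)}^2\d\nu(x)\d\nu(y)$; on the other hand this double integral must tend to infinity because almost every pair of points lies in distinct $\Gamma$\=/orbits and is therefore pushed to infinite $d_\Gamma^t$\=/distance as $t\to\infty$. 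If you want to salvage your approach, you would need both a version of the asymptotic-expander non-embeddability theorem without bounded geometry and a uniform quasi-isometry between the approximating graphs and the level sets, neither of which is available under the stated hypotheses.
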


\subsection{Structure of the paper}
Section~\ref{sec:prelims} covers some preliminaries and further illustrates the connections between this paper and other works. The first half of Section~\ref{sec:Markov.expansion} can be read independently from the rest of the paper and is devoted to introducing reversible Markov kernels/expansion and the statement of Theorem~\ref{thm:intro: expansion Markov kernel}. A self\=/contained proof of Theorem~\ref{thm:intro: expansion Markov kernel} is given in the appendix. The second part of Section~\ref{sec:Markov.expansion} connects this theory to the study of measure\=/class\=/preserving actions. Here we prove Proposition~\ref{prop:intro spectral charact. expansion}, Corollary~\ref{cor:intro:Matkov} and Theorem~\ref{thm:intro:structure.Matkov}. These results will be important to both of the following sections. In Section~\ref{sec:warped cones} we recall the warped cone construction and study asymptotic expansion from the point of view of warped cones. Here we prove Theorem~\ref{thm:intro:analytic char}. Section~\ref{sec:baum_connes} is mostly devoted to the study of Roe algebras of warped cones. In the first part, we prove Theorem~\ref{thm:intro:analytic char Roe}, and in the second part we provide new counterexamples to the coarse Baum--Connes conjecture by proving Theorem~\ref{thm:intro:ceg to cBC}. Finally, we conclude this section by proving Proposition~\ref{prop:intro:non CE}.

\subsection*{Acknowledgments}
We wish to thank Amine Marrakchi for pointing out \cite{chifan2010ergodic,Mar18} to us and for manifesting interest in our work. The first author wishes to thank Damian Sawicki for helpful discussions on the coarse Baum--Connes conjecture. The second author wishes to thank Uri Bader for his helpful conversations. The third author wishes to thank Jan \v{S}pakula for several useful discussions on dynamical quasi-locality.

We also thank the anonymous referees for drawing our attention to \cite{lawler1988bounds} and pointing out a few mistakes in earlier versions of this paper.


\section{Preliminaries}
\label{sec:prelims}

\subsection{Standing conventions}
Throughout the paper, $\Gamma$ will always be a countable discrete group. The group $\Gamma$ will be made into a metric space by fixing a proper length function (see below).
The letter $S$ will always denote a finite subset in $\Gamma$. Such a set will often---but not always---be symmetric (\emph{i.e.}, $\gamma\in S$ implies that $\gamma^{-1} \in S$) and containing the identity element $1\in \Gamma$. We will \emph{not} generally assume that $S$ generates $\Gamma$.

All the measure spaces will be $\sigma$-finite and all the actions will be measurable. More precisely, we say that \emph{$\Gamma\curvearrowright (X,\nu)$ is an action} as shorthand for saying that $\Gamma$ is a countable discrete group acting measurably on a $\sigma$-finite measure space $(X,\nu)$.
When we equip a metric space $(X,d)$ with a measure $\nu$, we will always assume that $\nu$ is defined on the Borel $\sigma$\=/algebra.

\subsection{Actions on measure spaces}

Let $(X,\nu)$ be a   measure space. A measurable subset $A\subseteq X$ of positive finite measure is called a \emph{domain}. An \emph{exhaustion} of $(X,\nu)$ is a sequence of nested measurable subsets $Y_1\subseteq Y_2\subseteq\cdots $ such that $\bigcup_{n\in\NN}Y_n=X$ up to measure zero. We denote exhaustions by $Y_n\nearrow (X,\nu)$, or simply $Y_n\nearrow X$ if the measure is clear from the context. 

A \emph{proper length function} on $\Gamma$ is a function $\ell: \Gamma \to \{0\}\cup\NN$ which satisfies the following:
\begin{itemize}
  \item $\ell(\gamma)=0$ if and only if $\gamma=1$ (the identity element in $\Gamma$);
  \item $\ell(\gamma)=\ell(\gamma^{-1})$ for every $\gamma\in \Gamma$;
  \item $\ell(\gamma_1\gamma_2) \leq \ell(\gamma_1)+\ell(\gamma_2)$ for every $\gamma_1,\gamma_2\in \Gamma$;
  \item the number of $\gamma\in\Gamma$ with $\ell(\gamma)\leq k$ is finite for every $k\in\NN$.
\end{itemize}

It is easy to show that every countable discrete group $\Gamma$ admits a proper length function (see \emph{e.g.} \cite[Proposition~1.2.2]{NY12}). For example, if $\Gamma$ is a finitely generated group then we can simply take the word length with respect to an arbitrary finite symmetric generating set. Any proper length function $\ell$ induces a left-invariant metric $d_\ell$ on $\Gamma$ by $d_\ell(\gamma_1,\gamma_2) \coloneqq \ell(\gamma_1^{-1}\gamma_2)$. This makes $\Gamma$ into a \emph{proper} discrete metric space. Choosing a different length function $\ell'$ will yield a \emph{coarsely equivalent} metric on $\Gamma$ (we will not need this fact).

For each $k\in \NN$, we denote by $B_k$ the closed ball in $(\Gamma,\ell)$ with radius $k$ and centred at the identity:
\[
B_k \coloneqq \{\gamma\in \Gamma \mid \ell(\gamma) \leq k\}.
\]
It follows from the definition of length function that each $B_k$ is finite and symmetric, $1\in B_k$ and $B_k \cdot B_l \subseteq B_{k+l}$ for every $k,l\in \NN$.

We will be concerned with actions of $\Gamma$ on $(X,\nu)$. Given $A \subseteq X$ and $K \subseteq \Gamma$, let
\[
K \cdot A \coloneqq \bigcup_{\gamma\in K} \gamma \cdot A.
\] 
Since $1\in B_k$, we note that $A\subseteq B_k\cdot A$ for every $A\subseteq X$ and every $k\in \N$. 

Recall that an action $\Gamma\curvearrowright (X,\nu)$ is \emph{measure\=/class\=/preserving} if it sends measure\=/zero sets to measure\=/zero sets.
In this case, for every $\gamma\in\Gamma$ there is an associated Radon--Nikodym derivative $\d \gamma^{-1}_*\nu/\d\nu$ that is well\=/defined up to measure-zero sets.

\subsection{Expansion in measure}

 Let $\Gamma\curvearrowright X$ be an action and $S\subseteq\Gamma$ a finite symmetric set. For any measurable subset $A\subseteq X$ we denote $\acbdry_S A\coloneqq S\cdot A\smallsetminus A$, which should be regarded as the ``boundary of $A$ with respect to the action by $S$''.

\begin{de}[\cite{Vig19}]\label{defn:expanding in measure}
 An action $\rho\colon\Gamma\curvearrowright(X,\nu)$ on a probability measure space $(X,\nu)$ is called \emph{expanding (in measure)} if there exist a constant $c>0$ and a finite $S\subset \Gamma$ such that for any measurable subset $A\subseteq X$ with $0<\nu(A) \leq \frac{1}{2}$, we have $\nu(\acbdry_S A)> c\nu(A)$. In this case, we say that $\rho$ is \emph{$(c,S)$\=/expanding} or simply \emph{$S$\=/expanding}.
 
If an action is $(c,B_k)$\=/expanding for some $k\in\NN$, we may also say that it is $(c,k)$\=/expanding. Note that every expanding action is $(c,k)$\=/expanding for some $c>0$ and $k\in\NN$.
\end{de}

In an independent work, Grabowski--Máthé--Pikhurko defined a ``local'' version of expansion under the name of \emph{domain of expansion}: 

\begin{de}[\cite{grabowski_measurable_2016}\footnote{The authors of \cite{grabowski_measurable_2016} only consider measure-preserving actions, but their definition makes sense for general measurable actions as well.}]\label{defn:domain of expansion} 
Let $\rho\colon \Gamma\curvearrowright (X,\nu)$ be an action. A domain $Y \subseteq X$ is called a \emph{domain of expansion} for $\rho$ if there exist a constant $c>0$ and a finite $S\subseteq \Gamma$ such that for every measurable subset $A\subseteq Y$ with $0<\nu(A) \leq \frac{\nu(Y)}{2}$, we have
 \[
 \nu\bigparen{(S \cdot A)\cap Y} > \paren{1+c}\nu(A).
\]
In this case, we say that $Y$ is a \emph{domain of $(c,S)$\=/expansion} or simply of \emph{$S$\=/expansion}.
As before, if $S=B_k$ we may say that $Y\subseteq X$ is a domain of $(c,k)$\=/expansion.
\end{de}

We note that when $\nu$ is finite, $\rho\colon  \Gamma\curvearrowright (X,\nu)$ is expanding \emph{if and only if} $X$ is a domain of expansion for $\rho$. We end this subsection by recalling the following elementary fact, which will be used in the proof of Proposition~\ref{prop:Markov local version}:

\begin{lem}[{\cite[Lemma 3.14]{dynamics1}}]\label{lem:union of domains}
 Let $\rho\colon \Gamma \act (X,\nu)$ be an action and $Y \subseteq X$ a domain. Assume that $Y_1, Y_2\subseteq Y$ are domains of $S$\=/expansion. If $\nu(Y_1)>\frac{3}{4}\nu(Y)$ and $\nu(Y_2)>\frac{3}{4}\nu(Y)$ then the union $Y_1 \cup Y_2$ is a domain of $S$\=/expansion as well.
\end{lem}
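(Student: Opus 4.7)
The plan is to show, for a domain $A \subseteq Z \coloneqq Y_1 \cup Y_2$ with $0<\nu(A) \le \nu(Z)/2$, a bound $\nu((S \cdot A) \cap Z) > (1+c)\nu(A)$ with $c$ depending only on the expansion constants $c_1, c_2$ of $Y_1, Y_2$ and on $\abs{S}$. Write $A_i \coloneqq A \cap Y_i$; since $A \subseteq Z$, we have $A = A_1 \cup A_2$ up to null sets, so after relabelling I may assume $\nu(A_1) \ge \nu(A)/2$. A key preliminary estimate is $\nu(Z) \le \nu(Y) < \tfrac{4}{3}\nu(Y_i)$, which gives $\nu(A) < \tfrac{2}{3}\nu(Y_i)$ for $i=1,2$---this controls, but does not eliminate, the possibility that $\nu(A_i)$ exceeds $\nu(Y_i)/2$.

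The easy case will be $\nu(A_1) \le \nu(Y_1)/2$: applying the expansion of $Y_1$ to $A_1$, together with the observation that $(S \cdot A_1) \cap Y_1$ and $A \setminus Y_1$ are disjoint subsets of $(S \cdot A) \cap Z$, I would derive
\[
\nu((S\cdot A) \cap Z) \ge \nu((S\cdot A_1) \cap Y_1) + \nu(A \setminus Y_1) > \nu(A) + c_1 \nu(A_1) \ge \bigl(1 + \tfrac{c_1}{2}\bigr)\nu(A).
\]
If instead $\nu(A_1) > \nu(Y_1)/2 > \tfrac{3}{8}\nu(Y)$ but $\nu(A_2) \le \nu(Y_2)/2$, then $\nu(A) > \tfrac{3}{8}\nu(Y)$, so $\nu(A \setminus Y_2) \le \nu(Y \setminus Y_2) < \tfrac{1}{4}\nu(Y) < \tfrac{2}{3}\nu(A)$, forcing $\nu(A_2) > \tfrac{1}{3}\nu(A)$; the symmetric argument using the expansion of $Y_2$ will then give $\nu((S \cdot A) \cap Z) > (1 + c_2/3)\nu(A)$.

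The hard part will be the residual sub-case where $\nu(A_i) > \nu(Y_i)/2$ for \emph{both} $i$: here neither $A_i$ is eligible for direct expansion. My plan is to apply the $Y_1$-expansion to $B_1 \coloneqq Y_1 \setminus A$, whose measure is strictly less than $\nu(Y_1)/2$, obtaining $\nu((S \cdot B_1) \cap A_1) > c_1 \nu(B_1)$. The delicate step---and the real crux---will be to convert this control of $(S \cdot B_1) \cap A$ into the dual control of $(S \cdot A) \cap B_1$. Using the symmetry of $S$ together with (boundedness of) the Radon--Nikodym derivatives $r(s,\cdot)$ for $s \in S$---available in every application of this lemma in the paper---the change-of-variables identity $\nu(s \cdot A \cap B) = \int_A \infun_B(s\cdot x)\, r(s,x)\, \d\nu(x)$ summed over $s \in S$ should yield $\nu((S \cdot A) \cap B_1) \gtrsim \nu((S \cdot B_1) \cap A_1)/\abs{S}$ up to Radon--Nikodym constants. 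Combined with the elementary bound $\nu(B_1) \ge \nu(Y_1) - \nu(A) > \tfrac{1}{4}\nu(Y) \ge \nu(A)/2$ and the inclusion $B_1 \subseteq Z \setminus A$, this will supply the extra mass needed.

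Assembling the three cases produces a universal $c = c(c_1, c_2, \abs{S}) > 0$, establishing the $S$-expansion of $Y_1 \cup Y_2$.
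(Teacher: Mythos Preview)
The paper does not prove this lemma itself; it merely quotes it from \cite{dynamics1}. So there is no paper proof to compare against, and I assess your argument on its own merits.

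Your Cases 1 and 2 are fine, modulo the implicit use of $1\in S$ (needed so that $A\setminus Y_1\subseteq S\cdot A$). The real problem is Case 3. There you apply the $S$-expansion of $Y_1$ to $B_1=Y_1\setminus A$, obtaining that $(S\cdot B_1)\cap A_1$ is large, and then attempt to ``dualize'' this into control of $(S\cdot A)\cap B_1$. That step is \emph{not} available under the hypotheses of the lemma: the statement is for an arbitrary measurable action, with no measure\=/class\=/preserving assumption, no bound on Radon--Nikodym derivatives, and $S$ is not required to be symmetric (the paper's standing conventions explicitly say $S$ is ``often---but not always---symmetric''). Your own parenthetical---that these assumptions are ``available in every application of this lemma in the paper''---is an admission that you are proving a strictly weaker result than the one stated. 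Moreover, even granting symmetry of $S$ and bounded derivatives, the passage from $\sum_{s\in S}\nu((s\cdot B_1)\cap A_1)$ to a lower bound on $\nu((S\cdot A)\cap B_1)$ picks up a factor of $\abs{S}^{-1}$ and the Radon--Nikodym bound $\Theta$; neither constant appears in the conclusion of the lemma as you have stated it, so at best you would obtain $S$-expansion with a constant depending on data not mentioned in the statement.

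Case 3 genuinely occurs (e.g.\ $Y=[0,1]$ with Lebesgue measure, $Y_1=[0,0.8]$, $Y_2=[0.2,1]$, $A=[0.3,0.8]$), so it cannot be dismissed. You need an argument for it that uses only the expansion hypothesis on $Y_1,Y_2$ and the measure bounds $\nu(Y_i)>\tfrac34\nu(Y)$, without invoking any regularity of the action; as written, your proof does not supply one.
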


\subsection{(Local) spectral gap}
We will work with complex $L^p$-spaces for $p\in [1,\infty)$. If we wish to stress that the $L^p$\=/norm of a function on $X$ is computed with respect to the measure $\nu$, we denote it by $\norm{f}_{\nu,p}$. Similarly, we will denote the inner product on the Hilbert space $L^2(X,\nu)$ by $\angles{f,g}_\nu$.
 
 Given a measurable subset $Y$ in a measure space $(X,\nu)$, we denote the restriction of $\nu$ to $Y$ by $\nu|_Y$. With a slight abuse of notation, we also use the symbol $\nu|_Y$ to denote the measure on $X$ which gives measure $0$ to $X\smallsetminus Y$ and coincides with $\nu$ on all measurable subsets of $Y$ (\emph{i.e.}, $\nu|_Y=\chi_Y\cdot \nu$ where $\chi_Y$ is the indicator function of $Y$). This will not cause confusion, as the meaning will be clear from the context.

A measure\=/preserving action $\rho\colon\Gamma\curvearrowright(X,\nu)$ on a probability measure space $(X,\nu)$ has a \emph{spectral gap} if there exist a constant $\kappa>0$ and a finite $S\subseteq \Gamma$ such that for every function $f\in L^2(X,\nu)$ with $\int_Xf\d\nu=0$ we have
\begin{equation}\label{eq:spectral gap}
 \norm{f}_2\leq \kappa\sum_{\gamma\in S}\norm{\gamma\cdot f-f}_2,
\end{equation}
where $\gamma\cdot f(x)\coloneqq f(\gamma^{-1} \cdot x)$.
It can be shown that the action $\rho$ is expanding in measure \emph{if and only if} it has a spectral gap (see, \emph{e.g.}, \cite[Section 7]{Vig19}).

In \cite{BIG17}, Boutonnet--Ioana--Golsefidy introduced the following localised version of spectral gap:

\begin{defn}[{\cite[Definition 1.2]{BIG17}}]\label{defn:local spectral gap}
Let $\rho\colon \Gamma \act (X,\nu)$ be a measure\=/preserving action and $Y \subseteq X$ be a domain.
The action $\rho$ has \emph{local spectral gap} with respect to $Y$ if there exist a constant $\kappa>0$ and a finite $S\subseteq \Gamma$ such that
\begin{equation}\label{eq:local spectral gap}
 \|f\|_{\nu|_Y,2} \leq \kappa \sum_{\gamma \in S} \norm{\gamma\cdot f -f}_{\nu|_Y,2}
\end{equation}
for every $f \in L^2(X,\nu)$ with $\int_Y f \mathrm{d}\nu=0$. 
\end{defn}

It is clear that when $\nu$ is a probability measure, $\rho$ has spectral gap \emph{if and only if} it has local spectral gap with respect to the whole $X$. 

It is shown in \cite[Lemma 5.2]{grabowski_measurable_2016} that a measure\=/preserving action $\rho\colon \Gamma\curvearrowright (X,\nu)$ has local spectral gap with respect to a domain $Y\subseteq X$ \emph{if and only if} $Y$ is a domain of expansion for $\rho$. This fact can also be deduced from \cite[Theorem 3.2]{houdayer2017strongly} (or by adapting the arguments of \cite[Section 7]{Vig19}). Later on, we will provide an alternative proof based on our study of Markov kernels (see Corollary \ref{cor:local.spec.gap.iff.domain.of.expansion}).

\begin{rmk}
 Equations \eqref{eq:spectral gap} and \eqref{eq:local spectral gap} make sense also if the action is not measure\=/preserving (although in this case it would be perhaps more appropriate to refer to them as Poincaré inequalities, rather than spectral gaps).
 It follows from \cite[Theorem 3.2]{houdayer2017strongly} that---as long as the Radon--Nikodym derivatives are bounded---the characterisation of (domains of) expansion in measure in terms of (local) spectral gaps also holds for actions that do not necessarily preserve measures.
\end{rmk}

\subsection{Asymptotic expansion in measure and structure theorems}
\label{ssec:asymptotic.expansion in measure}
The following weakening of expansion in measure was defined in \cite{dynamics1} in analogy with \cite{structure,intro}:

\begin{defn}[{\cite[Definition 3.1]{dynamics1}}]\label{defn:asymptotic expanding in measure}
Let $\rho\colon \Gamma \act (X,\nu)$ be an action on a space $(X,\nu)$ of finite measure. The action $\rho$ is called \emph{asymptotically expanding (in measure)} if there exist functions $\fnc\colon (0,\frac{1}{2}]\to \RR_{>0}$ and $\fnk\colon(0,\frac{1}{2}]\to \NN$ such that for every $\alpha\in(0,\frac{1}{2}]$ we have
\begin{equation}\label{eq:def.asymptotic.expansion}
 \nu\bigparen{B_{\fnk(\alpha)} \cdot A} > \paren{1+\fnc(\alpha)}\nu(A)
\end{equation}
for every measurable subset $A\subseteq X$ with $\alpha \nu(X)\leq \nu(A) \leq \frac{\nu(X)}{2}$.

For a finite $S\subseteq\Gamma$, we say that $\rho$ is \emph{$(\fnc,S)$\=/asymptotically expanding (in measure)} (or simply \emph{$S$\=/asymptotically expanding}) if for every $\alpha\in(0,\frac{1}{2}]$ and measurable subset $A\subseteq X$ with $\alpha \nu(X)\leq \nu(A) \leq \frac{\nu(X)}{2}$, we have $ \nu(S \cdot A) > \paren{1+\fnc(\alpha)}\nu(A)$.
\end{defn}

\begin{rem}\label{rem:finitely generated case}
 When the acting group $\Gamma$ is finitely generated by a finite symmetric set $S$, a measure\=/class\=/preserving action $\rho\colon \Gamma \act (X,\nu)$ on a probability space is asymptotically expanding \emph{if and only if} it is $S$\=/asymptotically expanding (see \cite[Lemma 3.16]{dynamics1}). We will not need this fact in this paper.
\end{rem}

This notion turns out to be naturally related to strong ergodicity. Recall that a measure\=/class\=/preserving action $\rho\colon \Gamma \act (X,\nu)$ on a probability space $(X,\nu)$ is called \emph{strongly ergodic} \cite{connes_property_1980,schmidt1980asymptotically} if any sequence of measurable subsets $\{C_n\}_{n \in \N}$ in $X$ with $\lim\limits_{n\to \infty} \nu(C_n \symdiff \gamma C_n)=0$ for every $\gamma \in \Gamma$, must satisfy
\[
\lim_{n\to \infty} \nu(C_n)(1-\nu(C_n))=0.
\]

Note that for two equivalent finite measures $\nu$, $\nu'$ on a space $X$ and any sequence of measurable subsets $(A_n)_{n\in\NN}$ in $X$, $\nu(A_n)\to 0$ \emph{if and only if} $\nu'(A_n)\to 0$. Hence, strong ergodicity only depends on the measure\=/class of the given measure. Therefore, the following is well\=/posed:

\begin{de}[\cite{ioana2017strong}]\label{strong ergodic on infinite space}
 A measure\=/class\=/preserving action $\Gamma\curvearrowright (X,\nu)$ on a (possibly infinite) measure space is \emph{strongly ergodic} if $\Gamma\curvearrowright (X,\nu')$ is strongly ergodic with respect to some (hence every) probability measure $\nu'$ equivalent to $\nu$.
\end{de}

\begin{prop}[{\cite[Proposition 3.5]{dynamics1}}]\label{prop:strongly ergodic iff asymptotic expanding}
Let $\rho\colon  \Gamma \act (X,\nu)$ be a measure\=/class\=/preserving action on a probability space.
Then $\rho$ is strongly ergodic \emph{if and only if} it is asymptotically expanding in measure.
\end{prop}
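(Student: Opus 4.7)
My plan is to prove each implication by contrapositive, reducing the problem to extracting an almost invariant sequence from a sequence of ``bad'' sets (in one direction) or large boundary from an almost invariant sequence (in the other). The main subtlety, compared to the standard measure\=/preserving setting, will be handling the absence of invariance of the measure. This is genuinely needed because asymptotic expansion is stated using the one\=/sided boundary $B_k\cdot A$, while strong ergodicity uses the symmetric difference $\gamma C \symdiff C$, and under only measure\=/class\=/preservation these behave less symmetrically than one might hope.

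\medskip

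\noindent\textbf{Asymptotic expansion implies strong ergodicity.} Suppose $(C_n)_{n \in \NN}$ is almost invariant. First, since $\gamma C_n^c = (\gamma C_n)^c$ as sets, we have $\nu(\gamma C_n^c \symdiff C_n^c) = \nu(\gamma C_n \symdiff C_n) \to 0$, so the sequence of complements is almost invariant as well. If $\nu(C_n)(1-\nu(C_n)) \not\to 0$, I pass to a subsequence along which $\nu(C_n)(1-\nu(C_n)) \geq \delta$ for some $\delta > 0$, and after possibly replacing some $C_n$ by their complements, I may assume $\delta' \leq \nu(C_n) \leq \frac{1}{2}$ for a uniform $\delta'>0$. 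Applying Definition~\ref{defn:asymptotic expanding in measure} with $\alpha = \delta'$, I get $c=\fnc(\delta')$ and $k=\fnk(\delta')$ such that $\nu(B_k\cdot C_n) \geq (1+c)\nu(C_n) \geq (1+c)\delta'$. On the other hand, since $B_k$ is finite,
\[
  \nu(B_k\cdot C_n \smallsetminus C_n) \leq \sum_{\gamma\in B_k}\nu(\gamma C_n \smallsetminus C_n) \leq \sum_{\gamma\in B_k}\nu(\gamma C_n \symdiff C_n) \xrightarrow{n\to\infty}0,
\]
so $\nu(B_k\cdot C_n) - \nu(C_n) \to 0$, forcing $c\,\nu(C_n)\to 0$, which contradicts the lower bound $\nu(C_n)\geq \delta'>0$.

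\medskip

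\noindent\textbf{Strong ergodicity implies asymptotic expansion.} I argue by contrapositive. If $\rho$ is not asymptotically expanding, then negating the definition I obtain some $\alpha\in(0,\frac{1}{2}]$ and, for every $n\in\NN$, a measurable set $A_n\subseteq X$ with $\alpha\leq \nu(A_n)\leq \frac{1}{2}$ and $\nu(B_n\cdot A_n) \leq (1+\tfrac{1}{n})\nu(A_n)$. I claim that $(A_n)$ is almost invariant. Fix $\gamma\in\Gamma$ and let $n$ be large enough that both $\gamma, \gamma^{-1}\in B_n$. Then
\[
  \nu(\gamma A_n \smallsetminus A_n) \leq \nu(B_n\cdot A_n \smallsetminus A_n) \leq \tfrac{1}{n}\nu(A_n) \leq \tfrac{1}{2n},
\]
and similarly $\nu(\gamma^{-1}A_n \smallsetminus A_n) \leq \tfrac{1}{2n}$. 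For the other half of the symmetric difference, write $A_n \smallsetminus \gamma A_n = \gamma(\gamma^{-1}A_n \smallsetminus A_n)$. Setting $E_n\coloneqq \gamma^{-1}A_n \smallsetminus A_n$, I have $\nu(E_n)\to 0$, and since $\nu(\gamma E_n) = \int_{E_n} r(\gamma^{-1},x)\,\mathrm{d}\nu(x)$ with $r(\gamma^{-1},\variable)\in L^1(X,\nu)$, the absolute continuity of the integral forces $\nu(\gamma E_n)\to 0$. Thus $\nu(\gamma A_n\symdiff A_n)\to 0$ for every $\gamma\in\Gamma$, but $\nu(A_n)(1-\nu(A_n))\geq \alpha/2 > 0$, violating strong ergodicity.

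\medskip

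The main conceptual point, and the only place where measure\=/class\=/preservation (as opposed to preservation) plays a role, is controlling $\nu(A_n\smallsetminus \gamma A_n)$ in the second half of the argument; the trick is to rewrite this set as the $\gamma$\=/image of $\gamma^{-1}A_n\smallsetminus A_n$ and use absolute continuity of $r(\gamma^{-1},\variable)$ to conclude. Everything else is a straightforward book\=/keeping translation between the ``one\=/sided boundary'' language of asymptotic expansion and the ``symmetric difference'' language of strong ergodicity.
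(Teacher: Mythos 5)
Your argument is correct, and it is essentially the standard one: this proposition is only quoted here from the companion paper \cite{dynamics1}, whose proof proceeds by exactly the same two contrapositive reductions (almost invariant sequences with measure bounded away from $0$ and $1$ versus sequences of non\-/expanding sets, with absolute continuity of $\gamma_*\nu\ll\nu$ handling the set $A_n\smallsetminus\gamma A_n$). The only quibble is notational: with this paper's convention $r(\gamma,x)=\frac{d\gamma^{-1}_*\nu}{d\nu}(x)$ one has $\nu(\gamma E)=\int_E r(\gamma,x)\,\d\nu(x)$ rather than $\int_E r(\gamma^{-1},x)\,\d\nu(x)$, but since both densities are in $L^1(X,\nu)$ this does not affect the argument.
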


In particular, explicit examples of strongly ergodic actions (\emph{e.g.}, those constructed in \cite{abert2012dynamical}) give rise to explicit examples of asymptotically expanding actions.

\

The following is a localised version of Definition~\ref{defn:asymptotic expanding in measure}:

\begin{de}[{\cite[Definition 3.7]{dynamics1}}]\label{defn:domain of asymp expansion} 
Let $\rho\colon \Gamma\curvearrowright (X,\nu)$ be an action. A domain $Y \subseteq X$ is called a \emph{domain of asymptotic expansion} for $\rho$ if there exist functions $\fnc\colon (0,\frac{1}{2}]\to \RR_{>0}$ and $\fnk\colon(0,\frac{1}{2}]\to \NN$ such that for every $\alpha\in(0,\frac{1}{2}]$ and measurable $A\subseteq Y$ with $\alpha\nu(Y)\leq \nu(A) \leq \frac{\nu(Y)}{2}$, we have
\[
 \nu\bigparen{(B_{\fnk(\alpha)} \cdot A)\cap Y} > \paren{1+\fnc(\alpha)}\nu(A).
\]

For a finite $S\subseteq\Gamma$, we say that $Y$ is a \emph{domain of $(\fnc,S)$\=/asymptotic expansion} (or simply \emph{domain of $S$\=/asymptotic expansion}) if for every $\alpha\in(0,\frac{1}{2}]$ and measurable subset $A\subseteq X$ with $\alpha \nu(X)\leq \nu(A) \leq \frac{\nu(X)}{2}$, we have $ \nu\bigparen{(S \cdot A)\cap Y} > \paren{1+\fnc(\alpha)}\nu(A)$.
\end{de}

When $\nu$ is finite, an action $\rho\colon  \Gamma\curvearrowright (X,\nu)$ is asymptotically expanding (in measure) \emph{if and only if} $X$ is a domain of asymptotic expansion for $\rho$.

The fact that Definitions~\ref{defn:asymptotic expanding in measure} and \ref{defn:domain of asymp expansion} are only concerned with domains of measure at most $\nu(X)/2$ (resp. $\nu(Y)/2$) makes them easier to verify, but sometimes awkward to use. The following elementary result is helpful to bypass this issue:

\begin{lem}[{\cite[Lemma 3.8]{dynamics1}}]\label{lem:annoying 1/2 upper bound} 
Let  $Y\subseteq X$ be a domain of asymptotic expansion for an action $\Gamma\curvearrowright (X,\nu)$. Then there exist functions $\fnb\colon [\frac{1}{2},1)\to \RR_{>0}$ and $\fnh\colon[\frac{1}{2},1)\to \NN$ such that for every $\beta\in[\frac{1}{2},1)$, we have
\[
 \nu\bigparen{(B_{\fnh(\beta)} \cdot A)\cap Y} > \paren{1+\fnb(\beta)}\nu(A)
\]
 for every measurable subset $A\subseteq Y$ with $\frac{1}{2}\nu(Y)\leq \nu(A)\leq\beta \nu(Y)$. 
 \end{lem}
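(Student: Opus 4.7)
The plan is to argue by contradiction, applying the asymptotic-expansion hypothesis not to $A$ nor to its complement $A':=Y\setminus A$ directly, but to the \emph{exterior} set $E:=Y\setminus B_k\cdot A$. The key set-theoretic observation is that, by symmetry of $B_k$, the translate $B_k\cdot E$ is disjoint from $A$: indeed, if $e\in E$ and $g\in B_k$ satisfied $g\cdot e\in A$, then $e\in g^{-1}\cdot A\subseteq B_k\cdot A$, contradicting $e\in E$. Hence $B_k\cdot E\cap Y\subseteq A'$, and in particular $\nu(E)\le\nu(A')\le\tfrac{1}{2}\nu(Y)$, so $E$ is an eligible input for Definition~\ref{defn:domain of asymp expansion}.

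Given $\beta\in[\tfrac{1}{2},1)$, I will set $\fnh(\beta):=\fnk\!\left(\tfrac{1-\beta}{2}\right)$, write $c:=\fnc\!\left(\tfrac{1-\beta}{2}\right)$, and define
\[
 \fnb(\beta):=\min\!\left(\frac{1-\beta}{2\beta},\ \frac{c(1-\beta)}{(1+c)\beta}\right).
\]
For any $A\subseteq Y$ with $\tfrac{1}{2}\nu(Y)\le\nu(A)\le\beta\nu(Y)$, suppose for contradiction that $\nu(B_{\fnh(\beta)}\cdot A\cap Y)\le(1+\fnb(\beta))\nu(A)$; equivalently, $\nu(E)\ge\nu(A')-\fnb(\beta)\nu(A)$. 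The bounds $\nu(A')\ge(1-\beta)\nu(Y)$, $\nu(A)\le\beta\nu(Y)$, and $\fnb(\beta)\le\tfrac{1-\beta}{2\beta}$ then force $\nu(E)\ge\tfrac{1-\beta}{2}\nu(Y)$. Applying Definition~\ref{defn:domain of asymp expansion} to $E$ with parameter $\alpha=\tfrac{1-\beta}{2}$ yields $\nu(B_{\fnh(\beta)}\cdot E\cap Y)>(1+c)\nu(E)$, which combined with $B_{\fnh(\beta)}\cdot E\cap Y\subseteq A'$ forces $\nu(E)<\nu(A')/(1+c)$. Substituting this back into $\nu(E)\ge\nu(A')-\fnb(\beta)\nu(A)$ and using $\nu(A')/\nu(A)\ge(1-\beta)/\beta$ contradicts $\fnb(\beta)\le\tfrac{c(1-\beta)}{(1+c)\beta}$.

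The main subtlety, more than a genuine obstacle, is that the action is not assumed to preserve measure (or even measure class), so one cannot compare $\nu(B_k\cdot A\cap A')$ with $\nu(B_k\cdot A'\cap A)$ via any symmetry of measures between $A$ and $A'$, nor pass measure estimates between points and their $B_k$-preimages. Routing the argument through the exterior set $E$ sidesteps this entirely: the disjointness $B_k\cdot E\cap A=\emptyset$ is purely combinatorial, and only a single invocation of the asymptotic-expansion hypothesis is needed, at the one scale $\fnh(\beta)=\fnk((1-\beta)/2)$. One expects $\fnb(\beta)\to 0$ (and possibly $\fnh(\beta)\to\infty$) as $\beta\to 1$, as the explicit dependence on $1-\beta$ above makes clear.
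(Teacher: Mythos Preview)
The paper does not give its own proof of this lemma; it is quoted verbatim from \cite[Lemma~3.8]{dynamics1}. So there is nothing to compare against directly, but your argument is correct and self-contained.

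Your key move---passing to the exterior set $E=Y\setminus(B_k\cdot A)$ rather than to the complement $A'=Y\setminus A$---is exactly the right one in this generality. Applying expansion to $A'$ would only control $\nu(B_k\cdot A'\cap A)$, which is unrelated to $\nu(B_k\cdot A\cap A')$ when the action is not assumed measure-class-preserving; routing through $E$ uses only the symmetry of $B_k$ and one call to Definition~\ref{defn:domain of asymp expansion}. The chain of inequalities is clean: the bound $\fnb(\beta)\le(1-\beta)/(2\beta)$ guarantees $\nu(E)\ge\tfrac{1-\beta}{2}\nu(Y)$ so that expansion applies at scale $\alpha=(1-\beta)/2$, and the bound $\fnb(\beta)\le c(1-\beta)/((1+c)\beta)$ is precisely what is contradicted by the resulting strict inequality $\nu(E)<\nu(A')/(1+c)$. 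Both constants in your $\min$ are strictly positive since $\beta<1$ and $c=\fnc((1-\beta)/2)>0$, so $\fnb$ is well-defined.
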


For later use, we end this subsection by recalling some structure results established in \cite[Section~4]{dynamics1}.

\begin{prop}[{\cite[Proposition 4.5 and 4.11]{dynamics1}}]\label{prop:exhausting domains by domains}
 Let $\Gamma\curvearrowright (X,\nu)$ be an action, $Y\subseteq X$ be a domain of asymptotic expansion and $(Z_n)_{n\in\NN}$ be a sequence of nested subsets of $Y$ with $\nu(Z_n)\to 0$. 
 Then there exist $N_0\in \NN$, a sequence of finite subsets $S_n\subseteq \Gamma$ and an exhaustion $Y_n\nearrow Y$ by domains of $S_n$\=/expansion such that $Y_n\subseteq Y\smallsetminus Z_n$ for every $n>N_0$.
\end{prop}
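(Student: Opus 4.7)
The plan is to build the exhaustion $Y_n\nearrow Y$ iteratively, at each stage invoking the asymptotic expansion hypothesis on $Y$ to produce a sufficiently large subset of $Y\smallsetminus Z_n$ that is a genuine domain of expansion, and then patching successive stages together via Lemma~\ref{lem:union of domains}. Since $\nu(Z_n)\to 0$ and the $Z_n$ are nested, $Y\smallsetminus Z_n$ is (essentially) increasing to $Y$, so the compatibility of the condition $Y_n\subseteq Y\smallsetminus Z_n$ with the nesting $Y_n\subseteq Y_{n+1}$ is automatic.

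The central ingredient is a ``cleanup lemma'' of the following form: for every $\alpha\in(0,\tfrac{1}{2}]$ there exists a threshold $\eta(\alpha)>0$ and an error $\delta(\alpha)>0$ with $\delta(\alpha)\to 0$ as $\alpha\to 0$, such that for every measurable $Z\subseteq Y$ with $\nu(Z)<\eta(\alpha)\nu(Y)$ there exists a measurable $W\subseteq Y\smallsetminus Z$ which is a domain of $\bigparen{\fnc(\alpha)/3,\,B_{\fnk(\alpha)}}$-expansion and which satisfies $\nu(Y\smallsetminus W)\leq\nu(Z)+\delta(\alpha)\nu(Y)$. To prove this, I would run a maximal bad-set elimination: starting from $Z$, iteratively absorb any measurable $A\subseteq Y\smallsetminus Z$ with $\nu(A)\leq\tfrac{1}{2}\nu(Y\smallsetminus Z)$ witnessing the failure of the desired expansion, forming an increasing chain whose (essential) supremum $Z^\ast$ is the complement of $W$. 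The crucial step is to bound $\nu(Z^\ast)$: if it exceeded $\nu(Z)+\delta(\alpha)\nu(Y)$, then $Z^\ast\smallsetminus Z$ would itself be a subset of $Y$ of relative measure at least $\alpha$, and applying the asymptotic expansion of $Y$ to it---together with a loss of at most $\nu(Z)$ coming from restricting images to $Y\smallsetminus Z$---would yield a strict expansion of $Z^\ast\smallsetminus Z$ inside $Y\smallsetminus Z$, contradicting maximality.

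Granted the cleanup lemma, choose a sequence $\alpha_n\searrow 0$ with $\delta(\alpha_n)\searrow 0$, and let $N_0$ be the least integer for which $\nu(Z_n)<\eta(\alpha_n)\nu(Y)/2$ holds for every $n\geq N_0$. Inductively, put $Y_{N_0}\coloneqq W_{N_0}$, obtained by applying the cleanup lemma with the input set $Z_{N_0}$, and set $Y_{n}\coloneqq Y_{n-1}\cup W_n$ for $n>N_0$, where $W_n$ comes from the cleanup lemma applied to $Z_n\cup(Y\smallsetminus Y_{n-1})$ (whose measure is small by induction). Refining the choice of $\alpha_n$ if necessary, one ensures $\nu(Y_{n-1})$ and $\nu(W_n)$ each exceed $\tfrac{3}{4}\nu(Y_{n-1}\cup W_n)$, and then Lemma~\ref{lem:union of domains} guarantees that $Y_n$ is a domain of $S_n$-expansion for $S_n\coloneqq B_{\fnk(\alpha_n)}$. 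The containment $Y_n\subseteq Y\smallsetminus Z_n$ is preserved throughout the induction since the $Z_n$ are decreasing and each $W_n$ avoids $Z_n$ by construction, while the cleanup bound forces $\nu(Y\smallsetminus Y_n)\to 0$.

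The main obstacle lies in the cleanup lemma, specifically in the bad-set bound: the asymptotic expansion hypothesis only controls subsets of $Y$ whose \emph{relative} measure in $Y$ is at least $\alpha$, so ``small'' bad sets of relative measure below $\alpha$ could in principle escape the argument and accumulate. The resolution is that the scale $\alpha_n$ must be chosen small enough, at each stage, to recognise any would-be bad set of appreciable total measure as a single expanding subset of $Y$; this tension between the expansion loss $\delta(\alpha)$ on one hand and the budget $\eta(\alpha)$ for the input $Z$ on the other is what drives the quantitative calibration of the iteration and is the heart of the proof.
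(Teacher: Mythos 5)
First, a caveat: this paper does not actually prove Proposition~\ref{prop:exhausting domains by domains}; it is imported from \cite[Propositions 4.5 and 4.11]{dynamics1}, so your argument can only be judged on its own terms. Your architecture (a cleanup lemma producing a large domain of expansion avoiding a prescribed small set, then inductive patching via Lemma~\ref{lem:union of domains} and the nestedness of the $Z_n$) is the right one, and you correctly locate the quantitative tension between the budget $\eta(\alpha)$ and the loss $\delta(\alpha)$. The genuine gap is in the crux of your cleanup lemma: for the forced expansion of $Z^\ast\smallsetminus Z$ to ``contradict maximality'', you must know that $Z^\ast\smallsetminus Z$ is itself \emph{deficient}, i.e.\ fails the $(1+\fnc(\alpha)/3)$\-/expansion inside $Y'\coloneqq Y\smallsetminus Z$, and this does not follow from each absorbed piece being deficient. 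The excess $\nu\bigparen{(S\cdot A)\cap Y'}-\nu(A)$ is subadditive under unions, but $\nu(A\cup B)$ can be much smaller than $\nu(A)+\nu(B)$, so the deficiency constant degrades along your chain. Worse, you leave ambiguous whether ``failure of the desired expansion'' is measured relative to the fixed ambient $Y'$ or to the current remainder: in the first reading the terminal $W$ need not be a domain of expansion at all (a set may expand fine into $Y'$ yet fail to expand into the smaller $W$ because its translates land in the removed part); in the second reading the process does terminate at a domain of expansion, but one must then prove that the disjoint transfinite union of sets, each deficient relative to the complement of its predecessors, is deficient in $Y'$ with the \emph{same} constant. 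That bookkeeping is true (using $1\in S$, one has $(S\cdot A_\beta)\cap Y'\smallsetminus\bigcup_{\gamma<\beta}(S\cdot A_\gamma)\subseteq (S\cdot A_\beta)\cap W_\beta$, and the pieces $A_\beta$ are pairwise disjoint), but it is the key lemma and it is missing. The alternative standard route is to take a single maximal deficient set $A^\ast$ (increasing unions of deficient sets are deficient by continuity of measure, so a Zorn\-/type argument applies) and to remove its whole neighbourhood, $W=Y'\smallsetminus B_{\fnk(\alpha)}\cdot A^\ast$, so that any deficient $B\subseteq W$ can be adjoined to $A^\ast$ without loss; with $W=Y\smallsetminus Z^\ast$ as you define it, the adjunction fails because $S\cdot B$ may spill into $Z^\ast$ with mass comparable to $\nu(Z^\ast)$ rather than to $\nu(B)$.

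There is also a slip in the induction: applying the cleanup lemma with forbidden set $Z_n\cup(Y\smallsetminus Y_{n-1})$ forces $W_n\subseteq Y_{n-1}\smallsetminus Z_n$, so $Y_n=Y_{n-1}\cup W_n=Y_{n-1}$ and the exhaustion never grows. You should apply it to $Z_n$ alone; then $Y_n\coloneqq Y_{n-1}\cup W_n$ avoids $Z_n$ automatically because $Z_n\subseteq Z_{n-1}$ and $Y_{n-1}\cap Z_{n-1}=\emptyset$, while $\nu(Y\smallsetminus Y_n)\leq\nu(Y\smallsetminus W_n)\to 0$. Finally, before invoking Lemma~\ref{lem:union of domains} the two pieces must be domains of expansion for a \emph{common} finite set; taking $S_n\coloneqq S_{n-1}\cup B_{\fnk(\alpha_n)}$ fixes this, since enlarging $S$ preserves domains of $S$\-/expansion.
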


\begin{thm}[{\cite[Theorem~4.9]{dynamics1}}]\label{thm:structure theorem general}
 Let $\rho\colon\Gamma\curvearrowright (X,\nu)$ be a measure\=/class\=/preserving action. Then the following are equivalent:
 \begin{enumerate}[(1)]
  \item $\rho$ is strongly ergodic; 
  \item every finite measure subset is a domain of asymptotic expansion; 
  \item $\rho$ is ergodic and $X$ admits a domain of expansion.
 \end{enumerate}
\end{thm}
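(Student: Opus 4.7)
The plan is to establish the cycle $(2)\Rightarrow(3)\Rightarrow(1)\Rightarrow(2)$. A preliminary reduction valid for each implication: given a finite-$\nu$-measure set $Y\subseteq X$, set $\nu':=(\nu|_Y+\mu)/(\nu(Y)+1)$ where $\mu$ is a probability measure on $X\smallsetminus Y$ equivalent to $\nu|_{X\smallsetminus Y}$ (which exists by $\sigma$-finiteness). Then $\nu'$ is an equivalent probability measure with $\nu'|_Y$ proportional to $\nu|_Y$, so the (asymptotic) expansion status of $Y$ is identical for $\nu$ and $\nu'$. Strong ergodicity is measure-class invariant by Definition~\ref{strong ergodic on infinite space}, so in each argument below we may assume $\nu$ is a probability measure.

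For $(2)\Rightarrow(3)$: if $\rho$ fails to be ergodic, pick an invariant set $E$ with $0<\nu(E)<\nu(X)$, choose positive-measure finite subsets $E_1\subseteq E$ and $F\subseteq X\smallsetminus E$ with $\nu(F)\leq\nu(E_1)$, and apply the hypothesis to $Y:=E_1\cup F$ and $A:=F$. Invariance of $X\smallsetminus E$ forces $(B_kF)\cap Y = B_kF\cap F\subseteq F$, contradicting the strict inequality in Definition~\ref{defn:domain of asymp expansion}. For the domain-of-expansion clause, any positive-finite-measure $Y$ is a domain of asymptotic expansion by hypothesis, and Proposition~\ref{prop:exhausting domains by domains} (with $Z_n=\emptyset$) produces an exhaustion of $Y$ by domains of expansion.

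For $(3)\Rightarrow(1)$: Given a domain of $(c,S)$-expansion $Y$ and a sequence $(C_n)$ witnessing failure of strong ergodicity ($\nu(\gamma C_n\triangle C_n)\to 0$ for every $\gamma$, $\nu(C_n)(1-\nu(C_n))$ bounded away from $0$), pass to a subsequence with $\nu(C_n\cap Y)\leq\tfrac12\nu(Y)$ (complementing $C_n$ if necessary). Setting $A_n:=C_n\cap Y$, expansion yields $(1+c)\nu(A_n)<\nu((SA_n)\cap Y)$, while the inclusion $(SA_n)\cap Y\subseteq A_n\cup(SC_n\smallsetminus C_n)$ (using $1\in S$) supplies the opposite bound $\nu(A_n)+\sum_{s\in S}\nu(sC_n\smallsetminus C_n)=\nu(A_n)+o(1)$. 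Hence $\nu(C_n\cap Y)\to 0$; measure-class preservation propagates this to $\nu(C_n\cap\gamma Y)\to 0$ for every $\gamma$, and ergodicity (which gives $X=\bigcup_N B_N\cdot Y$ up to null sets) combined with a finite-sum estimate forces $\nu(C_n)\to 0$, a contradiction.

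For $(1)\Rightarrow(2)$: Suppose the finite-measure set $Y$ is not a domain of asymptotic expansion. Then there exist $\alpha>0$ and sets $A_n\subseteq Y$ with $\alpha\nu(Y)\leq\nu(A_n)\leq\tfrac12\nu(Y)$ and $\nu((B_nA_n)\cap Y\smallsetminus A_n)\to 0$. The plan is to inflate $(A_n)$ into a globally almost invariant sequence $C_n\subseteq X$ with $\nu(C_n)(1-\nu(C_n))$ bounded below, contradicting strong ergodicity. Set $C_n:=B_{k_n}\cdot A_n$ where $k_n$ is chosen by pigeonhole on the non-decreasing bounded sequence $k\mapsto\nu(B_kA_n)\in[0,1]$: partitioning $[0,n]$ into disjoint windows of length $2L$ produces some $k_n\in[L,n]$ with $\nu(B_{k_n+L}A_n\smallsetminus B_{k_n-L}A_n)\leq 4L^2/n$, and a diagonal choice $L(n):=\lfloor n^{1/3}\rfloor$ drives this annular mass to $0$. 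For each fixed $\gamma$ and $n$ large enough that $\ell(\gamma)\leq L(n)$, the inclusions $B_{k_n-L(n)}A_n\subseteq (\gamma C_n)\cap C_n\subseteq (\gamma C_n)\cup C_n\subseteq B_{k_n+L(n)}A_n$ give $\nu(\gamma C_n\triangle C_n)\to 0$. Since $k_n\leq n$, the near-invariance of $A_n$ inside $Y$ yields $\nu(C_n\cap Y)\leq(1+1/n)\nu(A_n)\leq(1+o(1))\tfrac12\nu(Y)$, so $\nu(C_n)\leq 1-\tfrac14\nu(Y)$ eventually, while $\nu(C_n)\geq\nu(A_n)\geq\alpha\nu(Y)$; the product $\nu(C_n)(1-\nu(C_n))$ is thus bounded below by $\tfrac14\alpha\nu(Y)^2$, supplying the desired contradiction. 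The main obstacle is precisely this tuning of $k_n$: it must grow fast enough that $C_n$ absorbs enough translates of $A_n$ to be globally near-invariant, yet stay $\leq n$ so the near-invariance of $A_n$ inside $Y$ can still prevent $C_n$ from swallowing $X$.
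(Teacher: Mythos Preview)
The present paper does not prove this statement: it is quoted verbatim from \cite[Theorem~4.9]{dynamics1} as a preliminary, so there is no in-paper proof to compare against. Your cycle $(2)\Rightarrow(3)\Rightarrow(1)\Rightarrow(2)$ is correct and self-contained. The reduction to a probability measure is sound (strong ergodicity is measure-class invariant by Definition~\ref{strong ergodic on infinite space}, and rescaling $\nu$ on $Y$ by a constant leaves the expansion notions intact). In $(2)\Rightarrow(3)$ the ergodicity argument via an invariant $E$ is clean, and the appeal to Proposition~\ref{prop:exhausting domains by domains} with $Z_n=\emptyset$ is exactly how the paper itself extracts domains of expansion from domains of asymptotic expansion. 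In $(3)\Rightarrow(1)$ your chain $\nu(C_n\cap Y)\to 0 \Rightarrow \nu(C_n\cap\gamma Y)\to 0 \Rightarrow \nu(C_n)\to 0$ is valid; the middle step uses that $C_n\cap\gamma Y=\gamma(\gamma^{-1}C_n\cap Y)$, that $\gamma^{-1}C_n\cap Y\subseteq(C_n\cap Y)\cup(\gamma^{-1}C_n\triangle C_n)$, and absolute continuity of $\gamma_*\nu$ with respect to $\nu$.

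The only place worth a remark is the pigeonhole in $(1)\Rightarrow(2)$: partitioning $\{0,\dots,n\}$ into $\lfloor n/(2L)\rfloor$ windows of width $2L$ and using that the total increase of $k\mapsto\nu(B_kA_n)$ is at most $1$ gives a window with increase at most $2L/n$ (your stated bound $4L^2/n$ is weaker but still goes to zero with $L=\lfloor n^{1/3}\rfloor$, so nothing breaks). One should also note that with this partition the chosen center satisfies $k_n+L\leq n$, so both $k_n\leq n$ (needed for $C_n\cap Y\subseteq(B_nA_n)\cap Y$) and $k_n-L\geq 0$ hold. With these small clarifications the argument is complete. Judging from the hints in the present paper (the reference to ``(5)$\Rightarrow$(6)'' in the proof of Theorem~\ref{thm:structure theorem Markov}), the original \cite{dynamics1} proves a longer chain of equivalences; your direct cycle is an economical route through the three conditions actually stated here.
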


\begin{rmk}
 For measure\=/preserving actions, the equivalence ``$(1)\Leftrightarrow(3)$'' of Theorem~\ref{thm:structure theorem general} had been previously proved in \cite[Theorem A]{Mar18}.
\end{rmk}

\section{Expansion and reversible Markov kernels}
\label{sec:Markov.expansion}
In the first part of this section, we introduce the language of Markov kernels and review a general estimate for the Cheeger constant of a reversible Markov kernel in terms of the spectrum of the associated Laplacian operator. In the second part, we show that measure\=/class preserving actions give rise to reversible Markov kernels. This allows us to define the notion of (domain of) Markov expansion and to characterise asymptotic expansion in terms of exhaustions by domains of Markov expansion. This result will be pivotal in the subsequent sections.

\subsection{Preliminaries on Markov kernels}\label{sec:preliminaries on Markov kernels}
We begin by recalling a few elementary properties of reversible Markov kernels. We refer to the first chapters of \cite{Rev75} for more background and details.

\begin{de}
 Let $\CE$ be a $\sigma$\=/algebra on a set $X$. A \emph{Markov kernel} on the measurable space $(X,\CE)$ is a function $\Pi\colon X\times\CE\to [0,1]$ such that:
 \begin{enumerate}
  \item for every $x\in X$, the function $\Pi(x,\variable)\colon\CE\to[0,1]$ is a probability measure;
  \item for every $A\in \CE$, the function $\Pi(\variable,A)\colon X\to[0,1]$ is $\CE$\=/measurable.
 \end{enumerate}
\end{de}

If $f\colon X\to \RR$ is integrable with respect to the probability measure $\Pi(x,\variable)$, we denote its integral by 
\[
 \int_{X}f(y)\Pi(x,\d y)\coloneqq \int_{X}f(y) \d \Pi(x,\variable)(y)
\]
(the integral is then naturally extended to complex-valued functions). The associated \emph{Markov operator} $\fkP$ is a linear operator on the space of bounded $\CE$\=/measurable functions, defined by
\[
 \fkP f(x)\coloneqq  \int_{X}f(y)\Pi(x,\d y).
\]

Since $\Pi(\variable,A)$ is measurable for every $A\in\CE$, we can define an operator $\check\fkP$ on the space of measures on $(X,\CE)$ by letting
\[
 \check\fkP\nu(A)\coloneqq\int_X \Pi(x,A)\d\nu(x)
\]
for every measure $\nu$ on $(X,\CE)$. The operators $\fkP$ and $\check\fkP$ are dual to one another in the sense that
\begin{equation}\label{eq:dual.Markov.operator}
 \int_X \fkP f(x)\d\nu(x)=\int_X f(x)\d \check\fkP\nu(x),
\end{equation}
whenever the integrals are defined.

\begin{de}[\cite{Kai92}]\label{defn:markov boundary}
 Given a measure $\nu$ on $(X,\CE)$ and an $A\in\CE$, the \emph{($\nu$-)size of the boundary} of $A$ (with respect to $\Pi$) is defined as
 \[
  \abs{\partial_\Pi A}_\nu\coloneqq\int_A\Pi(x,X\smallsetminus A)\d \nu(x).
 \]
\end{de}

\begin{rmk}
 Heuristically, a Markov kernel can be described as ``moving mass across $X$'' without creating nor destroying it: the value $\Pi(x,A)$ is the proportion of the mass that is moved from the point $x$ into the set $A$. The measure $\check\fkP\nu$ is the distribution of mass on $X$ that is obtained after moving the initial distribution $\nu$ according to the kernel $\Pi$.
 The function $\fkP f$ assigns to a point $x\in X$ the expected value of $f$ when spreading $x$ across $X$ according to the kernel $\Pi$.

 The duality formula \eqref{eq:dual.Markov.operator} on the indicator function $f=\chi_A$ can be understood as saying that the total $\nu$\=/mass that is moved into a set $A$ by the kernel $\Pi$ is equal to $\nu$\=/integral of the likelihood that $\Pi$ will take $x$ into $A$.
 The size of the boundary of $A\in\CE$ is the amount of $\nu$\=/mass that is carried outside $A$ by $\Pi$.
\end{rmk}

We will only be concerned with some special Markov kernels:

\begin{de}
 A Markov kernel $\Pi$ is called \emph{reversible} if there exists a measure $m$ on $(X,\CE)$ such that
 \[
  \int_X f(x)\fkP g(x)\d m(x)=
  \int_X \fkP f(x) g(x)\d m(x)
 \]
for every pair of measurable bounded functions $f,g\colon X\to\RR$. The measure $m$ is said to be a \emph{reversing measure} for $\Pi$ (note that $m$ need not be unique in general).
To specify which reversing measure is being considered, we say that $\Pi$ is a reversible Markov kernel \emph{on $(X,m)$}.
\end{de}
Let $m$ be a measure on $X$. We define the measure $\mu$ on $X\times X$ by letting
\begin{equation}\label{eq:mu}
 \mu(A\times B)
 \coloneqq \int_A \Pi(x,B) \d m(x)
 =\int_{X}\chi_A(x)\fkP\chi_B(x)\d m(x)
\end{equation}
for every $A,B\in\CE$. Then $m$ is a reversing measure \emph{if and only if} $\mu$ is \emph{symmetric}, \emph{i.e.}, $\mu(A\times B)=\mu(B\times A)$ for every $A,B\in\CE$. In this case, we have 
\begin{equation}\label{eq:boundary}
 \mkbdry{A}= \mu\bigparen{A\times (X\smallsetminus A)} = \mu\bigparen{(X\smallsetminus A) \times A} = \mkbdry{X\smallsetminus A}.
\end{equation}
In other words, the $m$-size of the boundary of any measurable set is equal to the $m$-size of the boundary of its complement. 

For the rest of this section, let us fix a reversible Markov kernel $\Pi$ on $(X,m)$. We note that
\[
 \check\fkP m(A)
 =\mu(X\times A)=\mu(A\times X)
 =\int_A \Pi(x,X) \d m(x)=m(A),
\]
\emph{i.e.}, $m$ is \emph{invariant} under $\check\fkP$. Hence, the Jensen inequality yields:
\[
 \int_X \abs{\fkP f(x)}^2\d m(x)
 \leq \int_X \fkP \abs{f}^2(x) \d m(x)
 = \int_X \abs{f}^2(x) \d \check\fkP m(x)
 = \int_X \abs{f}^2(x) \d m(x)
 =\norm{f}_{m,2}^2.
\]
Therefore, the Markov operator $\fkP$ can be regarded as a bounded operator on $L^2(X,m)$ with norm $\norm{\fkP}\leq 1$. Since $m$ is reversing, the operator $\fkP$ is self\=/adjoint.

Now, for any $p \in [1,\infty)$ and any $f\in L^p(X,m)$, we define its \emph{$p$\=/Dirichlet energy} as
\[
 \CE_p(f)\coloneqq\frac{1}{2}\int_{X\times X}\abs{f(x)-f(y)}^p\d \mu(x,y).
\]
Since $\mu$ is symmetric, we note that
 \[
 \int_{X\times X}\abs{\chi_{A}(x)-\chi_{A}(y)}^p \d\mu(x,y)
 =\mu\bigparen{A\times (X\smallsetminus A)}+\mu\bigparen{(X\smallsetminus A)\times A} = 2\mu\bigparen{A\times (X\smallsetminus A)}.
 \]
Hence for every $p \in [1,\infty)$, we have
\begin{equation}\label{eq:dirichlet.equals.boundary}
  \CE_p(\chi_A)= \mkbdry{A}.
\end{equation}

Finally, we observe that for any $f\in L^2(X,m)$ we have
\[
 \CE_2(f)
 =\frac 12 \int_{X\times X}\abs{f}^2(x)+\abs{f}^2(y)-2 \mathfrak{Re}(f(x)\overline{f(y)})\ \d \mu(x,y)=\norm{f}_{m,2}^2-\angles{f,\fkP f}_m,
\]
where the last equality uses the reversibility.

We define the \emph{Laplacian} of $\Pi$ as $\Delta\coloneqq 1-\fkP$, then we have $\CE_2(f)=\angles{f,\Delta f}_m$ for every $f\in L^2(X,m)$. In particular, the Laplacian $\Delta$ is a positive self\=/adjoint operator whose spectrum is contained in $[0,2]$.

\subsection{Isoperimetric inequalities and spectra of Markov kernels}
\label{ssec:Markov.isoperimetric and spectra}

It is a well-known result that a sequence of finite graphs is a family of expanders \emph{if and only if} the Markov operators associated with the simple random walks have a uniform spectral gap \cite{alon1986eigenvalues,alon1985lambda1,dodziuk1984difference}.
A similar result is true---albeit not as widely known---in the context of Markov kernels.

Let $\Pi$ be a reversible Markov kernel on $(X,m)$, \textbf{where $m$ is a finite measure}. Then all constant functions on $X$ belong to $L^2(X,m)$ and are fixed by $\fkP$. It follows that $\norm{\fkP}=1$ and $1$ belongs to the spectrum of $\fkP$. Denote the orthogonal complement of the constant functions in $L^2(X,m)$ by $L^2_0(X,m)$, \emph{i.e.}, 
\[
L^2_0(X,m)\coloneqq\Bigbraces{f\in L^2(X,m)\bigmid \int_X f(x)\d m(x)=0}.
\]
 Note that $L^2_0(X,m)$ is $\fkP$-invariant and that the spectrum of the restriction of $\fkP$ on $L^2_0(X,m)$ is contained in $[-1,1]$. We denote the supremum of this spectrum by $\lambda_2\in \RR$. We make the following definition:

\begin{de}\label{spectral gap markov kernel}
 A reversible Markov kernel on a finite measure space $(X,m)$ is said to have a \emph{spectral gap} if $\lambda_2<1$. 
\end{de} 

It is clear from the definition that the reversible kernel $\Pi$ has a spectral gap \emph{if and only if} $1$ is isolated in the spectrum of $\fkP$ and the $1$\=/eigenspace consists of constant functions on $X$. Equivalently, this happens \emph{if and only if} $0$ is isolated in the spectrum of $\Delta=1-\fkP$ and the $0$\=/eigenspace consists of constant functions. Obviously, we have that 
\begin{equation}\label{eq: spectral gap}
 1-\lambda_2=\inf_{} \left\{\frac{\CE_2(f)}{\norm{f}_{m,2}^2}\;\middle|\; f\in L^2_0(X,m) \right\}.
\end{equation}

In analogy with the notion of Cheeger constants for finite graphs, we define:
\begin{de}\label{defn:Cheeger.constant.Markov}
The \emph{Cheeger constant} for a reversible Markov kernel $\Pi$ on $(X,m)$ is 
 \[
  \kappa\coloneqq\inf\left\{\frac{\mkbdry{A}}{m(A)}\;\middle|\; A\in\CE,\ 0<m(A)\leq\frac 12 m(X)\right\}.
 \]
\end{de}

We can now state the following theorem relating Cheeger constants and spectral gaps in the context of Markov kernels:
\begin{thm}[{\cite[Theorem 2.1]{lawler1988bounds}}]\label{thm:spectral.characterisation.markov.exp}
 Let $\Pi$ be a reversible Markov kernel on $(X,m)$ where $m$ is finite. Then
 \[
  \frac{\kappa^2}{2} \leq 1-\lambda_2\leq 2\kappa.
 \]
\end{thm}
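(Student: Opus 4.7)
My plan is to prove the two inequalities separately, following the classical strategy of the discrete Cheeger inequality (see \emph{e.g.}~\cite{alon1986eigenvalues,dodziuk1984difference}) but phrased entirely in terms of the Dirichlet energies $\CE_p$ and the identity $\CE_p(\chi_A)=\mkbdry{A}$ coming from \eqref{eq:dirichlet.equals.boundary}.

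For the upper bound $1-\lambda_2\leq 2\kappa$, I would test the variational formula \eqref{eq: spectral gap} against a shifted indicator function. Given a measurable $A\subseteq X$ with $0<m(A)\leq \tfrac{1}{2}m(X)$, set $f\coloneqq\chi_A - m(A)/m(X)$, which lies in $L^2_0(X,m)$. Since Dirichlet energies are unchanged by adding constants, $\CE_2(f)=\CE_2(\chi_A)=\mkbdry{A}$. A direct computation yields $\|f\|_{m,2}^2 = m(A)(m(X)-m(A))/m(X)\geq m(A)/2$, where the inequality uses $m(A)\leq m(X)/2$. Substituting into \eqref{eq: spectral gap} and taking the infimum over $A$ then gives $1-\lambda_2\leq 2\kappa$.

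For the lower bound $\kappa^2/2\leq 1-\lambda_2$, the key analytic tool is the coarea-type identity
\[
  \CE_1(g)=\int_0^\infty \mkbdry{\{g>t\}}\,dt,
\]
valid for every nonnegative $g\in L^1(X,m)$ and obtained from the layer-cake decomposition of $|g(x)-g(y)|$ combined with \eqref{eq:dirichlet.equals.boundary}. Given $f\in L^2_0(X,m)$, pick a median $c\in\RR$ of $f$ and set $f_+\coloneqq(f-c)\vee 0$, $f_-\coloneqq(c-f)\vee 0$; the defining property of a median ensures $m(\{f_\pm>0\})\leq m(X)/2$. Applying the coarea identity to $g\coloneqq f_\pm^2$ and using the definition of $\kappa$ produces $\CE_1(f_\pm^2)\geq \kappa\|f_\pm\|_{m,2}^2$. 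On the other hand, writing $a^2-b^2=(a-b)(a+b)$ inside $\CE_1$, applying Cauchy--Schwarz, and using $(a+b)^2\leq 2(a^2+b^2)$ together with the invariance $m\check\fkP=m$ yields
\[
  \CE_1(f_\pm^2)\leq \sqrt{2\,\CE_2(f_\pm)}\;\|f_\pm\|_{m,2}.
\]
Squaring and combining the two bounds gives $\CE_2(f_\pm)\geq (\kappa^2/2)\|f_\pm\|_{m,2}^2$.

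To pass back to $f$, I would invoke the elementary pointwise inequality $(u_+-v_+)^2+(u_--v_-)^2\leq(u-v)^2$ (verified by a quick case analysis on the signs of $u,v$), which yields $\CE_2(f_+)+\CE_2(f_-)\leq \CE_2(f-c)=\CE_2(f)$. The zero-mean assumption on $f$ forces $\|f_+\|_{m,2}^2+\|f_-\|_{m,2}^2=\|f-c\|_{m,2}^2\geq\|f\|_{m,2}^2$, so $\CE_2(f)\geq (\kappa^2/2)\|f\|_{m,2}^2$, and \eqref{eq: spectral gap} completes the argument. I expect the main obstacle to be this lower bound: coarea, Cauchy--Schwarz, truncation at a median, and sign analysis must all be aligned so that the loss between $\CE_1(f_\pm^2)$ and $\CE_2(f_\pm)$ collapses exactly into the factor $2$. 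The upper bound and all other manipulations are routine identities for Dirichlet forms of reversible Markov kernels.
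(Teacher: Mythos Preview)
Your proof is correct, and the upper bound argument is identical to the paper's. For the lower bound, however, you take a genuinely different route. Both you and the paper first establish the key estimate $\CE_2(g)\geq(\kappa^2/2)\|g\|_{m,2}^2$ for nonnegative $g$ supported on a set of measure at most $m(X)/2$ (the paper's Lemma~\ref{lem:dirichlet.estimate}), via coarea and Cauchy--Schwarz. The difference lies in how this local estimate is upgraded to all of $L^2_0(X,m)$. The paper invokes the spectral theorem to produce an approximate eigensequence $f_n$ with $\|\fkP f_n-\lambda_2 f_n\|\to 0$, then works with $f_n^+$; this forces a somewhat delicate auxiliary step (shifting by constants $c_n$) to ensure $\|f_n^+\|$ stays bounded away from zero, and a separate argument to control $\langle f_n^+,(\fkP f_n)^+\rangle$. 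Your approach is more elementary: truncating an arbitrary $f\in L^2_0(X,m)$ at a median produces \emph{two} pieces $f_\pm$, each with half-measure support, and the pointwise inequality $(u_+-v_+)^2+(u_--v_-)^2\leq(u-v)^2$ lets you simply add the two estimates. This avoids the spectral theorem entirely and sidesteps the positivity-of-norm issue that occupies the paper. The paper's route may feel more natural if one is already thinking spectrally, but your median-and-sum argument is cleaner and more direct. One small omission: you should note that it suffices to take the infimum in \eqref{eq: spectral gap} over real-valued $f$ (immediate since $\Delta$ is self-adjoint and $\CE_2$, $\|\cdot\|^2$ split over real and imaginary parts), so that ``median'' and $f_\pm$ make sense.
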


\begin{rmk}
 We are grateful to the anonymous referee for pointing out \cite{lawler1988bounds} to us. We should remark that the authors of \cite{lawler1988bounds} use a slightly different notion of Cheeger constant for Markov kernels. Moreover, the inequality they prove has slightly different constants and it is not sharp (see also the remark below \cite[Proposition 2.2]{lawler1988bounds}). For completeness, we provide a self\=/contained proof of Theorem~\ref{thm:spectral.characterisation.markov.exp} in the appendix.
\end{rmk}

\subsection{Markov kernels from actions}
\label{ssec:actions.and.markov.kernels}
Let $\Gamma\curvearrowright (X,\nu)$ be a measure\=/class-preserving action. For every $\gamma\in \Gamma$ and $x\in X$, let $r(\gamma,x)\coloneqq\frac{d \gamma^{-1}_*\nu}{d\nu}(x)$ be the Radon\=/Nikodym derivative. Note that $r(\gamma,x)=r(\gamma^{-1},\gamma(x))^{-1}$ and for any measurable function $f$ on $X$ we have
\[
 \int_X f(\gamma\cdot x)d\nu(x)=\int_X f(x)r(\gamma^{-1},x)d\nu(x)
\]
when the integrals exist. In particular, for every measurable $Y\subseteq X$ we have
\begin{equation}\label{eq:change.of.variable}
 \int_Y f(\gamma\cdot x)r(\gamma,x)^{\frac 12}d\nu(x)=\int_{\gamma(Y)} f(x)r(\gamma^{-1},x)^\frac 12 d\nu(x)
\end{equation}
when the integrals exist. 

Now fix a finite symmetric subset $S\subseteq \Gamma$ containing the identity $1$, and a measurable subset $Y\subseteq X$ (which might have infinite measure). For every $x\in Y$, let $S_{Y,x}\coloneqq\braces{s\in S\mid s\cdot x\in Y}$ and
\begin{equation}\label{function sigma}
 \sigma_{Y,S}(x)\coloneqq \sum_{s\in S_{Y,x}}r(s,x)^\frac{1}{2}.
\end{equation}

\begin{de}\label{defn:normalized.Markov.kernel}
Let $\Gamma\curvearrowright (X,\nu)$ be a measure\=/class-preserving action. The \emph{normalised local Markov kernel} associated with $Y$ and $S$ is the Markov kernel on $Y$ defined by
 \[
  \Pi_{Y,S}(x,\variable)\coloneqq\frac{1}{\sigma_{Y,S}(x)}\sum_{s\in S_{Y,x}}r(s,x)^\frac 12\delta_{s\cdot x}
 \]
 where $\delta_y$ is the Dirac delta measure on the point $y$, and we denote the associated Markov operator by $\fkP_{Y,S}$. We say that $\Pi_S\coloneqq\Pi_{X,S}$ is the \emph{normalised Markov kernel} associated with $S$. 
\end{de}

For later use, we record here an elementary but convenient integration formula: for every measurable function $G\colon S\times Y\to \CCC$ we have
\begin{equation}\label{eq:integral.Sum.Sx}
 \int_Y\sum_{s\in S_{Y,x}}G(s,x)\d\nu(x)
 =\int_Y\sum_{s\in S}\chi_{\braces{s^{-1}(Y)}}(x)G(s,x)\d\nu(x)
 =\sum_{s\in S}\int_{Y\cap s^{-1}(Y)}\hspace{-2em}G(s,x)\,\d\nu(x)
\end{equation}
(when the integrals are defined).

One of the key properties of normalised local Markov kernels is that they are reversible. In fact, consider the measure $\snu$ on $Y$ defined by 
\[
 \d\snu \coloneqq \sigma_{Y,S}\cdot \d(\nu|_Y).
\]
In other words, $\snu$ is obtained by rescaling the restriction of $\nu$ to $Y$ by the density function $\sigma_{Y,S}$. Then the following holds true:

\begin{prop}\label{prop:normalised.markov.is.reversible}
Let $\Gamma\curvearrowright (X,\nu)$ be a measure\=/class-preserving action and $S$ be a finite symmetric subset of $\Gamma$ containing the identity $1$. Then:
\begin{enumerate}
  \item The measure $\snu$ is equivalent to the restriction $\nu|_Y$.
  \item If $\nu(Y)$ is finite, then $\nu(A) \leq \snu(A) \leq |S|\sqrt{\nu(A)\nu(Y)}$ for any measurable $A \subseteq Y$. In particular, in this case $\snu(Y)$ is also finite.
  \item The measure $\snu$ is reversing for the normalised local Markov kernel $\Pi_{Y,S}$. The associated measure $\mu$ on $Y\times Y$---defined by (\ref{eq:mu})---is determined by the formula:
 \begin{equation}\label{eq:mu for Markov from actions}
  \int_{Y\times Y} F(x,y)\d \mu(x,y)=\sum_{s\in S}\int_{Y\cap s^{-1}(Y)} r(s,x)^\frac 12 F(x,s\cdot x) \d\nu(x)
 \end{equation}
 for every integrable function $F$ on $Y\times Y$.
\end{enumerate}
\end{prop}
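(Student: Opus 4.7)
For part (1), the key observation is that $1\in S$ and $r(1,x)=1$, so $1\in S_{Y,x}$ for every $x\in Y$, giving $\sigma_{Y,S}(x)\geq 1$ pointwise on $Y$. Since $\sigma_{Y,S}$ is a finite sum of non\=/negative measurable functions, it is also finite everywhere. Hence $\sigma_{Y,S}$ is a strictly positive density and $\snu \sim \nu|_Y$.

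For part (2), the lower bound $\snu(A)\geq \nu(A)$ is immediate from $\sigma_{Y,S}\geq 1$. For the upper bound, I will use \eqref{eq:integral.Sum.Sx} to write
\[
 \snu(A)=\int_A\sum_{s\in S_{Y,x}} r(s,x)^{\frac 12}\d\nu(x)
 =\sum_{s\in S}\int_{A\cap s^{-1}(Y)}r(s,x)^{\frac 12}\d\nu(x),
\]
and then apply the Cauchy--Schwarz inequality summand-by-summand to get a factor of $\sqrt{\nu(A\cap s^{-1}(Y))}$ together with $\bigparen{\int_{A\cap s^{-1}(Y)} r(s,x)\d\nu(x)}^{1/2}$. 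Using the change-of-variable formula for the Radon--Nikodym derivative, the second factor equals $\sqrt{\nu(s(A\cap s^{-1}(Y)))}\leq\sqrt{\nu(Y)}$ since $s(A\cap s^{-1}(Y))\subseteq Y$. Summing the $\abs S$ contributions gives the bound $\snu(A)\leq \abs{S}\sqrt{\nu(A)\nu(Y)}$.

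For part (3), I will first verify formula \eqref{eq:mu for Markov from actions} on product indicators $F=\chi_A\otimes\chi_B$ with $A,B\subseteq Y$, and then extend to general integrable $F$ by linearity and monotone convergence. Using the definition $\d\snu=\sigma_{Y,S}\d(\nu|_Y)$, the $\sigma_{Y,S}(x)$ in the denominator of $\Pi_{Y,S}(x,\variable)$ cancels, yielding
\[
 \mu(A\times B)=\int_A\sum_{s\in S_{Y,x},\, s\cdot x\in B} r(s,x)^{\frac 12}\d\nu(x).
\]
Since $B\subseteq Y$, the condition $s\cdot x\in B$ already forces $s\in S_{Y,x}$, and applying \eqref{eq:integral.Sum.Sx} rewrites the right\=/hand side in the form of \eqref{eq:mu for Markov from actions}.

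The remaining (and perhaps main) task is to establish symmetry of $\mu$. Given \eqref{eq:mu for Markov from actions}, applied with $F(x,y)=\chi_A(x)\chi_B(y)$, I will change variables in each summand via \eqref{eq:change.of.variable}: for fixed $s$,
\[
 \int_{Y\cap s^{-1}(Y)}\chi_A(x)\chi_B(s\cdot x)\,r(s,x)^{\frac 12}\d\nu(x)
 =\int_{Y\cap s(Y)}\chi_A(s^{-1}\cdot y)\chi_B(y)\,r(s^{-1},y)^{\frac 12}\d\nu(y).
\]
Now reindex the outer sum by $t=s^{-1}$, which is a bijection of $S$ because $S$ is symmetric, so that $Y\cap s(Y)=Y\cap t^{-1}(Y)$. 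The result is exactly the expression for $\mu(B\times A)$, which proves symmetry. Symmetry of $\mu$ is equivalent to reversibility of $\Pi_{Y,S}$ with respect to $\snu$, as recalled in Section~\ref{sec:preliminaries on Markov kernels}. No serious obstacle is expected; the only subtle point is making sure that the restriction to $S_{Y,x}$ in the definition of $\Pi_{Y,S}$ matches the integration domain $Y\cap s^{-1}(Y)$ in \eqref{eq:mu for Markov from actions}, which is handled by \eqref{eq:integral.Sum.Sx}.
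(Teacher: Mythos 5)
Your proposal is correct and follows essentially the same route as the paper's proof: part (1) via positivity of $\sigma_{Y,S}$ coming from $1\in S$, part (2) via \eqref{eq:integral.Sum.Sx}, Cauchy--Schwarz and the identity $\int_{A\cap s^{-1}(Y)}r(s,x)\,\d\nu=\nu((s\cdot A)\cap Y)\leq\nu(Y)$, and part (3) by cancelling $\sigma_{Y,S}$ to obtain \eqref{eq:mu for Markov from actions} and then proving symmetry of $\mu$ through the change of variables \eqref{eq:change.of.variable} and the reindexing $s\mapsto s^{-1}$ permitted by $S=S^{-1}$. The only cosmetic differences (termwise Cauchy--Schwarz bounds instead of a second Cauchy--Schwarz on the sum over $S$, and verifying \eqref{eq:mu for Markov from actions} on product indicators before extending) do not change the argument.
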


\begin{proof}
(1). Note that $0<r(s,x)<\infty$ for $\nu$\=/almost every $x\in X$ because the action is measure\=/class-preserving. Since $S$ contains the identity $1$, we know that $S_{Y,x}$ is non\=/empty for every $x\in Y$. It follows immediately that a measurable subset of $Y$ is $\nu$\=/null if and only if it is $\snu$\=/null. 
 
(2). Since $1 \in S_{Y,x}$, we have $\nu(A) \leq \snu(A)$ for any measurable $A \subseteq Y$. On the other hand, by \eqref{eq:integral.Sum.Sx} and the Cauchy--Schwarz inequality we have
 \begin{align*}
  \snu(A)
  &=\sum_{s\in S}\int_{A\cap s^{-1}(Y)} r(s,x)^\frac 12\d\nu(x)
  \leq \sum_{s\in S} \nu(A\cap s^{-1}(Y))^{\frac{1}{2}} \big(\int_{A\cap s^{-1}(Y)} r(s,x) \d\nu(x) \big)^{\frac{1}{2}}\\
  &\leq \big( \sum_{s\in S} \nu(A) \big)^\frac{1}{2} \cdot \big( \sum_{s\in S} \nu((s \cdot A) \cap Y)\big)^\frac{1}{2}
  \leq |S|\sqrt{\nu(A)\nu(Y)}.
  \end{align*}

(3). Let us first verify the formula for $\mu$. By definition, for any measurable function $F$ on $Y\times Y$ we have that
 \begin{align*}
  \int_{Y\times Y} F(x,y)\d \mu(x,y)
  &=\int_Y \int_Y F(x,y) \Pi_{Y,S}(x,\d y)\d \snu(x)  \\
  &=\int_Y \frac{1}{\sigma_{Y,S}(x)}
  \sum_{s\in S_{Y,x}} r(s,x)^\frac 12 F(x,s\cdot x) \d \snu(x)  \\
  &=\int_Y\sum_{s\in S_{Y,x}}r(s,x)^\frac 12 F(x,s\cdot x)\d \nu(x) \\
  &=\sum_{s\in S}\int_{Y\cap s^{-1}(Y)} r(s,x)^\frac 12 F(x,s\cdot x) \d\nu(x),
 \end{align*}
 where the last step follows from \eqref{eq:integral.Sum.Sx}.

In order to show that $\snu$ is reversing for $\Pi_{Y,S}$, it suffices to prove:
\[
\int_{Y\times Y} F(x,y)\d \mu(x,y) = \int_{Y\times Y} F(y,x)\d \mu(x,y)
\]
for every measurable function $F$ on $Y \times Y$. From \eqref{eq:mu for Markov from actions}, we have that
\begin{align*}
\int_{Y\times Y} F(y,x)\d \mu(x,y) &= \sum_{s\in S}\int_{Y\cap s^{-1}(Y)} r(s,x)^\frac 12 F(s\cdot x,x) \d\nu(x)\\
&=\sum_{s\in S}\int_{sY\cap Y} r(s^{-1},x)^\frac 12 F(x,s^{-1}\cdot x) \d\nu(x)\\
&=\sum_{s\in S}\int_{Y\cap s^{-1}(Y)} r(s,x)^\frac 12 F(x,s\cdot x) \d\nu(x) \\
&=\int_{Y\times Y} F(x,y)\d \mu(x,y),
\end{align*}
where we use \eqref{eq:change.of.variable} for the second equation, and use $S=S^{-1}$ for the third one.
\end{proof}

\begin{rmk}\label{rmk:1.in.S}
 The assumption that $1\in S$ is only used to ensure that $S_{Y,x}$ is always non\=/empty for every $x\in Y$. This assumption can be dropped if one already knows, \emph{a priori}, that $S_{Y,x}$ is non\=/empty (\emph{e.g.}, $Y$ is $\Gamma$\=/invariant). On the contrary, the condition that $S=S^{-1}$ is essential for the proof of reversibility in Proposition~\ref{prop:normalised.markov.is.reversible}. 
\end{rmk}

Having introduced the reversible normalised local Markov kernel $\Pi_{Y,S}$, we would like to apply the techniques developed in previous subsections to asymptotically expanding actions. Firstly, let us give the following definition:

\begin{de}\label{defn:domain.Markov.expansion}
Let $\Gamma\curvearrowright (X,\nu)$ be a measure\=/class\=/preserving action and $Y\subseteq X$ be a domain. Let $S\subseteq \Gamma$ be a finite symmetric subset with $1\in S$, then $Y$ is called a \emph{domain of Markov $S$\=/expansion (for the action)} if the associated normalised local Markov kernel $\Pi_{Y,S}$ has strictly positive Cheeger constant (see Definition~\ref{defn:Cheeger.constant.Markov}). $Y$ is called a \emph{domain of Markov expansion} if it is a domain of Markov $S$\=/expansion for some finite symmetric $S\subseteq \Gamma$ with $1\in S$.
\end{de}

By Theorem \ref{thm:spectral.characterisation.markov.exp}, $Y$ is a domain of Markov $S$\=/expansion \emph{if and only if} the normalised local Markov kernel $\Pi_{Y,S}$ has spectral gap. In other words, $1$ is isolated in the spectrum of the Markov operator $\fkP_{Y,S}$ and the $1$\=/eigenspace consists of constant functions on $X$.
When this is the case, restriction of the Markov operator $\fkP_{Y,S}$ on $L^2_0(Y,\snu)$ has spectrum contained in $[-1,\lambda_2]\subset[-1,1)$. The restriction of the operator $\frac 12 +\frac 12\fkP_{Y,S}$ to $L^2_0(Y,\snu)$ has spectrum contained in $[-\frac{3}4,\frac{\lambda_2+1}2]$ (this is the Markov operator obtained by lazyfying the Markov process). For future reference, we record this observation as a lemma.

\begin{lem}\label{lem:spetral gap equiv}
The Markov kernel $\Pi_{Y,S}$ has spectral gap \emph{if and only if} the restriction of the lazy Markov operator $\frac 12 + \frac 12\fkP_{Y,S}$ to $L^2_0(Y,\snu)$ has spectrum contained in $[-\frac{3}{4},1-\epsilon]$ with $\epsilon>0$ (and hence has norm strictly less than $1$).
\end{lem}

The following result provides the connection between expansion in measure (Definition \ref{defn:domain of expansion}) and Markov expansion (Definition~\ref{defn:domain.Markov.expansion}) under an assumption of bounded Radon--Nikodym derivatives:

\begin{lem}\label{lem:domain.of.exp iff Markov.exp}
Let $\Gamma\curvearrowright (X,\nu)$ be a measure\=/class\=/preserving action, $Y\subseteq X$ be a domain and $S$ be a finite symmetric subset of $\Gamma$ containing the identity. If there is a constant $\Theta\geq 1$ such that $1/\Theta\leq r(s,x)\leq\Theta$ for every $x\in Y$ and $s\in S_{Y,x}$, then $Y$ is a domain of $S$\=/expansion \emph{if and only if} it is a domain of Markov $S$\=/expansion. 
\end{lem}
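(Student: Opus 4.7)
The plan is to compare the quantity $\nu((S\cdot A)\cap Y)-\nu(A)=\nu\bigparen{\bigcup_{s\in S}(s\cdot A)\cap(Y\smallsetminus A)}$ (which measures $S$\=/expansion) with $|\partial_{\Pi_{Y,S}}(A)|_{\snu}$ (which measures Markov expansion) by passing through the common intermediate quantity
\[
 Q(A)\coloneqq\sum_{s\in S}\nu\bigparen{A\cap s^{-1}(Y\smallsetminus A)}.
\]
Using Proposition~\ref{prop:normalised.markov.is.reversible}(3) with $F=\chi_{A\times(Y\smallsetminus A)}$, we have the identity $|\partial_{\Pi_{Y,S}}(A)|_{\snu}=\sum_{s\in S}\int_{A\cap s^{-1}(Y\smallsetminus A)}r(s,x)^{1/2}\d\nu(x)$, which under the hypothesis $1/\Theta\le r(s,x)\le\Theta$ gives
\[
 \Theta^{-1/2}Q(A)\le |\partial_{\Pi_{Y,S}}(A)|_{\snu}\le \Theta^{1/2}Q(A).
\]

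Next, using $\nu(s\cdot B)=\int_B r(s,x)\d\nu(x)$ and the inclusion $s\cdot(A\cap s^{-1}(Y\smallsetminus A))=(s\cdot A)\cap(Y\smallsetminus A)$, I get
\[
 \frac{1}{|S|\Theta}Q(A)\le \nu\bigparen{(S\cdot A)\cap(Y\smallsetminus A)}\le \Theta\, Q(A),
\]
where the lower bound comes from taking the maximum over $s\in S$ and bounding the maximum by $1/|S|$ of the sum. Combining these two chains of inequalities, we obtain that $|\partial_{\Pi_{Y,S}}(A)|_{\snu}$ and $\nu((S\cdot A)\cap Y)-\nu(A)$ are comparable up to a multiplicative factor depending only on $|S|$ and $\Theta$.

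Finally, because $1\in S$ and $r(1,\variable)=1$, the density $\sigma_{Y,S}$ satisfies $1\le\sigma_{Y,S}\le|S|\sqrt{\Theta}$, so $\nu|_Y\le\snu\le|S|\sqrt{\Theta}\,\nu|_Y$. The main subtlety is that $S$\=/expansion restricts to sets $A$ with $\nu(A)\le\nu(Y)/2$ while Markov $S$\=/expansion restricts to sets with $\snu(A)\le\snu(Y)/2$, and the two conditions need not agree. To handle this I will split into two cases: when the condition we want to apply is satisfied directly on $A$, and when it fails, in which case it is satisfied on the complement $Y\smallsetminus A$. In the second case I use the symmetry $|\partial_{\Pi_{Y,S}}(A)|_{\snu}=|\partial_{\Pi_{Y,S}}(Y\smallsetminus A)|_{\snu}$ from \eqref{eq:boundary}, together with the fact that $\nu(Y\smallsetminus A)\ge\nu(A)$ (respectively $\snu(Y\smallsetminus A)\ge\snu(A)$) to transfer the expansion estimate from the complement back to $A$. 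The case analysis is the only mildly delicate step; once it is in place, the explicit expansion constants (depending only on $c$, $|S|$ and $\Theta$) fall out of the inequalities above.
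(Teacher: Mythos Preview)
Your proposal is correct and follows essentially the same strategy as the paper: bound $\locmkbdry{A}$ above and below by constant multiples of $\sum_{s\in S}\nu\bigl((s\cdot A\smallsetminus A)\cap Y\bigr)$ using the Radon--Nikodym bound, compare $\nu$ and $\snu$ via $1\le\sigma_{Y,S}\le\abs{S}\sqrt{\Theta}$, and handle the mismatch between the conditions $\nu(A)\le\nu(Y)/2$ and $\snu(A)\le\snu(Y)/2$ by a two-case argument passing to the complement. The only noteworthy difference is in the second case of the Markov $\Rightarrow$ expansion direction: the paper uses the set-theoretic inclusion $\acbdry_S A\cap Y\supseteq \acbdry_S(Y\smallsetminus S\cdot A)\cap Y$ together with $\nu(Y\smallsetminus S\cdot A)\ge\nu(A)-\nu(\acbdry_S A\cap Y)$, whereas you use the Markov-boundary symmetry $\locmkbdry{A}=\locmkbdry{Y\smallsetminus A}$ and then run the comparison inequalities back; your route is slightly more uniform across the two directions.
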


\begin{proof}
By definition and \eqref{eq:mu for Markov from actions}, for any measurable subset $A\subseteq Y$ we have that
\begin{align*}
  \locmkbdry{A}
&=\int_{Y \times Y}\chi_A(x)\chi_{Y\smallsetminus A}(y) \d\mu(x,y)   
= \sum_{s\in S}\int_{Y\cap s^{-1}(Y)} r(s,x)^\frac 12 \chi_A(x)\chi_{Y\smallsetminus A}(s\cdot x)\d\nu(x)\\
&=\sum_{s\in S}\int_{(A\smallsetminus s^{-1}(A))\cap s^{-1}(Y)}\hspace{-4ex}r(s,x)^\frac 12 \d\nu(x)=\sum_{s\in S}\int_{(s\cdot A\smallsetminus A)\cap Y}\hspace{-4ex}r(s^{-1},x)^\frac 12 \d\nu(x),
\end{align*}
where the last equality uses (\ref{eq:change.of.variable}).

Hence, it follows from the assumption on $r$ that
 \begin{equation}\label{eq:edge.vertex.estimate}
  \frac{1}{\sqrt{\Theta}}\locmkbdry{A}
  \leq \sum_{s\in S}\nu\bigparen{(s\cdot A\smallsetminus A)\cap Y}
  \leq \sqrt{\Theta}\locmkbdry{A}.
 \end{equation}
 Moreover, it follows by the definition of $\snu$ that
\begin{equation}\label{eq:comparing.measures}
  \nu(A)
  \leq\snu(A)
  \leq \abs{S}\sqrt{\Theta} \cdot \nu(A)
\end{equation}
for any measurable subset $A\subseteq Y$.
 
Now we assume that $Y$ is a domain of $(c,S)$\=/expansion for some constant $c>0$. Fix a measurable subset $A\subseteq Y$ with $0<\snu(A)\leq\frac 12\snu(Y)$. In particular, both $\nu(A)>0$ and $\nu(Y\smallsetminus A)>0$ by Proposition~\ref{prop:normalised.markov.is.reversible}(1).

If $\nu(A)\leq \frac 12\nu(Y)$, it follows from Definition~\ref{defn:domain of expansion} and (\ref{eq:edge.vertex.estimate}) that
 \[
  c\nu(A)
  <\nu\paren{(S\cdot A\smallsetminus A)\cap Y}
  \leq \sum_{s\in S} \nu\paren{(s\cdot A\smallsetminus A)\cap Y}
  \leq\sqrt{\Theta}\locmkbdry{A}.
 \]
Together with \eqref{eq:comparing.measures}, we conclude that 
 \[
  \locmkbdry{A}
  > \frac{c}{\abs{S}\Theta}\snu(A).
 \]
 If $\nu(A)> \frac 12\nu(Y)$, we can apply the same argument to $Y\smallsetminus A$ and deduce from (\ref{eq:boundary}) that
 \[
  \locmkbdry{A}=\locmkbdry{Y\smallsetminus A}
  > \frac{c}{\abs{S}\Theta}\snu(Y\smallsetminus A)
  \geq \frac{c}{\abs{S}\Theta}\snu(A),
 \]
where the last inequality follows from the assumption that $\snu(A)\leq\frac 12\snu(Y)$. Thus, $Y$ is a domain of Markov $S$\=/expansion as desired.

 The proof of the converse implication is similar.
 Let $\kappa>0$ be the Cheeger constant for the normalised local Markov kernel $\Pi_{Y,S}$ and fix any measurable subset $A\subset Y$ with $0<\nu(A)\leq\frac 12\nu(Y)$. 
 
If $\snu(A)\leq\frac 12\snu(Y)$, then \eqref{eq:edge.vertex.estimate} implies that
 \[
  \nu\paren{\acbdry_SA\cap Y}
  \geq \frac{1}{\abs{S}}\sum_{s\in S} \nu\bigparen{(s\cdot A\smallsetminus A)\cap Y}
  \geq \frac{1}{\abs{S}\sqrt{\Theta}}\locmkbdry{A}
  \geq \frac{\kappa}{\abs{S}\sqrt{\Theta}} \snu(A),
 \]
 where $\acbdry_SA=S\cdot A\smallsetminus A$. Together with \eqref{eq:comparing.measures} we obtain that
 \[
  \nu\paren{\acbdry_SA\cap Y}
  \geq \frac{\kappa}{\abs{S}\sqrt{\Theta}} \nu(A).
 \]
 If $\snu(A)>\frac 12\snu(Y)$, then $\acbdry_SA\cap Y\supseteq \acbdry_S(Y\smallsetminus S\cdot A)\cap Y$ implies that
 \[
  \nu\paren{\acbdry_SA\cap Y}\geq
  \nu\bigparen{\acbdry_S(Y\smallsetminus S\cdot A)\cap Y}
 \geq \frac{\kappa}{\abs{S}\sqrt{\Theta}} \nu\paren{Y\smallsetminus S\cdot A}.
 \]
Moreover, using $1\in S$ we note that
 \[
  \nu\paren{Y\smallsetminus S\cdot A}
  =\nu\paren{Y}-\nu(A) -\nu(\acbdry_S A\cap Y)\geq \nu(A)-\nu(\acbdry_S A\cap Y).
 \]
So it is easy to conclude that
\[
\nu\paren{\acbdry_SA\cap Y} \geq \frac{\kappa}{\abs{S}\sqrt{\Theta} + \kappa} \cdot \nu(A).
\]
This shows that $Y$ is a domain of $S$-expansion for the action.
\end{proof}

\begin{rmk}
 The statement of Lemma~\ref{lem:domain.of.exp iff Markov.exp} is an analogue of the fact that for graphs with bounded degrees, there are bounds between edge\=/expansion and vertex\=/expansion. More precisely, the Cheeger constant of the normalised (local) Markov kernel should be regarded as the ``measured'' Cheeger constant of the edge\=/expansion, while the notion of expansion in measure is clearly an analogue of the (exterior) vertex\=/expansion for graphs. The assumption that the Radon\=/Nikodym derivatives are bounded corresponds to that the graphs have bounded degree. 
\end{rmk}

Consequently, we obtain an alternative and direct proof for \cite[Lemma 5.2]{grabowski_measurable_2016}:
\begin{cor}[{\cite[Lemma 5.2]{grabowski_measurable_2016}}]
\label{cor:local.spec.gap.iff.domain.of.expansion}
Let $\Gamma\curvearrowright (X,\nu)$ be a measure-preserving action and $Y\subseteq X$ a domain. Then $Y$ is a domain of expansion \emph{if and only if} the action has local spectral gap with respect to $Y$.
\end{cor}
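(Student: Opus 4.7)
The strategy is to chain together three equivalences. First, since the action is measure-preserving, the Radon–Nikodym derivatives $r(s,x)$ are identically $1$, so Lemma~\ref{lem:domain.of.exp iff Markov.exp} (applied with $\Theta = 1$) gives, for every finite symmetric $S \subseteq \Gamma$ containing $1$, the equivalence: $Y$ is a domain of $S$-expansion if and only if $Y$ is a domain of Markov $S$-expansion. Second, the reversing measure $\snu$ is finite by Proposition~\ref{prop:normalised.markov.is.reversible}(2), so Theorem~\ref{thm:spectral.characterisation.markov.exp} applied to the reversible kernel $\Pi_{Y,S}$ yields that Markov $S$-expansion is equivalent to the spectral gap condition $\lambda_2 < 1$ for $\fkP_{Y,S}$ on $L^2_0(Y,\snu)$. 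What remains is to show that $\Pi_{Y,S}$ has a spectral gap for some finite symmetric $S \ni 1$ if and only if the action has local spectral gap with respect to $Y$ in the sense of Definition~\ref{defn:local spectral gap} (possibly with a different generating set).

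For this remaining step, the key input is the explicit expression of the Dirichlet energy coming from Proposition~\ref{prop:normalised.markov.is.reversible}(3) with $r \equiv 1$, namely
\[
\CE_2(f) \;=\; \frac{1}{2} \sum_{s \in S} \int_{Y \cap s^{-1}(Y)} |f(s \cdot x) - f(x)|^2 \, \d\nu(x),
\]
combined with the comparison $\nu|_Y \leq \snu \leq |S|\cdot\nu|_Y$ that follows from $1 \leq \sigma_{Y,S}(x) \leq |S|$. Together, these facts say that the Markov Rayleigh quotient $\CE_2(f)/\|f\|_{\snu,2}^2$ and the local Rayleigh quotient $\bigl(\sum_{s\in S}\|s\cdot f - f\|_{\nu|_Y,2}^2\bigr)/\|f\|_{\nu|_Y,2}^2$ are comparable up to multiplicative constants depending only on $|S|$, modulo three differences: (a) whether the zero-mean constraint is taken with respect to $\snu$ or to $\nu|_Y$; (b) whether the test functions live on $Y$ only or on all of $X$; and (c) whether the difference $|f(sx)-f(x)|^2$ is integrated over all of $Y$ or only over $Y \cap s^{-1}(Y)$.

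For the direction \emph{Markov spectral gap $\Rightarrow$ local spectral gap}, I take $f \in L^2(X,\nu)$ with $\int_Y f \d\nu = 0$, shift the restriction $f|_Y$ by the constant $c = \snu(Y)^{-1}\int_Y f \d\snu$ to enforce zero $\snu$-mean, and apply the Markov spectral gap; the shift is controlled by the estimate $|c|^2 \snu(Y) \lesssim \|f\|_{\nu|_Y,2}^2$ (Cauchy–Schwarz plus the measure comparison), and the integrand of $\CE_2$ is majorised termwise by that of $\|s\cdot f - f\|_{\nu|_Y,2}^2$, so this direction is straightforward. For the converse direction, given $g \in L^2(Y,\snu)$ with $\int_Y g \d\snu = 0$, I extend $g$ by zero to a function on $X$ and correct by a constant to enforce zero $\nu|_Y$-mean, then apply the local spectral gap. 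The main obstacle is that the quantities $\|s\cdot g - g\|_{\nu|_Y,2}^2$ pick up extra mass $\int_{Y \smallsetminus s^{-1}(Y)} |g(x)|^2 \d\nu$ where the Markov Dirichlet energy has no contribution; these terms must be absorbed by a careful rearrangement, possibly after enlarging $S$ (say to $S \cup S^2$), so that the resulting inequality yields a genuine spectral gap for $\Pi_{Y,S}$ and not a circular bound.
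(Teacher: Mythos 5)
Your forward chain (domain of expansion $\Rightarrow$ Markov expansion via Lemma~\ref{lem:domain.of.exp iff Markov.exp} with $\Theta=1$ $\Rightarrow$ Markov spectral gap via Theorem~\ref{thm:spectral.characterisation.markov.exp} $\Rightarrow$ local spectral gap) is exactly the paper's route, but your handling of the mean-shift in the last link does not close as stated. You propose to control the shift $c=\snu(Y)^{-1}\int_Y f\,\d\snu$ by $|c|^2\snu(Y)\lesssim\|f\|_{\nu|_Y,2}^2$; feeding that back into $\|f\|_{\nu|_Y,2}\leq\|g\|_{\snu,2}+|c|\,\nu(Y)^{1/2}$ gives $\|f\|\leq\|g\|_{\snu,2}+C\|f\|$ with $C\geq 1$, which cannot be absorbed. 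The correct observation (and the one the paper uses) is that the hypothesis $\int_Y f\,\d\nu=0$ means $0$ is the minimiser of $c\mapsto\|f-c\|_{\nu|_Y,2}$, so subtracting the $\snu$-mean can only \emph{increase} the $L^2(\nu|_Y)$-norm: $\|g\|_{\snu,2}^2\geq\|g\|_{\nu|_Y,2}^2=\|f\|_{\nu|_Y,2}^2+|c|^2\nu(Y)\geq\|f\|_{\nu|_Y,2}^2$. No absorption is needed. This is a small, fixable slip.

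The genuine gap is in the converse. By routing everything through the three-way equivalence, you are forced to prove ``local spectral gap $\Rightarrow$ Markov spectral gap for $\Pi_{Y,S}$,'' and you correctly identify the obstruction yourself: for $g$ extended by zero, $\sum_{s\in S}\|s\cdot g-g\|_{\nu|_Y,2}^2=2\CE_2(g)+\sum_{s\in S}\int_{Y\smallsetminus s^{-1}(Y)}|g|^2\,\d\nu$, and the extra terms are of the same order as $\|g\|_{\nu|_Y,2}^2$, so the local spectral gap inequality only bounds $\|g\|^2$ by $\CE_2(g)$ plus a multiple of $\|g\|^2$ --- a circular bound. Your proposed remedies (``careful rearrangement,'' ``enlarging $S$ to $S\cup S^2$'') are not substantiated, and it is not apparent that they can work: enlarging $S$ changes $\snu$ and does not remove the boundary-mass terms. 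The paper sidesteps this entirely. Its proof of ``local spectral gap $\Rightarrow$ domain of expansion'' never touches the Markov kernel: one simply tests the inequality of Definition~\ref{defn:local spectral gap} on (centered) indicator functions $\chi_A$, for which $\|s\cdot\chi_A-\chi_A\|_{\nu|_Y,2}^2$ computes measures of symmetric differences and directly yields the expansion inequality. You should replace your third-link converse with this one-line indicator-function argument; as written, that direction of your proof is not complete.
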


\begin{proof}
 Using indicator functions, it is easy to see that the existence of a local spectral gap implies that $Y$ is a domain of expansion. Hence, we only focus on the converse implication.
 
 Let $Y$ be a domain of $(c,k)$\=/expansion and let $S\coloneqq B_k$. Then the normalised local Markov kernel $\Pi_{Y,S}$ has a spectral gap by Theorem~\ref{thm:spectral.characterisation.markov.exp} and Lemma~\ref{lem:domain.of.exp iff Markov.exp}. Using \eqref{eq:mu for Markov from actions} in Proposition~\ref{prop:normalised.markov.is.reversible}, we obtain that for every $g\in L^2_0(Y,\snu)$:
\begin{equation}\label{eq: cor 4.17}
(1-\lambda_2)\cdot\norm{g}_{\snu,2}^2\leq \CE_2(g)=\frac{1}{2}\sum_{s\in S}\int_{Y\cap s^{-1}(Y)} \abs{g(x)-g(s\cdot x)}^2 \d\nu(x).
\end{equation}
 where $1-\lambda_2$ is bounded away from zero (Definition~\ref{spectral gap markov kernel}).
 
Now we fix an $f\in L^2(X,\nu)$ with $\int_Y f\d\nu=0$. Then $f|_Y\in L^2(Y,\snu)$ and $g\coloneqq f|_Y-\int_Y f|_Y\d\snu \in L^2_0(Y,\snu)$ by construction and Proposition~\ref{prop:normalised.markov.is.reversible}(2). Thus, it follows from $S=S^{-1}$ and (\ref{eq: cor 4.17}) that
 \[
  \sum_{s\in S}\norm{s\cdot f-f}_{\nu|_Y,2}
  \geq\Bigparen{\sum_{s\in S}\int_{Y} \abs{f(x)-f(s\cdot x)}^2 \d\nu(x)}^\frac 12
   \geq  \paren{2\CE_2(g)}^\frac 12
  \geq \sqrt{2(1-\lambda_2)}\cdot\norm{g}_{\snu,2}.
 \]
Moreover, since $\int_Y f\d\nu=0$ we see that
 \[
  \norm{g}_{\snu,2}^2\geq \norm{g}_{\nu|_Y,2}^2=\norm{f}_{\nu|_Y,2}^2+ \nu(Y)\Bigparen{\int_Y f|_Y\d\snu}^2\geq \norm{f}_{\nu|_Y,2}^2.
 \]
Combining the above inequalities we conclude that
 \[
 \sum_{s\in S}\norm{s\cdot f-f}_{\nu|_Y,2} \geq \sqrt{2(1-\lambda_2)}\cdot\norm{f}_{\nu|_Y,2},
 \]
 as required.
\end{proof}

\begin{rmk}\label{rmk:bdd ratio}
Note that the proof of Corollary~\ref{cor:local.spec.gap.iff.domain.of.expansion} holds also for non-measure-preserving actions as long as the action is measure-class-preserving and there is a uniform upper bound $\Theta\geq 1$ on the Radon--Nikodym derivatives $r(s,x)$.  
This can be used to provide an alternative proof for \cite[Theorem 3.2]{houdayer2017strongly}. 
\end{rmk}

\subsection{Markov expansion and the structure of strongly ergodic actions}
\label{ssec:Markov structure thm}
Now we are in the position to prove the Markovian analogue of the structure theorem for strongly ergodic actions (Theorem~\ref{thm:structure theorem general}). Let us start with the following local version (compare with Proposition \ref{prop:exhausting domains by domains}):
\begin{prop}\label{prop:Markov local version}
 Let $\rho\colon \Gamma\curvearrowright (X,\nu)$ be a measure\=/class-preserving action. If $Y\subseteq X$ is a domain of asymptotic expansion, then $Y$ admits an exhaustion by domains $Y_n$ of Markov $S^{(n)}$-expansion such that for each $n\in\NN$ there is a constant $\Theta_n\geq 1$ such that $1/\Theta_n\leq r(s,y)\leq\Theta_n$ for every $y\in Y_n$ and $s\in S^{(n)}_{Y_n,y}$.

Moreover, if $Y\subseteq X$ is a domain of $S$\=/asymptotic expansion, then $Y$ admits an exhaustion by domains $Y_n$ of Markov $S$\=/expansion.
\end{prop}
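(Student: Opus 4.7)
Both statements share the same skeleton: feed a carefully chosen decreasing sequence $(Z_n)$ of ``bad sets'' into Proposition~\ref{prop:exhausting domains by domains}, use it to obtain an exhaustion by ordinary $S$-expanding domains, and then invoke Lemma~\ref{lem:domain.of.exp iff Markov.exp} to upgrade $S$-expansion to Markov $S$-expansion on each piece. The role of $Z_n$ is to exclude the points where the relevant Radon--Nikodym derivatives blow up, so that the boundedness hypothesis of Lemma~\ref{lem:domain.of.exp iff Markov.exp} is automatically satisfied on $Y_n \subseteq Y \smallsetminus Z_n$.

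The ``moreover'' part is easier because $S$ is fixed: I would choose an increasing sequence $\Theta_n \to \infty$ such that $Z_n := \{x \in Y \mid r(s,x) \notin [1/\Theta_n, \Theta_n] \text{ for some } s \in S\}$ has measure below $2^{-n}$ --- possible because $S$ is finite and each $r(s,\cdot)$ is a.e.\ finite and positive --- and note that $(Z_n)$ is then automatically nested and decreasing. The version of Proposition~\ref{prop:exhausting domains by domains} appropriate for $S$-asymptotic expansion returns $S_n=S$ throughout, so it produces $Y_n \subseteq Y \smallsetminus Z_n$ that are domains of $S$-expansion, on which $1/\Theta_n \leq r(s,y) \leq \Theta_n$ for every $s \in S$. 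Lemma~\ref{lem:domain.of.exp iff Markov.exp} then converts $S$-expansion into Markov $S$-expansion.

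The main statement is trickier because $S^{(n)}$ is produced \emph{by} Proposition~\ref{prop:exhausting domains by domains} rather than chosen in advance, so we cannot tailor $(Z_n)$ to a specific target set. My plan is to diagonalise: enumerate $\Gamma = \{\gamma_i\}_{i \in \NN}$, set $\mathcal{S}_n := \{\gamma_1, \ldots, \gamma_n\}$, choose $\Theta_n$ large enough that the prefix bad set $W_n := \{x \in Y \mid r(\gamma_i,x) \notin [1/\Theta_n, \Theta_n] \text{ for some } i \leq n\}$ satisfies $\nu(W_n) < 2^{-n}$, and let $Z_n := \bigcup_{m \geq n} W_m$, which is decreasing with $\nu(Z_n) \to 0$. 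Feeding $(Z_n)$ into Proposition~\ref{prop:exhausting domains by domains} yields $Y_n \subseteq Y \smallsetminus Z_n$ that are domains of $S^{(n)}$-expansion for finite $S^{(n)} \subseteq \Gamma$. Since $S^{(n)}$ is finite, $S^{(n)} \subseteq \mathcal{S}_{M_n}$ for some $M_n$; setting $L_n := \max(n, M_n) \geq n$, we get $W_{L_n} \subseteq Z_n$, whence $Y_n \cap W_{L_n} = \emptyset$ and therefore $r(s,y) \in [1/\Theta_{L_n}, \Theta_{L_n}]$ for every $y \in Y_n$ and every $s \in S^{(n)} \subseteq \mathcal{S}_{L_n}$. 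Lemma~\ref{lem:domain.of.exp iff Markov.exp} then supplies Markov $S^{(n)}$-expansion with constant $\Theta_n := \Theta_{L_n}$.

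The main conceptual obstacle is precisely this mismatch between the unknown $S^{(n)}$ and the need to commit to $(Z_n)$ up front; the diagonalisation circumvents it by arranging $Z_n$ to block the bad set for \emph{every} prefix of index at least $n$, which is enough because the specific $S^{(n)}$ returned by the Proposition is contained in some such prefix. Once this is in place the remaining verifications --- smallness of $\nu(Z_n)$, boundedness of the derivatives on $Y_n$, and the hypotheses of Lemma~\ref{lem:domain.of.exp iff Markov.exp} --- are routine.
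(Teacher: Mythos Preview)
Your argument is correct and is genuinely simpler than the paper's. The paper applies Proposition~\ref{prop:exhausting domains by domains} \emph{twice}: first to produce an exhaustion $Y^{(k)}\nearrow Y$ by domains of $S^{(k)}$\=/expansion, then---after defining bad sets $Z^{(k)}_m$ tailored to each $Y^{(k)}$ and $S^{(k)}$ and aggregating them into a single $\widetilde Z_n$---a second time inside each $Y^{(k)}$ to produce exhaustions $Y^{(k)}_l\nearrow Y^{(k)}$ avoiding $\widetilde Z_l$. It then diagonalises, sets $Y_n\coloneqq\bigcup_{k\leq n}Y^{(k)}_{l_k}$, and invokes Lemma~\ref{lem:union of domains} to check that these unions are still domains of $S^{(n)}$\=/expansion. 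The reason for this detour is exactly the mismatch you identify: the paper learns $S^{(k)}$ only after the first exhaustion, so the bad sets are built \emph{post hoc}, and recovering a nested exhaustion after excising them requires the union-and-Lemma~\ref{lem:union of domains} step. Your trick of excluding, in advance, the bad set for every prefix $\CS_m$ with $m\geq n$---so that whichever finite $S^{(n)}$ is eventually returned lies in some $\CS_{L_n}$ whose bad set $W_{L_n}$ has already been removed---collapses all of this to a single call of Proposition~\ref{prop:exhausting domains by domains}. What you lose is nothing of substance here; what you gain is that you avoid the second exhaustion, the diagonal argument, and the appeal to Lemma~\ref{lem:union of domains}. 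Two minor points to tidy: Proposition~\ref{prop:exhausting domains by domains} only guarantees $Y_n\subseteq Y\smallsetminus Z_n$ for $n>N_0$, so reindex; and enlarge each returned $S^{(n)}$ to be symmetric with $1\in S^{(n)}$ before applying Lemma~\ref{lem:domain.of.exp iff Markov.exp} (this preserves the expansion and the derivative bounds, since you control $r(\gamma_i,\cdot)$ for all $\gamma_i\in\CS_{L_n}\supseteq S^{(n)}\cup(S^{(n)})^{-1}\cup\{1\}$ once $L_n$ is chosen large enough).
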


\begin{proof}
It follows from Proposition \ref{prop:exhausting domains by domains} that there exists an exhaustion $Y^{(k)}\nearrow Y$ by domains of $S^{(k)}$\=/expansion in measure.
Without loss of generality, we can assume that $S^{(k)}$ is symmetric, $1\in S^{(k)}$ and $S^{(k)}\subseteq S^{(k+1)}$ for every $k\in\NN$. Let 
 \[
  Z^{(k)}_m\coloneqq \bigbraces{y\in Y^{(k)}\bigmid r(s,y)<\frac{1}{m}\text{ or } r(s,y)> m\text{ for some }s\in S^{(k)}_{Y^{(k)},y}}.
 \]
Since each $Y^{(k)}$ has finite measure and the action is measure\=/class-preserving, for every $k\in\NN$ we have $\nu(Z^{(k)}_m) \to 0$ as $m\to \infty$.
Hence, we can choose for every $n\in\NN$ a sequence of integers $(m^{(n)}_k)_{k\in\NN}$ such that
 \[
  \sum_{k\in\NN}\nu\bigparen{Z^{(k)}_{m^{(n)}_k}}\leq \frac 1n.
 \]
 Let 
 \[
  \widetilde Z_n\coloneqq
  \bigcup_{k\in\NN}Z^{(k)}_{m^{(n)}_k},
 \]
 then we have $\nu(\widetilde Z_n)\to 0$ as $n\to \infty$. We can further assume that $m^{(n+1)}_k\geq m^{(n)}_k$ for every $n\in\NN$ so that $\widetilde Z_{n+1}\subseteq \widetilde Z_n$.
 
Now for every $k$, it follows from Proposition~\ref{prop:exhausting domains by domains} that $Y^{(k)}$ admits an exhaustion $Y^{(k)}_l\nearrow Y^{(k)}$ by domains of $S^{(k)}$\=/expansion such that $Y^{(k)}_l\cap\widetilde Z_l=\emptyset$. By a diagonal argument, there exists a sequence $(l_k)_{k\in\NN}$ such that $Y^{(k)}_{l_k}$ converges in measure to $Y$. Let 
 \[
  Y_n\coloneqq \bigcup_{k=0}^n Y^{(k)}_{l_k}.
 \]
 Ignoring finitely many $k$ if necessary, we can assume that $\nu(Y_{l_0}^{(0)})\geq\frac{3}{4}\nu(Y)$. Then we conclude from Lemma~\ref{lem:union of domains} that each $Y_n$ is a domain of $S^{(n)}$\=/expansion. Since $Y_n\cap \widetilde Z_n=\emptyset$, it follows from Lemma~\ref{lem:domain.of.exp iff Markov.exp} that it is also a domain of Markov $S^{(n)}$\=/expansion. The second statement is obtained by the special case where $S^{(k)}=S$ for all $k\in \NN$.
\end{proof}

\begin{rmk}
The main technical difficulty in the previous proof is to obtain \emph{increasing} sequences of domains of Markov expansion. This is largely due to the fact that it is hard to control Markov $S$\=/expansion as the set $S$ varies. More precisely, choosing different $S$ could yield widely different measures $\snu$ and this would in turn influence the Cheeger constant of $\Pi_{Y,S}$.
 
An alternative approach to Proposition~\ref{prop:Markov local version} would be to go through the proof of Proposition~\ref{prop:exhausting domains by domains} and reprove it using the language of Markov expansion.
\end{rmk}

It is now simple to prove a structure result for strongly ergodic actions in terms of Markovian expansion:
\begin{thm}\label{thm:structure theorem Markov}
 Let $\rho\colon\Gamma\curvearrowright (X,\nu)$ be a measure\=/class-preserving action. Then $\rho$ is strongly ergodic \emph{if and only if} every domain $Y\subseteq X$ admits an exhaustion by domains of Markov expansion.
\end{thm}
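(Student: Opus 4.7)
The forward direction is immediate from earlier results: if $\rho$ is strongly ergodic, Theorem~\ref{thm:structure theorem general} tells us that every domain $Y \subseteq X$ is a domain of asymptotic expansion, and Proposition~\ref{prop:Markov local version} then supplies the desired exhaustion of $Y$ by domains of Markov expansion.

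For the converse, I first reduce to the case where $\nu$ is a probability measure via Definition~\ref{strong ergodic on infinite space}, and then verify strong ergodicity directly from its definition. Suppose $(C_n)_{n\in\N}$ is a sequence of measurable subsets of $X$ with $\nu(C_n \symdiff \gamma \cdot C_n) \to 0$ for every $\gamma \in \Gamma$; after passing to a subsequence, assume for contradiction that $\nu(C_n) \in [\alpha, 1-\alpha]$ for some $\alpha > 0$. Applying the hypothesis to the domain $X$ itself, obtain an exhaustion $X_m \nearrow X$ by domains of Markov $S_m$-expansion with Cheeger constants $\kappa_m > 0$, and fix $m$ so that $\nu(X \setminus X_m) < \alpha/2$. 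For each $n$, at least one of $C_n \cap X_m$ and $X_m \setminus C_n$ has $\tilde\nu_{X_m, S_m}$-measure at most $\tilde\nu_{X_m, S_m}(X_m)/2$; call it $B_n = \tilde C_n \cap X_m$ with $\tilde C_n \in \{C_n, C_n^c\}$, and note that $\tilde C_n$ remains asymptotically invariant.

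The key estimate then comes by unpacking formula~\eqref{eq:mu for Markov from actions} together with the elementary inclusion $B_n \cap s^{-1}(X_m \setminus B_n) \subseteq \tilde C_n \symdiff s^{-1} \tilde C_n$ and the Cauchy--Schwarz inequality (using $\|r(s,\cdot)^{1/2}\|_{\nu,2}^2 = \nu(sX) = 1$), giving the upper bound
\[
\abs{\partial_{\Pi_{X_m, S_m}}(B_n)}_{\tilde\nu_{X_m, S_m}} \leq \sum_{s \in S_m} \sqrt{\nu(\tilde C_n \symdiff s^{-1} \tilde C_n)},
\]
which tends to $0$ as $n \to \infty$. On the other hand, Markov $S_m$-expansion of $X_m$ combined with $\tilde\nu_{X_m, S_m} \geq \nu$ on $X_m$ (Proposition~\ref{prop:normalised.markov.is.reversible}(2)) gives $\abs{\partial_{\Pi_{X_m, S_m}}(B_n)}_{\tilde\nu_{X_m, S_m}} \geq \kappa_m \nu(B_n) \geq \kappa_m \alpha/2$, which is the desired contradiction.

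The main obstacle is recognising the correct upper bound for the Markov boundary of $B_n$: once one expresses it as a weighted sum of symmetric-difference measures amenable to a single application of Cauchy--Schwarz, the rest of the argument is a clean tension between an upper bound forced to zero by asymptotic invariance and a positive lower bound supplied by Markov expansion. A more delicate point is to verify that the Markov exhaustion hypothesis is preserved under passage to an equivalent probability measure in the $\sigma$-finite case, which follows from tracking how the Radon--Nikodym derivatives and the reversing measures $\tilde\nu_{Y,S}$ transform.
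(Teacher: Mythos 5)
Your forward direction is exactly the paper's (Theorem~\ref{thm:structure theorem general} ``(1)$\Rightarrow$(2)'' plus Proposition~\ref{prop:Markov local version}). Your converse, however, takes a genuinely different route. The paper never verifies the definition of strong ergodicity directly: it first notes $\rho$ is ergodic, then produces a single domain of expansion in measure by choosing a domain $Y$ on which the Radon--Nikodym derivatives are bounded, taking one term of the hypothesised Markov exhaustion of $Y$, and converting Markov expansion into expansion in measure via Lemma~\ref{lem:domain.of.exp iff Markov.exp}; it concludes by ``(3)$\Rightarrow$(1)'' of Theorem~\ref{thm:structure theorem general}. You instead test an asymptotically invariant sequence $(C_n)$ directly against the Cheeger constant of a fixed domain of Markov $S_m$\=/expansion $X_m$ with $\nu(X\smallsetminus X_m)<\alpha/2$, squeezing $\abs{\partial_{\Pi}(B_n)}$ between the lower bound $\kappa_m\alpha/2$ and the upper bound $\sum_{s\in S_m}\sqrt{\nu(\tilde C_n\symdiff s^{-1}\tilde C_n)}\to 0$. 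I checked the two estimates: the inclusion $B_n\cap s^{-1}(X_m\smallsetminus B_n)\subseteq\tilde C_n\symdiff s^{-1}\tilde C_n$, the Cauchy--Schwarz step using $\int_X r(s,\cdot)\,\d\nu=1$, and the comparison $\snu\geq\nu$ from Proposition~\ref{prop:normalised.markov.is.reversible}(2) are all correct, so in the probability case your proof is complete, and it has the virtue of being self\=/contained in the Markov machinery rather than quoting the structure theorem of \cite{dynamics1} a second time.

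The one genuine weak spot is the reduction to a probability measure, which you dispatch in a sentence. The theorem is stated for $\sigma$\=/finite $(X,\nu)$, where strong ergodicity is defined via an equivalent probability measure $\nu'=\phi\cdot\nu$. Under this change the cocycle becomes $r'(s,x)=\phi(s\cdot x)\phi(x)^{-1}r(s,x)$, so the kernel $\Pi_{Y,S}$, its reversing measure and its Cheeger constant all change, and a domain of Markov expansion for $\nu$ need not remain one for $\nu'$ unless $\phi$ is bounded above and below on $S\cdot Y$. Transferring the hypothesis therefore requires refining the exhaustions to sets on which $\phi$ is controlled---essentially the bounded\=/derivative trick of Proposition~\ref{prop:Markov local version}, which the paper's own sufficiency argument also deploys when it selects $Y$ with $C(\gamma)^{-1}\leq r(\gamma,y)\leq C(\gamma)$---and this is not mere bookkeeping. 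The paper's route through Theorem~\ref{thm:structure theorem general} sidesteps the issue, since ``domain of expansion'' is insensitive to the normalisation of $\nu$. Either supply this transfer argument explicitly or reorganise the converse along the paper's lines.
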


\begin{proof}
\emph{Necessity}: This follows from Theorem~\ref{thm:structure theorem general} ``(1)$\Rightarrow$(2)'' and Proposition~\ref{prop:Markov local version}.

\emph{Sufficiency}: It follows from the same argument as in the proof of \cite[Theorem 4.9 ``(5)$\Rightarrow$(6)'']{dynamics1} that $\rho$ must be ergodic.
By Theorem~\ref{thm:structure theorem general} ``(3)$\Rightarrow$(1)'', it is hence enough to show that $X$ admits a domain of expansion. 

We can choose a domain $Y\subseteq X$ for which there exist constants $C(\gamma) \geq 1$ depending on $\gamma\in \Gamma$ such that $C(\gamma)^{-1} \leq r(\gamma,y)\leq C(\gamma)$ for every $y\in Y$ (such a domain can be constructed using an argument similar to that in the proof of Proposition~\ref{prop:Markov local version}). By the hypothesis, there is an exhaustion $Y_n\nearrow Y$ by domains of Markov expansion. Finally, Lemma \ref{lem:domain.of.exp iff Markov.exp} implies that any such $Y_n$ produces the desired domain of expansion in measure. 
\end{proof}

\section{Warped cones and finite (dynamical) propagation approximations}\label{sec:warped.cones}
\label{sec:warped cones}
The aim of this section is to introduce warped cones associated with group actions on metric measure spaces and to study the effects of asymptotic expansion on the analytic properties of said warped cones.
More precisely, adapting the techniques in \cite{structure} to the context of group actions and using the structure results in Subsection~\ref{ssec:Markov structure thm}, we can characterise asymptotic expansion in terms of finite propagation approximations of the Druţu--Nowak projections. As an intermediate bridge, we introduce dynamical versions of quasi-locality and finite propagation approximation to connect actions and projections on warped cones. 
In turn, this allows us to construct a multitude of non\=/compact ghost projections which will be used in Section~\ref{sec:baum_connes} to construct counterexamples to the coarse Baum--Connes conjecture.

\subsection{Preliminaries on warped cones}\label{ssec:pre on warped cones}
Recall that the countable group $\Gamma$ is equipped with a proper length function $\ell$. Let $(X,d)$ be a metric space and $\rho\colon \Gamma\curvearrowright X$ be a continuous action. For every $t\geq 1$ let $d^t$ be the rescaling of $d$ by $t$, \emph{i.e.}, $d^t(x,y)\coloneqq td(x,y)$.

\begin{de}
 The \emph{warped cone} associated with the action $\Gamma\curvearrowright X$ is the family of metric spaces $\wc{\Gamma\curvearrowright X}\coloneqq\braces{(X,d_\Gamma^t)\mid t\in[1,\infty)}$, where $d_\Gamma^t$ is the largest metric such that
\[
d_\Gamma^t \leq d^t \quad \text{ and } \quad d_\Gamma^t\paren{x,\gamma\cdot x}\leq \ell(\gamma)
\]
for every $x\in X$ and $\gamma\in \Gamma$.

If the diameter of $(X,d)$ is at most $2$, we can also define the \emph{unified warped cone} as the metric space $(\CO_\Gamma X,d_\Gamma)$, where $\CO_\Gamma X= X\times[1,\infty)$ as a set and 
\[
 d_\Gamma\bigparen{(x_1,t_1),(x_2,t_2)}\coloneqq d_\Gamma^{t_1\wedge t_2}(x_1,x_2) + \abs{t_1-t_2}
\]
where $t_1\wedge t_2=\min\braces{t_1,t_2}$. The requirement on the diameter is necessary to ensure that $d_\Gamma$ is a metric.
\end{de}
 
We will also need the following:

\begin{lem}\label{lem:intersection.of.neighbourhoods}
 Let $\Gamma\curvearrowright (X,d)$ be a continuous action and $R>0$ fixed.
 Given $A\subseteq X$, let $N_R(A;d_\Gamma^t)\subseteq X$ be the closed $R$\=/neighbourhood of $A$ with respect to the metric $d_\Gamma^t$. Then
 \[
  \bigcap_{t\geq1}N_R(A;d_\Gamma^t) =\overline{B_R\cdot A}.
 \]
\end{lem}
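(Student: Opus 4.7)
The inclusion $\overline{B_R\cdot A}\subseteq \bigcap_{t\geq 1}N_R(A;d_\Gamma^t)$ is the easy direction. For every $\gamma\in B_R$ and $a\in A$, the definition of the warped metric gives $d_\Gamma^t(\gamma\cdot a,a)\leq \ell(\gamma)\leq R$, so $B_R\cdot A\subseteq N_R(A;d_\Gamma^t)$ for each $t\geq 1$. Since $d_\Gamma^t\leq t\cdot d$, the identity map $(X,d)\to (X,d_\Gamma^t)$ is continuous, which forces each $N_R(A;d_\Gamma^t)$ to be closed in the original topology of $X$; the claimed inclusion then follows upon taking the $d$-closure.

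For the converse, fix $x\in \bigcap_{t\geq 1}N_R(A;d_\Gamma^t)$ and, for each $k\in\NN$, choose $a_k\in A$ with $d_\Gamma^k(x,a_k)\leq R$. The plan is to exploit the standard chain description of the warped metric: for each $k$ one can find an approximating chain
\[
x=x_0^{(k)},\ y_0^{(k)}=\gamma_0^{(k)}\cdot x_0^{(k)},\ x_1^{(k)},\ y_1^{(k)}=\gamma_1^{(k)}\cdot x_1^{(k)},\ \ldots,\ x_{n_k}^{(k)},\ y_{n_k}^{(k)}=a_k
\]
of total cost at most $R+1/k$, where the cost is $\sum_i\ell(\gamma_i^{(k)})+k\sum_i d(y_i^{(k)},x_{i+1}^{(k)})$. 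Collapsing trivial group steps, one may assume each $\ell(\gamma_i^{(k)})\geq 1$, which forces $n_k$ to be bounded. Because $\ell$ is integer-valued and proper, there are only finitely many possible tuples $(\gamma_0^{(k)},\ldots,\gamma_{n_k}^{(k)})$, so by pigeonhole we pass to a subsequence along which this tuple is independent of $k$, say $(\gamma_0,\ldots,\gamma_n)$. Integrality combined with the cost bound then gives $\sum_i\ell(\gamma_i)\leq R$ for $k$ large, so $\gamma\coloneqq \gamma_n\cdots \gamma_0$ lies in $B_R$.

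The metric part of the cost satisfies $\sum_i d(y_i^{(k)},x_{i+1}^{(k)})\leq (R+1/k)/k\to 0$. A straightforward induction, using that $x_0^{(k)}=x$ is fixed and that each $\gamma_i$ acts continuously on $X$, then shows that $y_i^{(k)}\to \gamma_i\cdots\gamma_0\cdot x$ in the metric $d$; in particular $a_k\to \gamma\cdot x$. Applying $\gamma^{-1}$, which is continuous and lies in $B_R$ by symmetry of $\ell$, we obtain $\gamma^{-1}\cdot a_k\to x$. As $\gamma^{-1}\cdot a_k\in B_R\cdot A$ for every $k$, this exhibits $x$ as a $d$-limit of elements of $B_R\cdot A$, as required.

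The hard part will be the rigidification step in the second paragraph: integer-valuedness of $\ell$ is what ensures the group-cost along the chain is actually bounded by $R$ (rather than merely $R+1/k$) once $k$ is large, while properness is what makes the pool of candidate group tuples finite. Combining these two features to extract a subsequence with a fixed group-part of the chain is the only nontrivial input; once in hand, the limit argument via continuity of the action is essentially routine, and the only subtlety worth flagging is that one only needs continuity of each fixed $\gamma_i$, not any uniform continuity across the orbit.
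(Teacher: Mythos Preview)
Your proof is correct and takes a genuinely different route from the paper's. The paper argues by induction on $\lfloor R\rfloor$: for $R<1$ the warped metric $d_\Gamma^t$ coincides with $d^t$ (since $\ell$ is integer-valued), so the intersection is simply $\bar A$; for larger $R$ it uses a containment of the form
\[
N_R(A;d_\Gamma^t)\subseteq N_R(A;d^t)\cup N_{R-1}\bigl(B_1\cdot N_{R-1}(A;d^t)\,;\,d_\Gamma^t\bigr)
\]
to peel off a single group element from $B_1$ and invoke the case $R-1$. Your approach instead fixes a point $x$ in the intersection, extracts an approximate warped chain from $x$ to $A$ at each level $t=k$, and uses pigeonhole on the (finitely many possible) group tuples to pass to a subsequence with fixed group part, after which continuity of the action yields the limit. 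Both arguments use integer-valuedness and properness of $\ell$ in the same essential way; yours is more direct and pointwise, while the paper's set-theoretic induction makes the layered structure of the warped neighbourhoods more explicit.

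One small wrinkle in your write-up: after collapsing trivial group steps you assert simultaneously that each $\ell(\gamma_i^{(k)})\geq 1$ \emph{and} that $x_0^{(k)}=x$, but collapsing a trivial $\gamma_0^{(k)}$ turns the initial group step into a metric step, so these two normalisations are not compatible in general. The fix is immediate---either allow a leading metric step from $x$ to $x_0^{(k)}$ (whose length tends to $0$, so $x_0^{(k)}\to x$ and your induction starts one step earlier), or exempt $\gamma_0^{(k)}$ and $\gamma_{n_k}^{(k)}$ from the collapsing (so $n_k\leq R+2$ is still bounded). Neither affects the substance of the argument.
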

\begin{proof}
 It is clear that $\overline{B_R\cdot A}$ is contained in $N_R(A;d_\Gamma^t)$ for every $t\geq1$. For the converse, it suffices to prove it for $A$ closed. If $R<1$, we see that $N_R(A;d_\Gamma^t)=N_R(A;d^t)=N_{R/t}(A;d)$, because $\ell$ only takes integer values. So the result holds trivially. 

 By induction on $n\in\NN$, we will prove that the claim holds for every $R< n$ and every closed $A\subset X$. First note that for every fixed $\gamma\in\Gamma$ we have
 \begin{equation}\label{eq:intersection nbhds}
  \bigcap_{t\geq1}\gamma\cdot N_R(A;d^t)=\gamma(A).
 \end{equation}
 Also note that the warped distance can be computed as
 \begin{equation}\label{eq:warped metric concrete}
  d^t_\Gamma(x,y)=\inf_\xi \Bigparen{ \sum_{i=0}^k d^t(x_i,y_i)+\sum_{i=1}^k\abs{\gamma_i} }  
 \end{equation}
 where the infimum is taken over $k\in\NN$ and sequences $\xi$ of points $x_0,\ldots x_k,y_0,\ldots y_k\in X$ and elements $\gamma_1,\ldots \gamma_k\in \Gamma$ so that $x=x_0$, $y=y_k$ and $x_i=\gamma_{i-1}(y_{i-1})$ for every $1\leq i\leq k$ (this expression is obtained by imposing that $d^t_\Gamma$ satisfies the triangle inequality).
 
 Fix now some $0< R<n$. For every $y\in N_R(A;d_\Gamma^t)\smallsetminus N_R(A;d^t)$ we can take a sequence of sequences  $\xi_l$ converging to the infimum in \eqref{eq:warped metric concrete}. Since $y\notin N_R(A;d^t)$ we can assume that each sequence $\xi_l$ has length $k\geq 1$. Since $B_R$ is finite, we can also pass to a subsequence $\xi_{l_m}$ so that each sequence $\xi_{l_m}$ has the same $\gamma_1$. Denote this element by $\gamma_{1,y}\in B_R$. Since $A$ is closed, it follows that 
 \[
  y\in N_{R-\abs{\gamma_{1,y}}}\bigparen{\gamma_{1,y}\cdot N_{R-\abs{\gamma_{1,y}}}(A;d^t) \, ;\, d^t_\Gamma}.
 \]
 As a consequence, we deduce that
 \begin{equation}\label{eq:neighbourhood.containment}
  N_R(A;d_\Gamma^t)\subseteq N_R(A;d^t)
  \cup \left( \bigcup_{m=1}^{n-1} 
  N_{R-m}\bigparen{B_{m}\cdot N_{R-m}(A;d^t)\, ;\, d^t_\Gamma} 
  \right).
 \end{equation}

 For every $1\leq m\leq n-1$ and $t>0$, the set $C_{m,t}\coloneqq B_{m}\cdot N_{R-m}(A;d^{t})$ is closed. If we fix $t_0>1$ we can apply the induction hypothesis on the neighbourhoods of $C_{m,t_0}$ to deduce that
 \[
  \bigcap_{t\geq1} N_{R-m}\bigparen{C_{m,t}\, ;\, d^t_\Gamma}
  \subseteq \bigcap_{t>t_0} N_{R-1}\bigparen{C_{m,t_0}\, ;\, d^t_\Gamma}
  = B_{R-m}\cdot C_{m,t_0}
  \subseteq B_R\cdot N_{R-m}(A;d^{t_0}).
 \]
 Therefore, for each $1\leq m\leq n-1$  we have
 \[
  \bigcap_{t\geq1} N_{R-m}\bigparen{C_{m,t}\, ;\, d^t_\Gamma}
  \subseteq \bigcap_{t_0>1} B_{R}\cdot N_{R-m}(A;d^{t_0})
  = B_{ R }\cdot A,
 \]
 where we used \eqref{eq:intersection nbhds} on the finitely many elements $\gamma\in B_{R}$ to obtain the last equality.
 This shows the the right hand side of \eqref{eq:neighbourhood.containment} shrinks down to $B_R\cdot A$ as $t$ goes to infinity, thus proving the lemma. 
%
\end{proof}

For more details and elementary facts on the geometry of warped cones, we refer to \cite{roe_warped_2005, SawickiThesis,Vig18,Wil09b}.

\subsection{(Dynamical) quasi-local characterisations for asymptotic expansion}\label{ssec:Warped cones and Druţu--Nowak projections}
In this subsection, we will introduce a notion of dynamical quasi-locality and explain its relation with the ordinary quasi-locality for operators on warped cones. Using the dynamical quasi-locality, we will study the Druţu--Nowak projection associated to a warped cone, and show that the ordinary quasi-locality of this projection characterises asymptotic expansion in measure. 

Let $\rho\colon \Gamma\curvearrowright X$ be a continuous action on a 
metric space $(X,d)$ of diameter at most $2$. Let $\nu$ be a probability measure on $(X,d)$ and $\lambda$ be the Lebesgue measure on $[1,\infty)$. Equip the unified warped cone $\CO_\Gamma X=X\times [1,\infty)$ with the product measure $\nu\times\lambda$. 

For any measurable non-null $Y\subseteq X$, denote by $P_Y \in \B(L^2(X,\nu))$ the \emph{averaging projection on $Y$}, which is the orthogonal projection onto the one-dimensional subspace in $L^2(X,\nu)$ spanned by $\chi_Y$. In other words, 
\[
 P_Y f \coloneqq \langle f, \frac{1}{\nu(Y)}\cdot \chi_Y\rangle \chi_Y,
\]
 where $f\in L^2(X,\nu)$. The \emph{Druţu--Nowak projection} (see \cite[Section~6.c.]{DN17}) is defined as $\fkG= P_X\otimes \Id_{L^2([1,\infty))}\in \B(L^2(\CO_\Gamma X, \nu\times\lambda))$. In other words, it is the orthogonal projection onto $\CCC\otimes L^2([1,\infty),\lambda)$.

\

Recall from \cite{Roe88,Roe96} that an operator $T\in \B(L^2(\CO_\Gamma X, \nu\times\lambda))$ is \emph{quasi-local} if for every $\epsilon>0$, there exists an $R>0$ such that for any two measurable subsets $A,C \subseteq \CO_\Gamma X$ with $d_\Gamma (A,C) >R$ we have $\|\chi_A T \chi_C\| < \epsilon$.
Analogously, a family of operators $\{T_t\}_{t\in [1,\infty)}$ in $\B(L^2(X,\nu))$ is \emph{uniformly quasi-local on $\wc{\Gamma\curvearrowright X}$} if for every $\epsilon>0$ there exists an $R>0$ such that for every $t\in [1,\infty)$ and every pair of measurable subsets $A,C\subseteq X$ with $d_\Gamma^t(A,C)>R$, we have $\|\chi_A T_t \chi_C\| < \epsilon$.

Now we introduce the following dynamical analogue of quasi\=/locality for operators in $\B(L^2(X,\nu))$ where $(X,\nu)$ is a probability space with a $\Gamma$\=/action:

\begin{defn}\label{defn: quasi-local algebra wrt action}
Let $\rho\colon\Gamma \act (X,\nu)$ be an action on a probability space $(X,\nu)$.
An operator $T \in \B(L^2(X,\nu))$ is called \emph{$\rho$\=/quasi\=/local} if for every $\epsilon>0$ there exists a $k \in \N$ such that for any measurable subsets $A,C \subseteq X$ with $\nu((B_k \cdot A)\cap C)=0$, we have $\|\chi_A T \chi_C\|<\epsilon$ (recall that $B_k=\{\gamma\in\Gamma\mid\ell(\gamma)\leq k\}$).
\end{defn}

Similarly to \cite[Lemma~3.8]{intro}, quasi-locality of the averaging projection $P_X$ can be detected by the following calculation:

\begin{lem}\label{lem: averaging proj calculation}
For every measurable subsets $A,C$ in $X$, we have that
\[
\norm{\chi_A P_X \chi_C}_{\B(L^2(X,\nu))}=\sqrt{\nu(A)\nu(C)}.
\]
\end{lem}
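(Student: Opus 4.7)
The plan is to reduce the norm computation to a direct application of Cauchy--Schwarz, using the explicit rank-one description of $P_X$.

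First, I would unpack the action of $\chi_A P_X \chi_C$ on an arbitrary $f \in L^2(X,\nu)$. Since $\nu(X)=1$ and $P_X$ is the orthogonal projection onto the span of $\chi_X$, one has $P_X g = \bigl(\int_X g\,\d\nu\bigr)\chi_X$. Therefore
\[
 (\chi_A P_X \chi_C) f = \Bigparen{\int_C f\,\d\nu}\chi_A,
\]
and consequently $\norm{(\chi_A P_X \chi_C)f}_2 = \abs{\int_C f\,\d\nu}\cdot\sqrt{\nu(A)}$.

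Next, I would control $\abs{\int_C f\,\d\nu}$ by Cauchy--Schwarz: $\abs{\int_C f\,\d\nu} = \abs{\angles{f,\chi_C}_\nu} \leq \norm{f}_2\sqrt{\nu(C)}$. Combining, for every $f\in L^2(X,\nu)$ we obtain $\norm{(\chi_A P_X\chi_C)f}_2 \leq \sqrt{\nu(A)\nu(C)}\,\norm{f}_2$, which gives the upper bound.

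Finally, to see that this bound is attained, I would plug in $f = \chi_C$ (assuming $\nu(C)>0$; otherwise both sides are trivially zero). Then $\int_C f\,\d\nu = \nu(C)$ and $\norm{f}_2 = \sqrt{\nu(C)}$, so the inequality above becomes the equality $\norm{(\chi_A P_X\chi_C)\chi_C}_2 = \sqrt{\nu(A)\nu(C)} = \sqrt{\nu(A)\nu(C)}\,\norm{\chi_C}_2$. Since this is a one-line computation rather than a proof with a genuine obstacle, the only thing to be careful about is the $\nu(X)=1$ normalisation in the definition of $P_X$; all the rest is formal.
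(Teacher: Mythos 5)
Your proof is correct and follows essentially the same route as the paper: both exploit the rank-one structure of $P_X$ to reduce the norm to a single application of Cauchy--Schwarz, with equality attained on normalised indicator functions (the paper phrases this via $\sup_{\|v\|=\|w\|=1}\abs{\angles{\chi_A P_X\chi_C v,w}}$ rather than $\norm{(\chi_A P_X\chi_C)f}$, which is an immaterial difference). Your remark about the $\nu(X)=1$ normalisation is the right thing to flag.
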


\begin{proof}
By direct calculations, we have that
\begin{align*}
\|\chi_AP_X\chi_C\| 
&= \sup_{\|v\|=\|w\|=1} |\langle \chi_AP_X\chi_C v,w \rangle| = \sup_{\|v\|=\|w\|=1} |\langle P_X\chi_C v, P_X \chi_A w \rangle|\\
&= \sup_{\|v\|=\|w\|=1} \big| \big\langle \langle \chi_C v,1 \rangle 1, \langle \chi_A w,1 \rangle 1  \big\rangle \big| 
=  \sup_{\|v\|=\|w\|=1} |\langle v,\chi_C  \rangle \overline{\langle w,\chi_A \rangle} \langle 1,  1 \rangle| \\
& \leq \sqrt{\nu(A)\nu(C)},
\end{align*}
where the last inequality follows from the Cauchy--Schwarz inequality. On the other hand, if we let $v$ and $w$ be the normalised characteristic functions of $C,A$ respectively then we have that $\langle P_X \chi_C v, P_X \chi_A w \rangle = \sqrt{\nu(A)\nu(C)}$.
\end{proof}
The following corollary is a dynamical analogue of \cite[Proposition~3.9]{intro} and it is an immediate consequence of Lemma~\ref{lem: averaging proj calculation}: 

\begin{cor}\label{cor: char for averaging projection being quasi-local}
Let $\rho\colon\Gamma\curvearrowright(X,\nu)$ be an action on a probability space $(X,\nu)$ and $P_X$ be the associated averaging projection on $X$.
Then $P_X$ is $\rho$\=/quasi\=/local \emph{if and only if} 
\[
\lim_{k \to +\infty} \sup\big\{\nu(A)\nu(C)\bigmid A,C \subseteq X\mbox{ measurable with}\  \nu((B_k \cdot A)\cap C)=0 \big\} =0.
\]
\end{cor}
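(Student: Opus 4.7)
The plan is to prove this by simply unpacking Definition~\ref{defn: quasi-local algebra wrt action} and substituting the explicit norm computation from Lemma~\ref{lem: averaging proj calculation}. There is essentially no geometric content beyond what has already been established; the statement is a direct reformulation.

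First I would rewrite the $\rho$-quasi-locality condition for $P_X$ using Lemma~\ref{lem: averaging proj calculation}. By that lemma, for any measurable $A, C \subseteq X$ we have $\|\chi_A P_X \chi_C\| = \sqrt{\nu(A)\nu(C)}$. Therefore, Definition~\ref{defn: quasi-local algebra wrt action} applied to $P_X$ reads: for every $\epsilon > 0$ there exists $k \in \NN$ such that $\nu((B_k \cdot A) \cap C) = 0$ implies $\sqrt{\nu(A)\nu(C)} < \epsilon$, or equivalently $\nu(A)\nu(C) < \epsilon^2$.

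Next I would observe that this quantified statement is precisely the definition of the limit
\[
 \lim_{k \to +\infty} \sup\bigbrace{\nu(A)\nu(C) \bigmid A, C \subseteq X \text{ measurable with } \nu((B_k \cdot A) \cap C) = 0} = 0.
\]
Indeed, writing $\phi(k)$ for the supremum on the left, the family of admissible pairs $(A,C)$ at level $k+1$ is contained in that at level $k$ (since $B_k \subseteq B_{k+1}$), so $\phi$ is monotonically non-increasing in $k$; hence $\phi(k) \to 0$ if and only if for every $\eta > 0$ there exists $k$ with $\phi(k) \le \eta$, which is exactly the $\epsilon^2$-version of the quasi-locality condition.

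Since both directions of the equivalence reduce to the same quantifier exchange, the forward and backward implications are handled simultaneously. There is no substantive obstacle; the proof is two or three lines, and I would present it in that compact form, pointing to Lemma~\ref{lem: averaging proj calculation} and Definition~\ref{defn: quasi-local algebra wrt action} as the only ingredients.
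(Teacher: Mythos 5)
Your proof is correct and is exactly the argument the paper has in mind: the paper itself states the corollary as "an immediate consequence of Lemma~\ref{lem: averaging proj calculation}", i.e., substitute the identity $\norm{\chi_A P_X\chi_C}=\sqrt{\nu(A)\nu(C)}$ into Definition~\ref{defn: quasi-local algebra wrt action} and match quantifiers, using monotonicity of the supremum in $k$. Nothing is missing.
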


We are now ready to show that asymptotic expansion in measure can be characterised by $\rho$\=/quasi-locality of the associated averaging projections. This is an analogue of \cite[Theorem~3.11]{intro}.

\begin{prop}\label{prop: characterise quasi-locality}
Let $\rho\colon \Gamma\curvearrowright (X,\nu)$ be an action on a probability space $(X,\nu)$ and $P_X$ be the associated averaging projection on $X$. Then $\rho$ is asymptotically expanding \emph{if and only if} $P_X$ is $\rho$\=/quasi\=/local.
\end{prop}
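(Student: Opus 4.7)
The plan is to unpack both directions using Lemma~\ref{lem: averaging proj calculation} and Corollary~\ref{cor: char for averaging projection being quasi-local}, which together reduce $\rho$-quasi-locality of $P_X$ to the statement that $\sup\{\nu(A)\nu(C):\nu((B_k\cdot A)\cap C)=0\}\to 0$ as $k\to\infty$.

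For the \emph{sufficiency} ($P_X$ being $\rho$-quasi-local $\Rightarrow$ $\rho$ asymptotically expanding), I would argue directly. Fix $\alpha\in(0,\frac{1}{2}]$ and set $\epsilon:=\frac{1}{2}\sqrt{\alpha}$; let $k=k_\epsilon$ be an integer witnessing $\rho$-quasi-locality. Given any measurable $A$ with $\alpha\leq\nu(A)\leq\frac{1}{2}$, take $C:=X\smallsetminus B_k\cdot A$, so that $\nu((B_k\cdot A)\cap C)=0$. Then $\sqrt{\nu(A)\nu(C)}<\epsilon$ forces $\nu(C)<\epsilon^{2}/\alpha=\frac{1}{4}$, hence $\nu(B_k\cdot A)>\frac{3}{4}\geq\frac{3}{2}\nu(A)$. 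Setting $\fnc(\alpha):=\frac{1}{2}$ and $\fnk(\alpha):=k_\epsilon$ verifies Definition~\ref{defn:asymptotic expanding in measure}.

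For the \emph{necessity} direction, the first step I would prove is an \emph{iterative expansion lemma}: for every $\delta\in(0,\frac{1}{2}]$ there exists $K_\delta\in\NN$ such that $\nu(A)\geq\delta$ implies $\nu(B_{K_\delta}\cdot A)\geq 1-\delta$. To establish this, I would iterate Definition~\ref{defn:asymptotic expanding in measure} applied with parameter $\alpha=\delta$: as long as the measure of the expanded set stays in $[\delta,\frac{1}{2}]$, each new application of $B_{\fnk(\delta)}$ multiplies the measure by at least $1+\fnc(\delta)$, so after $J:=\lceil\log_{1+\fnc(\delta)}(1/(2\delta))\rceil$ steps the measure must exceed $\frac{1}{2}$. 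Then I switch to Lemma~\ref{lem:annoying 1/2 upper bound} with $\beta=1-\delta$ (applied to $Y=X$, which is a domain of asymptotic expansion since $\rho$ is asymptotically expanding) and iterate analogously, growing the measure beyond $1-\delta$ in another bounded number of steps. Setting $K_\delta$ equal to the total length of $B$-factors used yields the claim. Given this lemma, the forward direction follows quickly: for $\epsilon>0$, choose $k:=K_{\epsilon^{2}}$; then whenever $\nu((B_k\cdot A)\cap C)=0$, either $\nu(A)<\epsilon^{2}$ (so $\nu(A)\nu(C)<\epsilon^{2}$), or $\nu(A)\geq\epsilon^{2}$ and $\nu(C)\leq 1-\nu(B_k\cdot A)\leq\epsilon^{2}$ (again yielding $\nu(A)\nu(C)\leq\epsilon^{2}$). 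By Lemma~\ref{lem: averaging proj calculation} and Corollary~\ref{cor: char for averaging projection being quasi-local}, $P_X$ is $\rho$-quasi-local.

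The main obstacle is the iterative expansion lemma. The subtlety is that the constants $\fnc(\cdot),\fnk(\cdot)$ in Definition~\ref{defn:asymptotic expanding in measure} depend on the lower bound on the measure, so a careless iteration could degrade them at each step. The fix is to observe that because the sequence of measures produced by iteration is monotone increasing and bounded below by $\delta$ (respectively $\frac{1}{2}$ in the second phase), one may legitimately use the fixed constants $\fnc(\delta),\fnk(\delta)$ (respectively $\fnb(1-\delta),\fnh(1-\delta)$) throughout; the number of steps required to cross each threshold is then controlled purely in terms of $\delta$, giving a $K_\delta$ that depends only on $\delta$.
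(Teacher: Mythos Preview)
Your proof is correct and follows essentially the same approach as the paper. The paper argues both directions by contrapositive/contradiction rather than directly, and in the iteration step it merges your two phases into a single range $[\alpha,1-\alpha]$ (combining Definition~\ref{defn:asymptotic expanding in measure} and Lemma~\ref{lem:annoying 1/2 upper bound} into one pair of constants $b,h$), but the underlying mechanism---iterating the expansion inequality a bounded number of times to push $\nu(B_k\cdot A)$ past $1-\delta$---is identical.
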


\begin{proof}
\emph{Necessity:} Suppose $P_X$ is not $\rho$\=/quasi\=/local, then by Corollary \ref{cor: char for averaging projection being quasi-local} we have:
\[
\alpha\coloneqq \frac{1}{2}\lim_{k \to +\infty} \sup\big\{\nu(A)\nu(C)\bigmid A,C \subseteq X\mbox{ measurable with~} \nu((B_k \cdot A)\cap C)=0 \big\} >0.
\]
In particular, $\frac{1}{2} \leq 1-\alpha<1$. Thus, we can choose a sequence $(A_n,C_n)_{n\in \N}$, where $A_n, C_n \subseteq X$ are measurable subsets with $\nu((B_n \cdot A_n)\cap C_n)=0$ 
such that $\nu(A_n)\nu(C_n) \geq \alpha$.
Since $\nu(A_n) \leq 1$ and $\nu(C_n) \leq 1$, both $\nu(A_n)$ and $\nu(C_n)$ are at least $\alpha$.  Furthermore, $\nu((B_{n} \cdot A_n)\cap C_n)=0$ implies that $\nu(A_n \cap C_n)=0$. In particular, both $\nu(A_n)$ and $\nu(C_n)$ are not greater than $1-\alpha$ for each $n\in \NN$.

If the action was asymptotically expanding, then Definition~\ref{defn:asymptotic expanding in measure} and Lemma~\ref{lem:annoying 1/2 upper bound} would imply that there exist constants $b>0$ and $h \in \NN$ such that for every measurable subset $A\subseteq X$ with $\alpha\leq \nu(A) \leq 1-\alpha$, we have $\nu(B_{h} \cdot A) > (1+b)\nu(A)$.
Let $k\coloneqq mh$, where $m\coloneqq\lceil \log_{1+b}(\frac{1}{\alpha}-1) \rceil$. Then \emph{either} 
\[
 \nu(B_k \cdot A)> 1-\alpha
\]
\emph{or} we deduce by induction on $m$ that 
\[
\nu(B_k \cdot A) > (1+b)^m\nu(A)\geq 1-\alpha.
\]
Note that $A_n$ satisfies $\alpha \leq \nu(A_n) \leq 1-\alpha$ for all $n\in \NN$. Hence for $n\geq {k}$, we have $\nu(B_{{n}} \cdot A_n) > 1-\alpha$. This is a contradiction to $\nu(C_n) \geq \alpha$ and $\nu((B_{{n}} \cdot A_n)\cap C_n)=0$. 

~\

\emph{Sufficiency:} Assume that $\rho$ is not asymptotically expanding. Then there exists $\alpha_0 \in (0,\frac 12]$ such that for every $n \in \N$ there exists a measurable subset $A_n \subseteq X$ with $\alpha_0 \leq \nu(A_n) \leq\frac{1}{2}$ and $\nu(B_n \cdot A_n) \leq \frac{3}{2}\nu(A_n)$. 
For every $n$ we have
\[
\nu(X \smallsetminus (B_n \cdot A_n)) = 1- \nu(B_n \cdot A_n) \geq 1 - \frac{3}{2}\nu(A_n)\geq \frac{1}{4}.
\]
Hence, we have that
\[
\nu(A_n) \cdot \nu(X \smallsetminus (B_n \cdot A_n)) \geq \frac{\alpha_0}{4}>0,
\]
which implies that the limit
\[
\lim_{n \to +\infty} \sup\big\{\nu(A)\nu(C)\bigmid A,C \subseteq X \mbox{ measurable with~} \nu((B_n \cdot A)\cap C)=0 \big\} \geq \frac{\alpha_0}{4} >0.
\]
Hence, $P_X$ is not $\rho$-quasi-local by Corollary \ref{cor: char for averaging projection being quasi-local}.
\end{proof}

We will now show that the dynamical quasi\=/locality completely determines the ordinary quasi\=/locality for those operators of the (unified) warped cone that arise as transformations of the base space. To be precise, consider the following $*$-homomorphism:
\begin{equation}\label{eq:connecting homomorphism}
\Phi: \B(L^2(X,\nu)) \rightarrow \B(L^2(\CO_\Gamma X, \nu\times\lambda)), \quad T \mapsto T \otimes \mathrm{Id}_{L^2([1,\infty))}
\end{equation}
(note that the Druţu--Nowak projection $\fkG$ equals to $\Phi(P_X)$). We can then prove the following: 

\begin{prop}\label{prop: quasi-locality equiv}
Let $(X,d)$ be a metric space with diameter at most $2$ equipped with a probability measure $\nu$, and $\rho\colon \Gamma\curvearrowright X$ be a continuous action. For any $T \in \B(L^2(X,\nu))$, we consider the following conditions:
\begin{enumerate}
  \item $T$ is $\rho$\=/quasi\=/local;
  \item $\Phi(T)$ is quasi-local;
  \item the family of operators $T_t\equiv T$ for $t\in [1,\infty)$ is uniformly quasi-local on $\wc{\Gamma\curvearrowright X}$.
\end{enumerate}
Then we have (1) $\Rightarrow$ (2) $\Rightarrow$ (3). Furthermore, if $\nu$ is Radon then they are all equivalent.
\end{prop}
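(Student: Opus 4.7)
The plan is to handle the three implications separately, saving the hardest direction (3) $\Rightarrow$ (1) for last.

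For (1) $\Rightarrow$ (2), I will slice the unified warped cone $\CO_\Gamma X$ by fibers. Given $\epsilon>0$, let $k\in\NN$ be the $\rho$-quasi-locality constant for $T$ at level $\epsilon$, and set $R=k$. For $A,C \subseteq \CO_\Gamma X$ with $d_\Gamma(A,C) > R$, writing $A_t = \{x \mid (x,t)\in A\}$ and similarly for $C_t$, one has $d_\Gamma^t(A_t, C_t) > k$ for every $t\geq 1$, since equal-height points in $\CO_\Gamma X$ satisfy $d_\Gamma((x_1,t),(x_2,t)) = d_\Gamma^t(x_1,x_2)$. This forces $(B_k\cdot A_t)\cap C_t = \emptyset$, hence $\|\chi_{A_t}T\chi_{C_t}\| < \epsilon$ by assumption. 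Since $\Phi(T) = T \otimes \Id$ acts fiberwise, Fubini yields $\|\chi_A\Phi(T)\chi_C\xi\|^2 = \int_1^\infty \|\chi_{A_t}T\chi_{C_t}\xi_t\|^2\,dt \leq \epsilon^2\|\xi\|^2$.

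For (2) $\Rightarrow$ (3), fix $\epsilon>0$ and extract $R$ from quasi-locality of $\Phi(T)$; the same $R$ will work at level $\epsilon$ in (3). Indeed, given $t\geq 1$ and measurable $A,C\subseteq X$ with $d_\Gamma^t(A,C) > R$, consider the product sets $A\times[t,t+1]$ and $C\times[t,t+1]$ in $\CO_\Gamma X$. Since $d_\Gamma^s \geq d_\Gamma^t$ whenever $s\geq t$, their $d_\Gamma$-distance is at least $R$, so quasi-locality of $\Phi(T)$ gives $\|\chi_{A\times[t,t+1]}\Phi(T)\chi_{C\times[t,t+1]}\| < \epsilon$. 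But this operator equals $(\chi_A T\chi_C)\otimes \Id_{L^2([t,t+1])}$ (zero-extended to $\CO_\Gamma X$), whose norm is exactly $\|\chi_A T\chi_C\|$.

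For (3) $\Rightarrow$ (1), the main obstacle is that the hypothesis $\nu((B_k\cdot A)\cap C) = 0$ is measure-theoretic while (3) demands metric separation, and in general $\overline{B_R\cdot A}$ can strictly contain $B_R\cdot A$. My plan is: given $\epsilon$, extract $R$ from (3) applied at slack $\epsilon/2$ and set $k = \lceil R\rceil$. Replace $C$ by $C' \coloneqq C\setminus(B_k\cdot A)$, which is $L^2$-equivalent to $C$ but now \emph{pointwise} disjoint from $B_k\cdot A$. Use Radon inner regularity to take compact exhaustions $K_n \nearrow A$ and $F_m \nearrow C'$ (modulo null sets). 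Each $\gamma\in B_R$ is a homeomorphism of $X$, so $B_R\cdot K_n$ is a finite union of compacts, hence compact; moreover $F_m \cap (B_R\cdot K_n) \subseteq C' \cap (B_k\cdot A) = \emptyset$. Applying Lemma~\ref{lem:intersection.of.neighbourhoods} gives $\bigcap_{t\geq 1} N_R(K_n;d_\Gamma^t) = \overline{B_R\cdot K_n} = B_R\cdot K_n$, so intersecting with $F_m$ yields a decreasing family of closed subsets of the \emph{compact} set $F_m$ with empty total intersection. The finite intersection property then produces some $t$ with $F_m\cap N_R(K_n;d_\Gamma^t) = \emptyset$, i.e., $d_\Gamma^t(K_n,F_m) > R$, whence (3) gives $\|\chi_{K_n}T\chi_{F_m}\| < \epsilon/2$.

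Passing from compact approximations back to $A$ and $C$ relies on the monotone convergence of operator norms under expansion from below: whenever $K_n\nearrow A$, for any bounded operator $S$ the quantities $\|\chi_{K_n}TS\|$ are increasing and converge to $\|\chi_A TS\|$ (the upper bound $\|\chi_{K_n}TS\|\leq\|\chi_A TS\|$ follows pointwise because $|\chi_{K_n}TS\xi|\leq|\chi_A TS\xi|$, while dominated convergence applied to $\chi_{K_n}TS\xi\to\chi_A TS\xi$ in $L^2$ for each $\xi$ provides the matching liminf). Applying this first with $S = \chi_{F_m}$ as $m\to\infty$ and then in $n$ yields $\|\chi_A T\chi_{C'}\| = \lim_n\lim_m \|\chi_{K_n}T\chi_{F_m}\| \leq \epsilon/2$, and since $\chi_C = \chi_{C'}$ in $L^2$ this concludes $\|\chi_A T\chi_C\| < \epsilon$.
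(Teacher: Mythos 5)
Your proposal is correct and follows essentially the same route as the paper: fiberwise slicing plus Fubini for (1)$\Rightarrow$(2), the product sets $A\times[t,t+1]$ for (2)$\Rightarrow$(3), and for (3)$\Rightarrow$(1) the reduction to compact sets via inner regularity combined with Lemma~\ref{lem:intersection.of.neighbourhoods} and a finite-intersection-property argument on the nested closed neighbourhoods $N_R(\cdot\,;d_\Gamma^t)$. The only differences are cosmetic (an $\epsilon/2$ slack, a double limit in $n$ and $m$ instead of a simultaneous exhaustion, and a strong-convergence argument in place of the paper's identity $\|\chi_A T\chi_C\|=\sup_n\|\chi_{A_n}T\chi_{C_n}\|$).
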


\begin{proof}
\emph{(1) $\Rightarrow$ (2):} 
Fix an $\epsilon>0$. Since $T$ is $\rho$\=/quasi\=/local, there exists $k \in \N$ such that for any measurable subsets $A',C' \subseteq X$ with $\nu((B_k \cdot A')\cap C')=0$, we have $\|\chi_{A'} T \chi_{C'}\|<\epsilon$.

Given a pair of measurable subsets $A,C \subseteq \O_\Gamma X = X\times [1,\infty)$ with $d_\Gamma (A,C) > k$, we can write $A=\bigsqcup_{t\in [1,\infty)} A_t\times \braces{t}$ and $C=\bigsqcup_{t\in [1,\infty)} C_t \times \braces{t}$, where $A_t, C_t$ are measurable subsets in $X$. 

For every $(x,t)\in \O_\Gamma X = X\times [1,\infty)$ and every $\gamma\in \Gamma$, we have $d_\Gamma ((\gamma\cdot x,t), (x,t)) \leq \ell(\gamma)$. Since $d_{\Gamma}(A,C)>k$, it follows that $\nu((B_k \cdot A_t) \cap C_t) = 0$ for every $t\in [1,\infty)$. Hence, we conclude that $\|\chi_{A_t} T \chi_{C_t}\|<\epsilon$ for every $t\in [1,\infty)$.

For every $\xi\in L^2(\CO_\Gamma X,\nu\times\lambda)$, we set $\xi_t(x)\coloneqq \xi(x,t)$ so that  $\xi_t\in L^2(X,\nu)$ for almost every $t\in [1,\infty)$. Using Fubini's Theorem, we obtain that 
\begin{align*}
\|\chi_A \Phi(T) \chi_C\xi\|^2 
&= \int_{\CO_\Gamma X} \big| \bigparen{\chi_{A}( T\otimes\Id_{L^2([1,\infty))})\chi_{C}\xi }(x,t) \big|^2 \d(\nu\times\lambda)(x,t) \\
&= \int_1^\infty 
  \int_{X} \big|\paren{\chi_{A_t}T\chi_{C_t}\xi_t} (x)\big|^2\d\nu(x)\d t 
  = \int_1^\infty 
  \big\|\chi_{A_t}T\chi_{C_t}\xi_t\big\|^2\d t \\
&\leq \int_1^\infty \epsilon^2\norm{\xi_t}^2\d t
\ = \epsilon^2\norm{\xi}^2.
\end{align*}
It follows that $\Phi(T)$ is quasi-local.

~\

\emph{(2) $\Rightarrow$ (3):}
For any measurable subsets $A,C\subseteq X$, we note that $d_\Gamma^t(A,C)=d_\Gamma(A\times[t,t+1],C\times [t,t+1])$. For every $f\in L^2(X,\nu)$ and $t\in [1,\infty)$, we construct a $F_t\in L^2(\CO_\Gamma X,\nu\times\lambda)$ by letting $F_t(x,s)=f(x)$ if $t\leq s\leq t+1$ and zero otherwise. Note that $\norm{f}_\nu= \norm{F_t}_{\nu\times\lambda}$ and
\(
 \norm{\chi_A T_t \chi_C f}=\norm{\chi_A T \chi_C f}=
 \norm{\chi_{A\times[t,t+1]} \Phi(T)\chi_{C\times[t,t+1]}F_t}
\). Now the rest of the proof is obvious.

~\

\emph{(3) $\Rightarrow$ (1):}
Fix an $\epsilon>0$. Then by the assumption, there exists an $R>0$ such that for every $t\in [1,\infty)$ and measurable subsets $A,C \subseteq X$ with $d_\Gamma^t (A,C) > R$ we have $\norm{\chi_{A} T \chi_{C}} < \epsilon$. We will verify that $\norm{\chi_{A} T \chi_{C}}< \epsilon$ for measurable subsets $A,C \subseteq X$ with $\nu\paren{(B_R\cdot A)\cap C}=0$.

Assume first that $A,C \subseteq X$ are compact subsets such that $(B_R \cdot A)\cap C = \emptyset$. It follows from Lemma~\ref{lem:intersection.of.neighbourhoods} that
\[
 \bigcap_{t\geq1} N_R(A;d_\Gamma^t) \cap C= (B_R\cdot A)\cap C=\emptyset.
\]
Since $C$ is compact and all $N_R(A;d_\Gamma^t)$ are closed, we deduce that $N_R(A;d_\Gamma^{t_0})\cap C=\emptyset$ for some $t_0$ large enough. This means that $d_\Gamma^{t_0}(A,C)>R$ and hence $\norm{\chi_{A} T \chi_{C}} < \epsilon$ by the hypothesis.

For general measurable subsets $A,C \subseteq X$ with $\nu((B_R \cdot A)\cap C)=0$, replacing $C$ by $C \smallsetminus (B_R \cdot A)$ if necessary (which only differ by a null set) we may assume that $(B_R \cdot A)\cap C=\emptyset$. 
Since the measure $\nu$ is Radon and finite, there exist increasing sequences of compact subsets $\{A_n \subseteq A\}_{n\in \NN}$ and $\{C_n \subseteq C\}_{n\in \NN}$ such that $\lim\limits_{n\to \infty}\nu(A \smallsetminus A_n) = 0$ and $\lim\limits_{n\to \infty}\nu(C \smallsetminus C_n) = 0$. \emph{A fortiori}, we have $(B_R \cdot A_n)\cap C_n=\emptyset$ and it follows from the discussion in the second paragraph that $\|\chi_{A_n} T \chi_{C_n}\| < \epsilon$ for all $n\in \NN$. Thus, $\|\chi_A T \chi_C\| = \sup_{n} \|\chi_{A_n} T \chi_{C_n}\| < \epsilon.$
\end{proof}

Combining Proposition~\ref{prop: characterise quasi-locality} with Proposition~\ref{prop: quasi-locality equiv} implies the desired characterisation of asymptotic expansion in measure in terms of quasi\=/locality:

\begin{thm}\label{thm: characterise quasi-locality}
Let $(X,d)$ be a metric space with diameter at most $2$ equipped with a Radon probability measure $\nu$, and $\rho\colon \Gamma\curvearrowright (X,d)$ be a continuous action. If $P_X$ is the associated averaging projection on $X$ and $\fkG= P_X\otimes \Id_{L^2([1,\infty))}$ is the Druţu--Nowak projection, then the following are equivalent:
\begin{enumerate}
  \item $\rho$ is asymptotically expanding;
  \item $P_X$ is $\rho$\=/quasi\=/local;
  \item $\fkG$ is quasi-local;
  \item the family of operators $(P_X)_t\equiv P_X$ for $t\in [1,\infty)$ is uniformly quasi-local on $\wc{\Gamma\curvearrowright X}$.
\end{enumerate}
\end{thm}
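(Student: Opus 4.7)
The plan is to obtain this theorem essentially for free by concatenating the two results immediately preceding it in the excerpt, since all the real work has already been done. First I would observe that the Druţu--Nowak projection is by definition $\fkG = P_X \otimes \Id_{L^2([1,\infty))} = \Phi(P_X)$, where $\Phi$ is the $*$-homomorphism defined in \eqref{eq:connecting homomorphism}. This identification is what allows us to bridge the two preceding propositions.

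Next, the equivalence $(1) \Leftrightarrow (2)$ is exactly the content of Proposition~\ref{prop: characterise quasi-locality}, which does not require the Radon hypothesis on $\nu$. For the remaining equivalences $(2) \Leftrightarrow (3) \Leftrightarrow (4)$, I would apply Proposition~\ref{prop: quasi-locality equiv} with $T = P_X$. The proposition gives the implications $(2) \Rightarrow (3) \Rightarrow (4)$ in general, and the full circle $(4) \Rightarrow (2)$ holds under the standing hypothesis that $\nu$ is Radon (which is assumed in the statement of the theorem). Chaining these yields the desired four-way equivalence.

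There is no genuine obstacle here: the theorem is essentially a restatement obtained by combining Propositions~\ref{prop: characterise quasi-locality} and~\ref{prop: quasi-locality equiv}. The only thing that requires a moment of care is ensuring the Radon hypothesis on $\nu$ is used only where necessary, namely to close the loop in the implication $(4) \Rightarrow (2)$ via Proposition~\ref{prop: quasi-locality equiv}. The rest of the implications $(1) \Leftrightarrow (2)$, $(2) \Rightarrow (3)$, and $(3) \Rightarrow (4)$ go through without this assumption, so the theorem as stated is a clean corollary.
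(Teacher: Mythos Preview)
Your proposal is correct and matches the paper's own proof, which simply states that the theorem follows by combining Proposition~\ref{prop: characterise quasi-locality} with Proposition~\ref{prop: quasi-locality equiv}. Your additional remark about where the Radon hypothesis is actually needed is accurate and slightly more detailed than what the paper records.
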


\subsection{Projections approximated by finite dynamical propagation operators}
\label{ssec:domains.of.exp.and.projections.as.limits}
In the previous subsection, we showed that asymptotic expansion in measure can be characterised by (dynamical) quasi-locality of certain projections (Theorem \ref{thm: characterise quasi-locality}). In the same spirit of \cite[Section 6]{structure}, we would like to connect (dynamical) quasi-locality with finite (dynamical) propagation operators.

In doing so, we will show that unified warped cones arising from asymptotically expanding actions admit plenty of projections which can be approximated by finite propagation operators. This greatly generalise \cite[Theorem 6.6]{DN17}.
It will turn out that all of these projections lie outside the image of the coarse Baum--Connes assembly map. We will return to these aspects in Section~\ref{sec:baum_connes}.

We once again introduce a dynamical analogue of an analytic property of operators, namely, the dynamical propagation (see Subsection \ref{ssec:finite propagation wcone} for the notion of ordinary finite propagation operators):

\begin{de}
 Let $\rho\colon\Gamma\curvearrowright (X,\nu)$ be an action on a probability space $(X,\nu)$. We say that an operator $T\in \B(L^2(X,\nu))$ has finite \emph{$\rho$\=/propagation} if there is a $k\in\NN$ such that $\chi_AT\chi_C=0$ for any measurable subsets $A,C \subseteq X$ with $\nu((B_k\cdot A)\cap C)=0$. The smallest $k$ satisfying the above condition is called the \emph{$\rho$\=/propagation} of $T$.
\end{de}

Throughout the rest of this subsection, let $\Gamma\curvearrowright (X,\nu)$ be a measure\=/class-preserving action, $Y\subseteq X$ be a domain and $S\subseteq \Gamma$ be a finite symmetric set containing the identity. Recall from Proposition~\ref{prop:normalised.markov.is.reversible} that such an action induces a normalised local Markov kernel $\Pi_{Y,S}$ on $Y$ (Definition \ref{defn:normalized.Markov.kernel}). This kernel is reversible with a reversing measure $\snu$, where $\d\snu=\sigma_{Y,S}\,\d(\nu|_Y)$ for the function $\sigma_{Y,S}$ defined in \eqref{function sigma}.
We denote by $\fkP_{Y,S}\in \B(L^2(Y,\snu))$ and $\Delta_{Y,S}=1-\fkP_{Y,S} \in \B(L^2(Y,\snu))$ the Markov and Laplacian operators associated with $\Pi_{Y,S}$, respectively. 

We will present two different ways to produce projections from operators of finite $\rho$\=/propagation using Markov $S$\=/expansion. One is normalised to better accommodate the associated Markov kernel, while the other is non-normalised and more related to the original averaging projection $P_X$. In either case, our construction relies heavily on the techniques developed in Section~\ref{sec:Markov.expansion}.

\subsubsection{Normalised projections}\label{secc:normailsed projections}
Let $\tilde{P}_{Y,S} \in \B(L^2(Y,\snu))$ be the orthogonal projection onto constant functions on $Y$ (this need not coincide with $P_Y$, as the projection is taken with respect to the inner product $\angles{\cdot,\cdot}_{\snu}$).
Let us consider the isometric embedding 
\[
\widehat I_{Y,S}\colon L^2(Y,\snu)\hookrightarrow  L^2(X,\nu) 
\]
defined by pointwise multiplication by the function $\sqrt{\sigma_{Y,S}}$ on $Y$ and then extending by $0$ on $X \smallsetminus Y$. This induces the following adjoint $\ast$-homomorphism:
\begin{equation}\label{eq:ad}
\widehat{\mathrm{Ad}}\colon \B(L^2(Y,\snu)) \rightarrow \B(L^2(X,\nu)),\quad \mbox{by} \quad T \mapsto \widehat I_{Y,S} \circ T \circ (\widehat I_{Y,S})^*.
\end{equation}
Note that $\widehat I_{Y,S}(1)=\sqrt{\sigma_{Y,S}}$, where $1$ is the constant function $1$ in $L^2(Y,\snu)$ and $\sigma_{Y,S}$ is defined to be $0$ on every $x\in X\smallsetminus Y$. 
It follows that
\[
\widehat P_{Y,S} \coloneqq \widehat{\mathrm{Ad}}(\tilde{P}_{Y,S}) \in \B(L^2(X,\nu))
\] 
is the orthogonal projection onto the $1$-dimensional subspace of $L^2(X,\nu)$ spanned by the vector $\sqrt{\sigma_{Y,S}}$. We also transfer the lazy Markov operator $\fkP_{Y,S} \in \B(L^2(Y,\snu))$ to $\widehat\fkP_{Y,S} \coloneqq \widehat{\mathrm{Ad}}(\fkP_{Y,S}) \in \B(L^2(X,\nu))$. Similarly, $\widehat{\mathrm{Ad}}$ sends the lazy Markov operator $\frac 12 +\frac 12\fkP_{Y,S}$ to $\frac 12\chi_Y +\frac 12\widehat\fkP_{Y,S}  \in \B(L^2(X,\nu))$.

Now the techniques developed in Section~\ref{sec:Markov.expansion} can be used to prove the following:
\begin{prop}\label{prop:projections.normalized limits.of FP operators}
Let $\rho\colon  \Gamma\curvearrowright (X,\nu)$ be a measure\=/class-preserving action and $Y\subseteq X$ be a domain of Markov $S$\=/expansion (Definition~\ref{defn:domain.Markov.expansion}). Then the associated projection $\widehat P_{Y,S}\in\B(L^2(X,\nu))$ is a norm limit of operators $(\frac 12\chi_Y +\frac 12\widehat\fkP_{Y,S})^n$, which all have finite $\rho$\=/propagation.
\end{prop}

\begin{proof}
Since the operator $\frac 12\chi_Y +\frac 12\widehat\fkP_{Y,S}$ has $\rho$\=/propagation at most $\max\braces{\ell(s)\mid s\in S}$, all of its powers $(\frac 12\chi_Y +\frac 12\widehat\fkP_{Y,S})^n$ have finite $\rho$-propagation as well. By Theorem \ref{thm:spectral.characterisation.markov.exp} and Lemma \ref{lem:spetral gap equiv}, the lazy Markov operator $\frac 12 +\frac 12\fkP_{Y,S}$ on $L^2(Y,\snu)$ has spectrum contained in $[-\frac 34, 1-\varepsilon] \cup \{1\}$ for some $\varepsilon>0$. It follows that the sequence $(\frac 12 +\frac 12\fkP_{Y,S})^n$ converges (as $n\to \infty$) in the operator norm to the projection onto the $1$-eigenspace, which is exactly the projection $\tilde{P}_{Y,S}$. Since $\widehat{\mathrm{Ad}}$ is a $*$-homomorphism, $\| (\frac 12\chi_Y +\frac 12\widehat\fkP_{Y,S})^n -  \widehat P_{Y,S}\| \rightarrow 0$ for $n\to \infty.$ This finishes the proof.
\end{proof}

Theorem \ref{thm:structure theorem Markov} shows that strongly ergodic actions provide plenty of domains of Markov expansion. We can hence use Proposition~\ref{prop:projections.normalized limits.of FP operators} as an abundant source of projections which can be approximated by finite $\rho$-propagation operators.

As a corollary to Proposition~\ref{prop:projections.normalized limits.of FP operators} we also recover the following result by Druţu and Nowak:
\begin{cor}[{\cite[Theorem 6.6]{DN17}}\footnote{Strictly speaking, \cite[Theorem 6.6]{DN17} concerns the operator $\fkG= P_X\otimes \Id_{L^2([1,\infty))}$ and also shows that it is ``ghost''. We will recover these facts in Section~\ref{sec:baum_connes}.}]
Let $\rho\colon\Gamma\curvearrowright (X,\nu)$ be a measure-preserving action on a probability space $(X,\nu)$. Suppose that $\rho$ has spectral gap, then the averaging projection $P_X$ is a norm limit of operators with finite $\rho$\=/propagation.
\end{cor}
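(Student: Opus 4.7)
The plan is to derive this corollary directly from Proposition~\ref{prop:projections.normalized limits.of FP operators} applied to $Y = X$, once we verify that in the measure\=/preserving case the normalised projection $\widehat{P}_{X,S}$ coincides with the usual averaging projection $P_X$. First, I would use the well\=/known equivalence between spectral gap and expansion in measure for measure\=/preserving actions (already recalled in Section~\ref{sec:prelims}) to pick a finite symmetric $S\subseteq\Gamma$ with $1\in S$ such that $X$ is a domain of $S$\=/expansion. Since the action preserves $\nu$, all Radon--Nikodym derivatives $r(s,x)$ equal $1$, hence are trivially bounded, so Lemma~\ref{lem:domain.of.exp iff Markov.exp} yields that $X$ is a domain of Markov $S$\=/expansion. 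Proposition~\ref{prop:projections.normalized limits.of FP operators} then gives that $\widehat{P}_{X,S}$ is the operator\=/norm limit of the finite\=/$\rho$\=/propagation operators $(\widehat{\fkP}_{X,S})^n$.

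It remains only to identify $\widehat{P}_{X,S}$ with $P_X$. Since $r(s,x)\equiv 1$ and $Y = X$, definition~\eqref{function sigma} gives $\sigma_{X,S}(x) = |S|$ for every $x\in X$, so $\snu = |S|\cdot\nu$ and $\widehat{I}_{X,S}$ is just the scalar multiplication by $\sqrt{|S|}$, viewed as a unitary $L^2(X,\snu)\to L^2(X,\nu)$. Consequently $\widehat{P}_{X,S} = \widehat{I}_{X,S}\tilde{P}_{X,S}\widehat{I}_{X,S}^{*}$ is the orthogonal projection onto the one\=/dimensional subspace spanned by the (now genuinely constant) function $\sqrt{\sigma_{X,S}}\equiv\sqrt{|S|}$, which is exactly $P_X$. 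Combining this identification with the approximation from Proposition~\ref{prop:projections.normalized limits.of FP operators} completes the argument.

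There is essentially no obstacle here: the corollary is really just the measure\=/preserving, global version of the proposition, and the only thing to check is the compatibility of the two notions of averaging projection. The one small point worth being careful about is the role of $|S|$\=/normalisation (both in the definition of $\snu$ and in the isometry $\widehat{I}_{X,S}$), which cancels in exactly the right way so that $\widehat{P}_{X,S}$ equals $P_X$ rather than a scalar multiple of it.
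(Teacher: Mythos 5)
Your argument is correct and follows essentially the same route as the paper's own proof: spectral gap gives that $X$ is a domain of expansion, Lemma~\ref{lem:domain.of.exp iff Markov.exp} upgrades this to Markov $S$\=/expansion since the Radon--Nikodym derivatives are identically $1$, and Proposition~\ref{prop:projections.normalized limits.of FP operators} applies once one notes $\snu=|S|\cdot\nu$ and hence $\widehat P_{X,S}=P_X$. Your extra detail on why the $|S|$\=/normalisation cancels in the identification $\widehat P_{X,S}=P_X$ is a welcome expansion of a step the paper states without justification.
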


\begin{proof}
Since the action is measure preserving, we have $\tilde\nu_{X,S}=|S| \cdot\nu$. It follows that $\widehat P_{X,S}= P_X$ for any choice of $S\subseteq \Gamma$. If $\rho$ has spectral gap, then $X$ is a domain of expansion (see e.g. Corollary~\ref{cor:local.spec.gap.iff.domain.of.expansion}). Hence, $X$ is a domain of Markov expansion by Lemma~\ref{lem:domain.of.exp iff Markov.exp} and we can apply Proposition~\ref{prop:projections.normalized limits.of FP operators} to conclude the proof. 
\end{proof}

\subsubsection{Non-normalised projections}\label{secc:non-normalised projections}
Now we move on to the second construction, where we show that the averaging projections $P_Y$ are norm limits of operators with finite $\rho$\=/propagation as well. Unlike the previous construction, these projections will not be limits of powers of a \emph{fixed} Markov operator. Instead, we will apply our structure theory for asymptotically expanding actions to produce appropriate sequences of operators.

We define a different embedding 
\[
I_{Y,S}\colon L^2(Y,\snu)\hookrightarrow  L^2(X,\nu) 
\]
simply by extending each function in $L^2(Y,\snu)$ by $0$ on $X \smallsetminus Y$.
In general, $I_{Y,S}$ is not isometric and may even be unbounded. 

Assume now that there exists $\Theta\geq 1$ such that $1/\Theta\leq r(s,y)\leq\Theta$ for every $y\in Y$ and $s\in S_{Y,y}$. Under this assumption, it is clear that $I_{Y,S}$ is bounded. So it induces the following adjoint map:
\begin{equation*}
\mathrm{Ad}\colon \B(L^2(Y,\snu)) \rightarrow \B(L^2(X,\nu)),\quad \mbox{by} \quad T \mapsto  I_{Y,S} \circ T \circ (I_{Y,S})^*.
\end{equation*}
Note that--- while being a bounded linear map preserving $\ast$-operations---the adjoint map $\mathrm{Ad}$ might not be multiplicative.

As before, let $\tilde{P}_{Y,S} \in \B(L^2(Y,\snu))$ be the orthogonal projection onto constant functions, while $P_Y \in \B(L^2(X,\nu))$ is the orthogonal projection onto the one-dimensional subspace in $L^2(X,\nu)$ spanned by $\chi_Y$. Since $(I_{Y,S})^*(g)=\frac{1}{\sigma_{Y,S}} g|_Y$ for $g \in L^2(X,\nu)$, we have that 
\begin{equation}\label{eq:projection calculation}
\mathrm{Ad}(\tilde{P}_{Y,S}) = \frac{\nu(Y)}{\snu(Y)}P_Y.
\end{equation}

We prove the following:

\begin{lem}\label{lem:projections limits.of FP operators}
Let $\rho\colon \Gamma\curvearrowright (X,\nu)$ be a measure\=/class-preserving action and $Y\subseteq X$ be a domain of Markov $S$\=/expansion. Assume further that there exists $\Theta\geq 1$ such that $1/\Theta\leq r(s,y)\leq\Theta$ for every $y\in Y$ and $s\in S_{Y,y}$. Then the averaging projection $P_Y\in\B(L^2(X,\nu))$ is a norm limit of operators with finite $\rho$\=/propagation.
\end{lem}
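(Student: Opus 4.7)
The plan is to mimic the strategy of Proposition~\ref{prop:projections.normalized limits.of FP operators}, but using the non\=/isometric embedding $I_{Y,S}$ in place of $\widehat I_{Y,S}$, together with identity~\eqref{eq:projection calculation}. The assumption $1/\Theta\leq r(s,y)\leq\Theta$ ensures that $I_{Y,S}$ and $(I_{Y,S})^*(g)=\sigma_{Y,S}^{-1}g|_Y$ are bounded, so $\mathrm{Ad}$ is a continuous (though not necessarily multiplicative) linear map $\B(L^2(Y,\snu))\to\B(L^2(X,\nu))$.

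Since $Y$ is a domain of Markov $S$\=/expansion, Theorem~\ref{thm:spectral.characterisation.markov.exp} together with Lemma~\ref{lem:spetral gap equiv} yields that the spectrum of $\fkP_{Y,S}\in\B(L^2(Y,\snu))$ is contained in $[-1+\varepsilon,1-\varepsilon]\cup\{1\}$ for some $\varepsilon>0$. Hence $(\fkP_{Y,S})^n\to \tilde P_{Y,S}$ in operator norm, exactly as in the proof of Proposition~\ref{prop:projections.normalized limits.of FP operators}. Applying the continuous map $\mathrm{Ad}$ and invoking~\eqref{eq:projection calculation}, I conclude that $\mathrm{Ad}((\fkP_{Y,S})^n)\to\mathrm{Ad}(\tilde P_{Y,S})=\frac{\nu(Y)}{\snu(Y)}P_Y$ in operator norm, so $P_Y$ is the norm limit of the rescaled sequence $\frac{\snu(Y)}{\nu(Y)}\mathrm{Ad}((\fkP_{Y,S})^n)$.

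It remains to verify that each operator $\mathrm{Ad}((\fkP_{Y,S})^n)$ has finite $\rho$\=/propagation. From the explicit form of $\Pi_{Y,S}$, the value of $(\fkP_{Y,S})^n g$ at $y\in Y$ depends only on values of $g$ at the finitely many points $\gamma\cdot y$ with $\gamma\in S^n\subseteq B_{nk_S}$, where $k_S\coloneqq\max_{s\in S}\ell(s)$. Unwinding the definition of $\mathrm{Ad}$, I expect to conclude that $\chi_A\mathrm{Ad}((\fkP_{Y,S})^n)\chi_C=0$ whenever $\nu((B_{nk_S}\cdot A)\cap C)=0$, so the $\rho$\=/propagation is bounded by $nk_S$.

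The main technical obstacle will be this last bookkeeping step: translating the locality of $(\fkP_{Y,S})^n$ with respect to the partial action on $(Y,\snu)$ into a genuine $\rho$\=/propagation bound for its $\mathrm{Ad}$\=/image on $L^2(X,\nu)$. This requires carefully symmetrising the neighbourhood condition via the measure\=/class\=/preserving property together with the symmetry $B_{nk_S}=B_{nk_S}^{-1}$, in order to convert the hypothesis $\nu((B_{nk_S}\cdot A)\cap C)=0$ into the equivalent form $\nu(A\cap(B_{nk_S}\cdot C))=0$, from which the vanishing of $\chi_A\mathrm{Ad}((\fkP_{Y,S})^n)\chi_C$ follows directly from the locality described above.
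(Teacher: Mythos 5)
Your argument is correct and coincides with the paper's own proof: both use the spectral gap of $\fkP_{Y,S}$ (via Theorem~\ref{thm:spectral.characterisation.markov.exp} and Lemma~\ref{lem:spetral gap equiv}) to get norm convergence $(\fkP_{Y,S})^n\to\tilde P_{Y,S}$, push this through the bounded map $\mathrm{Ad}$ using~\eqref{eq:projection calculation}, and bound the $\rho$\=/propagation of $\mathrm{Ad}((\fkP_{Y,S})^n)$ by $n\cdot\max_{s\in S}\ell(s)$. Your final bookkeeping step (symmetrising the neighbourhood condition via $B_{nk_S}=B_{nk_S}^{-1}$ and measure\=/class\=/preservation) is exactly the detail the paper leaves implicit, and it goes through as you describe.
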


\begin{proof}
By Theorem \ref{thm:spectral.characterisation.markov.exp} and Lemma \ref{lem:spetral gap equiv}, the lazy Markov operator $\frac 12+\frac 12\fkP_{Y,S}$ on $L^2(Y,\snu)$ has spectrum contained in $[-\frac 34, 1-\varepsilon] \cup \{1\}$ for some $\varepsilon>0$. Hence, $(\frac 12+\frac 12\fkP_{Y,S})^n$ converges in the operator norm to the projection $\tilde{P}_{Y,S}$ in $\B(L^2(Y,\snu))$ as $n\to \infty$.

Since the embedding $I_{Y,S}$ is bounded, we obtain that
\[
 I_{Y,S} \circ (\frac 12+\frac 12\fkP_{Y,S})^n \circ (I_{Y,S})^*
 = \mathrm{Ad}\bigparen{(\frac 12+\frac 12\fkP_{Y,S})^n}
 \xrightarrow{n\to\infty} \mathrm{Ad}(\tilde{P}_{Y,S}) =  \frac{\nu(Y)}{\snu(Y)}P_Y
\]
where the last equality comes from \eqref{eq:projection calculation}. Since each $I_{Y,S} \circ (\frac 12+\frac 12\fkP_{Y,S})^n \circ (I_{Y,S})^*$ has $\rho$-propagation bounded by $n\cdot\max\braces{\ell(s)\mid s\in S}$, so the conclusion holds.
\end{proof}

Unlike Proposition~\ref{prop:projections.normalized limits.of FP operators}, Lemma~\ref{lem:projections limits.of FP operators} concerns projections that do not depend on the finite symmetric set $S$. This allows us to prove a result for domains of asymptotic expansion as well:

\begin{prop}\label{prop:projections limits.of FP operators}
Let $\rho\colon  \Gamma\curvearrowright (X,\nu)$ be a measure\=/class-preserving action. Then for any domain $Y\subseteq X$ of asymptotic expansion, the averaging projection $P_Y$ is a norm limit of operators with finite $\rho$\=/propagation.
\end{prop}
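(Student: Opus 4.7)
The plan is to combine Lemma~\ref{lem:projections limits.of FP operators} with the structure result of Proposition~\ref{prop:Markov local version} in a diagonal argument. Since $Y$ is a domain of asymptotic expansion, Proposition~\ref{prop:Markov local version} produces an exhaustion $Y_n \nearrow Y$ such that each $Y_n$ is a domain of Markov $S^{(n)}$-expansion and the Radon--Nikodym derivatives $r(s,y)$ for $s\in S^{(n)}_{Y_n,y}$ and $y\in Y_n$ are bounded between $1/\Theta_n$ and $\Theta_n$ for some constant $\Theta_n\geq 1$. This is precisely the hypothesis needed to apply Lemma~\ref{lem:projections limits.of FP operators} to each $Y_n$: for every $n\in\NN$ we obtain operators $T_n\in\B(L^2(X,\nu))$ of finite $\rho$-propagation with $\norm{T_n-P_{Y_n}}<1/n$.

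The remaining step is to show that $P_{Y_n}\to P_Y$ in operator norm. These are rank-one orthogonal projections onto the lines spanned by the unit vectors $u_n\coloneqq \chi_{Y_n}/\sqrt{\nu(Y_n)}$ and $u\coloneqq \chi_Y/\sqrt{\nu(Y)}$ respectively, so it suffices to observe that $\norm{u_n-u}_{\nu,2}\to 0$. This follows from dominated convergence since $Y_n\nearrow Y$ (up to measure zero), which gives $\chi_{Y_n}\to\chi_Y$ in $L^2(X,\nu)$ and $\nu(Y_n)\to\nu(Y)>0$. A standard estimate for rank-one projections then yields $\norm{P_{Y_n}-P_Y}\leq 2\norm{u_n-u}_{\nu,2}\to 0$.

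Combining the two approximations via the triangle inequality, the sequence $T_n$ of finite $\rho$-propagation operators satisfies $\norm{T_n-P_Y}\leq \norm{T_n-P_{Y_n}}+\norm{P_{Y_n}-P_Y}\to 0$, which completes the proof.

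I expect no serious obstacle here: the substantive content is already packaged into Proposition~\ref{prop:Markov local version} (which handles the bounded Radon--Nikodym derivatives locally) and Lemma~\ref{lem:projections limits.of FP operators} (which handles the Markov approximation under that boundedness hypothesis). The only point requiring a moment of care is the norm convergence $P_{Y_n}\to P_Y$, but this reduces to an elementary $L^2$-convergence of indicator functions together with the elementary fact that, for rank-one projections, norm closeness of the spanning unit vectors controls the norm distance of the projections.
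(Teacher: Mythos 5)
Your proposal is correct and follows essentially the same route as the paper: invoke Proposition~\ref{prop:Markov local version} to get an exhaustion of $Y$ by domains of Markov $S^{(n)}$\=/expansion with locally bounded Radon--Nikodym derivatives, apply Lemma~\ref{lem:projections limits.of FP operators} to each $Y_n$, and finish with the norm convergence $P_{Y_n}\to P_Y$ and a diagonal argument. The paper leaves the convergence $P_{Y_n}\to P_Y$ as an unelaborated remark, whereas you supply the (correct) rank\=/one estimate $\norm{P_{Y_n}-P_Y}\leq 2\norm{u_n-u}_{\nu,2}$; this is a welcome but inessential addition.
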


\begin{proof}
From Proposition~\ref{prop:Markov local version}, it follows that there is an exhaustion $Y_n\nearrow Y$ by domains of Markov $S^{(n)}$\=/expansion such that for every $n\in\NN$ there is a $\Theta_n\geq 1$ such that $1/\Theta_n\leq r(s,y)\leq\Theta_n$ for every $y\in Y_n$ and $s\in S^{(n)}_{Y_n,y}$.
 
Now it follows from Lemma~\ref{lem:projections limits.of FP operators} that each $P_{Y_n}$ is a norm limit of operators with finite $\rho$\=/propagation. Since $Y_n$ increasingly converges to $Y$ in measure and $Y$ has finite measure, then $P_{Y_n}$ converges to $P_Y$ in the operator norm. Hence, a diagonal argument will conclude the proof.
\end{proof}

It follows easily from the definitions that norm limits of operators with finite $\rho$\=/propagation are $\rho$\=/quasi\=/local. Hence, combining Proposition~\ref{prop: characterise quasi-locality} with Proposition~\ref{prop:projections limits.of FP operators} we immediately obtain the following:

\begin{cor}\label{cor: characterise finite ppg}
Let $\rho\colon \Gamma\curvearrowright (X,\nu)$ be a measure-class-preserving action on a probability space $(X,\nu)$. Then $\rho$ is asymptotically expanding \emph{if and only if} $P_X$ is a norm limit of operators with finite $\rho$\=/propagation.
\end{cor}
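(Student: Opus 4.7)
The plan is to prove this corollary by directly combining the structural results already established earlier in this section, namely Proposition~\ref{prop: characterise quasi-locality} and Proposition~\ref{prop:projections limits.of FP operators}. The two implications are essentially independent and the forward direction is the immediate one, while the reverse direction uses a soft approximation observation.

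For the forward implication, I would argue that if $\rho$ is asymptotically expanding then by definition the whole space $X$ is itself a domain of asymptotic expansion (since $\nu(X)$ is finite). Therefore Proposition~\ref{prop:projections limits.of FP operators} applied with $Y = X$ directly yields that $P_X$ is a norm limit of operators with finite $\rho$-propagation, which is precisely what is required.

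For the reverse implication, I would first record the soft fact that any norm limit of operators with finite $\rho$-propagation is automatically $\rho$-quasi-local. Indeed, if $T_n \to T$ in operator norm and each $T_n$ has finite $\rho$-propagation $k_n$, then for any $\epsilon > 0$ one picks $n$ with $\|T - T_n\| < \epsilon$ and observes that whenever $A, C \subseteq X$ satisfy $\nu((B_{k_n}\cdot A)\cap C) = 0$ one has $\chi_A T_n \chi_C = 0$, hence
\[
 \|\chi_A T \chi_C\| = \|\chi_A (T - T_n) \chi_C\| \leq \|T - T_n\| < \epsilon,
\]
so $T$ is $\rho$-quasi-local. Applying this to $T = P_X$ together with Proposition~\ref{prop: characterise quasi-locality}, which identifies $\rho$-quasi-locality of $P_X$ with the asymptotic expansion of $\rho$, finishes the argument.

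There is no genuine obstacle here; the entire content of the corollary has been packed into the two cited propositions, so the proof amounts to stitching them together with the one-line approximation lemma above. The only thing worth double-checking is that the forward implication really applies with $Y = X$: this is transparent because Definition~\ref{defn:domain of asymp expansion} reduces to Definition~\ref{defn:asymptotic expanding in measure} in this case, so no additional verification is needed.
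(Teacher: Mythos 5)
Your proposal is correct and follows exactly the paper's own argument: the forward direction is Proposition~\ref{prop:projections limits.of FP operators} applied to $Y=X$ (which is a domain of asymptotic expansion since $\nu$ is a probability measure), and the reverse direction is the elementary observation that norm limits of finite $\rho$\=/propagation operators are $\rho$\=/quasi\=/local combined with Proposition~\ref{prop: characterise quasi-locality}. No further comment is needed.
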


\subsection{Characterising asymptotic expansion by finite propagation approximations}
\label{ssec:finite propagation wcone}
Finally, we conclude this section by combining results in Subsections~\ref{ssec:Warped cones and Druţu--Nowak projections} and \ref{ssec:domains.of.exp.and.projections.as.limits} to prove that an action is asymptotically expanding \emph{if and only if} the Druţu--Nowak projection can be approximated by operators with finite propagation.

Let $(X,d)$ be a metric space  of diameter at most $2$, $\rho\colon \Gamma\curvearrowright X$ be a continuous action and $\CO_\Gamma X$ the associated unified warped cone. 
If $X$ is equipped with a probability measure $\nu$, we give $\CO_\Gamma X$ the product measure $\nu\times\lambda$ and say that an operator $T\in \B(L^2(\CO_\Gamma X, \nu\times\lambda))$ has \emph{finite propagation} if there exists an $R>0$ such that for any two measurable subsets $A,C \subseteq \CO_\Gamma X$ with $d_\Gamma (A,C)>R$, we have $\chi_A T \chi_C=0$.

\begin{prop}\label{prop: finite ppg equiv}
Let $(X,d)$ be a metric space with diameter at most $2$ equipped with a probability measure $\nu$, and $\rho\colon \Gamma\curvearrowright X$ be a continuous action. If $T \in \B(L^2(X,\nu))$ has finite $\rho$\=/propagation, then $\Phi(T)$ has finite propagation. If in addition $\nu$ is Radon, the converse implication holds as well.
\end{prop}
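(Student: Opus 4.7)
The plan is to follow closely the proof of Proposition~\ref{prop: quasi-locality equiv}, replacing the quasi-local inequalities $\|\chi_A T\chi_C\| < \epsilon$ by exact equalities $\chi_A T \chi_C = 0$; the present statement is the ``propagation-sharp'' analogue of that result, and so both directions should admit the same overall structure.

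For the forward implication, I will show that if $T$ has $\rho$\=/propagation $k$, then $\Phi(T)$ itself has propagation at most $k$. Given measurable $A, C \subseteq \CO_\Gamma X$ with $d_\Gamma(A,C) > k$, decompose them via the second coordinate as $A = \bigsqcup_t A_t \times \{t\}$ and $C = \bigsqcup_t C_t \times \{t\}$. The distance hypothesis forces $d_\Gamma^t(A_t, C_t) > k$ whenever both slices are nonempty, and combined with the defining inequality $d_\Gamma^t(x,\gamma \cdot x) \leq \ell(\gamma)$ this gives $(B_k \cdot A_t)\cap C_t = \emptyset$. Finite $\rho$\=/propagation of $T$ then makes every $\chi_{A_t} T \chi_{C_t}$ vanish, and the Fubini-style integration from the ``(1)$\Rightarrow$(2)'' step of Proposition~\ref{prop: quasi-locality equiv} produces $\chi_A \Phi(T) \chi_C = 0$.

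For the converse, assume $\nu$ is Radon and $\Phi(T)$ has propagation $\leq R$, and fix any integer $k > R$. Given measurable $A,C \subseteq X$ with $\nu((B_k \cdot A) \cap C) = 0$, I may replace $C$ by $C \smallsetminus (B_k \cdot A)$ (which is measurable since each element of $B_k$ acts as a homeomorphism) and thus assume outright that $(B_k \cdot A) \cap C = \emptyset$. I first handle the case of compact $A$ and $C$: since $B_k \cdot A$ is then compact hence closed, Lemma~\ref{lem:intersection.of.neighbourhoods} gives $\bigcap_{t \geq 1} N_k(A; d_\Gamma^t) = B_k \cdot A$, which is disjoint from $C$, and a standard compactness argument applied to the nested closed sets $N_k(A; d_\Gamma^t) \cap C$ produces some $t_0 \geq 1$ with $d_\Gamma^{t_0}(A,C) > k$. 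Using that $t \mapsto d_\Gamma^t$ is non-decreasing (which follows from the maximality in the defining property of $d_\Gamma^t$, since increasing $t$ only weakens the constraint $d_\Gamma^t \leq t\cdot d$), the strips $A \times [t_0, t_0+1]$ and $C \times [t_0, t_0+1]$ are at $d_\Gamma$\=/distance strictly greater than $k > R$, so $\Phi(T)$ vanishes between them. Testing against vectors $\xi(x,s) = f(x)\chi_{[t_0,t_0+1]}(s)$ for $f \in L^2(X,\nu)$ then yields $\chi_A T \chi_C = 0$. For general measurable $A,C$, inner regularity of $\nu$ supplies increasing compact approximations $A_n \nearrow A$, $C_n \nearrow C$ that still satisfy $(B_k \cdot A_n) \cap C_n = \emptyset$; applying the compact case to each pair and passing to strong-operator-topology limits concludes.

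The main obstacle is the converse direction, and it has two delicate ingredients. First, one must extract a single scale $t_0$ at which the warped distance $d_\Gamma^{t_0}$ already separates $A$ from $C$ by more than $k$: this is precisely where compactness of $A \cup C$ and Lemma~\ref{lem:intersection.of.neighbourhoods} are essential, and the Radon hypothesis is what allows us to reduce to this compact case. Second, because $\Phi(T)$ is blind to measure-zero slices, one must inflate $X \times \{t_0\}$ to the positive-measure strip $X \times [t_0, t_0+1]$ so that $\Phi(T)$ can detect the vanishing of $\chi_A T \chi_C$; monotonicity of $t \mapsto d_\Gamma^t$ is what makes this inflation preserve the $d_\Gamma$\=/separation needed to invoke the propagation bound on $\Phi(T)$.
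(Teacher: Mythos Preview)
Your proposal is correct and follows essentially the same approach as the paper, which simply states that the argument is identical to that of Proposition~\ref{prop: quasi-locality equiv} with $\epsilon=0$. Your forward direction is exactly the $(1)\Rightarrow(2)$ step run with $\epsilon=0$, and your converse merges the paper's $(2)\Rightarrow(3)$ strip argument with the $(3)\Rightarrow(1)$ compactness-and-Radon argument into a single direct step, using the same key ingredients (Lemma~\ref{lem:intersection.of.neighbourhoods}, monotonicity of $t\mapsto d_\Gamma^t$, inner regularity, and the Fubini computation).
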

\begin{proof}
 The argument is identical to that of Proposition~\ref{prop: quasi-locality equiv} with $\epsilon=0$.
\end{proof}

Since norm limits of operators with finite propagation are quasi-local, 
we can combine Proposition \ref{prop: finite ppg equiv} and Corollary \ref{cor: characterise finite ppg} with Theorem~\ref{thm: characterise quasi-locality} $(3)\Rightarrow (1)$ to obtain a dynamical counterpart of \cite[Theorem~C]{structure}: 

\begin{thm}\label{thm: characterise finite ppg}
Let $(X,d)$ be a metric space with diameter at most $2$ equipped with a probability measure $\nu$, and $\rho\colon \Gamma\curvearrowright X$ be a continuous measure\=/class-preserving action. The following are equivalent:
\begin{enumerate}
  \item $\rho$ is asymptotically expanding;
  \item the averaging projection $P_X$ is a norm limit of operators with finite $\rho$\=/propagation;
  \item the Druţu--Nowak projection $\fkG$ is a norm limit of operators with finite propagation.
\end{enumerate}
\end{thm}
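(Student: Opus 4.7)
The plan is to prove the theorem via the cyclic chain $(1)\Leftrightarrow(2)\Rightarrow(3)\Rightarrow(1)$, which lets us leverage the dynamical/geometric dictionary built up in the rest of the section. The equivalence $(1)\Leftrightarrow(2)$ is already Corollary~\ref{cor: characterise finite ppg}, which packages the characterisation of asymptotic expansion in terms of finite $\rho$\=/propagation approximations of $P_X$. Thus the only new work is to bridge the gap between the dynamical world (finite $\rho$\=/propagation operators in $\B(L^2(X,\nu))$) and the geometric world (finite propagation operators on the unified warped cone), which is exactly what Proposition~\ref{prop: finite ppg equiv} and Theorem~\ref{thm: characterise quasi-locality} are designed for.

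For $(2)\Rightarrow(3)$, I would start from a sequence $(T_n)_{n\in\NN}$ in $\B(L^2(X,\nu))$ with finite $\rho$\=/propagation converging to $P_X$ in operator norm. Since the map $\Phi$ from \eqref{eq:connecting homomorphism} is a $*$-homomorphism of norm at most $1$, the operators $\Phi(T_n)$ converge in norm to $\Phi(P_X)=\fkG$; and by Proposition~\ref{prop: finite ppg equiv}, each $\Phi(T_n)$ has finite propagation on $\CO_\Gamma X$. Hence $\fkG$ is a norm limit of finite propagation operators, as required.

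For $(3)\Rightarrow(1)$, I would first observe the general soft fact that any norm limit of finite propagation operators is automatically quasi-local: if $\|T-\fkG\|<\varepsilon$ and $T$ has propagation at most $R$, then for any measurable $A,C\subseteq\CO_\Gamma X$ with $d_\Gamma(A,C)>R$ we have $\chi_A T\chi_C=0$, hence $\|\chi_A \fkG\chi_C\|\leq \|\fkG-T\|<\varepsilon$. In particular $\fkG$ is quasi-local, and the implication $(3)\Rightarrow(1)$ of Theorem~\ref{thm: characterise quasi-locality} closes the loop to deliver the asymptotic expansion of $\rho$.

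The genuinely hard part has already been absorbed into the preceding development: all the real analytic content lives in Corollary~\ref{cor: characterise finite ppg} (built on the Markov machinery of Section~\ref{sec:Markov.expansion}) and in the dynamical/ordinary quasi-locality correspondence from Proposition~\ref{prop: quasi-locality equiv}. At this stage the proof is essentially bookkeeping. The one subtle point to keep in mind is that the implication $(3)\Rightarrow(1)$ of Theorem~\ref{thm: characterise quasi-locality}---via Proposition~\ref{prop: quasi-locality equiv} $(3)\Rightarrow (1)$---requires the probability measure $\nu$ to be Radon (to approximate measurable sets by compact ones from inside), and this hypothesis must be tacitly carried through, or verified directly, when applying the present theorem.
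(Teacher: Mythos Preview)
Your proposal is correct and follows essentially the same route as the paper: the paper also derives the theorem by combining Corollary~\ref{cor: characterise finite ppg} for $(1)\Leftrightarrow(2)$, Proposition~\ref{prop: finite ppg equiv} for $(2)\Rightarrow(3)$, and the observation that norm limits of finite propagation operators are quasi-local together with Theorem~\ref{thm: characterise quasi-locality} $(3)\Rightarrow(1)$ for $(3)\Rightarrow(1)$. Your remark that the step $(3)\Rightarrow(1)$ tacitly requires $\nu$ to be Radon is well taken and matches the hypothesis used in Theorem~\ref{thm: characterise quasi-locality}.
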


For later use, we record that we can apply Proposition \ref{prop: finite ppg equiv} to the projections constructed in Proposition \ref{prop:projections.normalized limits.of FP operators} and Proposition \ref{prop:projections limits.of FP operators} and obtain the following:

\begin{cor}\label{cor:projection finite ppg}
Let $(X,d)$ be a metric space with diameter at most $2$ equipped with a probability measure $\nu$, and $\rho\colon \Gamma\curvearrowright X$ be a continuous and measure\=/class-preserving action. Let $P\in \B(L^2(X,\nu))$ be one of the following rank\=/one projection:
\begin{enumerate}
  \item $P=\widehat P_{Y,S}$ for a domain $Y\subseteq X$ of Markov $S$\=/expansion;
  \item $P=P_Y$ for a domain $Y\subseteq X$ of asymptotic expansion.
\end{enumerate}
Then the projection $\Phi(P)=P \otimes \mathrm{Id}_{L^2([1,\infty))} \in \B(L^2(\CO_\Gamma X, \nu\times\lambda))$ is a norm limit of operators with finite propagation.
\end{cor}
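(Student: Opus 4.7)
The proof will be a short synthesis of the results assembled in Sections \ref{ssec:domains.of.exp.and.projections.as.limits} and \ref{ssec:finite propagation wcone}, so the plan is essentially to package these together and invoke continuity of $\Phi$.

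First I would note that the map $\Phi$ defined in \eqref{eq:connecting homomorphism} is a $\ast$\=/homomorphism between $C^{\ast}$\=/algebras (it is the amplification by $\Id_{L^2([1,\infty))}$), and in particular it is contractive, hence norm\=/continuous. Consequently, if a sequence $(T_n)$ in $\B(L^2(X,\nu))$ converges in operator norm to $P$, then $\Phi(T_n)\to\Phi(P)$ in $\B(L^2(\CO_\Gamma X,\nu\times\lambda))$. Combined with Proposition~\ref{prop: finite ppg equiv}, this means it is enough to exhibit, for each of the two cases, a sequence $(T_n)$ of operators with finite $\rho$\=/propagation such that $T_n\to P$ in norm.

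For case (1), where $P=\widehat P_{Y,S}$ and $Y$ is a domain of Markov $S$\=/expansion, I would invoke Proposition~\ref{prop:projections.normalized limits.of FP operators} directly: the powers $(\widehat\fkP_{Y,S})^n$ all have finite $\rho$\=/propagation (bounded by $n\cdot\max\braces{\ell(s)\mid s\in S}$) and converge in norm to $\widehat P_{Y,S}$. For case (2), where $P=P_Y$ and $Y$ is a domain of asymptotic expansion, I would invoke Proposition~\ref{prop:projections limits.of FP operators}, which produces the desired approximating sequence by combining Proposition~\ref{prop:Markov local version} (to obtain an exhaustion of $Y$ by domains of Markov expansion with bounded Radon--Nikodym derivatives) with Lemma~\ref{lem:projections limits.of FP operators} (to approximate each $P_{Y_n}$) via a diagonal argument.

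In both cases, applying $\Phi$ to the chosen approximating sequence and invoking Proposition~\ref{prop: finite ppg equiv} yields a sequence of finite\=/propagation operators in $\B(L^2(\CO_\Gamma X,\nu\times\lambda))$ that converges in norm to $\Phi(P)$. There is no real obstacle here: every nontrivial step has already been proved in the preceding subsections, and the corollary is essentially the statement that the $\ast$\=/homomorphism $\Phi$ transfers ``norm limits of finite $\rho$\=/propagation operators'' on the base space to ``norm limits of finite propagation operators'' on the unified warped cone.
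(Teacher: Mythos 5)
Your proposal is correct and matches the paper's argument exactly: the paper likewise obtains this corollary by applying Proposition~\ref{prop: finite ppg equiv} to the approximating sequences of finite $\rho$\=/propagation operators furnished by Proposition~\ref{prop:projections.normalized limits.of FP operators} (case (1)) and Proposition~\ref{prop:projections limits.of FP operators} (case (2)), using that $\Phi$ is a norm\=/continuous $\ast$\=/homomorphism.
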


\section{The coarse Baum--Connes Conjecture}
\label{sec:baum_connes}  
In this section, we will use the projections constructed in Section \ref{ssec:domains.of.exp.and.projections.as.limits} to provide new counterexamples to the coarse Baum--Connes conjecture. These arise from certain warped cones associated with asymptotically expanding actions. We will follow the outline of \cite[Section 3]{sawicki_warped_2017} (the origin of this method goes back to \cite{higson1999counterexamples} and \cite{MR2871145}).

Throughout this section, $(X,d)$ will be a compact metric space with diameter at most $2$ endowed with a non-atomic probability measure $\nu$ of full support (\emph{i.e.}, every singleton has measure zero and every open set has positive measure). As usual, $\Gamma$ is a countable discrete group with a proper length function $\ell$. Furthermore, $\Gamma\curvearrowright(X,d,\nu)$ will be a continuous measure-class-preserving action.

\subsection{Roe algebras and projections}\label{ssec:roe algebras}
Let us begin by recalling some basic notions concerning Roe algebras. 

Let $(Y,d)$ be any proper metric space. In particular, $Y$ is locally compact and $\sigma$-compact. Let $C_0(Y)$ be the $C^*$-algebra of continuous functions on $Y$ vanishing at infinity. A non-degenerate $*$\=/representation $C_0(Y) \rightarrow \B(\CH)$ on some separable Hilbert space $\CH$ is called \emph{ample} if no non-zero element of $C_0(Y)$ acts as a compact operator on $\H$. An operator $a\in \B(\CH)$ has \emph{finite propagation} if there is $r>0$ such that $fag=0$ whenever $f,g\in C_0(Y)$ satisfy $d(\text{supp}(f),\text{supp}(g))>r$.\footnote{%
It is easy to check that for a proper metric space $(X,d)$, $T\in \B(L^2(X, \nu))$ has finite propagation \emph{if and only if} there exists an $R>0$ such that $\chi_AT\chi_C=0$ whenever $A,C\subseteq X$ are measurable subsets with $d(A,C)>R$. In particular, this definition is equivalent to the one given in Subsection~\ref{ssec:finite propagation wcone}.
}
Moreover, an operator $a\in \B(\CH)$ is called \emph{locally compact} if $fa$ and $af$ are compact for all $f\in C_0(Y)$.

The \emph{algebraic Roe algebra $\mathbb{C}[Y]$} of $Y$ is the $*$\=/algebra of locally compact finite propagation operators in $\B(\CH)$, and the \emph{Roe algebra $C^*(Y)$} of $Y$ is the norm\=/closure of $\mathbb{C}[Y]$ in $\B(\CH)$. Note that the Roe algebra $C^*(Y)$ does not depend on the choice of the non-degenerate ample $*$-representation of $C_0(Y)$, but only up to non-canonical $*$\=/isomorphism (see \emph{e.g.} \cite[Remark~5.1.13]{WY20}). On the other hand, the $K$-theory groups $K_*(C^*(Y))$ do not depend on the choice of such representations up to \emph{canonical} $*$-isomorphism (see \emph{e.g.} \cite[Theorem~5.1.15]{WY20}). It is well-known that the isomorphism class of $C^*(Y)$ is a coarse invariant for the metric space $Y$.

Let now $(X,d,\nu)$ be a metric measure space as outlined at the beginning of Section~\ref{sec:baum_connes}. Since $\nu$ has full support and is non-atomic, the multiplication representation of $C(X)$ on $L^2(X,\nu)$ is non-degenerate and ample. 
Hence the multiplication representation of $C_0(\mathcal{O}_\Gamma X)$ on $L^2(X\times [1,\infty), \nu \times \lambda)$ is also non-degenerate and ample. We can thus
use it to form the Roe algebra $C^*(\mathcal{O}_\Gamma X)$.

As explained by Sawicki in \cite[Proposition 1.1]{sawicki_warped_2017}, the original Druţu--Nowak projection $\fkG \in \B(L^2(\mathcal{O}_\Gamma X,\nu \times \lambda))$ is \emph{not} locally compact because its image contains a copy of $L^2([1,\infty),\lambda)$. In particular, $\fkG$ cannot belong to the Roe algebra. One way to overcome this issue is to consider the subspace $(X \times \NN,d_\Gamma)$ of the unified warped cone $\mathcal{O}_\Gamma X$ instead. We will call this
the \emph{integral warped cone}. Since the embedding $(X \times \NN, d_\Gamma) \hookrightarrow (\mathcal{O}_\Gamma X, d_\Gamma)$ is a quasi-isometry, their Roe algebras are isomorphic. We will hence abuse the notation and denote also the integral warped cone by $\mathcal{O}_\Gamma X$.

Similarly, we also define the following integral analogue of the $*$-homomorphism $\Phi$ defined in \eqref{eq:connecting homomorphism}  (still denoted by $\Phi$):
\[
\Phi\colon \B(L^2(X,\nu)) \rightarrow \B(L^2(\CO_\Gamma X, \nu\times \lambda_\NN)), \quad T \mapsto T \otimes \mathrm{Id}_{\ell^2(\NN)},
\]
where $\lambda_\NN$ denotes the counting measure on $\NN$. It is elementary to check that Theorem~\ref{thm: characterise quasi-locality} and Proposition \ref{prop: finite ppg equiv} still hold in the integral setting. It follows that the integral analogues of Theorem \ref{thm: characterise finite ppg} and Corollary \ref{cor:projection finite ppg} hold true as well. We will henceforth use their integral versions without further notice.

Let us now focus on the projections considered in Corollary \ref{cor:projection finite ppg}. More precisely, we denote by $\mathcal{P}$ the set of rank one projections in $\B(L^2(X,\nu))$ as follows:
\begin{align*}
P\in \mathcal{P} \quad \Leftrightarrow \quad &\mbox{either}& &P=\widehat P_{Y,S} \mbox{~for~a~domain~} Y\subseteq X \mbox{~of~Markov~} S\=/\mbox{expansion}\\
&\mbox{or}& &P=P_Y \mbox{~for~a~domain~} Y\subseteq X \mbox{~of~asymptotic~expansion}.
\end{align*}
For the averaging projection $P_X$, the associated projection $\Phi(P_X) = P_X \otimes \mathrm{Id}_{\ell^2(\NN)}$ (still denoted by $\fkG$) is called the \emph{integral Druţu--Nowak projection} (see \cite[Proposition~1.3]{sawicki_warped_2017}). It follows from Corollary \ref{cor:projection finite ppg} that the projection $\Phi(P)$ can be approximated by finite propagation operators for every $P \in \mathcal{P}$. Actually, we can even show the following stronger statement:

\begin{prop}\label{prop:projection in Roe}
For every $P \in \mathcal{P}$, the projection $\Phi(P)$ is non-compact and belongs to the Roe algebra $C^*(\mathcal{O}_\Gamma X)$ of the integral warped cone $\mathcal{O}_\Gamma X$. In particular, when the action is asymptotically expanding the integral Druţu--Nowak projection $\fkG$ belongs to $C^*(\mathcal{O}_\Gamma X)$.
\end{prop}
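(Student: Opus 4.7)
The plan is to establish the three assertions of the proposition in order: non-compactness of $\Phi(P)$, local compactness, and norm-approximability by locally compact finite-propagation operators (i.e.\ membership in $C^*(\mathcal{O}_\Gamma X)$).

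Non-compactness is immediate from the tensor-product structure: the range of $\Phi(P) = P \otimes \mathrm{Id}_{\ell^2(\mathbb{N})}$ equals $\mathrm{Ran}(P) \otimes \ell^2(\mathbb{N})$, which is infinite-dimensional since $P$ is a nonzero (rank-one) projection and $\ell^2(\mathbb{N})$ is infinite-dimensional; an orthogonal projection onto an infinite-dimensional closed subspace cannot be compact.

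For local compactness I would use crucially that each $P \in \mathcal{P}$ is rank-one. For any $f \in C_c(\mathcal{O}_\Gamma X)$, the compactness of $X$ forces $\mathrm{supp}(f) \subseteq X \times \{0, 1, \ldots, N\}$ for some $N \in \NN$, so $f$ decomposes as $f = \sum_{n=0}^{N} f_n \cdot \chi_{X \times \{n\}}$ with $f_n \in C(X)$. One then computes
\begin{equation*}
M_f \Phi(P) \ =\ \sum_{n=0}^{N} (M_{f_n} P) \otimes |\delta_n\rangle\langle\delta_n|,
\end{equation*}
a finite sum of operators of rank at most one (each $M_{f_n} P$ has rank $\leq \mathrm{rank}(P) = 1$) and thus compact. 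A uniform $C_c$-approximation of an arbitrary $f \in C_0(\mathcal{O}_\Gamma X)$ then shows $M_f \Phi(P)$ is compact, and the symmetric argument handles $\Phi(P) M_f$; hence $\Phi(P)$ is locally compact.

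For membership in $C^*(\mathcal{O}_\Gamma X)$, the integral analogue of Corollary~\ref{cor:projection finite ppg} provides operators $Q_k \in \B(L^2(X,\nu))$ of finite $\rho$-propagation with $\Phi(Q_k) \to \Phi(P)$ in norm, and by the integral analogue of Proposition~\ref{prop: finite ppg equiv} each $\Phi(Q_k)$ has finite propagation in $d_\Gamma$. The main technical obstacle is that these $\Phi(Q_k)$ are \emph{not} automatically locally compact: the Markov-type approximants from Proposition~\ref{prop:projections.normalized limits.of FP operators} and Lemma~\ref{lem:projections limits.of FP operators} are bounded on $L^2(X, \nu)$ but fail to be compact, so $\Phi(Q_k) = Q_k \otimes \mathrm{Id}$ has infinite-rank level blocks. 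To obtain genuine elements of $\mathbb{C}[\mathcal{O}_\Gamma X]$, I would exploit the rank-one limit: combine $Q_k$ with carefully chosen finite-rank compressions (via partitions of $X$ refined at the $\rho$-propagation scale of $Q_k$) to produce operators $T_k$ that are compact on $L^2(X,\nu)$, maintain finite $\rho$-propagation, and still satisfy $\|T_k - P\| \to 0$. Pushing forward under $\Phi$, the tensor-diagonal compactness argument used for local compactness above---applied to the compact $T_k$ in place of $P$---yields locally compact, finite-propagation operators on $\mathcal{O}_\Gamma X$ converging in norm to $\Phi(P)$, so $\Phi(P) \in C^*(\mathcal{O}_\Gamma X)$. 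The final assertion follows at once: when $\rho$ is asymptotically expanding, $X$ itself is a domain of asymptotic expansion, so $P_X \in \mathcal{P}$ and the integral Druţu--Nowak projection $\fkG = \Phi(P_X)$ lies in $C^*(\mathcal{O}_\Gamma X)$.
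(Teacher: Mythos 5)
Your first two steps are fine: non-compactness of $\Phi(P)=P\otimes\mathrm{Id}_{\ell^2(\NN)}$ is indeed immediate from the infinite-dimensionality of its range, and your level-by-level computation showing that $\Phi(P)$ itself is locally compact (each block $M_{f_n}P$ having rank at most one) is correct, though local compactness of the limit is not by itself what is needed. You also correctly identify the real obstacle: the finite-propagation approximants $\Phi(Q_k)$ coming from Corollary~\ref{cor:projection finite ppg} are not locally compact, so one must modify them before they qualify as elements of $\mathbb{C}[\CO_\Gamma X]$.

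The gap is in your proposed fix. You want operators $T_k$ on $L^2(X,\nu)$ that are simultaneously \emph{compact}, of \emph{finite $\rho$-propagation}, and norm-close to $P$, obtained by compressing $Q_k$ against finite-rank projections built from partitions of $X$. But $\rho$-propagation is a dynamical condition, not a metric one on $X$, and there is no ``scale'' in $X$ at which refining a partition controls it: if $R'$ is the orthogonal projection onto $\mathrm{span}\{\chi_{U_1},\dots,\chi_{U_N}\}$ for a Borel partition of $X$ into positive-measure pieces, then for any $k$ one can pick $A\subseteq U_1$ with $\nu(A)$ so small that $\nu(B_k\cdot A)<\nu(U_1)$, set $C=U_1\smallsetminus(B_k\cdot A)$, and compute $\chi_A R'\chi_C(\chi_C)=\frac{\nu(C)}{\nu(U_1)}\chi_A\neq 0$ even though $\nu((B_k\cdot A)\cap C)=0$. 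So $R'$ has infinite $\rho$-propagation, and $R'Q_kR'$ inherits this defect; your construction does not produce finite $\rho$-propagation operators, and you give no alternative. The paper's proof avoids this by performing the compression on the warped cone itself, where propagation is metric: it takes a Borel partition $\{V_i\}$ of $\CO_\Gamma X$ into sets of $d_\Gamma$-diameter at most $1$, finitely many per level $X\times\{n\}$ (necessarily level-dependent, since the metric $d^n_\Gamma$ rescales with $n$), and lets $R$ be the projection onto $\mathrm{span}\{(\chi_{U_i}\sqrt{\sigma_{Y,S}})\otimes\chi_{\{n(i)\}}\}$. This $R$ has propagation at most $1$ with respect to $d_\Gamma$, satisfies $R\psi$ compact for every $\psi\in C_0(\CO_\Gamma X)$ (finitely many pieces per level), and---crucially---contains the range of $\Phi(P)$, so that $RT_nR\to R\,\Phi(P)\,R=\Phi(P)$ while each $RT_nR$ is a locally compact finite-propagation operator. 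The containment $\Phi(P)\leq R$ is the ingredient your sketch is missing: it is what lets one compress without losing the limit, and it is why the spanning vectors are $\chi_{U_i}\sqrt{\sigma_{Y,S}}$ rather than bare indicators.
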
 
\begin{proof}
Clearly, each $\Phi(P)=P \otimes \mathrm{Id}_{\ell^2(\NN)}$ is non-compact for $P \in \mathcal{P}$. We only show that $\Phi(P)$ belongs to $C^*(\mathcal{O}_\Gamma X)$ when $P=\widehat P_{Y,S}$ for a domain $Y \subseteq X$ of Markov $S$-expansion, as the other case is similar and almost identical to the proof of \cite[Proposition~1.3]{sawicki_warped_2017}. Recall that $\widehat P_{Y,S}$ is the orthogonal projection onto the one-dimensional subspace of $L^2(X,\nu)$ spanned by the vector $\sqrt{\sigma_{Y,S}}$ defined in \eqref{function sigma}. 
 
Since $X$ is compact, there exists a Borel partition $\mathcal{V}=\{V_i\mid i\in I\}$ of $\mathcal{O}_\Gamma X$ such that each $V_i$ has diameter at most 1 and is contained in some level set $X \times \{n\}$, and for each $n\in \NN$ only finitely many $V_i$ are contained in $X \times \{n\}$. For each $i\in I$, we write $V_i=U_i \times \{n(i)\}$ for Borel $U_i \subseteq X$ and $n(i)\in \NN$.
We consider the closed subspace $W \subseteq L^2(\mathcal{O}_\Gamma X, \nu \times \lambda_\NN)$ spanned by 
\[
\big\{(\chi_{U_i} \cdot \sqrt{\sigma_{Y,S}}~) \otimes \chi_{\{n(i)\}}\bigmid i\in I \big\}.
\]
Let $R \in \B(L^2(\mathcal{O}_\Gamma X, \nu \times \lambda_\NN))$ be the orthogonal projection onto $W$. It is clear that $\Phi(P)$ is a subprojection of $R$, so $\Phi(P) = R \circ \Phi(P) \circ R$. Moreover, the projection $R$ has propagation at most one.

By Corollary \ref{cor:projection finite ppg}, $\Phi(P)$ is a norm limit of finite propagation operators $T_n \in \B(L^2(\mathcal{O}_\Gamma X, \nu \times \lambda_\NN))$. In particular, we have $\Phi(P)=\lim_{n\to \infty} RT_nR$. Since each $RT_nR$ has finite propagation, it suffices to show that it is also locally compact. If $\phi \in C_0(\mathcal{O}_\Gamma X)$ is a function of compact support, then its range is contained in $L^2(X \times \{1,2,\ldots, N_0\} )$ for some $N_0\in \NN$. This implies that $R\phi$ is of finite rank. Since the set of compact operators is norm-closed, we have that $R \psi$ is compact for every $\psi\in C_0(\mathcal{O}_\Gamma X)$. Since $R$ is self-adjoint, $\psi R$ is compact as well. Hence, we conclude that both $RT_nR\psi$ and $\psi RT_nR$ are compact for every $\psi\in C_0(\mathcal{O}_\Gamma X)$, as desired.
\end{proof}

Proposition~\ref{prop:projection in Roe} allows us to use Theorem~\ref{thm: characterise quasi-locality} to deduce the main theorem of this subsection. Namely, the following dynamical version of \cite[Theorem~C]{structure}:  

\begin{thm}\label{thm:char for asymp. expansion via Roe}
Let $(X,d)$ be a compact metric space with diameter at most $2$ equipped with a non-atomic Radon probability measure $\nu$ of full support, and $\rho\colon \Gamma\curvearrowright (X,d,\nu)$ be a continuous and measure-class-preserving action.

If $P_X$ is the associated averaging projection on $X$ and $\fkG= P_X\otimes \Id_{\ell^2(\NN)}$ is the integral Druţu--Nowak projection, then the following are equivalent:
\begin{enumerate}
  \item $\rho$ is asymptotically expanding;
  \item $P_X$ is $\rho$\=/quasi\=/local;
  \item $\fkG$ is quasi-local;
  \item $\fkG$ belongs to the Roe algebra $C^*(\mathcal{O}_\Gamma X)$ of the integral warped cone $\mathcal{O}_\Gamma X$.
\end{enumerate}
\end{thm}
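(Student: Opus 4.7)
The plan is to observe that the equivalences (1)$\Leftrightarrow$(2)$\Leftrightarrow$(3) are essentially already established in Theorem~\ref{thm: characterise quasi-locality}, and that the remaining link involving (4) comes from Proposition~\ref{prop:projection in Roe} combined with a standard soft argument about the Roe algebra. More precisely, as the authors remark just before the statement, the proofs of Theorem~\ref{thm: characterise quasi-locality} and Proposition~\ref{prop: finite ppg equiv} carry over verbatim when the Lebesgue measure on $[1,\infty)$ is replaced by the counting measure on $\NN$, so the cycle (1)$\Leftrightarrow$(2)$\Leftrightarrow$(3) for the integral Druţu--Nowak projection follows without further work. It therefore suffices to close the cycle by adding (4) to this chain, which I would do via the two implications (1)$\Rightarrow$(4) and (4)$\Rightarrow$(3).

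For (1)$\Rightarrow$(4): since $P_X\in\mathcal{P}$ whenever $X$ itself is a domain of asymptotic expansion, i.e.\ whenever $\rho$ is asymptotically expanding, Proposition~\ref{prop:projection in Roe} directly gives $\fkG=\Phi(P_X)\in C^*(\mathcal{O}_\Gamma X)$. Note that the hypotheses on $\nu$ (non-atomic, Radon, of full support) are exactly what is needed for the multiplication representation of $C_0(\mathcal{O}_\Gamma X)$ on $L^2(\mathcal{O}_\Gamma X,\nu\times\lambda_\NN)$ to be non-degenerate and ample, so that the resulting $C^*$-algebra is genuinely the Roe algebra and the construction of Proposition~\ref{prop:projection in Roe} applies.

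For (4)$\Rightarrow$(3): I would argue that every element $T\in C^*(\mathcal{O}_\Gamma X)$ is quasi-local. Indeed, every locally compact finite propagation operator is trivially quasi-local (take $R$ larger than the propagation, then $\chi_A T\chi_C=0$ for $d_\Gamma(A,C)>R$). Quasi-locality is preserved under norm limits: if $T_n\to T$ with each $T_n$ quasi-local, then for any $\varepsilon>0$ one picks $n$ with $\|T-T_n\|<\varepsilon/2$ and then $R>0$ witnessing quasi-locality of $T_n$ at scale $\varepsilon/2$, giving $\|\chi_A T\chi_C\|<\varepsilon$ for $d_\Gamma(A,C)>R$. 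Since $\fkG\in C^*(\mathcal{O}_\Gamma X)$ is a norm limit of locally compact finite propagation operators, this yields quasi-locality of $\fkG$, i.e.\ condition (3).

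There is no genuine obstacle here: all the hard analytic work---the spectral analysis of Markov expansion, the structure theorem producing exhaustions by domains of Markov expansion, the construction of the finite-propagation approximants, and the passage between dynamical and metric quasi-locality---has already been carried out in Sections~\ref{sec:Markov.expansion} and \ref{sec:warped cones} and consolidated in Proposition~\ref{prop:projection in Roe}. The only mildly delicate point is to make sure the integral versions of Theorem~\ref{thm: characterise quasi-locality} and Proposition~\ref{prop: finite ppg equiv} genuinely hold; this amounts to rerunning the Fubini-type computation in the proof of Proposition~\ref{prop: quasi-locality equiv} with $\int_1^\infty\cdots\,dt$ replaced by $\sum_{n\in\NN}$, and similarly for the Radon-based approximation argument, neither of which presents any difficulty given compactness of $X$.
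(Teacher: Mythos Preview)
Your proposal is correct and matches the paper's approach exactly: the paper simply states that Proposition~\ref{prop:projection in Roe} together with (the integral version of) Theorem~\ref{thm: characterise quasi-locality} yields the result, and your write-up is a faithful unpacking of precisely this, including the easy observation that Roe-algebra elements are quasi-local for the implication (4)$\Rightarrow$(3).
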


Another important feature of the projections $\Phi(P)$ for $P\in \mathcal{P}$ is that they are \emph{ghost operators}. This notion was originally introduced by Yu (unpublished) in his study of the coarse Baum--Connes conjecture. We will use the following:

\begin{defn}[{\cite[Definition~6.5]{DN17}}]
Given a metric measure space $(Z,d,\nu)$, an operator $T \in \B(L^2(Z,\nu))$ is called \emph{ghost} if for every $R, \epsilon>0$, there exists a bounded subset $C \subseteq Z$ such that for any $\phi \in L^2(Z,\nu)$ with $\|\phi\|=1$ and $\supp (\phi) \subseteq B_R(x;d)$ for some $x\in Z \smallsetminus C$  we have $\|T \phi\| \leq \epsilon$.
\end{defn}

Firstly, we observe the following easy fact:
\begin{lem}\label{lem:upper uniform}
A non\=/atomic probability measure $\nu$ on a metric space $(Z,d)$ is necessarily upper uniform (\cite[Definition~6.1]{DN17}) in the sense that $ \lim_{r\to 0}\sup_{z\in Z} \nu(B_r(z;d))=0$.
\end{lem}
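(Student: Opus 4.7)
The plan is to argue by contradiction. Assuming the conclusion fails, I would first extract witnesses: since $r\mapsto\sup_{z\in Z}\nu(B_r(z;d))$ is non-decreasing, its $r\to 0$ limit exists, and if it equals some $\epsilon>0$ then there are sequences $r_n\searrow 0$ and $z_n\in Z$ with $\nu(B_{r_n}(z_n;d))\geq \epsilon$ for every $n$. The core idea is to produce a subsequential limit point $z^\ast\in Z$ such that every open ball around $z^\ast$ has $\nu$-measure at least $\epsilon$; by continuity of $\nu$ from above along the nested balls $B_{1/k}(z^\ast;d)$, this would force $\nu(\{z^\ast\})\geq \epsilon$, contradicting non-atomicity.

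To extract the limit point, I would use that a Borel probability measure on a metric space is tight in the relevant setting of the paper (the measures involved are Radon, and the underlying spaces $X$ and $\mathcal{O}_\Gamma X$ are compact or $\sigma$-compact). So I pick a compact set $K\subseteq Z$ with $\nu(Z\smallsetminus K)<\epsilon/2$; then $\nu(B_{r_n}(z_n;d)\cap K)\geq \epsilon/2>0$, so each $B_{r_n}(z_n;d)$ meets $K$ and we can choose $z'_n\in K\cap B_{r_n}(z_n;d)$. The triangle inequality gives $B_{r_n}(z_n;d)\subseteq B_{2r_n}(z'_n;d)$, hence $\nu(B_{2r_n}(z'_n;d))\geq \epsilon$, while compactness of $K$ yields a subsequence $z'_{n_k}\to z^\ast$ in $K$.

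To finish, for any fixed $\delta>0$ the bound $2r_{n_k}+d(z'_{n_k},z^\ast)<\delta$ holds for all sufficiently large $k$, so $B_{2r_{n_k}}(z'_{n_k};d)\subseteq B_\delta(z^\ast;d)$ and therefore $\nu(B_\delta(z^\ast;d))\geq \epsilon$. Taking $\delta=1/k$ and using countable intersection $\{z^\ast\}=\bigcap_k B_{1/k}(z^\ast;d)$ together with continuity of the finite measure $\nu$ from above gives $\nu(\{z^\ast\})\geq \epsilon$, the desired contradiction.

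The main (essentially the only) obstacle is the passage to a convergent subsequence, which requires tightness of $\nu$. This is ensured by the Radon hypothesis that is standing throughout Section~\ref{sec:baum_connes}; without it the statement would need an extra regularity assumption, but in the intended applications (the warped cone equipped with $\nu\times\lambda_{\NN}$, restricted to suitable subsets) tightness is automatic.
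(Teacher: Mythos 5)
Your proof is correct in the setting where the lemma is actually applied, but it takes a genuinely different route from the paper's, and the difference matters for the generality of the statement. Both arguments share the same skeleton: assume the limit is some $\epsilon>0$, produce a point $z^\ast$ with $\nu(B_\delta(z^\ast;d))\geq\epsilon$ for all $\delta>0$, and conclude via continuity from above along $\bigcap_k B_{1/k}(z^\ast;d)=\{z^\ast\}$ that $z^\ast$ is an atom. Where you differ is in how the limit point is manufactured. You invoke tightness of $\nu$ to trap the ball centres inside a compact set and then extract a convergent subsequence; as you yourself note, this requires a Radon (or Polish-space) hypothesis that the lemma does not assume --- it is stated for an arbitrary non-atomic Borel probability measure on an arbitrary metric space. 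The paper avoids this entirely with a purely measure-theoretic trick: setting $E_N\coloneqq\bigcup_{n>N}B_{1/n}(z_n;d)$, each $E_N$ has measure at least $\epsilon$, the $E_N$ are nested decreasing, and finiteness of $\nu$ gives $\nu\bigl(\bigcap_N E_N\bigr)\geq\epsilon>0$; any point $\bar z$ of this (necessarily non-empty) limsup set lies in $B_{1/n}(z_n;d)$ for infinitely many $n$, hence $\nu(B_{2/n}(\bar z;d))\geq\epsilon$ along that subsequence, and the atom argument finishes. In short, the measure itself certifies the existence of the accumulation point, with no topological input beyond measurability of balls. Your approach buys a perhaps more geometrically transparent picture and is perfectly adequate for the warped-cone applications (where $X$ is compact and $\nu$ is Radon), but as a proof of the lemma as stated it leaves a gap that the paper's limsup argument closes for free; it is also worth noting that to extract the witnesses you should take $\nu(B_{r_n}(z_n;d))\geq\epsilon/2$ rather than $\geq\epsilon$, since the supremum need not be attained.
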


\begin{proof}
If there exist an $\epsilon>0$ and a sequence $z_n\in Z$ such that $\nu(B_{1/n}(z_n;d))\geq\epsilon>0$ for every $n\in\NN$, then there must be some point $\bar{z}\in Z$ that belongs to $B_{1/n}(z_n;d)$ for infinitely many $n$. To see this, it is sufficient to note that $\nu(\bigcap_{N\in \NN}\bigcup_{n>N}B_{1/n}(z_n;d))=\lim_{N\to \infty}\nu(\bigcup_{n>N}B_{1/n}(z_n;d))\geq\epsilon$ as the probability measure $\nu$ is continuous from above. On the other hand, such a $\bar z$ must be an atom for $\nu$ so that $\nu$ cannot be non-atomic.
\end{proof}

The following lemma is a generalisation of \cite[Theorem 6.6]{DN17}:

\begin{lem}\label{lem:DN projection is ghost}
If $T\in\B(L^2(X,\nu))$ is any orthogonal rank one projection, then $\Phi(T)\in\B(L^2(\CO_\Gamma X, \nu\times\lambda_\NN))$ is ghost. In particular, $\Phi(P)$ is a ghost projection for every $P \in \mathcal{P}$.
\end{lem}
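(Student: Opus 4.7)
The plan is to write an arbitrary rank one orthogonal projection as $T=\langle\cdot,\xi\rangle\xi$ for a unit vector $\xi\in L^2(X,\nu)$ and to analyse $\Phi(T)$ via the identification $L^2(\CO_\Gamma X,\nu\times\lambda_\NN)\cong L^2(X,\nu)\otimes\ell^2(\NN)$, under which $\Phi(T)=T\otimes\Id_{\ell^2(\NN)}$. Decomposing $\phi=\sum_{n\in\NN}\phi_n\otimes e_n$, one computes
\[
\|\Phi(T)\phi\|^2=\sum_{n\in\NN}|\langle\phi_n,\xi\rangle|^2\leq\Bigparen{\sup_n\|\xi\,\chi_{\supp\phi_n}\|_2^2}\sum_n\|\phi_n\|^2.
\]

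Next I will observe that if $\supp\phi\subseteq B_R((x_0,n_0);d_\Gamma)$, then $\phi_n=0$ whenever $|n-n_0|>R$ and otherwise $\supp\phi_n\subseteq B_R(x_0;d_\Gamma^{n\wedge n_0})$. Since $d_\Gamma^t$ is non\=/decreasing in $t$ (a direct consequence of its defining universal property), for $n_0>R$ every such support is contained in $B_R(x_0;d_\Gamma^{n_0-R})$. Because bounded subsets of the integral warped cone are contained in $X\times\{1,\ldots,N\}$ for some $N$, choosing a candidate bounded $C$ of this form and letting $N$ grow reduces the ghost property to showing
\[
\lim_{t\to\infty}\sup_{x_0\in X}\int_{B_R(x_0;d_\Gamma^t)}|\xi|^2\,\d\nu=0.
\]
As $|\xi|^2\,\d\nu$ is a finite measure absolutely continuous with respect to $\nu$, by the $\varepsilon$-$\delta$ formulation of absolute continuity this reduces further to the purely measure\=/theoretic claim that $\sup_{x_0\in X}\nu\paren{B_R(x_0;d_\Gamma^t)}\to 0$ as $t\to\infty$.

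The main obstacle is this last uniform measure estimate. To attack it, I will decompose any path from $x_0$ to $y$ realising $d_\Gamma^t(x_0,y)\leq R$ into alternating distance steps (of $d$\=/cost at most $R/t$ in total) and group steps (by elements $\gamma_i\in\Gamma$ with $\sum\ell(\gamma_i)\leq R$, so at most $\lfloor R\rfloor$ of them since $\ell$ is integer\=/valued). Compactness of $X$ makes each map $x\mapsto\gamma x$ uniformly continuous; letting $\omega$ be the maximum modulus of continuity over the finite set $B_R$, an induction on the number of group steps tracking how a $d$\=/perturbation propagates through the action yields
\[
B_R(x_0;d_\Gamma^t)\subseteq\bigcup_{\gamma\in B_R}N_{f(R/t)}(\gamma x_0;d),
\]
where $f$ is obtained by composing $\omega$ and adding $R/t$ at most $\lfloor R\rfloor$ times; since $\omega(r)\to 0$ as $r\to 0$, so does $f(r)$. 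Combining this with $|B_R|<\infty$ and the upper uniformity $\sup_y\nu(N_r(y;d))\to 0$ supplied by Lemma~\ref{lem:upper uniform} delivers the required estimate. The ``in particular'' clause for $P\in\mathcal{P}$ is then immediate, since each such $P$ is by construction a rank one orthogonal projection.
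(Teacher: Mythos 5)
Your proposal is correct and follows essentially the same route as the paper: both reduce the ghost property, via Cauchy--Schwarz applied level by level, to the uniform estimate $\sup_{x}\nu\bigparen{B_R(x;d_\Gamma^t)}\to 0$, which is then obtained by covering warped balls with finitely many small $d$\=/balls around the $B_{\lfloor R\rfloor}$\=/orbit of the centre and invoking the upper uniformity of $\nu$ (Lemma~\ref{lem:upper uniform}). The only difference is cosmetic: you spell out the path\=/decomposition argument for the containment $B_R(x_0;d_\Gamma^t)\subseteq\bigcup_{\gamma\in B_{\lfloor R\rfloor}}N_{f(R/t)}(\gamma\cdot x_0;d)$, which the paper dispatches by citing an adaptation of Lemma~\ref{lem:intersection.of.neighbourhoods}.
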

\begin{proof} 
 
Since $X$ is compact, the action $\Gamma\curvearrowright X$ is uniformly continuous. Then the proof of Lemma~\ref{lem:intersection.of.neighbourhoods} can be adapted to show that the balls $B_R(x;d^{n}_\Gamma)$ are contained in $N_{\delta_n}(B_{\lfloor R\rfloor}\cdot x ; d)\subseteq X$ for some positive $\delta_n$ independent of $x$ and such that $\delta_n\to 0$, where $B_{\lfloor R\rfloor}$ denotes the ball in $\Gamma$. Since $N_{\delta_n}(B_{\lfloor R\rfloor}\cdot x ; d)\subseteq \bigcup_{\gamma\in B_{\lfloor R\rfloor}}B_{\delta_n}(\gamma\cdot x;d)$ and $B_{\lfloor R\rfloor}$ is finite, we easily deduce from the upper uniformity of $\nu$ (Lemma~\ref{lem:upper uniform}) that $\lim_{n\to \infty}\sup_{x\in X}\nu(B_R(x;d_\Gamma^{n}))=0$ (see also \cite[Lemma 6.3]{DN17}).

Let $\epsilon,R>0$ be fixed and let $C_{N}\coloneqq \mathcal{O}_\Gamma X\cap \big(X\times [1,N)\big)$ for $N\in \NN$. We note that $C_N$ is a bounded subset of $\mathcal{O}_\Gamma X$ and any point in $\mathcal{O}_\Gamma X\smallsetminus C_N$ is of the form $(x,n)$ for some $n\geq N$ and $x\in X$. It is well-known that every rank one projection $T\in\B(L^2(X,\nu))$ is of the form $T\eta=\angles{\eta,\xi}\xi$ for some unit vector $\xi\in L^2(X,\nu)$. In order to show that $\Phi(T)\in\B(L^2(\CO_\Gamma X, \nu\times\lambda_\NN))$ is ghost, we fix any $\phi\in L^2(\mathcal{O}_\Gamma X, \nu\times\lambda_\NN)$ with $\|\phi\|=1$ and $\supp (\phi) \subseteq B_R((x,n);d_\Gamma)$ for some $(x,n)\in \mathcal{O}_\Gamma X \smallsetminus C_N$. So we have that

 \begin{align*}
  \norm{\Phi(T)(\phi)}^2 &= \sum_{m\in \NN} \norm{\xi}^2\cdot\abs{\int_X \phi(y,m)\overline{\xi}(y)\d\nu(y)}^2 \\
   &\leq \sum_{m=n-R}^{n+R} \big(\int_{B_R(x;d^m_\Gamma)} \abs{\phi(y,m)\overline{\xi}(y)}\d\nu(y) \big)^2\\
   &\leq \sum_{m=n-R}^{n+R} \big(\int_X |\phi(y,m)|^2\d\nu(y) \cdot \int_{B_R(x;d^m_\Gamma)} |\xi(y)|^2\d\nu(y) \big) \\
   &\leq \sum_{m=n-R}^{n+R}  \int_{B_R(x;d^m_\Gamma)} |\xi(y)|^2\d\nu(y),
 \end{align*}
where the last inequality uses the fact that $\int_X |\phi(y,m)|^2\d\nu(y)\leq \norm{\phi}^2=1$ for every $m\in \NN$. Since $\xi\in L^2(X,\nu)$ and $\sup_{x\in X}\nu(B_R(x;d^{n}_\Gamma))\to 0$ as $n\to\infty$, it follows that $\norm{\Phi(T)(\phi)}^2 \to 0$ for $n\to \infty$. We can hence choose $N$ large enough so that $\norm{\Phi(T)(\phi)}\leq \epsilon$ for every $\phi$ with $\supp (\phi) \subseteq B_R((x,n);d_\Gamma)$ for some $(x,n)\in \mathcal{O}_\Gamma X \smallsetminus C_N$, as desired.
\end{proof}

Combining Proposition \ref{prop:projection in Roe} with Lemma \ref{lem:DN projection is ghost}, we obtain the following:

\begin{cor}\label{cor:ghost.projection.in.Roe}
Let $(X,d)$ be a compact metric space with diameter at most $2$ endowed with a non-atomic probability measure $\nu$ of full support, and $\Gamma\curvearrowright(X,d,\nu)$ a measure-class-preserving continuous action. Then each $\Phi(P) \in \B(L^2(\mathcal{O}_\Gamma X, \nu\times\lambda_\NN))$ for $P \in \mathcal{P}$ is a non-compact ghost projection in the Roe algebra $C^*(\CO_\Gamma X)$ of the integral warped cone $\CO_\Gamma X$.
\end{cor}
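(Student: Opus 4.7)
The plan is to assemble the corollary directly from the two ingredients proved immediately before it in the excerpt, namely Proposition~\ref{prop:projection in Roe} and Lemma~\ref{lem:DN projection is ghost}. There is essentially nothing new to do: each of the two assertions (membership in the Roe algebra, and being a non-compact ghost) is already in one of these results, and the content is purely the observation that the two apply simultaneously to every $P\in\mathcal{P}$.

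First, I would recall that by definition every $P\in\mathcal{P}$ is a rank-one orthogonal projection in $\B(L^2(X,\nu))$: either $P=\widehat P_{Y,S}$, which is the orthogonal projection onto $\CCC\cdot\sqrt{\sigma_{Y,S}}$, or $P=P_Y$, which is the orthogonal projection onto $\CCC\cdot\chi_Y$. In either case $Y$ has positive finite $\nu$-measure (it is a domain), so the spanning vector is a non-zero element of $L^2(X,\nu)$ and $P$ is genuinely rank one.

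Next, I would apply Proposition~\ref{prop:projection in Roe} to conclude that $\Phi(P)=P\otimes\Id_{\ell^2(\NN)}$ is non-compact and lies in the Roe algebra $C^*(\CO_\Gamma X)$ of the integral warped cone. The standing hypotheses of the corollary (compactness of $X$, diameter at most $2$, $\nu$ non-atomic of full support, and the action continuous and measure-class-preserving) are exactly those of Proposition~\ref{prop:projection in Roe}, and the two families of projections listed in $\mathcal{P}$ are precisely those treated there.

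Finally, I would apply Lemma~\ref{lem:DN projection is ghost}, which asserts that for \emph{any} orthogonal rank-one projection $T\in\B(L^2(X,\nu))$ the amplification $\Phi(T)\in\B(L^2(\CO_\Gamma X,\nu\times\lambda_\NN))$ is a ghost operator; taking $T=P$ gives that $\Phi(P)$ is ghost. Combining the two conclusions yields that $\Phi(P)$ is a non-compact ghost projection sitting inside $C^*(\CO_\Gamma X)$, as claimed. There is no genuine obstacle here; the only minor point worth flagging is to record explicitly that each $P\in\mathcal{P}$ is rank-one, so that the hypothesis of Lemma~\ref{lem:DN projection is ghost} is satisfied.
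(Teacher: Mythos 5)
Your proposal is correct and matches the paper exactly: the corollary is obtained there by combining Proposition~\ref{prop:projection in Roe} (non-compactness and membership in $C^*(\CO_\Gamma X)$) with Lemma~\ref{lem:DN projection is ghost} (ghostness of $\Phi(T)$ for any rank-one projection $T$). Your added remark that each $P\in\mathcal{P}$ is genuinely rank one is a sensible bit of bookkeeping but introduces nothing beyond the paper's argument.
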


\subsection{Counterexamples to the coarse Baum--Connes conjecture}\label{ssec:ceg to cBc}
In this subsection, we will consider the subset $\mathcal{Q} X \coloneqq X \times \{2^n\mid n\in \NN\}$ of $X\times[1,\infty)$ and the associated subspace $\mathcal{Q}_\Gamma X$ (which we will call \emph{sparse warped cone}) of the unified warped cone $\mathcal{O}_\Gamma X$. The main goal is to  show that under certain mild assumptions all non-compact ghost projections in the Roe algebra $C^*(\mathcal{Q}_\Gamma X)$ lie outside the image of the coarse Baum--Connes assembly map. In particular, they all violate the coarse Baum--Connes conjecture. 

As before, we define a $*$-homomorphism $\Phi_\mathcal{Q}$ as follows:
\[
\Phi_\mathcal{Q}\colon \B(L^2(X,\nu)) \rightarrow \B(L^2(\CQ_\Gamma X, \nu\times\lambda_\NN)), \quad T \mapsto T \otimes \mathrm{Id}_{\ell^2(\{2^n\mid n\in \NN\})}.
\]
It is easy to see that Corollary \ref{cor:ghost.projection.in.Roe} still holds in this setting: under the same assumption, each $\Phi_\mathcal{Q}(P)$ with $P \in \mathcal{P}$ is a non-compact ghost projection in the Roe algebra $C^*(\CQ_\Gamma X)$ . We call $\fkG_\mathcal{Q}=\Phi_{\mathcal{Q}}(P_X)$ the \emph{sparse Druţu--Nowak projection}.

The idea of the proof is to construct two ``trace'' maps $\tau_\mathrm{d}$ and $\tau^\mathrm{u}$ on $K_0(C^*(\mathcal{Q}_\Gamma X))$, whose restrictions to the image of the coarse assembly map coincide and yet take different values on every non-compact ghost projection in $C^*(\mathcal{Q}_\Gamma X)$. 
The following argument is a combination of those in \cite{higson1999counterexamples, sawicki_warped_2017, MR2871145}. We have decided to provide here a fair amount of details, because it also requires a few (minor) adaptations and extensions.

\begin{rmk}
 The choice of $2^n$ in the definition of $\CQ_\Gamma X$ is rather arbitrary and made for the sake of concreteness. We could equally set $\mathcal{Q} X = X \times \{a_n\mid n\in \NN\}$ for any other sequence $\{a_n\}_{n\in \NN}\subseteq [1,\infty)$ as long as $\lim_{n,m \to \infty}|a_n - a_m|=\infty$.
\end{rmk}

\subsubsection{The trace $\tau_\mathrm{d}$} 

For each $n\in \NN$, we denote by $Q_n \in \B(L^2(\mathcal{Q}_\Gamma X,\nu \times \lambda_\NN))$ the orthogonal projection onto $L^2(X\times \{2^n\},\nu)$.
For $T \in \B(L^2(\mathcal{Q}_\Gamma X,\nu \times \lambda_\NN))$ with propagation at most $2^{n-1}$, we have $Q_n T = T Q_n$ and define $T_n \coloneqq Q_n T Q_n \in C^*(X\times \{2^n\} )$. Hence, the map
\[
\C[\mathcal{Q}_\Gamma X]
\ni T \mapsto (T_n)_{n\in\NN} \in \frac{\prod_n C^*(X\times\{2^n\} )}{\bigoplus_n C^*(X\times\{2^n\} )}
\]
is multiplicative, contractive and $\ast$-preserving on the algebraic Roe algebra $\C[\mathcal{Q}_\Gamma X]$. Thus, it yields a $\ast$\=/homomorphism on the entire Roe algebra $C^*(\mathcal{Q}_\Gamma X)$.

As each $X\times\{2^n\} $ is compact, $C^*(X\times\{2^n\} )$ is $*$-isomorphic to the $C^*$-algebra of compact operators $\K(L^2(X\times\{2^n\} ))$. Hence, the canonical trace map $\mathrm{Tr}$ on $\K(L^2(X\times\{2^n\} ))$ induces $\mathrm{Tr}_*: K_0(C^*(X\times\{2^n\} )) \to \mathbb{Z}$. 
As in \cite[Section 6]{MR2871145} and \cite[Section 3]{sawicki_warped_2017}, we define the trace map 
\[
\tau_\mathrm{d}: K_0(C^*(\mathcal{Q}_\Gamma X)) \rightarrow \frac{\prod \RR}{\bigoplus \RR}
\] 
as the composition of the trace $\mathrm{Tr}_*: K_0(C^*(X\times\{2^n\} )) \to \mathbb{Z} \subseteq \RR$ with the map 
\[
K_0(C^*(\mathcal{Q}_\Gamma X)) \rightarrow K_0 \bigg( \frac{\prod_n C^*(X\times\{2^n\} )}{\bigoplus_n C^*(X\times\{2^n\} )} \bigg)
\]
induced by $T \mapsto (T_n)_{n\in\NN}$ under the identification
\[
K_0 \bigg( \frac{\prod_n C^*(X\times\{2^n\} )}{\bigoplus_n C^*(X\times\{2^n\} )} \bigg) \cong \frac{K_0(\prod_n C^*(X\times\{2^n\} ))}{K_0(\bigoplus_n C^*(X\times\{2^n\} ))} \cong \frac{\prod_n K_0(C^*(X\times\{2^n\} ))}{\bigoplus_n K_0(C^*(X\times\{2^n\} ))}.
\]

The proof of the following lemma is almost identical to the proof of \cite[Theorem 6.1]{MR2871145}, we include here a short proof for the convenience of the reader.  
\begin{lem}\label{lem:trace d}
Let $p\in C^*(\mathcal{Q}_\Gamma X)$ be any projection, then $\tau_\mathrm{d}([p])=0$ \emph{if and only if} $p$ is compact. In particular, we have $\tau_\mathrm{d}([\Phi_\mathcal{Q}(P)]) \neq 0$ for every $P \in \mathcal{P}$.
\end{lem}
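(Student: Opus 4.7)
The plan is to work with the $*$-homomorphism
$\pi \colon C^*(\mathcal{Q}_\Gamma X) \to \prod_n C^*(X\times\{2^n\})/\bigoplus_n C^*(X\times\{2^n\})$
defined by $T\mapsto (T_n)_n + \bigoplus$, and to extract from $\pi(p)$ genuine projections in the compact operators so that the integer-valued rank can be used to read off $\tau_\mathrm{d}([p])$. Since the integral warped cone is proper and each slice $X\times\{2^n\}$ has finite $d_\Gamma$-diameter, the characteristic function of $X\times\{2^n\}$ lies in $C_0(\mathcal{Q}_\Gamma X)$, so local compactness of elements of the Roe algebra gives that each $p_n = Q_n p Q_n$ is compact. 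Moreover, since $\pi(p)$ is a projection in the quotient, $\|p_n^2 - p_n\|\to 0$, and for $n$ large enough the spectrum of $p_n$ is contained in a small neighbourhood of $\{0,1\}$; functional calculus then produces a projection $P_n \in \K(L^2(X\times\{2^n\}))$ with $\|P_n - p_n\|\to 0$ and $\pi_n([p]) = [P_n]$, hence $\mathrm{Tr}_*(\pi_n([p])) = \mathrm{rank}(P_n)$.

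For the sufficiency, a compact projection $p$ has finite rank, hence is trace class. Since $Q_n \to 0$ in the strong operator topology, trace cyclicity yields $\mathrm{Tr}(p_n) = \mathrm{Tr}(pQ_n) \to 0$; combined with $\|P_n - p_n\|\to 0$ this gives $\mathrm{Tr}(P_n)\to 0$, and since ranks are non-negative integers we conclude $P_n = 0$ eventually, so $\tau_\mathrm{d}([p]) = 0$.

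For the necessity, assume $\tau_\mathrm{d}([p]) = 0$: then $P_n = 0$ for $n$ large and the concentration of the spectrum of $p_n$ near $\{0,1\}$ forces $\|p_n\| \to 0$. Fix $\varepsilon > 0$ and choose $T \in \C[\mathcal{Q}_\Gamma X]$ of propagation strictly less than $2^{N-1}$ with $\|T - p\| < \varepsilon$, taking $N$ large enough that also $\|p_n\| < \varepsilon$ for every $n \geq N$. The geometric key is that for $n \geq N$ the spacing $2^{n+1} - 2^n = 2^n \geq 2^N > 2^{N-1}$ exceeds the propagation, so the bound $d_\Gamma((x,s),(y,t)) \geq |s-t|$ forces $Q_m T Q_n = 0$ whenever $m\neq n$ and $\max(m,n) \geq N$. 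Setting $R_{<N} = \sum_{n<N} Q_n$, one therefore obtains the block decomposition $T = R_{<N} T R_{<N} + \sum_{n\geq N} Q_n T Q_n$. The tail has operator norm $\sup_{n\geq N}\|Q_n T Q_n\| < 2\varepsilon$ by orthogonality of supports and the bound $\|Q_n T Q_n - p_n\| \leq \|T-p\|$, while $R_{<N} T R_{<N}$ is locally compact with support in the compact set $X\times\{1,2,\ldots,2^{N-1}\}$ and hence compact. Thus $p$ is a norm limit of compact operators and so is compact.

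Finally, the ``in particular'' statement follows by direct computation: $Q_n \Phi_\mathcal{Q}(P) Q_n = P\otimes \chi_{\{2^n\}}$ is a rank-one projection for every $n$, so $\mathrm{Tr}_*(\pi_n([\Phi_\mathcal{Q}(P)])) = 1$ for all $n$, and $\tau_\mathrm{d}([\Phi_\mathcal{Q}(P)])$ is the class of the constant sequence $1$, which is non-zero in $\prod\RR/\bigoplus\RR$. The main obstacle I anticipate is in the necessity direction: establishing the block decomposition of the finite-propagation approximant $T$ requires carefully combining its propagation bound with the gap property of $\{2^n\}_n$ so that the off-diagonal blocks indexed by $m\neq n$ (with at least one of $m,n$ at least $N$) all vanish, allowing the tail to be controlled by the diagonal blocks alone.
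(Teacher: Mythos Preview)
Your proof is correct and follows the same overall strategy as the paper's: identify $\tau_\mathrm{d}([p])$ with the sequence of ranks of projections close to $p_n=Q_npQ_n$, and argue that $p$ is compact if and only if these ranks are eventually zero. The paper's proof is considerably terser---it simply asserts that ``$p$ is compact if and only if $\dim(q_n)=0$ for all but finitely many $n$''---while you supply an explicit argument for the necessity direction via a block decomposition of a finite-propagation approximant. That extra detail is a legitimate (and perhaps clearer) way to fill in what the paper leaves to the reader; the only cosmetic point is that the order of quantifiers in your choice of $T$ and $N$ should be stated as: first pick $T$ with $\|T-p\|<\varepsilon$ (of some propagation $r$), then pick $N$ large enough that $2^{N-1}>r$ and $\|p_n\|<\varepsilon$ for $n\geq N$.
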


\begin{proof}
Firstly, we note that for every $T \in C^*(\mathcal{Q}_\Gamma X)$, we have $[T,Q_n] \to 0$ as $n\to \infty$.
In particular, for every projection $p\in C^*(\mathcal{Q}_\Gamma X)$ we have that $Q_n p Q_n$ gets arbitrarily close to some honest projections $q_n$ in $C^*(X\times\{2^n\})$ as $n\to \infty$. In other words, $[(Q_n p Q_n)_{n\in\NN}]=[(q_n)_{n\in\NN}]$ in $\prod_n C^*(X\times\{2^n\} )/\bigoplus_n C^*(X\times\{2^n\} )$.

By the definition of $\tau_\mathrm{d}$, we have that 
\[
\tau_\mathrm{d}([p]) = [(\mathrm{Tr}(q_1),\mathrm{Tr}(q_2),\ldots)] = [(\mathrm{dim}(q_1),\mathrm{dim}(q_2),\ldots)],
\]
where $\mathrm{dim}(q_n)$ denotes the dimension of the range of $q_n$. On the other hand, as $\|Q_npQ_n - q_n \|\to 0$ it follows that the projection $p$ is compact if and only if $\mathrm{dim}(q_n)=0$ for all but finitely many $n$. So we conclude that $\tau_\mathrm{d}([p])=0$ if and only if $p$ is compact.
\end{proof}

\subsubsection{The trace $\tau^\mathrm{u}$}
In order to construct the other trace map $\tau^\mathrm{u}$, we need some extra assumptions and preliminaries. 

Following \cite{sawicki_straightening_2017}, we equip $\CQ X=X\times\{2^n\mid n\in\NN\}$ with the open cone metric
\[
 d_{\CQ}((x_1,t_1),(x_2,t_2))\coloneqq  \paren{{t_1\wedge t_2}}\cdot d(x_1,x_2) + \abs{t_1-t_2}
\]
so that $\CQ X$ and $\CQ_\Gamma X$ coincide as sets but are equipped with different metrics. We can then define a metric $d_{\Gamma\times\CQ}$ on the product $\Gamma \times \mathcal{Q} X$ as the largest metric such that 
\begin{itemize}
 \item $d_{\Gamma\times\CQ}\paren{(\gamma,(x_1,t_1)),(\gamma,(x_2,t_2))} \leq d_\CQ((x_1,t_1),(x_2,t_2))$;
 \item $d_{\Gamma\times\CQ}\paren{(\gamma,(x,t)), (\eta\gamma, \eta\cdot (x,t)} \leq \ell(\eta)$
\end{itemize}
for every $\gamma,\eta\in\Gamma$ and $(x_1,t_1),(x_2,t_2)\in\CQ X$.\footnote{We remark that the metric $d_{\Gamma\times\CQ}$ is denoted by $d_1'$ in \cite{sawicki_warped_2017} and it is isometric---but not equal---to the metric $d^1$ used in \cite[Definition~3.6]{sawicki_straightening_2017}. The latter is also denoted by $d_1$ in \cite{sawicki_warped_2017}.}

The projection to the second coordinate gives a natural quotient map $\pi\colon \Gamma \times \mathcal{Q} X \to \mathcal{Q}_\Gamma X$ and the metric $d_{\Gamma\times\CQ}$ is defined so that the quotient metric on $\mathcal{Q}_\Gamma X$ coincides with the warped metric $d_\Gamma$. Since $X$ is compact, it is shown in \cite[Proposition 3.10]{sawicki_straightening_2017} that the action on $X$ is free \emph{if and only if} $\pi$ is asymptotically faithful. Recall that a surjective map between metric spaces $\pi\colon(Y,d_Y)\to(Z,d_Z)$ is called \emph{asymptotically faithful} if for every $R>0$ there is a bounded subset $C_R\subseteq Z$ such that the restriction of $\pi$ to every $R$\=/ball centred at a point outside of $\pi^{-1}(C_R)$ is an isometry \cite{sawicki_straightening_2017,MR2871145}. Asymptotic faithfulness will play an important role later on, we thus need to restrict our attention to free actions.

In order to estimate operator norms of finite propagation operators in $\B( L^2(\Gamma\times \CQ X))$, we assume that the metric space $(\Gamma\times\CQ X,d_{\Gamma\times \CQ })$ has the \emph{operator norm localisation property} (ONL) (see \cite{chen2008metric}). Namely, if we equip $\Gamma\times \CQ X$ with the product measure $\lambda_\Gamma\times\nu\times\lambda_\NN$ (here $\lambda_\Gamma$ is the counting measure on the discrete group $\Gamma$), we say that $(\Gamma\times\CQ X,d_{\Gamma\times \CQ})$ has ONL if for every $c\in (0,1)$ and $r>0$ there exists an $R>0$ so that for any operator $T \in \B( L^2(\Gamma\times \CQ X))$ of propagation at most $r$ there exists a unit vector $\xi \in L^2(\Gamma\times \CQ X)$ with $\diam (\supp \xi) \leq R$ satisfying $\|T \xi\| \geq c\|T\|$. 

\begin{rmk}
 It follows from \cite[Proposition~2.4]{chen2008metric} that the above definition of ONL is equivalent to the original definition in \cite[Definition~2.3]{chen2008metric}. It follows from the work of Sako \cite{Sako14} that---for metric spaces that are proper and have bounded geometry--- ONL is also equivalent to property A in the sense of \cite[Definition~2.1]{roe_warped_2005} (see \cite[Corollary~2.5]{sawicki_warped_2017} for a proof).
\end{rmk}

 \begin{rem}\label{rmk:ONL of product cone}
 For a Lipschitz action $\Gamma\curvearrowright X$ on a compact space $X$, the metric space $(\Gamma\times\CQ X,d_{\Gamma\times \CQ})$ has ONL under either of the following conditions:
 \begin{itemize}
 \item[(1)] if $\Gamma$ has property $A$ and $X$ is a manifold;
 \item[(2)] if the asymptotic dimension of $\Gamma$ is finite and $X$ is an ultrametric space.
 \end{itemize} 
 We refer to \cite[Corollary~2.11]{sawicki_warped_2017} for a more general statement.
 \end{rem}

As in \cite[Section~3.2]{sawicki_warped_2017}, let $\rho\colon \Gamma \act X$ be a free action so that $\pi\colon \Gamma\times\CQ X\to\CQ_\Gamma X$ is asymptotically faithful. Let $T \in C^*(\mathcal{Q}_\Gamma X)$ be an operator with propagation at most $r$ and let $n_0$ be large enough so that for every $n>n_0$ the quotient map $\pi$ restricts to an isometry on every ball of radius $3r$ in $\Gamma\times X\times\{2^n\}\subseteq \Gamma\times \CQ X$. This allows us to define, for every $n>n_0$, a canonical $\Gamma$\=/equivariant lift $T_n' \in \C[\Gamma \times X\times\{2^n\} ]^\Gamma$ of the operator $T_n=Q_n T Q_n \in C^*(X\times\{2^n\} )$.
Specifically, given $\xi,\eta\in L^2(\Gamma \times X\times\{2^n\} )$ with support of diameter at most $r$ we define
\begin{equation*}
\langle T'_n \xi, \eta \rangle \coloneqq
    \begin{cases}
           ~\langle T_n(\xi \circ \sigma), \eta \circ \sigma \rangle,  & \mbox{if}~ d_{\Gamma\times\CQ}(\supp \xi, \supp \eta) \leq r, \\
           ~0, & \mbox{otherwise},
     \end{cases}
\end{equation*}
where $\sigma$ is the inverse of the restriction of $\pi$ to $\supp (\xi) \cup \supp (\eta)$. Note that the subspace spanned by vectors with diameter of supports at most $r$ is dense in $L^2(\Gamma \times X \times \{2^n\})$, hence $T'_n$ is well-defined. It is verified in \cite[Lemma 3.1]{sawicki_warped_2017} that each $T'_n$ is bounded, and it is clear that each $T_n'$ has propagation at most $r$ and is locally compact and invariant under conjugations.

Moreover, \cite[Lemma 3.2]{sawicki_warped_2017} shows that if $(\Gamma \times \mathcal{Q} X, d_{\Gamma\times\CQ})$ has ONL, then $\|T'_n\| \leq c \|T\|$ for every $n\in \NN$ and some uniform constant $c>0$ coming from ONL. It follows that the map $T \mapsto [(T'_n)_{n\in\NN}]$ induces an algebraic $\ast$-homomorphism:
\[
\Psi\colon \C[\mathcal{Q}_\Gamma X] \longrightarrow \frac{\prod_n C^*(\Gamma \times \{2^n\} \times X)^\Gamma}{\bigoplus_n C^*(\Gamma \times \{2^n\} \times X)^\Gamma}
\]
which can be extended to a $C^*$-homomorphism on the whole $C^*(\mathcal{Q}_\Gamma X)$.
As a matter of fact, it is possible to obtain a slightly improved control on the norm of $\Psi(T)$ (the proof is omitted as it is equal to the proof of \cite[Lemma~13.3.11]{WY20}):

\begin{lem}\label{lem:ONL lifting}
Let $\Gamma\curvearrowright X$ be a free action and assume that $(\Gamma \times \mathcal{Q} X, d_{\Gamma\times\CQ})$ has ONL. Then for every $T \in \C[\mathcal{Q}_\Gamma X]$ we have that
\begin{equation*}
\|\Psi(T)\| = \sup_{R\geq 0} \varlimsup_{n\to \infty} \sup\{\|T_n \xi\| \mid \xi\in L^2(X\times\{2^n\} ), \|\xi\|=1, \mbox{and}~\diam(\supp \xi) \leq R\}.
\end{equation*}
\end{lem}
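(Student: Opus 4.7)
The strategy is to prove both inequalities separately. The standard formula for norms in $\prod_n/\bigoplus_n$ yields
\[
\|\Psi(T)\| = \varlimsup_{n\to\infty}\|T'_n\|,
\]
so it suffices to compare $\|T'_n\|$ with the corresponding suprema over compactly supported unit vectors on $X\times\{2^n\}$. Let $r$ denote the propagation of $T$; by construction each $T'_n$ has propagation at most $r$ as well. The key ingredient is a \emph{local isometry} statement. Fix $R \geq 0$. Since the action is free, \cite[Proposition~3.10]{sawicki_straightening_2017} implies that $\pi$ is asymptotically faithful, so there is a bounded set $C\subseteq\CQ_\Gamma X$ such that $\pi$ restricts to an isometric bijection on every ball of radius $R+r$ centred at a point of $\pi^{-1}(\CQ_\Gamma X\smallsetminus C)$. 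Because $X$ is compact, $C$ meets only finitely many levels $X\times\{2^m\}$; hence for all sufficiently large $n$, $\pi$ restricts to an isometric bijection on every $(R+r)$-ball centred in $\Gamma\times X\times\{2^n\}$. On each such ball $B$ we obtain a unitary $U_B\colon L^2(B)\to L^2(\pi(B))$, and the defining formula of $T'_n$ together with its $\Gamma$-equivariance ensures that $U_B$ intertwines $T'_n$ with $T_n$ on vectors whose support lies in the inner ball of radius $R$.

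For the direction ``$\geq$'' in the statement (\emph{i.e.} the supremum on the right is bounded above by $\|\Psi(T)\|$), fix $R\geq 0$ and let $n$ be large enough so the local isometry applies. Given any unit vector $\eta \in L^2(X\times\{2^n\})$ with $\diam(\supp\eta)\leq R$, choose a ball $B$ as above with $\supp\eta\subseteq \pi(B)$, and set $\xi \coloneqq U_B^{-1}\eta$. Then $\xi$ is a unit vector with $\diam(\supp\xi)\leq R$, and since $T'_n$ has propagation $\leq r$, $T'_n\xi$ remains supported in $B$; the intertwining then gives $\|T_n\eta\| = \|T'_n\xi\|\leq\|T'_n\|$. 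Passing to $\varlimsup_n$ and then to the supremum over $R$ yields this inequality.

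For the reverse direction, fix $\epsilon\in(0,1)$ and invoke ONL on the family $\{T'_n\}$ (whose propagations are uniformly bounded by $r$): this produces an $R>0$ independent of $n$ such that, for every $n$, there exists a unit vector $\xi_n\in L^2(\Gamma\times X\times\{2^n\})$ with $\diam(\supp\xi_n)\leq R$ and $\|T'_n\xi_n\|\geq (1-\epsilon)\|T'_n\|$. For $n$ large, apply the local isometry to a ball $B\supseteq\supp\xi_n$ and set $\eta_n\coloneqq U_B\xi_n$; then $\eta_n$ is a unit vector on $X\times\{2^n\}$ with $\diam(\supp\eta_n)\leq R$, and the intertwining gives $\|T_n\eta_n\| = \|T'_n\xi_n\|\geq(1-\epsilon)\|T'_n\|$. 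Taking $\varlimsup_n$, then the supremum over $R$, then sending $\epsilon\to 0$, completes the proof. The main obstacle is verifying that $U_B$ genuinely intertwines $T_n$ and $T'_n$ on the appropriate subspaces, which amounts to unwinding the defining formula for $T'_n$ via local inverses of $\pi$ combined with its $\Gamma$-equivariance and the freeness of the action; otherwise the argument is formal manipulation of suprema and $\varlimsup$.
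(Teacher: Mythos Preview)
The paper does not give a proof of this lemma: it simply states that ``the proof is omitted as it is equal to the proof of \cite[Lemma~13.3.11]{WY20}''. Your argument is a correct reconstruction of that standard proof: you identify $\|\Psi(T)\|=\varlimsup_n\|T'_n\|$, use asymptotic faithfulness to transfer compactly supported vectors between $X\times\{2^n\}$ and $\Gamma\times X\times\{2^n\}$ via local sections of $\pi$, and use ONL (applied to each $T'_n$, extended by zero, with the crucial observation that the localisation radius $R$ depends only on $r$ and $\epsilon$, hence is uniform in $n$) to witness the norms by vectors of bounded diameter. The only point worth making explicit is that the ONL vector $\xi_n$ can be taken inside $\Gamma\times X\times\{2^n\}$: if not, restrict it to that level (since $T'_n$ vanishes elsewhere) and renormalise---this can only improve the inequality and preserves the diameter bound.
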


Lemma~\ref{lem:ONL lifting} allows us to identify the kernel of $\Psi$ with the closed ideal consisting of all ghost operators in $C^*(\mathcal{Q}_\Gamma X)$ (compare with \cite[Corollary~13.3.14]{WY20}):

\begin{cor}\label{cor:ghost iff kernel}
Let $\Gamma\curvearrowright X$ be a free action and assume that $(\Gamma \times \mathcal{Q} X, d_{\Gamma\times\CQ})$ has ONL. Given $T\in C^*(\mathcal{Q}_\Gamma X)$, then $\Psi(T)=0$ \emph{if and only if} $T$ is a ghost operator.
\end{cor}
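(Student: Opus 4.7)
I would prove both directions by relating each one to a single scalar quantity. For arbitrary $T\in\B(L^2(\mathcal{Q}_\Gamma X,\nu\times\lambda_\NN))$ set
\[
 N(T)\coloneqq \sup_{R\geq 0}\varlimsup_{n\to\infty}\sup\braces[\big]{\norm{T_n\xi}\bigmid \xi\in L^2(X\times\{2^n\}),\ \norm{\xi}=1,\ \diam(\supp\xi)\leq R},
\]
where $T_n\coloneqq Q_nTQ_n$. Since $\abs{\norm{T_n\xi}-\norm{S_n\xi}}\leq\norm{T-S}$ for every fixed $n,R,\xi$, the functional $N$ is $1$-Lipschitz on $\B(L^2(\mathcal{Q}_\Gamma X,\nu\times\lambda_\NN))$. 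Combined with the continuity of $\Psi$ and the identity $N(T)=\norm{\Psi(T)}$ for $T\in\C[\mathcal{Q}_\Gamma X]$ provided by Lemma~\ref{lem:ONL lifting}, this extends to give $N(T)=\norm{\Psi(T)}$ for every $T\in C^*(\mathcal{Q}_\Gamma X)$. Hence $\Psi(T)=0$ is equivalent to $N(T)=0$, and everything reduces to showing that this last condition is equivalent to $T$ being ghost.

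For the direction ``$T$ ghost $\Rightarrow\Psi(T)=0$'', fix $R,\epsilon>0$ and let $C\subseteq\mathcal{Q}_\Gamma X$ be the bounded set supplied by the ghost definition. Since $C$ is bounded and the levels $X\times\{2^n\}$ escape to infinity in $\mathcal{Q}_\Gamma X$, the whole level $X\times\{2^n\}$ lies outside $C$ for every $n$ sufficiently large. For any such $n$ and any unit $\xi$ supported in $X\times\{2^n\}$ with $\diam(\supp\xi)\leq R$, picking $x\in\supp\xi$ (so $x\notin C$) we have $\supp\xi\subseteq B_R(x;d_\Gamma)$, and the ghost inequality yields $\norm{T_n\xi}\leq\norm{T\xi}\leq\epsilon$. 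As $R,\epsilon$ were arbitrary, $N(T)=0$.

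For the converse, assume $N(T)=0$ and fix $R,\epsilon>0$; I must exhibit a bounded $C$ witnessing the ghost property for $T$ at scale $R,\epsilon$. The key geometric observation is that $d_\Gamma((x_1,2^n),(x_2,2^m))\geq\abs{2^n-2^m}\geq 2^{n-1}$ whenever $m\neq n$, so a $d_\Gamma$-ball of radius $R$ centred in $X\times\{2^n\}$ lies entirely in that level once $2^{n-1}>R$. I approximate $T$ by some $T^{(k)}\in\C[\mathcal{Q}_\Gamma X]$ with $\norm{T-T^{(k)}}<\epsilon/4$ and propagation $r$, then choose $N$ large enough that $2^{N-1}>R+r$ and that the level-$n$ supremum at radius $2R$ in the definition of $N(T)$ is below $\epsilon/4$ for every $n>N$. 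Setting $C\coloneqq\mathcal{Q}_\Gamma X\cap(X\times[1,2^N])$: for any unit $\phi$ supported in $B_R(x;d_\Gamma)$ with $x\notin C$, one has $\supp\phi\subseteq X\times\{2^n\}$ for some $n>N$, and the propagation bound keeps $T^{(k)}\phi$ inside the same level so that $T^{(k)}\phi=T^{(k)}_n\phi$. Combining this with $\norm{T^{(k)}_n-T_n}\leq\norm{T-T^{(k)}}$ and $\diam(\supp\phi)\leq 2R$ yields $\norm{T\phi}\leq\norm{T^{(k)}_n\phi}+\epsilon/4\leq\norm{T_n\phi}+\epsilon/2<\epsilon$, as desired.

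The main obstacle will be the careful coordination between the single-level formula defining $N(T)$ and the ghost condition on $\phi$, whose support is a $d_\Gamma$-ball that \emph{a priori} could meet several levels of $\mathcal{Q}_\Gamma X$. The sparsity of the sequence $\{2^n\}$ is precisely what rules out this possibility asymptotically, allowing both $\phi$ and $T^{(k)}\phi$ to be absorbed into a single level; once this is in place, the rest is a routine $\epsilon/4$-approximation.
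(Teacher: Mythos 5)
Your argument is correct and follows essentially the same route as the paper: extend the norm formula of Lemma~\ref{lem:ONL lifting} to all of $C^*(\mathcal{Q}_\Gamma X)$ by continuity, and then identify the vanishing of that quantity with the ghost condition. The paper states the final equivalence without proof, whereas you supply the (correct) details using the sparsity of the levels $\{2^n\}$ and a finite-propagation approximation; this is a welcome elaboration rather than a different method.
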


\begin{proof}
By continuity of $\Psi$, it follows from Lemma~\ref{lem:ONL lifting} that the formula
\begin{equation*}
\|\Psi(T)\| = \sup_{R\geq 0} \varlimsup_{n\to \infty} \sup\{\|T_n \xi\| \mid \xi\in L^2(X\times\{2^n\} ), \|\xi\|=1, \mbox{and}~\diam(\supp \xi) \leq R\}
\end{equation*} 
also holds for every $T\in C^*(\mathcal{Q}_\Gamma X)$. So $\Psi(T)=0$ if and only if for every $R, \epsilon>0$, there exists an $N_0 \in \NN$ such that for every $n>N_0$ and every unit vector $\xi\in L^2(X\times\{2^n\} )$ with $\diam(\supp \xi) \leq R$, we have $\|T_n \xi\| \leq \epsilon$. The latter condition holds if and only if $T$ is a ghost operator.
\end{proof}

We resume the construction of the trace $\tau^{\mathrm{u}}$ following \cite[Section 3.2]{sawicki_warped_2017}. 
It can be shown that for every $n\in\NN$ we have a $*$-isomorphism
\[
C^*((\Gamma \times X\times\{2^n\} ), d_{\Gamma\times\CQ X})^\Gamma \cong C^*_r(\Gamma) \otimes \K(L^2(X\times\{2^n\} )),
\]
where $\K(L^2(X\times\{2^n\} ))$ denotes the compact operators.
The latter admits a trace $\tau$ coming from the canonical traces on both tensor factors. More precisely, we let
\[
\tau(p)\coloneqq\mathrm{Tr}(\chi_1 p \chi_1),
\]
where $\chi_1$ is the characteristic function of $\{1\} \times X\times\{2^n\} $, and $\mathrm{Tr}$ is the canonical trace on $\K(L^2(X\times\{2^n\} ))$. 
Finally, we define the trace $\tau^{\mathrm{u}}$ on $K_0(C^*(\mathcal{Q}_\Gamma X))$ as the following composition:
\begin{align*}
K_0(C^*(\mathcal{Q}_\Gamma X)) \stackrel{\displaystyle\Psi_*}{\longrightarrow} & K_0 \bigg(\frac{\prod_n C^*(\Gamma \times X\times\{2^n\} , d_{\Gamma\times\CQ})^\Gamma}{\bigoplus_n C^*(\Gamma \times X\times\{2^n\} , d_{\Gamma\times\CQ})^\Gamma}\bigg)  \\
& \cong \frac{\prod_n K_0(C^*(\Gamma \times X\times\{2^n\} , d_{\Gamma\times\CQ})^\Gamma)}{\bigoplus_n K_0(C^*(\Gamma \times X\times\{2^n\} , d_{\Gamma\times\CQ})^\Gamma)}
\stackrel{\displaystyle\tau_*}{\longrightarrow} \frac{\prod \RR}{\bigoplus \RR}.
\end{align*}

Consequently, Corollary~\ref{cor:ghost iff kernel} together with Corollary~\ref{cor:ghost.projection.in.Roe}  prove the following:
\begin{prop}\label{prop:trace=0}
Let $(X,d)$ be a compact metric space of diameter at most $2$ equipped with a non-atomic probability measure $\nu$ of full support, and $\Gamma \act (X,d,\nu)$ be a free measure-class-preserving continuous action. Assume that $(\Gamma \times \mathcal{Q} X,d_{\Gamma\times\CQ})$ has ONL. Then for every ghost projection $p \in C^*(\mathcal{Q}_\Gamma X)$, we have $\tau^{\mathrm{u}}([p])=0$. In particular, for any projection $P \in \mathcal{P}$ we have $\tau^{\mathrm{u}}([\Phi_{\mathcal{Q}}(P)])=0$.
\end{prop}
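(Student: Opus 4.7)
The plan is to unwind the definition of $\tau^{\mathrm{u}}$ and invoke the two corollaries already established earlier in the subsection. Recall that $\tau^{\mathrm{u}}$ is defined as the composition $\tau_* \circ \Psi_*$, where $\Psi_*$ is the $K_0$-map induced by the $C^*$-homomorphism
\[
 \Psi\colon C^*(\mathcal{Q}_\Gamma X) \longrightarrow \frac{\prod_n C^*(\Gamma \times X\times\{2^n\}, d_{\Gamma\times\CQ})^\Gamma}{\bigoplus_n C^*(\Gamma \times X\times\{2^n\}, d_{\Gamma\times\CQ})^\Gamma}.
\]
Hence it suffices to show that $\Psi_*([p]) = 0$ whenever $p$ is a ghost projection.

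Under the hypothesis that $(\Gamma\times\CQ X, d_{\Gamma\times\CQ})$ has ONL, Corollary~\ref{cor:ghost iff kernel} applies and identifies the kernel of $\Psi$ with the ideal of ghost operators in $C^*(\mathcal{Q}_\Gamma X)$. In particular, if $p$ is ghost then $\Psi(p) = 0$ in the quotient, and therefore $\Psi_*([p]) = [\Psi(p)] = [0] = 0$ in the $K_0$-group of the target. Applying $\tau_*$ then yields $\tau^{\mathrm{u}}([p]) = 0$, which proves the main assertion.

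For the second assertion, the sparse analogue of Corollary~\ref{cor:ghost.projection.in.Roe} (mentioned at the beginning of the subsection and obtained by repeating verbatim the arguments of Lemma~\ref{lem:DN projection is ghost} and Proposition~\ref{prop:projection in Roe} with $\mathcal{Q}_\Gamma X$ replacing $\mathcal{O}_\Gamma X$) ensures that for every $P \in \mathcal{P}$ the projection $\Phi_{\mathcal{Q}}(P)$ is a non-compact ghost projection in $C^*(\mathcal{Q}_\Gamma X)$. Applying the first part to $p = \Phi_{\mathcal{Q}}(P)$ concludes the proof. There is essentially no obstacle here; both non-trivial ingredients---the identification of ghosts as the kernel of $\Psi$ (which uses ONL via Lemma~\ref{lem:ONL lifting}) and the ghost property of the specific projections $\Phi_{\mathcal{Q}}(P)$---are already in place.
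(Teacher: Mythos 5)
Your argument is correct and is precisely the one the paper intends: the paper derives this proposition in one line from Corollary~\ref{cor:ghost iff kernel} (ghosts are exactly the kernel of $\Psi$, so $\Psi_*([p])=0$ and hence $\tau^{\mathrm{u}}([p])=\tau_*(\Psi_*([p]))=0$) together with the sparse analogue of Corollary~\ref{cor:ghost.projection.in.Roe} (each $\Phi_{\mathcal{Q}}(P)$ with $P\in\mathcal{P}$ is a ghost projection). Your write-up just makes these two steps explicit, so there is nothing to add.
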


\subsubsection{Comparing the two traces}

The concluding argument goes exactly as in \cite{higson1999counterexamples, sawicki_warped_2017, MR2871145}. 
The key idea is to use Atiyah $\Gamma$-index Theorem \cite{atiyah1976elliptic} to show that whenever $p$ is a projection in the Roe algebra such that $[p]$ belongs to the range of the coarse assembly map, then
\[
\tau_{\mathrm{d}}([p]) = \tau^{\mathrm{u}}([p]) \in \frac{\prod \RR}{\bigoplus \RR}.
\]
This argument first appeared in \cite[Proposition 5.6]{higson1999counterexamples}. The detailed proof (in the case of graphs) can be found in \cite[Lemma 6.5]{MR2871145} (see also \cite[Theorem 3.3]{sawicki_warped_2017} for the case of compact metric spaces).

Together with Lemma \ref{lem:trace d} and Proposition \ref{prop:trace=0}, we deduce that  every $K$-theory class of a non-compact ghost projection in the Roe algebra is \emph{not} in the image of the coarse assembly map (see \cite[Theorem~6.1]{MR2871145} for the case of graphs). 
In particular this applies to any projection $\Phi(P)$ with $P\in\CP$. We record this fact as a theorem, as it is the main result of Section~\ref{sec:baum_connes} and it generalises \cite[Theorem~3.5]{sawicki_warped_2017} from measure\=/preserving actions with a spectral gap to measure\=/class-preserving asymptotically expanding actions:

\begin{thm}\label{thm:ceg to CBC}
Let $(X,d)$ be a compact metric space of diameter at most $2$ equipped with a non-atomic probability measure $\nu$ of full support, and $\rho\colon\Gamma \act (X,d,\nu)$ be a free continuous measure\=/class\=/preserving action. Further assume that $(\Gamma \times \mathcal{Q} X,d_{\Gamma\times\CQ})$ has ONL.

If $p=\Phi_{\mathcal{Q}}(P_Y)$ for any domain $Y \subseteq X$ of asymptotic expansion or $p=\Phi_{\mathcal{Q}}(\widehat P_{Y,S})$ for any domain $Y\subseteq X$ of Markov $S$\=/expansion, then $[p]$ does not belong to the image of the coarse assembly map.

In particular, if the action $\rho$ is asymptotically expanding then the class of the sparse Druţu--Nowak projection $\fkG_\mathcal{Q}$ violates the coarse Baum--Connes conjecture for the sparse warped cone $\mathcal{Q}_\Gamma X$. 
\end{thm}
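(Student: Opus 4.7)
The plan is to exploit the two trace maps $\tau_{\mathrm{d}}$ and $\tau^{\mathrm{u}}$ on $K_0(C^*(\mathcal{Q}_\Gamma X))$ constructed earlier in this subsection, together with their compatibility with the coarse assembly map, to show that the class $[p]$ cannot lie in the image of the coarse assembly map $\mu\colon K_*^{\mathrm{lf}}(\mathcal{Q}_\Gamma X) \to K_*(C^*(\mathcal{Q}_\Gamma X))$. The overarching strategy, originally due to Higson, is to exhibit two traces on $K_0$ of the Roe algebra that must coincide on the image of $\mu$ yet disagree on $[p]$.

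First I would verify that, for either choice $p = \Phi_{\mathcal{Q}}(P_Y)$ or $p = \Phi_{\mathcal{Q}}(\widehat P_{Y,S})$, the operator $p$ is a non-compact ghost projection in $C^*(\mathcal{Q}_\Gamma X)$. This is the sparse-cone analogue of Corollary~\ref{cor:ghost.projection.in.Roe}, already noted at the start of the subsection (every ingredient of the proof of that corollary goes through with $\lambda_\NN$ replaced by its restriction to $\{2^n\mid n\in\NN\}$). Granted this, Lemma~\ref{lem:trace d} applied to the sparse cone gives $\tau_{\mathrm{d}}([p])\neq 0$, while Proposition~\ref{prop:trace=0} yields $\tau^{\mathrm{u}}([p])=0$.

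The technical core is then to show that $\tau_{\mathrm{d}}$ and $\tau^{\mathrm{u}}$ agree on every class coming from the coarse assembly map. The key inputs are asymptotic faithfulness of the quotient $\pi\colon \Gamma\times\mathcal{Q} X \to \mathcal{Q}_\Gamma X$, which follows from freeness of the action by \cite[Proposition~3.10]{sawicki_straightening_2017}, together with Atiyah's $L^2$-index theorem, which identifies the $\Gamma$-trace of the equivariantly lifted index with the ordinary trace of the downstairs index. The ONL assumption is used precisely to ensure that the lifting map $\Psi$ extends continuously to the full Roe algebra (as in Lemma~\ref{lem:ONL lifting}). I would follow the template of \cite[Lemma~6.5]{MR2871145} and \cite[Theorem~3.3]{sawicki_warped_2017}, where the comparison was proved for graphs and for measure-preserving actions on compact spaces respectively; the argument transfers verbatim to the measure-class-preserving setting, since the construction of both traces used only ONL and asymptotic faithfulness and not the dynamical character of the action.

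Putting these pieces together produces the required contradiction: if $[p]$ were in the image of $\mu$, we would have $\tau_{\mathrm{d}}([p])=\tau^{\mathrm{u}}([p])$, contradicting the computed values. The final assertion about $\fkG_{\mathcal{Q}}$ is then immediate by specialising to $Y=X$, since under the asymptotic expansion hypothesis $X$ itself is a domain of asymptotic expansion and $\fkG_{\mathcal{Q}}=\Phi_{\mathcal{Q}}(P_X)$ by definition. The main obstacle I anticipate is the trace-comparison step, but as this has been handled in detail in the cited references---with the only differences being notational---the remaining work is bookkeeping rather than new mathematics.
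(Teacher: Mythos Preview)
Your proposal is correct and follows essentially the same approach as the paper: use Lemma~\ref{lem:trace d} to get $\tau_{\mathrm d}([p])\neq 0$, Proposition~\ref{prop:trace=0} to get $\tau^{\mathrm u}([p])=0$, and the Atiyah $\Gamma$-index argument (cited from \cite{higson1999counterexamples,MR2871145,sawicki_warped_2017}) to conclude that the two traces agree on the image of the assembly map, hence $[p]$ cannot lie there. The paper's treatment is in fact terser than yours---it simply records the trace comparison as a citation and then states the theorem---so your write-up is a faithful expansion of what the authors intended.
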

The following corollary follows immediately from Theorem~\ref{thm:ceg to CBC} and Remark~\ref{rmk:ONL of product cone}:
\begin{cor}\label{cor:ceg to CBC}
Let $(X,d)$ be a compact metric space of diameter at most $2$ equipped with a non-atomic probability measure $\nu$ of full support, and let $\rho\colon\Gamma \act (X,d,\nu)$ be a free Lipschitz measure\=/class\=/preserving asymptotically expanding action under \textbf{either} of the following conditions:
 \begin{itemize}
 \item[(1)] if $\Gamma$ has property $A$ and $X$ is a manifold;
 \item[(2)] if the asymptotic dimension of $\Gamma$ is finite and $X$ is an ultrametric space.
 \end{itemize}  
Then the coarse Baum--Connes conjecture for the sparse warped cone $\mathcal{Q}_\Gamma X$ fails.
\end{cor}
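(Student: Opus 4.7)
The plan is essentially to combine the two previously stated results: Theorem~\ref{thm:ceg to CBC} and Remark~\ref{rmk:ONL of product cone}. The hypotheses of the corollary are precisely designed so that both ingredients apply.

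First, I would verify that the hypotheses of Theorem~\ref{thm:ceg to CBC} are satisfied. The corollary assumes that $(X,d)$ is a compact metric space of diameter at most $2$, that $\nu$ is a non-atomic probability measure of full support, and that $\rho$ is a free continuous measure-class-preserving action --- all of which are verbatim the first-line hypotheses of Theorem~\ref{thm:ceg to CBC}. (Here Lipschitz implies continuous.) Moreover, $\rho$ is assumed to be asymptotically expanding, which will allow us to invoke the final conclusion of the theorem concerning the sparse Dru\c{t}u--Nowak projection $\fkG_\CQ$.

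The only remaining hypothesis to check is the operator norm localisation property for the metric space $(\Gamma\times \CQ X, d_{\Gamma\times\CQ})$. This is supplied by Remark~\ref{rmk:ONL of product cone}: under either of the two alternatives assumed in the corollary --- namely (1) $\Gamma$ has property A and $X$ is a manifold, or (2) the asymptotic dimension of $\Gamma$ is finite and $X$ is an ultrametric space --- together with the Lipschitz condition on the action, it is precisely shown that $(\Gamma\times \CQ X, d_{\Gamma\times\CQ})$ has ONL.

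Putting these together, Theorem~\ref{thm:ceg to CBC} applies, and its final clause yields that the class $[\fkG_\CQ]\in K_0(C^*(\CQ_\Gamma X))$ does not lie in the image of the coarse assembly map. In particular, the assembly map fails to be surjective, so the coarse Baum--Connes conjecture fails for $\CQ_\Gamma X$, as required. There is no real obstacle here: the corollary is purely a reformulation packaging two earlier results into a ready-to-use statement, and the only reason it merits being stated separately is to make explicit the two geometric conditions (1) and (2) under which ONL is automatic.
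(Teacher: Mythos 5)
Your proposal is correct and matches the paper exactly: the paper derives Corollary~\ref{cor:ceg to CBC} immediately from Theorem~\ref{thm:ceg to CBC} together with Remark~\ref{rmk:ONL of product cone}, which supplies the ONL hypothesis under either condition (1) or (2). Nothing further is needed.
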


\begin{ex}\label{eg:counterexample BCC}
Given a chain of finite index subgroups $\Gamma>\Gamma_1>\Gamma_2>\cdots$, we consider the inverse limit $X=\varprojlim\Gamma/\Gamma_i$. This space is homeomorphic to a Cantor set, and the uniform measures on $\Gamma/\Gamma_i$ induce a natural probability measure $\nu$ on $X$
($\nu$ is obviously non-atomic and with full-support). Further, $X$ can also be given an ultrametric by letting $d((\gamma_i\Gamma_i)_{i\in\NN},(\gamma_i'\Gamma_i)_{i\in\NN})=2^{-n}$ where $n$ is the smallest index such that $\gamma_n\Gamma_n\neq\gamma'_n\Gamma_n$.
Clearly, $\Gamma$ acts $X$ by left multiplication and the action is isometric and measure\=/preserving. Further, if $\bigcap_{i\in\NN}\Gamma_i=\{1\}$ then the action is free. Such an action is called a \emph{profinite action}.

Abért--Elek constructed in \cite[Theorem~5]{abert2012dynamical} a free profinite action $F_k \curvearrowright (X,d,\nu)$ of any finitely generated non\=/abelian free group $F_k$ that is strongly ergodic (and hence asymptotically expanding) but does not have a spectral gap. 

Since the free group has asymptotic dimension 1, we can hence apply Theorem~\ref{thm:ceg to CBC} and Corollary~\ref{cor:ceg to CBC} to deduce that the sparse Druţu--Nowak projection over $\CQ_{F_k}X$ violates the coarse Baum--Connes conjecture. This fact does not directly follow from \cite[Theorem~3.5]{sawicki_warped_2017}, as the action does not have spectral gap.
As pointed out by the anonymous referee, it is also possible to deduce that the sparse warped cone $\mathcal{Q}_{F_k} X$ violates the coarse Baum--Connes conjecture by combining the approximating space construction from \cite{dynamics1} with the results in \cite{structure}. However, the latter argument is somewhat more opaque. For example, it is not clear to us whether this approach implies that sparse Druţu--Nowak projection violates the coarse Baum--Connes. 
On the contrary, the approach developed in this paper implies \emph{all} non-compact ghost projections---including the sparse Druţu--Nowak projection---violate the conjecture.


\end{ex}

\begin{rmk}\label{rmk: unbounded geometry profinite}
 It is not hard to check that the sparse and unified warped cones arising from a free profinite action have bounded geometry\footnote{%
 A metric space $(X,d)$ has \emph{bounded geometry} if for every $\epsilon,R > 0$ there exists an $N \in \N$ such that any $\epsilon$-separated subset of an $R$-ball of $X$ has at most $N$ elements.} \emph{if and only if} there is a uniform upper bound on the indices $[\Gamma_i:\Gamma_{i+1}]$ for $i \in \NN$.

 It follows that the sparse warped cone in Example~\ref{eg:counterexample BCC} does \emph{not} have bounded geometry in general: the construction of Abért--Elek requires chains of subgroups with indices growing very quickly (this is important in the proof of \cite[Lemma 6.2]{abert2012dynamical}). It would be interesting to know if it is possible to find a chain $\Gamma>\Gamma_1>\cdots$ with uniformly bounded indices $[\Gamma_i:\Gamma_{i+1}]$ such that the induced profinite action is strongly ergodic but has no spectral gap.
\end{rmk}

\subsection{Non-coarse embeddability}
In this subsection we prove that warped cones arising from asymptotically expanding actions do not coarsely embed into any Hilbert space.  
One of the ground-breaking results by Yu was to verify the coarse Baum--Connes conjecture for every proper bounded geometry metric space which coarsely embeds into some Hilbert space \cite{Yu00}. It follows from Theorem~\ref{thm:ceg to CBC} that the sparse warped cone $\mathcal{Q}_\Gamma X$ coming from an asymptotically expanding action of $\Gamma$ on a compact metric space $X$ cannot coarsely embed into Hilbert spaces provided that $(\Gamma \times \mathcal{Q} X,d_{\Gamma\times\CQ})$ has ONL and $\mathcal{Q}_\Gamma X$ has bounded geometry (it is not hard to show that if $(X,d)$ is proper, so is $(\CO_\Gamma X,d_\Gamma)$). Below we will strengthen this result and show both ONL of $(\Gamma \times \mathcal{Q} X,d_{\Gamma\times\CQ})$ and bounded geometry of $\mathcal{Q}_\Gamma X$ are redundant.

Recall that a map $F:(X,d_X)\rightarrow (Z,d_Z)$ is a \emph{coarse embedding} between metric spaces if there exist non-decreasing unbounded functions $\rho_{\pm}\colon[0,\infty)\rightarrow [0,\infty)$ such that 
\begin{align*}
\rho_{-}(d_X(x,x'))\leq d_Z(F(x),F(x'))\leq \rho_{+}(d_X(x,x')),
\end{align*}
for all $x,x'\in X$.

The following proposition is a (partial) extension of \cite[Theorem~3.1]{nowak_warped_2017} from the setting of measure-preserving actions with a spectral gap to asymptotically expanding measure\=/class\=/preserving actions. The proof combines the idea in \cite[Theorem~3.1]{nowak_warped_2017} with Proposition~\ref{prop:Markov local version}:  

\begin{prop}\label{prop:non CE}
Let $(X,d)$ be a compact metric space of diameter at most $2$ equipped with a non-atomic probability measure $\nu$, and $\rho\colon \Gamma\curvearrowright (X,d,\nu)$ be a continuous measure-class-preserving and asymptotically expanding action. 
 
If $A\subseteq [1,\infty)$ is any unbounded subset and $d_\Gamma$ is the warped cone metric on $\CO_\Gamma X$, then $(X\times A,d_\Gamma)$ does not admit a coarse embedding into any Hilbert space. 
\end{prop}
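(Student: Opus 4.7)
The plan is to assume the existence of a coarse embedding $f\colon (X\times A, d_\Gamma) \to H$ with moduli $\rho_\pm$ and derive a contradiction via a vector\=/valued Poincaré inequality. For each $t \in A$ the slice $f_t \coloneqq f|_{X\times\{t\}} \colon (X, d_\Gamma^t) \to H$ is a coarse embedding with the same moduli since $d_\Gamma\bigparen{(x_1,t),(x_2,t)} = d_\Gamma^t(x_1,x_2)$. Because $\rho$ is asymptotically expanding, $X$ is a domain of asymptotic expansion, so Proposition~\ref{prop:Markov local version} gives an exhaustion $Y_n \nearrow X$ by domains of Markov $S^{(n)}$\=/expansion with Radon--Nikodym derivatives bounded on $Y_n$ by some $\Theta_n \geq 1$. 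Fix $n$ large enough that $\nu(Y_n) > 3/4$ and apply Theorem~\ref{thm:spectral.characterisation.markov.exp} together with Lemma~\ref{lem:spetral gap equiv} to obtain a positive spectral gap $\lambda_n$ for the Markov operator $\fkP_{Y_n,S^{(n)}}$.

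The scalar spectral gap passes, coordinate\=/wise along an orthonormal basis of $H$, to a Hilbert\=/space\=/valued Poincaré inequality: for every $g \in L^2(Y_n, \tilde\nu_{Y_n,S^{(n)}}; H)$ with $\tilde\nu$\=/average $\bar g$,
\begin{equation*}
 \int_{Y_n} \|g-\bar g\|^2 \,\d \tilde\nu_{Y_n,S^{(n)}} \leq \frac{1}{\lambda_n}\,\CE_2(g).
\end{equation*}
I will apply this to $g = f_t|_{Y_n}$. Using the explicit form of the measure $\mu$ from \eqref{eq:mu for Markov from actions} together with the crucial observation that $d_\Gamma^t(y, s\cdot y) \leq \ell(s) \leq L_n \coloneqq \max_{s \in S^{(n)}} \ell(s)$, every Dirichlet summand is controlled by $\rho_+(L_n)$; combined with $r(s,y)^{1/2} \leq \sqrt{\Theta_n}$, this produces a bound on $\CE_2(f_t|_{Y_n})$ that is independent of $t \in A$. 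Invoking Proposition~\ref{prop:normalised.markov.is.reversible}(2) to replace $\tilde\nu$ by $\nu$ on the left then yields a uniform bound $\int_{Y_n}\|f_t - \bar f_t\|^2\,\d\nu \leq C$ for all $t \in A$.

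By Chebyshev's inequality, fixing a sufficiently large $M>0$ makes the good set $G_t \coloneqq \{y \in Y_n : \|f_t(y) - \bar f_t\| \leq M\}$ of $\nu$\=/measure at least $1/2$, uniformly in $t \in A$. To conclude, I will invoke the ball\=/shrinking estimate implicit in the proof of Lemma~\ref{lem:DN projection is ghost}: uniform continuity of $\Gamma \curvearrowright X$ on the compact $X$ combined with the upper uniformity of $\nu$ (Lemma~\ref{lem:upper uniform}) yields $\sup_{x\in X}\nu(B_R(x;d_\Gamma^t)) \to 0$ as $t \to \infty$ for every fixed $R>0$. Since $A$ is unbounded, for any $R>0$ one can pick $t \in A$ large enough that every $d_\Gamma^t$\=/ball of radius $R$ has $\nu$\=/measure strictly less than $1/2$. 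The set $G_t$ then cannot be contained in any single such ball, so there exist $x_1,x_2 \in G_t$ with $d_\Gamma^t(x_1,x_2) > R$, whence
\begin{equation*}
 \rho_-(R) \leq \|f_t(x_1) - f_t(x_2)\| \leq 2M.
\end{equation*}
Letting $R \to \infty$ contradicts the unboundedness of $\rho_-$.

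The main obstacle is this last step: converting the uniform\=/in\=/$t$ concentration of $f_t$ near its average into actual pairs of points in $G_t$ at arbitrarily large $d_\Gamma^t$\=/distance. This hinges on the shrinking of $d_\Gamma^t$\=/balls in $\nu$\=/measure as $t\to\infty$, a phenomenon driven squarely by the compactness of $X$, the uniform continuity of the action, and the non\=/atomicity of $\nu$; once this is in hand, the whole argument closes cleanly against the uniform coarse\=/embedding moduli $\rho_\pm$.
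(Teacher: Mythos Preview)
Your argument is correct and follows the paper's strategy up through the uniform bound $\int_{Y_n}\|f_t-\bar f_t\|^2\,\d\nu\leq C$. The divergence comes in the final contradiction step.

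The paper, after centering so that $\int_Y f_t\,\d\nu=0$, observes that
\[
\iint_{Y\times Y}\|f_t(x)-f_t(y)\|^2\,\d\nu(x)\,\d\nu(y)=2\,\nu(Y)\,\|f_t\|_{\nu,2}^2,
\]
which is uniformly bounded in $t$; it then argues that the left-hand side tends to infinity because the set of pairs $(x,y)$ lying in the same $\Gamma$-orbit is $\nu\times\nu$-null (non-atomicity plus countability of $\Gamma$), and for every pair in distinct orbits one has $d_\Gamma^t(x,y)\to\infty$, hence $\|f_t(x)-f_t(y)\|\to\infty$ pointwise a.e. Your route instead applies Chebyshev to locate a set $G_t$ of measure $>1/2$ on which $f_t$ is uniformly close to its barycentre, and then uses the ball-shrinking estimate from the proof of Lemma~\ref{lem:DN projection is ghost} to force two points of $G_t$ to be $d_\Gamma^t$-far apart. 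Both endings are clean; the paper's is a touch shorter and uses only the orbit structure and Fatou, while yours is more geometric and has the minor advantage that it avoids needing to check that $d_\Gamma^t(x,y)\to\infty$ for every pair in distinct orbits, at the cost of invoking a fact buried inside another proof.
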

\begin{proof}
From Proposition~\ref{prop:Markov local version}, there exist a finite symmetric subset $1\in S\subseteq \Gamma$ and a domain $Y \subseteq X$ of Markov $S$-expansion such that there is a constant $\Theta\geq 1$ such that $1/\Theta\leq r(s,x)\leq\Theta$ for every $x\in Y$ and $s\in S_{Y,x}=\{s\in S\ \vert\  s\cdot x \in Y\}$. Hence, we have $1-\lambda_2>0$ by Theorem~\ref{thm:spectral.characterisation.markov.exp} (see also (\ref{eq: spectral gap})) and we let $\kappa\coloneqq \frac{1}{2(1-\lambda_2)}>0$. 


By Proposition~\ref{prop:normalised.markov.is.reversible}(3) (see also (\ref{eq: cor 4.17})) we have that for every $g\in L^2_0(Y,\snu)$
\begin{align*}
\norm{g}_{\snu,2}^2 &\leq \kappa\sum_{s\in S}\int_{Y\cap s^{-1}(Y)} r(s,x)^{1/2}\abs{g(x)-g(s\cdot x)}^2 \d\nu(x) \\
&\leq \kappa\sqrt{\Theta} \sum_{s\in S}\int_{Y\cap s^{-1}(Y)} \abs{g(x)-g(s\cdot x)}^2 \d\nu(x).
\end{align*}

Assume now that $(X\times A,d_\Gamma)$ admits a coarse embedding into Hilbert space $\ell^2(\NN)$. If $F\colon (Y\times A,d_\Gamma) \rightarrow \ell^2(\NN)$ denotes the coarse embedding, then we let $F_t\colon Y \rightarrow \ell^2(\NN)$ be the restriction of $F$ to the level set $Y \times \{t\}$ for $t\in A$. For each $t\in A$ and  $n\in \NN$, denote by $F_t^{(n)}$ the associated coefficient of the function of $F_t$. Every $F_t^{(n)}$ is a bounded function (which is hence in $L^2(Y,\nu)$), and we have
\begin{align*}
\sum_{n\in \NN}&\sum_{s\in S}\int_{Y\cap s^{-1}(Y)} \abs{F_t^{(n)}(x)-F_t^{(n)}(s\cdot x)}^2 \d\nu(x) = \sum_{s\in S}\int_{Y\cap s^{-1}(Y)} \|{F_t(x)-F_t(s\cdot x)}\|^2_{\ell^2(\NN)} \d\nu(x) \\
&\leq \sum_{s\in S}\int_{Y\cap s^{-1}(Y)} \rho_+(d^t_\Gamma(x,s\cdot x))^2 \d\nu(x) \leq \rho_+(M)^2|S|,
\end{align*}
where $M\coloneqq\max_{s\in S}\ell(s)$.

After translating $F_t$ if necessary, we may assume that $F_t^{(n)} \in L^2_0(Y,\snu)$.
This implies that for each $t\in A$ we have
\[
\norm{F_t}_{\snu,2}^2=\sum_{n\in \NN} \norm{F^{(n)}_t}_{\snu,2}^2 \leq \kappa\sqrt{\Theta}\rho_+(M)^2|S|<\infty.
\]
On the other hand, since $F_t^{(n)} \in L^2_0(Y,\snu)$ we also have
\begin{align*}
 \iint_{Y\times Y} \|F_t(x)-F_t(y)\|^2_{\ell^2(\NN)} \d \snu(x)\snu(y) = 2 \|F_t\|_{\snu,2}^2.
\end{align*}
We will thus reach a contradiction by showing that
\begin{equation}\label{eq: integral tending to infty1}
\iint_{Y\times Y} \|F_t(x)-F_t(y)\|^2_{\ell^2(\NN)} \d \snu(x)\snu(y)\to \infty,\  \text{as}\  t\to \infty.
\end{equation}

Since $\snu$ is non-atomic and $\Gamma$ is countable, the set
\[
N\coloneqq\{(x,y)\in Y\times Y\ | \ \Gamma \cdot x=\Gamma \cdot y\}
\]
is measurable and has measure zero by Fubini's theorem. On the other hand, for any $x,y\in Y$ lying in different $\Gamma$-orbits the distance $d_\Gamma((x,t),(y,t))=d^t_\Gamma(x,y)\to \infty$ as $t\to \infty$. Since $F$ is a coarse embedding, it follows that $ \|F_t(x)-F_t(y)\|^2_{\ell^2(\NN)}\to \infty$. Hence, we deduce (\ref{eq: integral tending to infty1}) and obtain the desired contradiction.
\end{proof}

\begin{rmk}
Let $1<p<\infty$. By interpolation, if $Y$ is a domain of Markov $S$\=/expansion then the lazy Markov operator $\frac 12+\frac 12\fkP_{Y,S}$ has norm strictly less than one also when regarded as an operator on $L^p_0(Y,\snu)$. An easy modification of the proof of Proposition \ref{prop:non CE} shows that the warped cone does not coarsely embed into $L^p$ for any $1<p<\infty$.
Moreover, the warped cone cannot coarsely embed into $L^1$\=/spaces as well because it is shown in \cite[Proposition~4.1]{MR2146202} that every $L^1$\=/space coarsely embeds into a Hilbert space.
\end{rmk}

\begin{ex}
 \cite[Theorem~3.1]{nowak_warped_2017} does not apply to the warped cones arising from the profinite actions $F_k \curvearrowright (X,d,\nu)$ of Abért--Elek (Example~\ref{eg:counterexample BCC}). However, we may use Proposition~\ref{prop:non CE} to conclude that the sparse warped cone $\mathcal{Q}_{F_k} X$ as well as the unified warped $\CO_{F_k} X$ cannot be coarsely embedded into any Hilbert space.
 
 Note also that the non\=/embeddability of $\mathcal{Q}_{F_k} X$ does not immediately follow from the fact that it violates the coarse Baum--Connes conjecture. In fact, Yu's argument only applies to \emph{bounded geometry} proper metric spaces, while the warped cones in Example~\ref{eg:counterexample BCC} have unbounded geometry (Remark \ref{rmk: unbounded geometry profinite}).
\end{ex}

\appendix

\section{Proof of the spectral characterisation of expansion for Markov kernels}

In this appendix, we will provide a proof of Theorem~\ref{thm:spectral.characterisation.markov.exp}.
As mentioned before, some special cases of this result are especially well\=/known. Two such instances are given by simple random walks on finite or countably infinite graphs: the former gives a spectral characterisation of expansion \cite{alon1986eigenvalues,alon1985lambda1,dodziuk1984difference}, while the latter characterises non\=/amenability \cite{dodziuk1984difference,kesten1959full,mohar1988isoperimetric}. 
Lawler--Sokal \cite{lawler1988bounds} proved the general result already in the 1980s. However, their work seems to have been overlooked by a part of the mathematical community. In \cite{Kai92} Kaimanovich proved a version of Theorem~\ref{thm:spectral.characterisation.markov.exp} for reversible Markov kernels on infinite measure spaces (he actually proved much more refined results concerning $p$\=/capacities and Dirichlet norms). 
Lyon--Nazarov proved it for Markov kernels arising from measure\=/preserving actions on probability spaces \cite[Theorem 3.1]{lyons2011perfect}\footnote{%
\cite[Theorem 3.1]{lyons2011perfect} also claims that the spectrum of the Markov operator is bounded away from $-1$, but this is not correct: there is a small mistake at the very end of their proof. 
It is also worth pointing out that their proof is based on an inequality which they claim holds true by ``checking cases''. We are unable to verify such inequality.}.

Before finding out about \cite{lawler1988bounds}, we managed to prove Theorem~\ref{thm:spectral.characterisation.markov.exp} by extending the standard argument used for Markov processes on finite state spaces \cite{Pet17,Woe09}.
Most of its key points generalise without difficulties (Lemma~\ref{lem:sobolev.inequality} and Lemma~\ref{lem:dirichlet.estimate}), but the concluding argument is considerably more involved.
We decided to include the proof in this appendix for the convenience of the reader and also because it provides a slightly better lower bound on the spectral gap.

%

\begin{lem}\label{lem:sobolev.inequality}
 If $g\in L^1(X,m)$ is a function that takes value in $[0,\infty)$ and such that $m(\braces{g>0})\leq \frac 12 m(X)$, then $\CE_1(g)\geq \kappa \norm{g}_{m,1}$.
\end{lem}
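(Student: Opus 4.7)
The plan is to prove this by the standard co-area / layer-cake strategy, which reduces the statement about arbitrary nonnegative functions to the statement \eqref{eq:dirichlet.equals.boundary} about indicator functions, where the Cheeger constant $\kappa$ directly applies.

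First, I would introduce the sublevel notation $A_t \coloneqq \{g>t\}$ for $t\geq 0$ and record two layer-cake identities. For the $L^1$-norm, since $g\geq 0$, Fubini gives
\[
\norm{g}_{m,1} = \int_X g(x)\,\d m(x) = \int_0^\infty m(A_t)\,\d t.
\]
For the integrand of $\CE_1(g)$, I would use that for any $x,y\in X$ with $g(x),g(y)\geq 0$,
\[
\abs{g(x)-g(y)} = \int_0^\infty \abs{\chi_{A_t}(x)-\chi_{A_t}(y)}\,\d t,
\]
which is verified immediately by assuming $g(x)\geq g(y)$ and noting that the right-hand integrand equals $\chi_{[g(y),g(x))}(t)$.

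Next, I would substitute this identity into the definition of $\CE_1(g)$ and apply Fubini's theorem to exchange the $t$-integral with the $\mu$-integral over $X\times X$. Using \eqref{eq:dirichlet.equals.boundary} applied to $f=\chi_{A_t}$, this gives
\[
\CE_1(g) = \int_0^\infty \CE_1(\chi_{A_t})\,\d t = \int_0^\infty \mkbdry{A_t}\,\d t.
\]
The hypothesis $m(\{g>0\})\leq \tfrac12 m(X)$ then becomes crucial: since $A_t\subseteq\{g>0\}$ for every $t>0$, the set $A_t$ always satisfies $m(A_t)\leq \tfrac12 m(X)$ and hence is eligible for the infimum defining $\kappa$ (when $m(A_t)=0$ the inequality $\mkbdry{A_t}\geq \kappa\, m(A_t)$ is trivial). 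Thus $\mkbdry{A_t}\geq \kappa\, m(A_t)$ for almost every $t$.

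Combining the two displays finishes the proof:
\[
\CE_1(g) \geq \kappa\int_0^\infty m(A_t)\,\d t = \kappa\norm{g}_{m,1}.
\]
The only mildly delicate point is justifying the application of Fubini, which requires checking measurability of $(x,y,t)\mapsto \abs{\chi_{A_t}(x)-\chi_{A_t}(y)}$; this follows routinely from the measurability of $g$. No step presents a real obstacle---the entire argument is a template ``Cheeger constant $\Rightarrow$ $L^1$-Sobolev inequality'' computation transcribed into the reversible Markov kernel setting, using the symmetry of $\mu$ encoded in \eqref{eq:dirichlet.equals.boundary}.
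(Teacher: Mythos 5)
Your proof is correct and follows essentially the same route as the paper's: a layer-cake decomposition of $\norm{g}_{m,1}$ and of $\abs{g(x)-g(y)}$, an application of Fubini, and the Cheeger inequality applied to the (super)level sets, which are eligible because they sit inside $\braces{g>0}$. The only cosmetic differences are your use of strict superlevel sets $\braces{g>t}$ in place of the paper's $\braces{g\geq t}$ and the order in which the two sides of the inequality are developed.
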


\begin{proof}
 By hypothesis, we have that
 \begin{equation*}
  \int_X g(x)\d m 
  = \int_0^\infty  m(\braces{g\geq t}) \d t 
  \leq \frac{1}{\kappa} \int_0^\infty \mkbdry{\braces{g\geq t}} \d t,
\end{equation*}
where $\kappa$ denotes the Cheeger constant. Using \eqref{eq:dirichlet.equals.boundary} we deduce:
 \begin{align*}
  \int_X g(x)d m 
  &\leq \frac{1}{\kappa} \int_0^\infty \CE_1(\chi_{\braces{g\geq t}}) \d t. \\
  &= \frac{1}{2\kappa}\int_0^\infty\int_{X\times X}\abs{\chi_{\braces{g\geq t}}(x)-\chi_{\braces{g\geq t}}(y)}\, \d\mu(x,y) \d t \\
  &= \frac{1}{2\kappa}\int_{X\times X} \left({\int_0^\infty\abs{\chi_{\braces{g\geq t}}(x)-\chi_{\braces{g\geq t}}(y)}\,\d t\; }\right) \d\mu(x,y) \\
  &= \frac{1}{2\kappa}\int_{X\times X} \abs{g(x)-g(y)}\d\mu(x,y),
 \end{align*}
thus proving the lemma.
\end{proof}

In turn, this is used to prove the estimate that lies at the heart of the proof of Theorem~\ref{thm:spectral.characterisation.markov.exp}:

\begin{lem}\label{lem:dirichlet.estimate}
 If $g\in L^2(X,m)$ is a function that takes value in $[0,\infty)$ and such that $m(\braces{g>0})\leq \frac 12 m(X)$, then 
 \[
 \CE_2(g)\geq \frac{\kappa^2}{2}\norm{g}_{ m,2}^2.
 \]
\end{lem}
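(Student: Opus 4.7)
The plan is to apply Lemma~\ref{lem:sobolev.inequality} to the function $g^2$ and then use a Cauchy--Schwarz argument to transfer the bound from $\CE_1(g^2)$ to $\CE_2(g)$. This is the same strategy that appears in the standard proof of Cheeger's inequality, adapted to the Markov kernel setting.

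First, I would observe that $\{g^2>0\}=\{g>0\}$, so $g^2$ satisfies the hypotheses of Lemma~\ref{lem:sobolev.inequality}. Since $g^2\in L^1(X,m)$ with $\|g^2\|_{m,1}=\|g\|_{m,2}^2$, the lemma yields
\[
 \CE_1(g^2)\;\geq\;\kappa\,\|g\|_{m,2}^2.
\]

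Next, I would bound $\CE_1(g^2)$ from above in terms of $\CE_2(g)$ and $\|g\|_{m,2}$. Factoring $|g(x)^2-g(y)^2|=|g(x)-g(y)|\cdot|g(x)+g(y)|$ and applying the Cauchy--Schwarz inequality in $L^2(X\times X,\mu)$ gives
\[
 2\,\CE_1(g^2)\;\leq\;\Bigpar{\int_{X\times X}\!|g(x)-g(y)|^2\,\d\mu}^{\!1/2}\Bigpar{\int_{X\times X}\!|g(x)+g(y)|^2\,\d\mu}^{\!1/2}.
\]
The first factor is $\sqrt{2\,\CE_2(g)}$ by definition. For the second factor, I would use that the marginals of $\mu$ both equal $m$ (since $\Pi(x,X)=1$ and $\mu$ is symmetric by reversibility), so that
\[
 \int_{X\times X}|g(x)+g(y)|^2\,\d\mu\;\leq\;2\int_{X\times X}\bigpar{g(x)^2+g(y)^2}\d\mu\;=\;4\|g\|_{m,2}^2.
\]
Combining, $\CE_1(g^2)\leq \|g\|_{m,2}\sqrt{2\,\CE_2(g)}$.

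Chaining the two bounds produces $\kappa\|g\|_{m,2}^2\leq\|g\|_{m,2}\sqrt{2\,\CE_2(g)}$, and after dividing by $\|g\|_{m,2}$ (the case $g=0$ in $L^2$ is trivial) and squaring, we obtain the desired inequality $\CE_2(g)\geq \frac{\kappa^2}{2}\|g\|_{m,2}^2$. There is no serious obstacle here; the only mild subtlety is verifying that the marginals of $\mu$ coincide with $m$, which follows directly from reversibility together with $\Pi(x,X)=1$. Everything else is an elementary manipulation.
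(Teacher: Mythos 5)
Your proof is correct and follows essentially the same route as the paper: apply Lemma~\ref{lem:sobolev.inequality} to $g^2$, factor $|g^2(x)-g^2(y)|$, and use Cauchy--Schwarz to bound $\CE_1(g^2)$ by $\sqrt{2}\,\CE_2(g)^{1/2}\norm{g}_{m,2}$. The only (immaterial) difference is in the second Cauchy--Schwarz factor, where you use $(a+b)^2\leq 2(a^2+b^2)$ together with the fact that both marginals of $\mu$ equal $m$, whereas the paper expands the square and bounds $\angles{g,\fkP g}_m\leq\norm{g}_{m,2}^2$; both give the same constant $4\norm{g}_{m,2}^2$.
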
 
\begin{proof}
Firstly, we note that $\norm{g}_{ m,2}^2=\norm{g^2}_{ m,1}$. We can hence apply Lemma~\ref{lem:sobolev.inequality} to the function $g^2$ to obtain
 \begin{equation}\label{eq:norm_dirichlet.energy.bound}
  \kappa\norm{g}_{ m,2}^2\leq \CE_1(g^2).
 \end{equation}
Using the Cauchy--Schwarz inequality, we can estimate the value $\CE_1(g^2)$ as follows:
 \begin{align*}
  \CE_1(g^2)
  &=\frac 12\int_{X\times X} \abs{g^2(x) - g^2(y)}\d\mu(x,y) \\
  &=\frac 12\int_{X\times X} \abs{g(x) - g(y)} \cdot \abs{g(x) + g(y)}\d\mu(x,y) \\
  &\leq\frac 12\Bigparen{\int_{X\times X} \abs{g(x) - g(y)}^2 \d\mu(x,y) }^\frac 12 
  \Bigparen{\int_{X\times X} \abs{g(x) + g(y)}^2\d\mu(x,y) }^\frac 12\\
  &=\frac 12 \bigparen{2\CE_2(g)}^\frac 12 \Bigparen{2 \norm{g}^2_{m,2}+2\angles{g,\fkP g}_m}^\frac 12
  \\
  &\leq \sqrt{2}\CE_2(g)^\frac 12 \norm{g}_{m,2}.
 \end{align*}
The proof is complete once we combine the above estimate with \eqref{eq:norm_dirichlet.energy.bound}. 
\end{proof}

Finally, we are ready to prove the main theorem of this subsection: 

\begin{proof}[Proof of Theorem~\ref{thm:spectral.characterisation.markov.exp}]  
Given a measurable $A\subseteq X$ with $0< m(A)\leq \frac 12 m(X)$, let $f_A\coloneqq \chi_A-\frac{ m(A)}{ m(X)}$ be the projection of $\chi_A$ to $L^2_0(X, m)$. Then 
 \[
  \norm{f_A}_{ m,2}^2=( m(X)- m(A))\frac{ m(A)}{ m(X)}\geq \frac{1}{2} m(A). 
 \]
 Using \eqref{eq:dirichlet.equals.boundary} we deduce that
 \[
  1-\lambda_2\leq \frac{\CE_2(f_A)}{\norm{f_A}_{ m,2}^2}
  = \frac{\mkbdry{A}}{\norm{f_A}_{ m,2}^2}\leq 2 \frac{\mkbdry{A}}{m(A)},
 \]
 and hence $1-\lambda_2 \leq 2\kappa$.

For the other direction, we need to show
 \[
  \frac{\kappa^2}{2}
  \leq \inf_{f\in L^2_0(X,m)} \frac{\angles{f,\Delta f}_m}{\norm{f}_{m,2}^2}
  =\inf_{f\in L^2_0(X,m)} \frac{\CE_2(f)}{\norm{f}_{m,2}^2}=1-\lambda_2.
 \]
 
 Since $\fkP$ is self\=/adjoint, the spectral theorem implies that there exists a sequence of real\=/valued functions $f_n\in L^2_0(X,m)$ with $\|f_n\|_{m,2}=1$ such that $\norm{\fkP f_n-\lambda_2 f_n}_{m,2}\to 0$. In particular, $\langle f_n, \Delta f_n \rangle \to 1-\lambda_2$. Write $f_n=f_n^+-f_n^-$, where $f_n^+(x)\coloneqq \max\braces{0, f_n(x)}$ and $f_n^-(x)\coloneqq \max\braces{0, -f_n(x)}$. Replacing $f_n$ with $-f_n$ if necessary, we can assume that $ m(\braces{f_n(x)>0})\leq \frac 12 m(X)$. 

If each $f_n$ was an eigenfunction for $\Delta$, we would immediately have 
 \begin{equation}\label{eq:dirichlet positive part}
  \frac{\angles{f_n^+\,,\, (\Delta f_n)^+}_m}{\norm{f_n^+}_{m,2}^2}
  = \frac{\angles{f_n\,,\,\Delta f_n}_{m}}{\norm{f_n}_{m,2}}.
 \end{equation}
In this case, the proof of the theorem would easily follow from Lemma~\ref{lem:dirichlet.estimate}. Yet, this need not be the case for general Markov kernels. This is the place where our argument differs from the classical proof for finite\=/state processes. 
 
On the way to overcome this difficulty, we will first need to modify $f_n$ to ensure that $\norm{f_n^+}_{m,2}$ is bounded away from $0$.
If $\norm{f_n^+}_{m,2}$ does not tend to $0$, we simply pass to a subsequence $h_n\coloneqq f_{k_n}$ so that $\norm{h_n^+}_{m,2}$ is bounded away from $0$. Otherwise, we have $\norm{f_n^+}_{m,2}\to 0$. Since $m$ is finite, we also have $\norm{f_n^+}_{m,1}\to 0$. On the other hand, $\norm{f_n^+}_{m,1}=\norm{f_n^-}_{m,1}$ because $f_n\in L^2_0(X,m)$. It follows that there exists a sequence $c_n>0$ such that $c_n\to 0$ and $m(\braces{f_n^-(x)\geq c_n})\to 0$. 
We then define $h_n\coloneqq -(f_n+c_n)$ and also note that 
\[
\norm{\fkP h_n - \lambda_2 h_n}_{m,2} \leq \norm{\fkP f_n - \lambda_2 f_n}_{m,2} + \|\fkP - \lambda_2\| \cdot \|c_n\| \to 0 \mbox{~as~} n\to \infty.
\]
For $n$ large enough we have $m(\braces{h_n^+(x)> 0})\leq m(X)/2$ and $\norm{h_n^+}_{m,2}\geq \norm{f_n^-}_{m,2}-\norm{c_n}_{m,2}$ tends to $1$, as $1=\norm{f_n^+}_{m,2}^2+\norm{f_n^-}_{m,2}^2$ and $\norm{f_n^+}_{m,2}\to 0$. 
 
 We are now ready to complete the proof. Note that
 \[
  \angles{h_n^+, (\fkP h_n)^+}_m -\lambda_2\norm{h_n^+}_{m,2}^2
  =\angles{h_n^+\,,\; (\fkP h_n)^+ -\lambda_2h_n^+} _m
 \]
 and by the Cauchy--Schwarz inequality, we have that
 \[
  \frac{\abs{\angles{h_n^+\,,\; (\fkP h_n)^+ -\lambda_2h_n^+}}}{\norm{h_n^+}_{m,2}^2}
  \leq \frac{\norm{(\fkP h_n)^+ - (\lambda_2 h_n)^+}_{m,2}}{\norm{h_n^+}_{m,2}}
  \leq \frac{\norm{\fkP h_n - \lambda_2 h_n}_{m,2}}{\norm{h_n^+}_{m,2}}.
 \]
 Since $\norm{h_n^+}_{m,2}$ is bounded away from $0$, the right hand side in the above inequality tends to $0$ and therefore
 \[
  1-\lambda_2
  =
  \lim_{n\to\infty} \frac{\norm{h_n^+}_{m,2}^2-\angles{h_n^+\,,\; (\fkP h_n)^+}_m}{\norm{h_n^+}_{m,2}^2}.
 \]
 
 Finally, since
 \(  
  \angles{h_n^+,\fkP(h_n^+)}_m 
  \geq \angles{h_n^+,(\fkP h_n)^+}_m
 \), we deduce that 
 \[
  1-\lambda_2 
  \geq \varlimsup_{n\to\infty} \frac{\norm{h_n^+}_{m,2}^2-\angles{h_n^+,\fkP(h_n^+)}_m}{\norm{h_n^+}_{m,2}^2}
  = \varlimsup_{n\to \infty}\frac{\angles{h_n^+\,,\; \Delta( h_n^+)}_m}{\norm{h_n^+}_{m,2}^2}
  = \varlimsup_{n\to \infty}\frac{\CE_2\paren{h_n^+}}{\norm{h_n^+}_{m,2}^2}
 \]
 and the latter is greater or equal to $\frac{\kappa^2}{2}$ by Lemma~\ref{lem:dirichlet.estimate}, as desired.
\end{proof}

\bibliographystyle{plain}
\bibliography{bibfile,ExpanderishVig}

\end{document}